\numberwithin{equation}{section}
\theoremstyle{plain}
\newtheorem*{theorem*}{Theorem}
\newtheorem{theorem}{Theorem}
\numberwithin{theorem}{section}
\newtheorem{proposition}[theorem]{Proposition}
\newtheorem{lemma}[theorem]{Lemma}
\newtheorem{corollary}[theorem]{Corollary}
\newtheorem{conjecture}[theorem]{Conjecture}
\theoremstyle{definition}
\newtheorem{definition}[theorem]{Definition}
\newtheorem{remark}[theorem]{Remark}
\newtheorem{example}[theorem]{Example}
\theoremstyle{definition}
\newtheorem*{defn*}{Definition}
\theoremstyle{plain}
\newtheorem*{thm*}{Theorem}
\theoremstyle{plain}
\newtheorem*{prop*}{Proposition}
\theoremstyle{plain}
\newtheorem*{conj*}{Conjecture}
\theoremstyle{plain}
\newtheorem*{ex*}{Example}
\newcommand{\pr}{\mathrm{pr}}
\newcommand{\C}{\mathbb{C}}
\newcommand{\G}{\mathbb{G}}
\newcommand{\K}{\mathbb{K}}
\newcommand{\Z}{\mathbb{Z}}
\newcommand{\PP}{\mathbb{P}}
\newcommand{\R}{\mathbb{R}}
\newcommand{\sI}{\mathcal{I}}
\newcommand{\sE}{\mathcal{E}}
\newcommand{\sG}{\mathcal{G}}
\newcommand{\sK}{\mathcal{K}}
\newcommand{\sP}{\mathcal{P}}
\newcommand{\sQ}{\mathcal{Q}}
\newcommand{\sN}{\mathcal{N}}
\newcommand{\sT}{\mathcal{T}}
\newcommand{\sO}{\mathcal{O}}
\newcommand{\sU}{\mathcal{U}}
\newcommand{\sV}{\mathcal{V}}
\newcommand{\sZ}{\mathcal{Z}}
\newcommand{\sPE}{\mathcal{PE}}
\newcommand{\sPG}{\mathcal{PG}}
\newcommand{\sPN}{\mathcal{PN}}
\newcommand{\mR}{\mathsmaller{\mathbb{R}}}
\def\sm{{\rm sm}}
\def\sing{{\rm sing}}
\def\codim{{\rm codim}}
\def\rank{{\rm rank}}
\def\image{{\rm im}}
\newcommand{\mT}{\mathsmaller{\mathsf{T}}}
\newcommand{\red}[1]{{\color{red}#1}}
\newcommand{\blue}[1]{{\color{blue}#1}}
\newcommand*{\rom}[1]{\expandafter\@slowromancap\romannumeral #1@}
\title[Conditional Euclidean distance optimization via relative tangency]{Conditional Euclidean distance optimization \\ via relative tangency}
\author{Sandra Di Rocco}
\author{Lukas Gustafsson}
\author{Luca Sodomaco}
\address{Department of Mathematics, KTH Royal Institute of Technology, SE-100 44 Stockholm, Sweden}
\email{dirocco@kth.se}
\email{lukasgu@kth.se}
\email{sodomaco@kth.se}
\subjclass[2020]{13P25, 14M12, 14N05, 14N10, 14Q20, 51N35, 90C26.}
\keywords{Euclidean Distance Degree, Euclidean Distance Data Loci, constrained critical points, relative projective duality, relative polar classes, determinantal varieties}
\date{}
\begin{document}

\begin{abstract}
We introduce a theory of relative tangency for projective algebraic varieties. The dual variety $X_Z^\vee$ of a variety $X$ relative to a subvariety $Z$ is the set of hyperplanes tangent to $X$ at a point of $Z$. We also introduce the concept of polar classes of $X$ relative to $Z$.
We explore the duality of varieties of low rank matrices relative to special linear sections.
In this framework, we study the critical points of the Euclidean Distance function from a data point to $X$, lying on $Z$. The locus where the number of such conditional critical points is positive is called the ED data locus of $X$ given $Z$. The generic number of such critical points defines the conditional ED degree of $X$ given $Z$. We show the irreducibility of ED data loci, and we compute their dimensions and degrees in terms of relative characteristic classes.
\end{abstract}

\maketitle

\section{Introduction}

Duality in projective algebraic geometry is a natural notion: for an algebraic variety $X\subset\PP^N$, its {\em dual variety} $X^\vee$ is given by all points in the dual projective space $(\PP^N)^\vee$ whose associated hyperplanes in $\PP^N$ are tangent to $X$. The dual variety plays an essential role in several central problems like optimization or sampling over algebraic varieties.
For example, when optimizing a linear cost function over a real algebraic variety $X,$ the equations and the degree of $X^\vee$ provide relevant information on the complexity of the linear semidefinite program \cite[Section 5.3]{blekherman2012semidefinite}. The investigation of tangential properties of embedded varieties has also garnered extensive attention due to their pivotal role in understanding the shape and topology of these varieties. When datasets exhibit an underlying algebraic model structure, resembling points on or near an algebraic variety, a deep understanding of their geometry can lead to the discovery of significant patterns that serve as key elements in data analysis (see \cite{nicolau2011topology}, \cite{blair2021phenotipic} for examples).

The degree of a dual variety $X^\vee$ is also the degree of a specific polar class of $X$. Polar classes are invariants of projective varieties, encoding the non-generic tangential behavior with respect to linear spaces in the ambient space. The {\em $i$th polar class} of a variety $X\subset\PP^N$, denoted by $p_i(X)$, represents the Chow class of a codimension $i$ subvariety of $X$. Its degree, denoted by $\mu_i(X)$, provides valuable numerical information about the geometry of $X$. Notably, these invariants play a key role in applications such as Algebraic Statistics and Variety Sampling \cite{DHOST,dirocco2020bottleneck}.

In this paper, we lay the theoretical foundations of a relative notion of projective duality. Having fixed a projective subvariety $Z\subset X$, one may look for all hyperplanes tangent to $X$ on at least one point of $Z$. This defines the {\em dual variety of $X$ relative to $Z$}, denoted by $X_Z^\vee$. Similarly, we work with a relative version of polar classes, which we call {\em relative polar classes}, denoted by $p_i(X,Z)$. Their degrees, in this paper denoted by $\mu_i(X,Z)$, were originally introduced in \cite{piene1978polar} under the name of {\em classes of immersion}.
This paper aims to establish a relative tangency theory for nongeneric subvarieties, proving that most natural generalizations still hold.
A class of varieties for which this relative point of view is particularly central when it comes to applications is the class of determinantal varieties, explored in Section \ref{sec: relative tang determinantal}.

\begin{example}\label{ex: conics tangent to fixed conic}
Consider the space $\PP^5=\PP(S^2\C^3)$, the parameter space for conics in $\PP^2$ or equivalently of $3\times 3$ symmetric matrices. Let $X$ be the degree $4$ surface in $\PP^5$  representing symmetric matrices of rank at most one, i.e., the Veronese embedding of $\PP^2$. Geometrically, $X$ encodes non-reduced conics, counting lines with multiplicity two.
The dual variety $X^\vee\subset (\PP^5)^\vee$ parametrizes all $3\times 3$ symmetric matrices with determinant zero: these correspond to singular plane conics, or conics given by the union of two lines.
Consider a smooth conic $C\subset\PP^2$ and its Veronese embedding $Z$, a quartic curve in $X$. The subvariety $Z$ describes lines with multiplicity two, tangential to $C$. The dual variety of $Z$ is another hypersurface in $\PP^5$, representing plane conics tangent to $C$. If we consider only hyperplanes tangent to $X$ at points living in $Z$, they form the relative dual variety $X_Z^\vee$. This variety is contained in $X^\vee\cap Z^\vee$ and characterizes conics resulting from two lines intersecting at a point on $C$. Furthermore, it has codimension $2$ in $\PP^5$. We show in Remark \ref{rmk: relative dual strictly contained in intersection X dual Z dual} that $X_Z^\vee$ is strictly contained in $X^\vee\cap Z^\vee$.\hfill$\diamondsuit$
\end{example}

The primary motivation for introducing a relative tangency theory comes again from optimization.
In several applications, one seeks to optimize a polynomial objective function over a model described by polynomial equalities \cite{nie2009algebraic}. One of the most classic examples is the minimization of the Euclidean distance $d_u(x)=(x_0-u_0)^2+\cdots+(x_N-u_N)^2$ from a given data point $u=(u_0,\ldots,u_N)$, where $x=(x_0,\ldots,x_N)$ is constrained to an algebraic variety $X$.
The minimizers of $d_{X,u}\coloneqq d_u|_X$ sit among the subset of critical points $\mathrm{Crit}(d_{X,u})$, see Definition \ref{def: ED degree}. The cardinality $|\mathrm{Crit}(d_{X,u})|$ is finite and constant on a dense open subset of data points $u$ and is known as the {\em Euclidean Distance (ED) degree} of $X$.
It might happen, however, that the relevant area of focus is a subset of the given variety $X$.
In such situations, limiting the analysis to that particular subvariety of interest $Z\subset X$, rather than considering the entire variety, can significantly improve the computational efficiency of the problem. This motivates the introduction of ``conditional data invariants''.
If $u$ is sufficiently general, one expects to find no critical points of $d_{X,u}$ lying on $Z$. Indeed, there is a proper subset of data points possessing at least one critical point on $Z$. This is, by definition, the {\em ED data locus} of $X$ {\em given} $Z$, and it is denoted by $\mathrm{DL}_{X|Z}$, see Definition \ref{def: ED data locus}. This definition is slightly different than the definition of ED data locus given in  \cite{horobet2022data}, and coincides with it when $X$ is a smooth variety. In this paper, we investigate more on dimensions and degrees of ED data loci, connecting the tools of relative tangency introduced above.

\begin{example}\label{ex: Cayley intro}
Consider the Cayley's nodal cubic surface $X\subset\R^3$ of equation
\begin{equation}\label{eq: Cayley}
f(x_1,x_2,x_3)=16\,x_{1}x_{2}x_{3}+4(x_{1}^{2}+x_{2}^{2}+x_{3}^{2})-1=0\,.
\end{equation}
The variety $X$ has four non-coplanar singular points, highlighed in red in Figures \ref{fig: ED data locus Cayley circle} and \ref{fig: ED data locus Cayley line}.

First, we consider the circle $Z$ obtained by intersecting $X$ with the plane $x_1=0$. The ED data locus $\mathrm{DL}_{X|Z}$ is the sextic surface shown in green in Figure \ref{fig: ED data locus Cayley circle} together with the normal spaces $N_xX$ at the points $x\in Z$. Its equations is given in \eqref{eq: ED data locus Cayley circle}, and it coincides with the data locus studied in \cite{horobet2022data}.
Instead, consider the line $Z\subset X$ that joins the singular points $P_1=(1/2,1/2,-1/2)$ and $P_2=(-1/2,-1/2,-1/2)$ of $X$. On one hand, the ED data locus $\mathrm{DL}_{X|Z}$ is the plane $u_1-u_2=0$, shown in green on the left of Figure \ref{fig: ED data locus Cayley line}. On the other hand, the ED data locus studied in \cite{horobet2022data} consists of three irreducible components: the variety $\mathrm{DL}_{X|Z}$ and the two normal cones of $X$ at the two singularities lying on $Z$, depicted in orange in Figure \ref{fig: ED data locus Cayley line}. Their equations are given in \eqref{eq: components ED data locus Cayley}.
More details are given in Example \ref{ex: Cayley}.\hfill$\diamondsuit$
\end{example}
\begin{figure}[ht]
\begin{overpic}[width=2.2in]{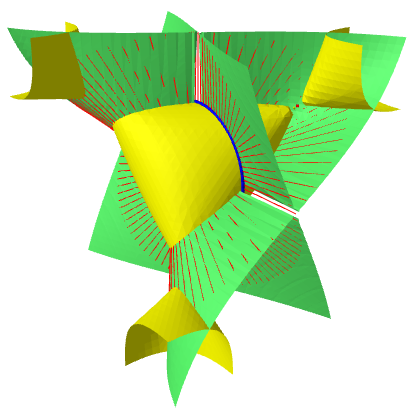}
\put (40,60) {{\scriptsize $X$}}
\put (53,58) {{\scriptsize\blue{$Z$}}}
\put (15,40) {{\small $\mathrm{DL}_{X|Z}$}}
\end{overpic}
\caption{The Cayley's nodal cubic surface $X$ (in yellow) and the green ED data locus $\mathrm{DL}_{X|Z}$ with respect to the blue circle $Z\subset X$.}\label{fig: ED data locus Cayley circle}
\end{figure}
\begin{figure}[ht]
\begin{overpic}[width=2.1in]{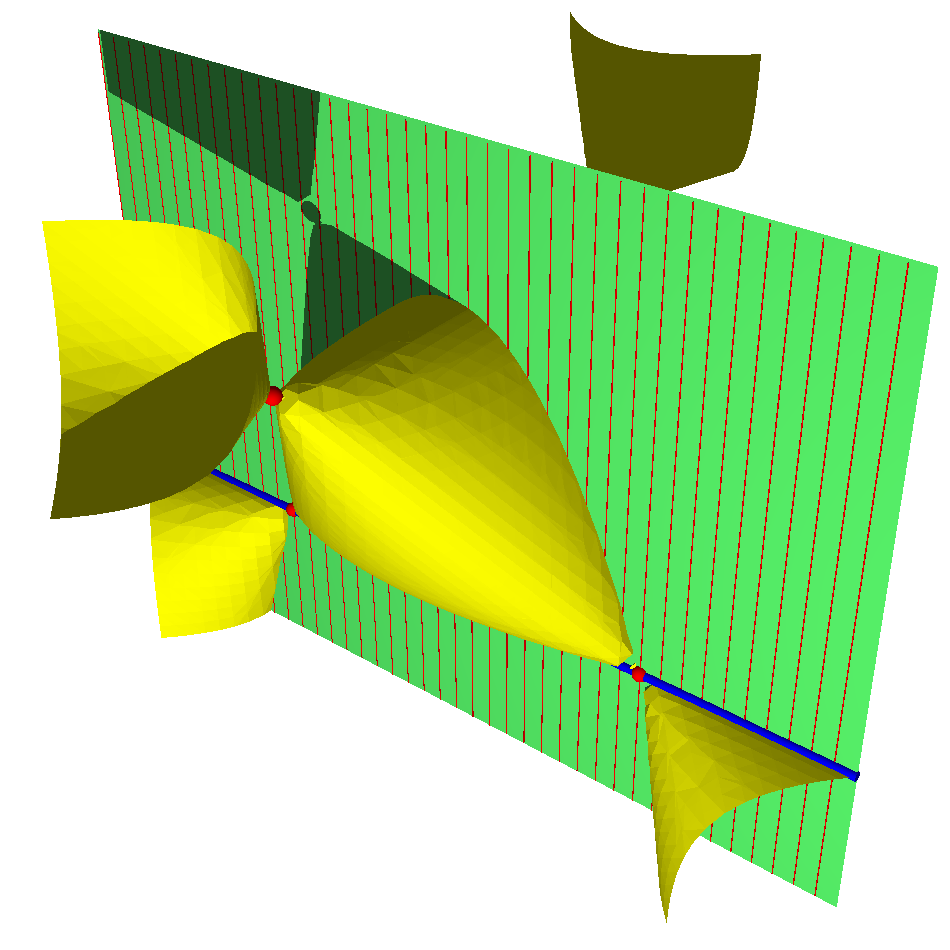}
\put (40,50) {{\scriptsize $X$}}
\put (80,25) {{\scriptsize\blue{$Z$}}}
\put (75,67) {{\small $\mathrm{DL}_{X|Z}$}}
\put (60,24) {{\scriptsize\red{$P_2$}}}
\put (25,41) {{\scriptsize\red{$P_1$}}}
\end{overpic}
\begin{overpic}[width=2.1in]{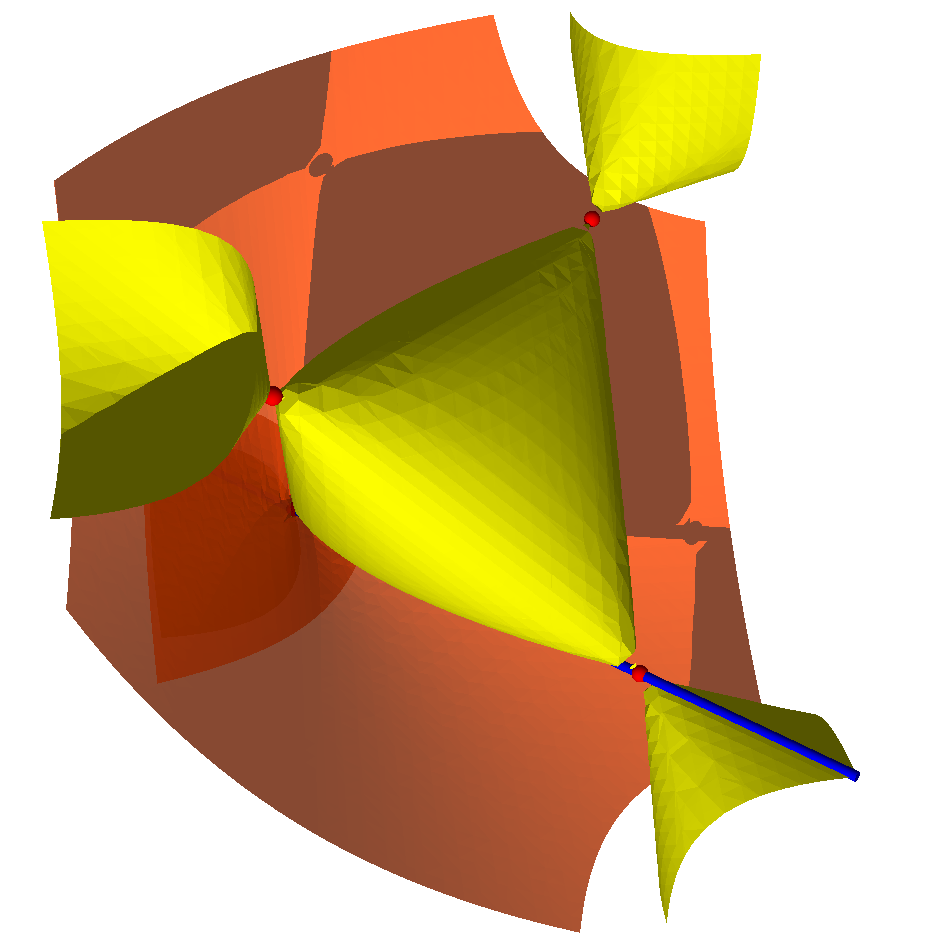}
\put (40,50) {{\scriptsize $X$}}
\put (82,26) {{\scriptsize\blue{$Z$}}}
\put (60,24) {{\scriptsize\red{$P_2$}}}
\put (25,41) {{\scriptsize\red{$P_1$}}}
\end{overpic}
\begin{overpic}[width=2.1in]{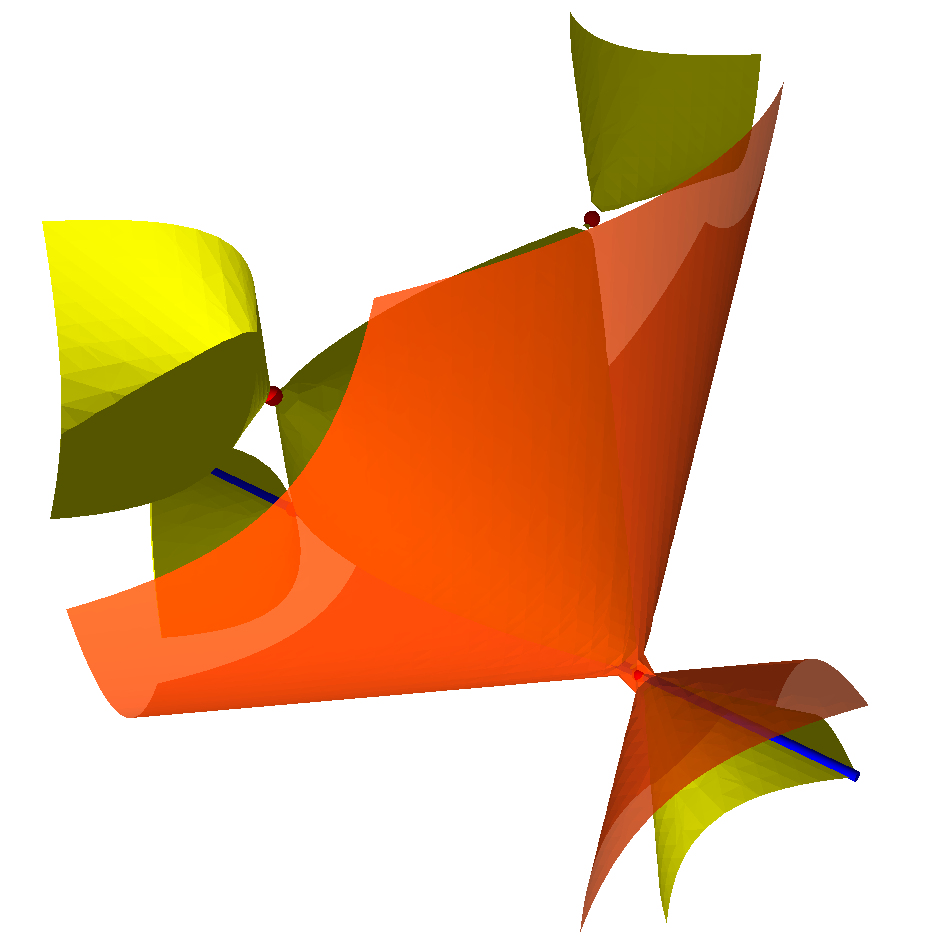}
\put (10,66) {{\scriptsize $X$}}
\put (85,12) {{\scriptsize\blue{$Z$}}}
\put (60,24) {{\scriptsize\red{$P_2$}}}
\put (25,41) {{\scriptsize\red{$P_1$}}}
\end{overpic}
\caption{On the left, in green the ED data locus $\mathrm{DL}_{X|Z}$ given the line $Z\subset X$ joining the singular points $P_1$ and $P_2$ of $X$.
In the center and on the right, in orange the normal cones of $X$ at $P_1$ and $P_2$.}\label{fig: ED data locus Cayley line}
\end{figure}

Furthermore, in our paper we consider the following question: {\em how many of the ED critical points of $d_{X,u}$ belong to $Z$?} Equivalently, {\em what is the ED degree of $X$ conditional to $Z$?}
These questions are also motivated by the following observation. Suppose that $X$ is covered by the family of nonempty subsets $\sZ=\{Z_i\mid i\in I\}$, and let $\mathrm{Crit}(d_{X,u}|Z_i)$ be the subset of $\mathrm{Crit}(d_{X,u})$ of critical points that belong to $Z_i$ (or conditional to $Z_i$).
By the inclusion-exclusion principle,
\begin{equation}\label{eq: inclusion-exclusion}
|\mathrm{Crit}(d_{X,u})|=\sum_{i\in I}|\mathrm{Crit}(d_{X,u}|Z_i)|-\sum_{Z_i\cap Z_j\neq\emptyset}|\mathrm{Crit}(d_{X,u}|Z_i\cap Z_j)|+\cdots\,.
\end{equation}
When the elements of $\sZ$ are pairwise disjoint, then $\sZ$ is a partition of $X$ and only the first sum in \eqref{eq: inclusion-exclusion} is nonzero. Assuming that all $Z_i$ are algebraic varieties, under a suitable condition of ED regularity (see Definition \ref{def: conditional ED regularity}), the cardinality $\mathrm{Crit}(d_{X,u}|Z)$ is finite and constant on a dense open subset of data points $u\in\mathrm{DL}_{X|Z}$. We denote it by $\mathrm{EDD}(X|Z)$ we refer to it as the {\em conditional ED degree} of $X$ {\em given $Z$}.

ED data loci and conditional ED degrees are closely related to further relevant topics in {\em metric algebraic geometry}, such as {\em Voronoi cells} \cite{cifuentes2022voronoi}, {\em medial axis}, {\em reach} and {\em bottlenecks} \cite{dirocco2020bottleneck}. We point out that, in this work, we focus on ED data loci and conditional ED degrees with respect to the Euclidean distance function, but in principle our results may be rephrased for other optimization problems, for instance considering the log-likelihood function \cite{catanese2006maximum,sturmfels2010multivariate,amendola2021maximum}, the $p$-norm \cite{kubjas2021algebraic} or the Wasserstein distance \cite{celik2021wasserstein,depaul2024degrees,meroni2024algebraic}.

In Section \ref{sec: preliminaries}, we outline the main definitions and theorems in classical tangency theory, following the references \cite{piene1978polar,kleiman1986tangency,holme1988geometric,gelfand1994discriminants}. Given a projective variety $X\subset\PP^N$, the conormal variety $W_X$ and the dual variety $X^\vee$ are recalled in Definitions \ref{def: conormal variety} and \ref{def: dual variety X}, respectively. The variety $X$ is reflexive when $W_X=W_{X^\vee}$, and is reciprocal when $(X^\vee)^\vee=X$. Reflexivity implies reciprocity in general, and over an algebraically closed field of characteristic zero, the two notions are equivalent. We further recall the connection between the rational equivalence class $[W_X]$ in the Chow ring of $\PP^N\times(\PP^N)^\vee$ and the degrees $\mu_i(X)$ of the polar classes of order $i$ of $X$.

In Section \ref{sec: relative polar classes of a projective variety}, we develop the concepts of relative duality and polarity. In Definition \ref{def: relative conormal variety} we introduce the conormal variety $W_{X,Z}$ relative to the subvariety $Z\subset X$, and the relative dual variety $X_Z^\vee$. We say that $X$ is reflexive relative to $Z$ if $W_{X,Z}=W_{X^\vee,X_Z^\vee}$, and that $X$ is reciprocal relative to $Z$ if $(X^\vee)_{X_Z^\vee}^\vee=Z$. Under a mild condition of {\em dual regularity} given in Definition \ref{def: dual regular relative Z}, we characterize relative reflexivity and reciprocity in terms of the defects $\mathrm{def}(X)$ and $\mathrm{def}(X,Z)$, which are the dimensions of the contact locus in $X$ (respectively, in $Z$) of a generic tangent hyperplane to $X$.

Let $Z\subset X$ be irreducible projective varieties defined over an algebraic closed field of characteristic zero.
\begin{theorem}\label{thm: conditions equivalent to relative biduality}
Assume that $X$ is dual regular relative to $Z$. The following properties are equivalent:
\begin{enumerate}
    \item $\mathrm{def}(X,Z)=\mathrm{def}(X)$.
    \item $X$ is reflexive relative to $Z$.
    \item $X$ is reciprocal relative to $Z$.
\end{enumerate}
\end{theorem}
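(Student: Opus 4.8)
The plan is to reduce all three conditions to dimension counts on the relative conormal varieties, with the characteristic-zero biduality (``reflexivity'') theorem from Section~\ref{sec: preliminaries} as the only real input: the swap isomorphism $\tau\colon\PP^N\times(\PP^N)^\vee\to(\PP^N)^\vee\times\PP^N$ identifies $W_X$ with $W_{X^\vee}$, and in particular $(X^\vee)^\vee=X$.

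First I would record the dimensions. Since $Z$ is irreducible and dual regularity forces the generic point of $Z$ to be a smooth point of $X$, the projection $W_{X,Z}\to Z$ is generically a projective bundle with fibres of dimension $N-1-\dim X$, so $\dim W_{X,Z}=\dim Z+N-1-\dim X$; as $\mathrm{def}(X,Z)$ is the dimension of the generic fibre of $W_{X,Z}\to X_Z^\vee$, this gives $\dim X_Z^\vee=\dim Z+N-1-\dim X-\mathrm{def}(X,Z)$. Dual regularity also forces the generic point of $X_Z^\vee$ to be a smooth point of $X^\vee$, so $W_{X^\vee,X_Z^\vee}\to X_Z^\vee$ is generically a projective bundle with fibres of dimension $N-1-\dim X^\vee=\mathrm{def}(X)$, whence
\[\dim W_{X^\vee,X_Z^\vee}=\dim X_Z^\vee+\mathrm{def}(X)=\dim Z+N-1-\dim X+\mathrm{def}(X)-\mathrm{def}(X,Z).\]
Next, restricting the identification $\tau(W_X)=W_{X^\vee}$ to the dense locus of pairs $(x,H)$ with $x$ smooth in both $Z$ and $X$ and $H$ tangent to $X$ at $x$, noting that such an $H$ is a generic---hence (by dual regularity) smooth---point of $X^\vee$, and taking closures, yields an inclusion of irreducible varieties $\tau(W_{X,Z})\subseteq W_{X^\vee,X_Z^\vee}$; pushing this forward along the projection to the $\PP^N$-factor gives the a priori inclusion $Z\subseteq(X^\vee)^\vee_{X_Z^\vee}$.

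Now the equivalences. For (1)$\Leftrightarrow$(2): the inclusion $\tau(W_{X,Z})\subseteq W_{X^\vee,X_Z^\vee}$ of irreducible varieties is an equality---i.e.\ $X$ is reflexive relative to $Z$---exactly when their dimensions agree, which by the displayed formulas is exactly the condition $\mathrm{def}(X,Z)=\mathrm{def}(X)$. For (2)$\Rightarrow$(3): projecting $\tau(W_{X,Z})=W_{X^\vee,X_Z^\vee}$ to the $\PP^N$-factor gives $Z=(X^\vee)^\vee_{X_Z^\vee}$. For (3)$\Rightarrow$(2): assuming $(X^\vee)^\vee_{X_Z^\vee}=Z$, the projection $\pr_2\colon W_{X^\vee,X_Z^\vee}\to\PP^N$ has image $Z$, whose generic point $x$ is smooth in $X=(X^\vee)^\vee$; a generic point $(H,x)$ of $W_{X^\vee,X_Z^\vee}$ lies in $W_{X^\vee}$, hence by biduality $(x,H)\in W_X$, and smoothness of $x$ in $X$ forces $H$ into the fibre of $W_X\to X$ over $x$, of dimension $N-1-\dim X$; therefore the generic fibre of $\pr_2$ has dimension at most $N-1-\dim X$, so $\dim W_{X^\vee,X_Z^\vee}\le\dim Z+N-1-\dim X=\dim\tau(W_{X,Z})$, and together with the inclusion this forces $\tau(W_{X,Z})=W_{X^\vee,X_Z^\vee}$.

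The main obstacle is the bookkeeping of the second paragraph: pinning down exactly which genericity and smoothness statements the hypothesis ``dual regular relative to $Z$'' supplies (in essence, that $Z$ and $X_Z^\vee$ are generically smooth in $X$ and $X^\vee$ respectively), and checking that the identification $\tau(W_X)=W_{X^\vee}$ restricts to a genuine identification---not merely an inclusion after closure---at the level of relative conormal varieties. Once these are in place, what remains is the dimension arithmetic above, resting on characteristic-zero biduality.
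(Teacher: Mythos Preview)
Your proof is correct and takes a genuinely different route from the paper's. The paper argues cyclically $(1)\Rightarrow(2)\Rightarrow(3)\Rightarrow(1)$ through the geometry of contact loci: for $(1)\Rightarrow(2)$ it shows that $\mathrm{Cont}(H,X,Z)=\mathrm{Cont}(H,X)$ for generic $H\in X_Z^\vee$ and rebuilds the two conormal varieties from these fibres; for $(3)\Rightarrow(1)$ it invokes the description of $(X^\vee)_{X_Z^\vee}^\vee$ as the closure of the union of contact loci meeting $Z$ (Lemma~\ref{lem: identity double relative dual variety}) to force $\mathrm{Cont}(H,X)\subset Z$. You instead set up the single inclusion $\tau(W_{X,Z})\subseteq W_{X^\vee,X_Z^\vee}$ of irreducible varieties and reduce everything to dimension arithmetic: $(1)\Leftrightarrow(2)$ becomes equality of dimensions, and $(3)\Rightarrow(2)$ becomes a fibre-dimension bound over the generic point of $Z$. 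Your approach is more economical---it bypasses Lemmas~\ref{lem: Z contained in double relative dual} and~\ref{lem: identity double relative dual variety} entirely---while the paper's route yields those lemmas as independently useful geometric statements about contact loci.

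One small imprecision: in your inclusion argument you write that $H$ is ``a generic\dots point of $X^\vee$''. What you actually need (and what dual regularity gives) is that a generic point of $X_Z^\vee$ is smooth in $X^\vee$; the conclusion is the same, but the $H$ in question is generic in $X_Z^\vee$, not in $X^\vee$. Also, for the equality-from-equal-dimensions step you are silently using that $W_{X^\vee,X_Z^\vee}$ is irreducible; this follows from Proposition~\ref{prop: dimension relative conormal variety} applied to $X^\vee$ and $X_Z^\vee$, since dual regularity ensures $X_Z^\vee\not\subset(X^\vee)_{\sing}$, and is worth stating explicitly.
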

Next, in Definition \ref{def: relative polar variety} we introduce relative polar varieties of order $i$ and their degrees $\mu_i(X,Z)$. We prove a relative version of the celebrated Kleiman's duality formula relating the invariants $\mu_i(X,Z)$ to the multidegrees $\delta_i(X,Z)$ of the rational equivalence class $[W_{X,Z}]$.
\begin{proposition}\label{prop: properties relative polar ranks}
Let $X\subset\PP^N$ be a smooth projective variety of dimension $n$, and $Z$ a smooth subvariety of $X$ of dimension $d$.
For all $0\le i\le d$, we have $\mu_i(X,Z)=\delta_{d-i}(X,Z)$.
\end{proposition}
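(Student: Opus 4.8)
The plan is to transplant the classical proof of Kleiman's duality formula recalled in Section~\ref{sec: preliminaries} from the conormal variety $W_X$ to the relative conormal variety $W_{X,Z}\subseteq\PP^N\times(\PP^N)^\vee$, with $d=\dim Z$ in the role of $n=\dim X$. The only genuinely new input is structural: since $X$ is smooth, hence smooth along $Z$, the first projection $p\colon W_{X,Z}\to Z$ is a projective bundle — over each $x\in Z\subseteq X_{\sm}$ the fibre of $p$ is the $\PP^{N-n-1}$ of hyperplanes containing the embedded tangent space $\mathbb{T}_xX$, so $W_{X,Z}=\PP(\mathcal E)$ for a rank-$(N-n)$ bundle $\mathcal E$ on $Z$, namely the restriction to $Z$ of the bundle realizing $W_X$ over $X$. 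Consequently $W_{X,Z}$ is irreducible of dimension $N-n+d-1$, the pullback $\xi_2=q^*h_2$ of the hyperplane class of $(\PP^N)^\vee$ restricts to the relative hyperplane class of $\PP(\mathcal E)$, and $\xi_1=p^*(h_1|_Z)$ satisfies $\xi_1^{\,d+1}=0$, which already singles out $0\le i\le d$ as the natural range.

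First I would turn both quantities into intersection numbers on $W_{X,Z}$. By the definition of the multidegrees of $[W_{X,Z}]$,
\[
\delta_{d-i}(X,Z)=\int_{W_{X,Z}}\xi_1^{\,d-i}\,\xi_2^{\,(N-n+d-1)-(d-i)}=\int_{W_{X,Z}}\xi_1^{\,d-i}\,\xi_2^{\,N-n-1+i},
\]
and the projection formula together with the projective-bundle pushforward $p_*(\xi_2^{\,N-n-1+i})=s_i(\mathcal E)$ rewrites this as $\int_Z(h_1|_Z)^{d-i}\,s_i(\mathcal E)$. On the other side, by Definition~\ref{def: relative polar variety} the $i$-th relative polar variety is represented by $M_i(X,Z)=p\big(q^{-1}(\Lambda)\cap W_{X,Z}\big)$ for a general linear subspace $\Lambda\subseteq(\PP^N)^\vee$ of codimension $N-n-1+i$, and $\mu_i(X,Z)=\deg M_i(X,Z)$ — with the understanding that both $\mu_i(X,Z)$ and $\delta_{d-i}(X,Z)$ vanish when this polar locus is empty. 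Next I would apply Kleiman's transversality theorem — valid because $\K$ has characteristic zero and $\mathrm{PGL}_{N+1}$ acts transitively on $(\PP^N)^\vee$ — to a general $\Lambda$: then $q^{-1}(\Lambda)\cap W_{X,Z}$ is generically reduced of pure dimension $d-i$, has class $\xi_2^{\,N-n-1+i}\cap[W_{X,Z}]$, and the restriction of $p$ to it is birational onto $M_i(X,Z)$. Granting the birationality, the projection formula gives
\[
\mu_i(X,Z)=\deg M_i(X,Z)=\int_{\PP^N}h_1^{\,d-i}\cdot p_*\big[q^{-1}(\Lambda)\cap W_{X,Z}\big]=\int_{W_{X,Z}}\xi_1^{\,d-i}\,\xi_2^{\,N-n-1+i}=\delta_{d-i}(X,Z),
\]
which is the claim; the extreme case $i=0$ records $\mu_0(X,Z)=\deg Z=\delta_d(X,Z)$.

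The part requiring care is the generic injectivity of the polar map $p\colon q^{-1}(\Lambda)\cap W_{X,Z}\to M_i(X,Z)$ for general $\Lambda$; this is exactly the classical argument, now run over the projective bundle $W_{X,Z}\to Z$ instead of $W_X\to X$. One bounds the locus of hyperplanes in a general $\Lambda$ tangent to $X$ along two distinct points of $Z$ by stratifying the incidence $\{(x_1,x_2,H)\in Z\times Z\times(\PP^N)^\vee : x_1\ne x_2,\ \mathbb{T}_{x_1}X+\mathbb{T}_{x_2}X\subseteq H\}$ according to $\dim(\mathbb{T}_{x_1}X\cap\mathbb{T}_{x_2}X)$; using $d\le n$, this ``double-tangent'' locus stays below dimension $d-i$ after cutting with $\Lambda$, so the non-injectivity locus of $p$ is a proper closed subset, and smoothness of $Z$ guarantees that $M_i(X,Z)$ has the expected dimension $d-i$. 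Should one prefer to avoid this step, an alternative is to bypass the polar map and instead invoke Piene's Chern-class formula for the classes of immersion: applied to $\mathcal E$ it expresses $\mu_i(X,Z)$ as $\int_Z(h_1|_Z)^{d-i}\,s_i(\mathcal E)$, which is precisely the expression obtained above for $\delta_{d-i}(X,Z)$, so the two coincide.
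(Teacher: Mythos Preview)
Your approach is essentially the paper's: both $\mu_i(X,Z)$ and $\delta_{d-i}(X,Z)$ are identified with the intersection number $\int_{W_{X,Z}}\xi_1^{\,d-i}\xi_2^{\,N-n-1+i}$ via the projection formula, using that $W_{X,Z}\to Z$ is a $\PP^{N-n-1}$-bundle (hence $W_{X,Z}$ is smooth) and that the relative polar variety equals $\pi_{1,Z}(\pi_{2,Z}^{-1}(L^\vee))$ (the paper's Lemma~\ref{lem: identity relative polar locus}). The paper runs the same chain of equalities you write, only without the detour through the Segre classes $s_i(\mathcal E)$, which you do not actually use in the final identification.

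One point needs correcting. Your sketch for the generic injectivity of $p$ on $q^{-1}(\Lambda)\cap W_{X,Z}$ is the argument for the wrong projection: the double-tangent stratification $\{(x_1,x_2,H):x_1\ne x_2,\ \mathbb T_{x_1}X+\mathbb T_{x_2}X\subset H\}$ controls injectivity of $q$ (to $(\PP^N)^\vee$), not of $p$ (to $Z$). What you actually need is that for a generic $x$ in the polar locus the fiber $\{H\in\Lambda:\mathbb T_xX\subset H\}$ is a single point; this holds because for such $x$ the intersection $\mathbb T_xX\cap L$ has exactly the threshold dimension $i-1$, so $\langle \mathbb T_xX,L\rangle$ is a hyperplane and there is a unique $H$ containing it. The paper glosses over this step entirely, writing $[\pi_{1,Z}(\pi_{2,Z}^{-1}(L^\vee))]=\pi_{1,Z*}(\pi_{2,Z}^*[L^\vee])$ without comment, so your instinct to justify it is right; just swap in the fiber-dimension argument for the double-tangent one.
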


In Section \ref{sec: relative tang determinantal}, we investigate relative duality and polarity of determinantal varieties, also known as varieties of low-rank matrices, relative to two special subvarieties: first, the subvariety of low-rank matrices with prescribed column and row spaces (see Proposition \ref{prop: dim relative dual Z X_r}). Then, the subvariety of low-rank symmetric matrices (see Proposition \ref{prop: relative dual determinantal wrt subspace symmetric}).

In Sections \ref{sec: affine ED data loci} and \ref{sec: projective} we provide the geometric foundations of ED data loci and conditional ED degrees, both for affine and projective varieties. The geometric foundations of these notions rely on the {\em conditional ED correspondence} $\sE_{X|Z}$ of a variety $X$ given a subvariety $Z$, see Definition \ref{def: ED correspondence Z}. In Proposition \ref{prop: rationality ED data loci} we show how to parametrize the relative ED correspondence, provided that $X$ and $Z$ are rational varieties.

In Section \ref{sec: degrees ED data loci} we characterize $\mathrm{EDD}(X|Z)$ and $\mathrm{DL}_{X|Z}$ in terms of characteristic classes. Theorem \ref{thm: degree data locus no genericity X, Z} shows that the polar class formula in \cite[Theorem 5.4]{DHOST} has a conditional counterpart.
\begin{theorem}\label{thm: degree data locus no genericity X, Z}
Let $Z$ be a projective subvariety of $X$ of dimension $d$, such that $X$ is ED regular given $Z$. If $\sPN_{X,Z}$ is disjoint from the diagonal $\Delta(\PP^N)$ of $\PP^N\times\PP^N$, then
\begin{equation}\label{eq: degree data locus no genericity X, Z}
\mathrm{EDD}(X|Z)\deg(\mathrm{DL}_{X|Z})=\sum_{i=0}^d\delta_i(X,Z)\,.
\end{equation}
\end{theorem}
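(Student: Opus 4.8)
The plan is to realize both sides of \eqref{eq: degree data locus no genericity X, Z} as intersection numbers on the relative conditional ED correspondence $\sE_{X|Z}\subset \PP^N\times\PP^N$ (or rather its closure, a variety whose first projection dominates $X$ with generic fiber the conditional normal space $N_xX$, and whose second projection dominates the data locus $\mathrm{DL}_{X|Z}$ with generic fiber the set of conditional critical points). First I would recall from Section \ref{sec: degrees ED data loci} the identification of $\sE_{X|Z}$ as (the closure of) the image of the relative conormal variety $W_{X,Z}$ under the rational map sending a point--hyperplane pair to the corresponding point--normal-direction pair, i.e. the "Euclidean" incarnation of relative conormality; this is where the characteristic classes $\delta_i(X,Z)$ of $[W_{X,Z}]$ enter. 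The key point is that $\sE_{X|Z}$ has dimension $N-1$ (because $\dim W_{X,Z} = N-1$ after the relative-conormal construction, independently of $\dim Z$) and its class in the Chow ring of $\PP^N\times\PP^N$ is a specialization of $[W_{X,Z}] = \sum_i \delta_i(X,Z)\, h_1^{N-i}h_2^{i+1}$ under the substitution induced by the isomorphism $\PP^N\times(\PP^N)^\vee \cong \PP^N\times\PP^N$ coming from the quadratic form, as in \cite[Section 5]{DHOST}.

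The engine of the proof is then the same bidegree computation as in \cite[Theorem 5.4]{DHOST}, carried out on $\sE_{X|Z}$ instead of the classical ED correspondence. Concretely: intersect $\sE_{X|Z}$ with a generic line $\ell$ in the second $\PP^N$-factor. On one hand this computes $\mathrm{EDD}(X|Z)\cdot\deg(\mathrm{DL}_{X|Z})$, because a generic point of $\mathrm{DL}_{X|Z}$ sits over it $\mathrm{EDD}(X|Z)$ times (this uses ED regularity given $Z$, which guarantees the generic fiber of the second projection over $\mathrm{DL}_{X|Z}$ has the expected constant cardinality $\mathrm{EDD}(X|Z)$), and the generic line meets $\mathrm{DL}_{X|Z}$ in $\deg(\mathrm{DL}_{X|Z})$ points; one must check the second projection $\sE_{X|Z}\to\mathrm{DL}_{X|Z}$ is generically finite, equivalently $\dim\mathrm{DL}_{X|Z}=N-1$, which again follows from ED regularity. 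On the other hand, the same intersection number is $\sum_i \delta_i(X,Z)$ by reading off the bidegrees of $[\sE_{X|Z}]$: intersecting with the class of a line in the second factor picks out $\sum_i$ (bidegree of type $(i, N-1-i)$), and the passage from $[W_{X,Z}]$ to $[\sE_{X|Z}]$ preserves these bidegrees because the quadratic-form isomorphism $\PP^N\times(\PP^N)^\vee\to\PP^N\times\PP^N$ is a linear (degree $(1,1)$) correspondence on the two factors separately.

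The hypothesis that $\sPN_{X,Z}$ is disjoint from the diagonal $\Delta(\PP^N)$ is exactly what is needed to ensure this passage from $[W_{X,Z}]$ to $[\sE_{X|Z}]$ is an honest equality of classes (a birational change of coordinates without exceptional contributions): if the projective "null-correspondence" $\sPN_{X,Z}$ met the diagonal, the rational map from the relative conormal variety to the ED correspondence would fail to be birational onto its image in the relevant locus, and spurious components on the isotropic quadric would contribute to the bidegree. So the main obstacle is the careful verification that, under the disjointness hypothesis, $\sE_{X|Z}$ is irreducible of dimension $N-1$, the rational map $W_{X,Z}\dashrightarrow\sE_{X|Z}$ is birational, and consequently $[\sE_{X|Z}]$ has bidegrees $(\delta_0(X,Z),\ldots,\delta_d(X,Z),0,\ldots,0)$ — precisely the relative analogue of the computation underlying \cite[Theorem 5.4]{DHOST}, with $W_{X,Z}$ and the invariants $\delta_i(X,Z)$ of Proposition \ref{prop: properties relative polar ranks} replacing their classical counterparts. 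Once this is in place, equating the two evaluations of the line-intersection number gives \eqref{eq: degree data locus no genericity X, Z}.
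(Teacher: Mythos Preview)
Your proposal contains a fundamental dimensional error that propagates throughout. You claim that $\dim W_{X,Z}=N-1$ ``independently of $\dim Z$'', but this is false: by Proposition~\ref{prop: dimension relative conormal variety}, $\dim W_{X,Z}=N-1-n+d$ where $n=\dim X$ and $d=\dim Z$. Likewise $\sPE_{X|Z}$ has dimension $N+1-n+d$ (Proposition~\ref{prop: projective ED correspondence Z irreducible}) and $\mathrm{DL}_{X|Z}$ has codimension $n-d$ in $V_u$ (Proposition~\ref{prop: proj data locus is irreducible}), not codimension one. Consequently, intersecting with a generic \emph{line} in the data factor is the wrong operation: a line misses $\mathrm{DL}_{X|Z}$ entirely whenever $n-d>1$. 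You have in effect transplanted the absolute argument from \cite[Theorem~5.4]{DHOST} (the case $Z=X$, where $n=d$ and all your dimension claims do hold) without adjusting for the drop in dimension caused by restricting to $Z$.

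There is also a conflation of objects. The variety isomorphic to $W_{X,Z}$ under the quadratic-form identification $(\PP^N)^\vee\cong\PP^N$ is $\sPN_{X,Z}$, \emph{not} $\sE_{X|Z}$ or $\sPE_{X|Z}$; these live in different ambient spaces and have different dimensions. The passage from conormal data $(x,y)$ with $y\in N_xX$ to ED data $(x,u)$ with $u-x\in N_xX$ is governed by the map $(x,y)\mapsto(x,x+y)$, which does not descend to a bidegree-preserving automorphism of $\PP^N\times\PP^N$. The paper handles this by working in the triple product $\PP_x^N\times\PP_y^N\times V_u$ via the joint correspondence $\sPG(\Gamma_Z)$ and the determinantal locus $\Lambda=\{([x],[y],u)\mid\dim\langle x,y,u\rangle\le 2\}$. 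The disjointness hypothesis yields $\sPG(\Gamma_Z)=(\sPN_{X,Z}\times V_u)\cap\Lambda$, and the desired count is then extracted by pairing $[\sPN_{X,Z}]$ with the class $[\Lambda_u]=\sum_{i=0}^{N-1}h^{N-1-i}(h')^i$ of the fiber of $\Lambda$ over a data point; it is precisely this determinantal class, absent from your outline, that produces the sum $\sum_i\delta_i(X,Z)$.
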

The incidence variety $\sPN_{X,Z}$ is introduced in \eqref{eq: def PN X Z}.
In the remainder of the section, we also provide alternative formulas for the product on the left-hand side of \eqref{eq: degree data locus no genericity X, Z} involving Chern classes. When $X$ is a complete intersection variety, this product has an explicit formulation.
\begin{theorem}\label{thm: degree data loci complete intersection}
Let $X\subset\PP^N$ be a generic complete intersection of $s$ hypersurfaces $X_1,\ldots,X_s$ of degrees $d_1,\ldots,d_s$.
Let $Z\subset X$ be an irreducible smooth subvariety of dimension $d$ such that $X$ is ED regular given $Z$. Then
\begin{equation}\label{eq: degree data loci complete intersection}
\mathrm{EDD}(X|Z)\deg(\mathrm{DL}_{X|Z})= \deg(Z)\sum_{i_1+\cdots+i_s\le d}(d_1-1)^{i_1}\cdots(d_s-1)^{i_s}\,.
\end{equation}
\end{theorem}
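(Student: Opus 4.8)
The plan is to deduce the stated identity from Theorem~\ref{thm: degree data locus no genericity X, Z} and then to compute $\sum_{i=0}^d\delta_i(X,Z)$ by exploiting the projective-bundle structure of the relative conormal variety $W_{X,Z}$. First I would record that a generic complete intersection $X$ is smooth and meets the isotropic quadric transversally, so $\sPN_X$ — and a fortiori its sublocus $\sPN_{X,Z}$ — is disjoint from the diagonal $\Delta(\PP^N)$; this is the same genericity input used in the unconditional complete-intersection formula of \cite{DHOST}. Hence Theorem~\ref{thm: degree data locus no genericity X, Z} applies and gives $\mathrm{EDD}(X|Z)\deg(\mathrm{DL}_{X|Z})=\sum_{i=0}^d\delta_i(X,Z)$, reducing the problem to a characteristic-class computation.

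Next I would identify $W_{X,Z}$ explicitly. Since $X$ is smooth, $W_X\subset\PP^N\times(\PP^N)^\vee$ is the projectivization $\PP\big(N^\vee_{X/\PP^N}(1)\big)$, embedded through the tautological inclusion $N^\vee_{X/\PP^N}(1)\hookrightarrow\sO_X^{\oplus(N+1)}$ into $X\times(\PP^N)^\vee$; and since $Z$ is smooth with $Z\subseteq X=X_{\sm}$, the relative conormal variety is just the restriction $W_{X,Z}=W_X\times_X Z$. For a complete intersection $N_{X/\PP^N}=\bigoplus_{k=1}^s\sO_X(d_k)$, so
\[
W_{X,Z}=\PP(\sN),\qquad \sN:=\bigoplus_{k=1}^s\sO_Z(1-d_k),
\]
a $\PP^{s-1}$-bundle $\pi\colon\PP(\sN)\to Z$ of dimension $m:=d+s-1$. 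Under this description the pullbacks of the two hyperplane classes become $h=\pi^*h_Z$, with $h_Z$ the hyperplane class of $Z$, and $h'=\xi:=c_1(\sO_{\PP(\sN)}(1))$, the relative hyperplane class.

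Finally I would extract the sum of multidegrees. Writing $[W_{X,Z}]=\sum_i\delta_i(X,Z)\,h^{N-i}(h')^{N-m+i}$ one checks $\deg\!\big(h^j(h')^{m-j}\cdot[W_{X,Z}]\big)=\delta_j(X,Z)$, so that, pushing forward along $\pi$ and using $\pi_*(\xi^{(s-1)+k})=s_k(\sN)$ together with $h_Z^{j}=0$ for $j>d$,
\[
\sum_{i=0}^d\delta_i(X,Z)=\int_{W_{X,Z}}\sum_{j=0}^{m}\pi^*(h_Z^{j})\,\xi^{m-j}=\int_Z\Big(\sum_{j\ge 0}h_Z^{j}\Big)\,s(\sN).
\]
Since $s(\sN)=c(\sN)^{-1}=\prod_{k=1}^s\big(1-(d_k-1)h_Z\big)^{-1}=\prod_{k=1}^s\sum_{i_k\ge0}(d_k-1)^{i_k}h_Z^{i_k}$, the degree-$d$ coefficient of $\big(\sum_{j\ge0}h_Z^{j}\big)\prod_k\sum_{i_k\ge0}(d_k-1)^{i_k}h_Z^{i_k}$ equals $\big(\sum_{i_1+\cdots+i_s\le d}(d_1-1)^{i_1}\cdots(d_s-1)^{i_s}\big)h_Z^{d}$, and integrating over $Z$ (so $\int_Z h_Z^{d}=\deg Z$) yields the claimed formula.

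The step I expect to be the main obstacle is the second one: making the identification $W_{X,Z}=\PP(\sN)$ precise requires pinning down the twist in $\sN$ and the projective-bundle and Segre-class conventions so that $h'=\xi$ and $s(\sN)=c(\sN)^{-1}$ hold on the nose. A reassuring check is that for $Z=X$ the formula specializes to $\mathrm{EDdeg}(X)=d_1\cdots d_s\sum_{i_1+\cdots+i_s\le n}(d_1-1)^{i_1}\cdots(d_s-1)^{i_s}$, recovering \cite{DHOST}; a minor secondary point is the transversality assertion of the first step, which can be imported verbatim from the unconditional analysis.
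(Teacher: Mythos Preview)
Your argument is correct. The route differs from the paper's, however. The paper does not go through Theorem~\ref{thm: degree data locus no genericity X, Z} and a direct multidegree computation on $W_{X,Z}$; instead it first establishes the Catanese-type formula of Theorem~\ref{thm: degree data loci Catanese},
\[
\mathrm{EDD}(X|Z)\deg(\mathrm{DL}_{X|Z})=\int_Z\frac{1}{c(\sN_{X/\PP^N}^\vee(1)|_Z)\cdot c(\sO_Z(-1))}\,,
\]
obtained from the vector-bundle structure of the projective ED correspondence $\sPE_{X|Z}$ over $Z$ and the top Chern class of the quotient $\sQ_{X,Z}$, and then plugs in $\sN_{X/\PP^N}^\vee(1)=\bigoplus_k\sO_X(1-d_k)$. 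Your approach and the paper's converge to the identical generating-function calculation $\int_Z(1-h_Z)^{-1}\prod_k(1-(d_k-1)h_Z)^{-1}$, just reached from different sides: you push forward Segre classes from the relative conormal variety $W_{X,Z}=\PP(\sN)$, while the paper pushes forward Chern classes from $\sPE_{X|Z}$. Your route is more self-contained here, since it only needs Theorem~\ref{thm: degree data locus no genericity X, Z} and standard projective-bundle identities; the paper's route costs the extra Theorem~\ref{thm: degree data loci Catanese} but gains a formula valid for any smooth $X$, not just complete intersections. The convention check you flagged (that $h'$ restricts to $c_1(\sO_{\PP(\sN)}(1))$ and $s(\sN)=c(\sN)^{-1}$ in Fulton's sub-bundle convention) is exactly right and is the only point requiring care.
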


Finally in Section \ref{sec: multiple}, we investigate multiple ED data loci, namely those sets of data points admitting more than $\mathrm{EDD}(X|Z)$ critical points on $Z$. We describe their relationship with singular loci of ED data loci, and we use these properties to determine when $\mathrm{EDD}(X|Z)=1$.

All figures in this paper have been realized using the software \verb|Sage| \cite{sagemath}. The \verb|Macaulay2| \cite{GS} code used in the computations of this work can be found at 
\begin{center}
    \url{https://github.com/sodomal1/conditional-ED-Optimization.git}
\end{center}

\section*{Acknowledgements}
We thank Emil Horobe\c{t}, Antonio Lerario, Giorgio Ottaviani, Ragni Piene, Jose Rodriguez, for useful discussions.
The first and second author are supported by VR grant [NT:2018-03688]. A KTH grant from the Verg Foundation and Brummer \& Partners MathDataLab currently supports the third author.

\section{Preliminaries on duality and polar classes}\label{sec: preliminaries}

Let $\K$ be an algebraically closed field. 
Let $V$ be an $(N+1)$-dimensional vector space over $\K$. We denote by $\PP^N$ the projective space $\PP(V)$, while $(\PP^N)^\vee$ denotes the dual projective space of hyperplanes in $\PP^N$.

We fix a reduced and irreducible algebraic variety $X\hookrightarrow\PP^N$ of dimension $n$.
In the following, we denote by $X_{\sm}$ the dense subvariety of smooth points of $X$. For a smooth point $x\in X_{\sm}$, we let $T_xX$ be the projective tangent space of $X$ at $x$. When $X$ is smooth, then $\sT_X$ is the tangent bundle of $X$, whose fibers are $T_xX$. Furthermore, we denote with $\sN_{X/\PP^N}$ the {\em normal bundle} of $X\hookrightarrow\PP^N$, that is the quotient bundle $(\sT_{\PP^N}|_X)/\sT_X$ whose fibers are $T_x\PP^N/T_xX$. Its dual $\sN_{X/\PP^N}^\vee$ is the {\em conormal bundle} of $X$ in $\PP^N$.

\begin{definition}\label{def: conormal variety}
We mainly follow the notations in \cite[Chapter 1]{gelfand1994discriminants}. If $x\in X_{\sm}$ and $H\in(\PP^N)^\vee$ are such that $H\supset T_xX$, we say that $H$ is {\em tangent to $X$ at $x$}. Define
\begin{equation}\label{eq: def conormal variety}
W_X^{\circ}\coloneqq\{(x,H)\in\PP^N\times(\PP^N)^\vee\mid\text{$x\in X_{\sm}$ and $H$ is tangent to $X$ at $x$}\}\,.
\end{equation}
The {\em conormal variety} of $X\hookrightarrow\PP^N$ is the Zariski closure $W_X\coloneqq\overline{W_X^{\circ}}\subset\PP^N\times(\PP^N)^\vee$.
\end{definition}

Let $\pr_1$ and $\pr_2$ be the projections from $\PP^N\times(\PP^N)^\vee$ to the factors $\PP^N$ and $(\PP^N)^\vee$, respectively.
Note that $\pr_1$ gives $W_X^{\circ}$ the structure of a projective bundle on $X_{\sm}$ of rank $N-n-1$. In particular, if $X$ is smooth, then $W_X^{\circ}=W_X=\PP(\sN_{X/\PP^N}^\vee)$. It follows that
\begin{equation}\label{eq: dim conormal}
\dim(W_X)=N-1\,.    
\end{equation}
\begin{definition}\label{def: dual variety X}
The {\em dual variety} of $X\hookrightarrow\PP^N$ is $X^\vee\coloneqq \pr_2(W_X)$.
\end{definition}

Consider the surjective morphisms $\pi_1\colon W_X\to X$ and $\pi_2\colon W_X\to X^\vee$ induced by the projections $\pr_1$ and $\pr_2$. More precisely $\pi_i=\pr_i\circ j$ for $i=1,2$, where $j\colon W_X\hookrightarrow\PP^N\times(\PP^N)^\vee$ is the inclusion.
 Notice that, for a generic choice of $H\in X^\vee$, the dimension of the {\it contact locus}
\begin{equation}\label{eq: def contact locus}
    \mathrm{Cont}(H,X)\coloneqq\pi_1(\pi_2^{-1}(H))\cong\overline{\{x\in X_{\sm}\mid H\supset T_xX\}}
\end{equation}
is equal to the dimension of a generic fiber of $\pi_2$. From \eqref{eq: dim conormal} it follows that 
\[
\text{$\dim(X^\vee)=N-1-\dim(\mathrm{Cont}(H,X))$ for a generic hyperplane $H\in X^\vee$.}
\]
Generally, one expects that $X^\vee$ is a hypersurface or equivalently that the generic contact locus is zero-dimensional, otherwise the embedding is called dual-defective.
\begin{definition}\label{def: dual defect}
The {\em dual defect} of $X$ is defined as
\begin{equation}
\mathrm{def}(X)\coloneqq\dim(\mathrm{Cont}(H,X))\text{ for a generic }H\in(\PP^N)^\vee\,.    
\end{equation}
A variety $X$ is said to be {\em dual defective} if $\mathrm{def}(X)>0$. Otherwise, it is {\em dual non-defective}.
\end{definition}

The name ``dual defect'' is motivated by the identity $\dim(X^\vee)=N-1-\mathrm{def}(X)$. In the following, we consider the dual correspondence between a linear space $L\subset\PP^N$ of dimension $k$ and its dual linear space $L^\vee\subset(\PP^N)^\vee$ of codimension $k+1$. We say that $X$ is {\em reflexive} if $W_{X^\vee}=W_X$. A classical result asserts that:

\begin{proposition}{\cite[Corollary 2.2]{holme1988geometric}}\label{prop: reflexivity contact locus linear}
A projective variety $X$ is reflexive if and only if the contact locus $\mathrm{Cont}(H,X)$ of a generic hyperplane $H$ is a linear subspace of $\PP^N$.
\end{proposition}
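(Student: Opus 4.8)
The plan is to prove both implications by reading the fibres of the projections $\pi_1,\pi_2\colon W_X\to X,\,X^\vee$ off the projective bundle structures of the two conormal varieties. I will work at a generic point $(x_0,H_0)\in W_X$; in the characteristic zero setting of this section, generic smoothness applied to the dominant morphism $\pi_2$ guarantees that $x_0\in X_{\sm}$, that $H_0$ is a smooth point of $X^\vee$, and that the fibre $\pi_2^{-1}(H_0)$ — which $\pi_1$ identifies isomorphically with $\mathrm{Cont}(H_0,X)$ — is reduced and smooth at $x_0$, of dimension $\mathrm{def}(X)$. I will also use that $W_X$ and $W_{X^\vee}$ are irreducible of dimension $N-1$ by \eqref{eq: dim conormal}, and that both lie inside the incidence variety $I=\{(x,H)\mid x\in H\}$, so that differentiating the defining bilinear form of $I$ yields $\langle\dot H,x_0\rangle+\langle H_0,\dot x\rangle=0$ for every $(\dot x,\dot H)\in T_{(x_0,H_0)}W_X$.

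For ``reflexive $\Rightarrow$ linear contact locus'' I would argue as follows. If $W_X=W_{X^\vee}$ (with the two factors of the product swapped), then $\mathrm{Cont}(H_0,X)=\{x\mid(x,H_0)\in W_X\}=\{x\mid(H_0,x)\in W_{X^\vee}\}$, i.e.\ it is the image in $\PP^N$ of the fibre over $H_0$ of the first projection of $W_{X^\vee}$. Using the projective bundle structure $W_{X^\vee}^\circ=\PP(\sN^\vee_{X^\vee/(\PP^N)^\vee})$ over $(X^\vee)_{\sm}$, together with the routine fact that the fibre of a dominant morphism over a generic point is the closure of its intersection with a dense open stratum, this fibre is (for generic $H_0$) the projectivised conormal space of $X^\vee$ at $H_0$, which the second projection carries linearly into $\PP^N$. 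Hence $\mathrm{Cont}(H_0,X)$ is a linear subspace.

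For the converse, the goal is to show that a generic point of $W_X$ lies in $W_{X^\vee}$; since both are irreducible of dimension $N-1$, this forces $W_X=W_{X^\vee}$, i.e.\ reflexivity. First, the bundle structure $W_X^\circ=\PP(\sN^\vee_{X/\PP^N})$ over $X_{\sm}$ shows that the first projection sends $T_{(x_0,H_0)}W_X$ into $\widehat{T_{x_0}X}$, which lies in the hyperplane $H_0$; hence $\langle H_0,\dot x\rangle=0$ for all tangent vectors, and the incidence relation above then gives $\langle\dot H,x_0\rangle=0$, i.e.\ $\pr_2\bigl(T_{(x_0,H_0)}W_X\bigr)\subseteq x_0^\vee$. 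Second, this image is exactly the image of $d\pi_2$, which lands in $T_{H_0}X^\vee$ since $H_0$ is a smooth point of $X^\vee$; and since $\pi_2^{-1}(H_0)$ is linear of dimension $\mathrm{def}(X)$ and smooth at $x_0$, the kernel of $d\pi_2$ at $(x_0,H_0)$ is precisely its tangent space, of dimension $\mathrm{def}(X)$, so $\pr_2(T_{(x_0,H_0)}W_X)$ has dimension $N-1-\mathrm{def}(X)=\dim X^\vee$ and therefore equals $T_{H_0}X^\vee$. Combining the two points, $T_{H_0}X^\vee\subseteq x_0^\vee$, which says exactly that $(H_0,x_0)\in W_{X^\vee}^\circ$.

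The step I expect to carry the weight is the second one in the converse: identifying $\ker(d\pi_2)$ with the tangent space of the fibre, which needs the generic fibre of $\pi_2$ to be reduced, of the expected dimension, and smooth at a generic point. In the characteristic zero setting here this is simply generic smoothness — and then the converse in fact also follows immediately from Theorem \ref{thm: reflexivity} — so the true content of the proposition lies in positive characteristic, where separability of $\pi_2$ can genuinely fail; there it is exactly the linearity of the contact locus that one must leverage to exclude that failure. The remaining ingredients (the generic-fibre/closure identifications and the triviality that the tangent space to a linear space is that space itself) are routine bookkeeping.
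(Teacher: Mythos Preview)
The paper does not give its own proof of this proposition: it is quoted verbatim as \cite[Corollary 2.2]{holme1988geometric} and used as a black box, so there is no proof in the paper to compare against.

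That said, your argument is sound and is essentially the standard one. Your forward implication is exactly how one usually reads off linearity of $\mathrm{Cont}(H_0,X)$ from the projective bundle structure of $W_{X^\vee}^\circ$ over $(X^\vee)_{\sm}$. For the converse, your tangent-space computation is the classical Monge--Segre--Wallace criterion in disguise (reflexivity $\Leftrightarrow$ $\pi_2$ separable $\Leftrightarrow$ $\ker d\pi_2$ equals the tangent space of the fibre at a generic point), and you have correctly isolated where linearity does the work: it forces the set-theoretic fibre to be smooth, hence of the right tangent dimension, which in characteristic~$p$ is exactly what can fail. You are also right that in the characteristic-zero setting of this section the converse is vacuous, since Theorem~\ref{thm: reflexivity} already gives reflexivity unconditionally; the proposition's real bite is in positive characteristic, where one must additionally argue that the scheme-theoretic generic fibre is reduced (this is where Holme's original argument works a bit harder than your sketch).
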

Reflexivity is  central in the theory of duality that holds in characteristic zero:

\begin{theorem}{\cite[Theorem 4]{kleiman1986tangency}}\label{thm: reflexivity}
Let $\K$ be a field of characteristic zero. Let $X\subset\PP^N$ be a reduced and irreducible algebraic variety. Then $X$ is reflexive. In particular, if $x$ is a smooth point of $X$ and $H$ is a smooth point of $X^\vee$, then $H$ is tangent at $x$ if and only if the hyperplane $x^\vee$ in $(\PP^N)^\vee$ is tangent to $X^\vee$ at the point $H$.
\end{theorem}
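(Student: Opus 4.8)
The plan is to deduce reflexivity from the fact that the conormal variety is a conic Lagrangian subvariety of a cotangent space, combined with the symmetry of the canonical symplectic structure; since reflexivity is a geometric property one may freely pass to the algebraic closure of $\K$ and so assume $\K$ algebraically closed as in the preamble. First I would pass to affine cones: let $\widehat X\subset V$ be the affine cone over $X$ and let $\widehat W_X\subset V\times V^\vee$ be the conormal variety of the cones, so that $\widehat W_X$ is the Zariski closure of the conormal bundle
\[
N^*\widehat X_{\sm}=\{(q,p)\in V\times V^\vee\mid q\in\widehat X_{\sm},\ p|_{T_q\widehat X}=0\},
\]
and $W_X$ is its image in $\PP^N\times(\PP^N)^\vee$. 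Identifying $V\times V^\vee$ with the cotangent bundle $T^*V$, carrying the tautological $1$-form $\theta$ and the constant symplectic form $\omega=d\theta$, the first step is the elementary remark that $\theta$ restricts to zero on $N^*\widehat X_{\sm}$ --- a tangent vector there projects to some $\dot q\in T_q\widehat X$, on which $\theta$ evaluates as $p(\dot q)=0$ --- so $\omega$ vanishes on $N^*\widehat X_{\sm}$ and hence on $\widehat W_X$; since $\dim\widehat W_X=\dim\widehat X+\codim_V\widehat X=\dim V=\tfrac{1}{2}\dim T^*V$, the variety $\widehat W_X$ is a conic Lagrangian subvariety of $T^*V$.

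The next step is to observe that the flip $\tau\colon V\times V^\vee\to V^\vee\times V$, $(q,p)\mapsto(p,q)$, identifies $T^*V$ with $T^*(V^\vee)$ and satisfies $\tau^*\omega^\vee=-\omega$ (this is merely the symmetry of the pairing $V\times V^\vee\to\K$), hence sends conic Lagrangians to conic Lagrangians. Thus $\tau(\widehat W_X)$ is an irreducible conic Lagrangian subvariety of $T^*(V^\vee)$ whose image in $V^\vee$ is the cone $\widehat{X^\vee}$ over $X^\vee$. I would then invoke the following lemma: an irreducible conic Lagrangian subvariety $\Lambda\subset T^*W$ dominating an irreducible subvariety $Y\subseteq W$ equals $\overline{N^*Y_{\sm}}$. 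To prove it, pick a general $(y,p)\in\Lambda$ with $y\in Y_{\sm}$; conicity gives the Euler direction $(0,p)\in T_{(y,p)}\Lambda$, and generic smoothness of the projection $\Lambda\to Y$ (valid in characteristic zero) makes $T_{(y,p)}\Lambda$ surject onto $T_yY$; pairing any lift of $\dot q\in T_yY$ against $(0,p)$ via $\omega|_\Lambda=0$ yields $p(\dot q)=0$, so the general fibre of $\Lambda\to Y$ is contained in the conormal space $(T_yY)^\perp$, and a dimension count $\dim\Lambda-\dim Y=\codim_WY=\dim(T_yY)^\perp$ forces equality of fibres, and then of $\Lambda$ with $\overline{N^*Y_{\sm}}$ by irreducibility. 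Applying this with $W=V^\vee$ and $Y=\widehat{X^\vee}$ gives $\tau(\widehat W_X)=\widehat W_{X^\vee}$, i.e.\ $W_X=W_{X^\vee}$ inside $(\PP^N)^\vee\times\PP^N$, which is precisely reflexivity; the ``in particular'' assertion then follows by reading off the defining descriptions of $W_X$ and $W_{X^\vee}$ over the smooth loci of $X$ and $X^\vee$, using that each conormal variety restricts to the conormal bundle over the smooth locus of its base.

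The step I expect to be the main obstacle --- and the unique point where characteristic zero is genuinely used --- is the generic smoothness of $\Lambda\to Y$ in the lemma: it is exactly the possible failure of this separability that produces the classical counterexamples to reflexivity in positive characteristic, and it is the content of the Monge--Segre--Wallace criterion. A secondary, purely bookkeeping point is that the various ``general point'' choices (a general $y\in Y_{\sm}$ over which $\Lambda$ dominates, a general $p$ in its fibre at which both $\Lambda$ and $\Lambda\to Y$ are smooth) can be arranged simultaneously, which is routine by openness of smooth loci and generic flatness. If one prefers to avoid symplectic language altogether, an equivalent route is to compute $T_{(x,H)}W_X$ at a general point by hand and check directly that $\tau$ interchanges it with $T_{(H,x)}W_{X^\vee}$, following \cite[Chapter~1]{gelfand1994discriminants}; this conceals the same use of generic smoothness inside the implicit-function description of the Gauss map.
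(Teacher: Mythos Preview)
The paper does not give its own proof of this theorem: it is quoted verbatim as \cite[Theorem 4]{kleiman1986tangency} and used as a black box throughout Sections~\ref{sec: preliminaries}--\ref{sec: relative polar classes of a projective variety}. So there is no ``paper's proof'' to compare against, and your task reduces to whether your argument stands on its own.

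It does. Your route --- recognising $\widehat W_X$ as a conic Lagrangian in $T^*V$, transporting it by the anti-symplectic flip $\tau$, and then characterising irreducible conic Lagrangians over their image via generic smoothness --- is the standard modern proof, essentially the one in \cite[Chapter~1]{gelfand1994discriminants} and equivalent in spirit to Kleiman's treatment via the Monge--Segre--Wallace criterion. You have correctly isolated the unique use of characteristic zero (separability of $\Lambda\to Y$, needed so that the derivative surjects at a general point), and your dimension count closing the lemma is clean. One small point worth making explicit: when you say ``the general fibre of $\Lambda\to Y$ is contained in $(T_yY)^\perp$'', you are using that the fibre is a cone (so spanned by Euler directions) together with the $\omega$-pairing argument applied at every point of the fibre, not just the chosen $(y,p)$; this is implicit in your conicity remark but could be stated once. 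The ``in particular'' clause follows exactly as you say, since over $X_{\sm}$ the conormal variety agrees with the conormal bundle, and likewise for $X^\vee$.
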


Furthermore, we say that $X$ is {\em reciprocal} (or that {\em biduality} holds for $X$) if $(X^\vee)^\vee=X$. If $X$ is reflexive, then it is also reciprocal. This is also known as the {\em Biduality Theorem}, see \cite[Theorem I.1.1]{gelfand1994discriminants}. Conversely, a reciprocal variety might not be reflexive if the ground field $\K$ has positive characteristic. See \cite[p. 168]{kleiman1986tangency} for a counterexample.
From now on we will assume that $\K$ has characteristic zero. We can conclude that:

\begin{corollary}\label{corol: contact locus linear}
The dual variety $X^\vee$ is a hypersurface if and only if the generic tangent hyperlane to $X$ is tangent at a single point.
\end{corollary}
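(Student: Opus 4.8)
The plan is to reduce the claim to the dimension identity $n' = \dim(X^\vee) = N-1-\mathrm{def}(X)$ recalled just above, and then to use reflexivity to turn a zero-dimensional generic contact locus into a single point. As a first step I would note that $X^\vee = \pr_2(W_X)$ is irreducible, since $W_X = \overline{W_X^\circ}$ is irreducible (the set $W_X^\circ$ of \eqref{eq: def conormal variety} fibers over the irreducible variety $X_{\sm}$), and that $\dim(X^\vee) \le \dim(W_X) = N-1$ by \eqref{eq: dim conormal}. Hence $X^\vee$ is a hypersurface in $(\PP^N)^\vee$ if and only if $\dim(X^\vee) = N-1$.

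By the identity $n' = N-1-\mathrm{def}(X)$, the condition $\dim(X^\vee) = N-1$ is equivalent to $\mathrm{def}(X) = 0$, i.e.\ to $\dim(\mathrm{Cont}(H,X)) = 0$ for a generic hyperplane $H$ tangent to $X$. It remains to identify, for generic such $H$, the statement ``$\mathrm{Cont}(H,X)$ is zero-dimensional'' with the statement ``$H$ is tangent to $X$ at a single point''. Since $X$ is reflexive by Theorem \ref{thm: reflexivity}, Proposition \ref{prop: reflexivity contact locus linear} tells us that $\mathrm{Cont}(H,X)$ is a linear subspace of $\PP^N$ for generic $H$; a zero-dimensional linear subspace is a single point, which gives one implication, and the reverse implication is immediate from the definition \eqref{eq: def contact locus}. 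Combining the two equivalences yields the corollary.

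This corollary is essentially bookkeeping once the above machinery is in place, so there is no real obstacle; the only step that is not formal is the passage from ``zero-dimensional'' to ``a single point'', which is precisely where reflexivity (and hence Proposition \ref{prop: reflexivity contact locus linear}) is genuinely used --- a priori a reduced zero-dimensional contact locus could consist of several points, and it is linearity that rules this out. If one wanted a fully self-contained account, the one computation worth spelling out would be the identity $n' = N-1-\mathrm{def}(X)$ itself, obtained by applying the theorem on the dimension of the fibers to $\pi_2 \colon W_X \to X^\vee$ and observing that over a generic $H \in X^\vee$ the projection $\pi_1$ restricts to a bijection $\pi_2^{-1}(H) \to \mathrm{Cont}(H,X)$, since a conormal pair $(x,H)$ with $x \in X_{\sm}$ is determined by $x$ once $H$ is fixed.
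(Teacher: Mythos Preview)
Your proposal is correct and follows exactly the line the paper intends: the corollary is stated without proof, as an immediate consequence of the dimension identity $n'=N-1-\mathrm{def}(X)$ together with Proposition~\ref{prop: reflexivity contact locus linear}, and your write-up makes precisely these two steps explicit. Your remark that linearity of the generic contact locus is the genuine content behind passing from ``zero-dimensional'' to ``a single point'' is on the mark.
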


Let $H$ and $H'$ be hyperplanes in $\PP^N$ and $(\PP^N)^\vee$, respectively, and let $[H]$ and $[H']$ be the corresponding classes in the Chow rings $A^*(\PP^N)$ and $A^*((\PP^N)^\vee)$. We use the shorthand
\begin{equation}\label{eq: notation hyperplane classes}
h=\pr_1^*([H])=[H\times(\PP^N)^\vee]\,,\quad h'=\pr_2^*([H'])=[\PP^N\times H']\,.
\end{equation}
Recalling \eqref{eq: dim conormal}, the class $[W_X]\in A^*(\PP^N\times(\PP^N)^\vee)$ can be written as
\begin{equation}\label{eq: def multidegrees conormal}
[W_X]=\delta_0(X)h^Nh'+\delta_1(X)h^{N-1}(h')^2+\cdots+\delta_{N-1}(X)h(h')^N\,.
\end{equation}
The numbers $\delta_i(X)$ are the multidegrees of $W_X$, or the classes of $X$. The class of $X$ always refers to the number $\delta_0(X)$.
Given $0\le i\le N-1$, let $L_1\subset\PP^N$ and $L_2\subset(\PP^N)^\vee$ be generic subspaces of dimensions $N-i$ and $i+1$, respectively. In particular $[L_1]=[H]^i$ and $[L_2]=[H']^{N-1-i}$. Then $\delta_i(X)$ is the cardinality of the finite intersection $W_X\cap(L_1\times L_2)$, or equivalently
\begin{equation}\label{eq: interpretation multidegrees delta_i}
    \delta_i(X)=\int \pr_1^*([L_1])\cdot \pr_2^*([L_2])\cdot[W_X]=\int h^i(h')^{N-1-i}[W_X]\,.
\end{equation}

We recall now the identification of the multidegrees $\delta_i(X)$ with the so-called polar degrees of $X$. Let $G(k+1,N+1)=\G(k,N)$ be the {\em Grassmannian} of $(k+1)$-dimensional vector subspaces of $V$, namely of $k$-dimensional projective subspaces of $\PP^N$. For a fixed projective subspace $L\subset\PP^N$ and a generic $M\in\G(k,N)$, one expects that $\dim(M\cap L)=k-\codim(L)$. Hence, it is natural to define the set of elements of $\G(k,N)$ that have a larger intersection with $L$:
\[
\Sigma_k(L)\coloneqq\{M\in \G(k,N)\mid\dim(M\cap L)>k-\codim(L)\}.
\] 
Actually, $\Sigma_k(L)$ is an example of a {\em Schubert variety}.
For example, the Grassmannian $\G(1,2)$ is the dual space $(\PP^2)^\vee$. Given a point $p\in\PP^2$, we have that
\[
\Sigma_1(p)=\{M\in(\PP^2)^\vee\mid\dim(M\cap \{p\})>-1\}=\{\text{lines in $\PP^2$ containing $p$}\}\,.
\]
The {\em Gauss map} associated with the variety $X$ is the rational map
\begin{equation}\label{eq: def Gauss map}
\gamma_X\colon X \dasharrow \G(n,N)\,,\quad x \mapsto T_xX
\end{equation}
which is defined over the smooth points of $X$.

\begin{definition}\label{def: polar variety}
The {\em polar variety} of an $n$-dimensional projective variety $X\subset\PP^N$ with respect to a subspace $L\subset\PP^N$ is
\begin{equation}
\sP(X,L) \coloneqq \overline{\gamma_X^{-1}(\Sigma_n(L))} = \overline{\{x\in X\mid\dim(T_xX\cap L)>n-\codim(L)\}}\,.
\end{equation}
\end{definition}
The class $[\sP(X,L)]\in A^*(X)$ does not depend on the generic choice of $L$, namely $[\sP(X,L)]$ depends only on the dimension of $L$. This motivates the following definition.
\begin{definition}\label{def: polar classes}
For every $0\le i\le n$, let $L\subset\PP^N$ be a generic subspace of dimension $N-n+i-2$. The {\em $i$th polar class of $X$} is $p_i(X)\coloneqq[\sP(X,L)]\in A^*(X)$. The {\em $i$th polar degree of $X$} is $\mu_i(X)\coloneqq\deg(p_i(X))$.
\end{definition}
For example, assume $i=0$ in Definition \ref{def: polar classes}, equivalently that $L\subset\PP^N$ is a generic subspace of dimension $N-n-2$. Then $\sP(X,L) = \overline{\{x\in X\mid\dim(T_xX\cap L)>-2\}}=X$.
This implies that $\mu_0(X)=\deg(X)$. Instead if $i=n+1$, then $L\subset\PP^N$ is a generic hyperplane and $\sP(X,L) = \overline{\{x\in X\mid\dim(T_xX\cap L)>n-1\}}= \overline{\{x\in X\mid T_xX\subset L\}}=\emptyset$ because $L$ is generic. More in general, the class $p_i(X)$ represents a subvariety of codimension $i$ in $X$. This motivates why the index $i$ ranges between $0$ and $n$ in Definition \ref{def: polar classes}.

\begin{example}
Let $X\subset\PP^2$ be a plane curve, so it admits two polar degrees $\mu_0(X)$ and $\mu_1(X)$. The $0$th polar degree of $X$ is $\mu_0(X)=\deg(X)$. Then $\mu_1(X)$ is the degree of $\sP(X,p)$ for a generic point $p\in\PP^2$. In this case $\sP(X,p)=\{x\in X\mid\dim(T_xX\cap \{p\})>-1\}=\{x\in X\mid p\in T_xX\}$ is equal to the degree of the dual variety $X^\vee$ of $X$.\hfill$\diamondsuit$
\end{example}

Assume that $X\hookrightarrow\PP^N$ is a smooth embedding given by the global sections of the very ample line bundle $\sO_X(1)$.   
We briefly recall the definition of the first jet bundle of the line bundle $\sO_X(1),$ and outline an important relation between this object and the polar classes of $X.$ 

Let $x\in X$ and  $\sigma\in H^0(X,\sO_X(1)).$ Recall that the global section $\sigma$ can be represented in a neighborhood of $x$ as a polynomial in local coordinates $(\xi_1,\ldots,\xi_n).$  The {\em first jet} of $\sigma$ at $x$ is the $(n+1)$-tuple:
\[
j_{1,x}(\sigma)\coloneqq\left(\sigma(x),\frac{\partial\sigma}{\partial\xi_1}(x),\ldots,\frac{\partial\sigma}{\partial\xi_n}(x)\right)\in H^0\left(X,\sO_X(1)\otimes\frac{\sO_{X,x}}{m_{x}^2}\right)\cong{\C^{n+1}}\,,
\]
where  $m_x$ denotes the maximal ideal at $x$. The vector space $H^0\left(X,\sO_X(1)\otimes\frac{\sO_{X,x}}{m_{x}^2}\right)$ is the fiber at $x$ of the vector bundle $\sP^1(\sO_X(1))$ of rank $n+1$ called the {\it first jet bundle} of the embedding or the principal part bundle.
Gluing together the maps $j_{1,x}$ yields a surjective morphism of vector bundles on $X$: $j_1\colon H^0(X,\sO_X(1))\otimes\sO_X\to\sP^1(\sO_X(1))$. 
Moreover, since $X$ is smooth, the Gauss map $\gamma_X$ is a morphism and for every $x\in X$ we can identify $\gamma_X(x)=T_xX$ with $\PP(\image(j_{1,x}))$. Consequently, we have the relations (see \cite[\S2]{piene2015polar})
\begin{equation}\label{eq: relations polar classes Chern classes first jet bundle X}
    p_i(X)=c_i(\sP^1(\sO_X(1)))\quad\forall\,i\in\{0,\ldots,n\}\,,
\end{equation}
where $c_i(\sP^1(\sO_X(1))$ is the $i$th Chern class of $\sP^1(\sO_X(1))$.
Furthermore, the kernel of $j_1$ is $\sN_{X/\PP^N}^\vee(1)=\sN_{X/\PP^N}^\vee\otimes\sO_X(1)$, a vector bundle of rank $N-n$. In particular, we obtain the short exact sequence
\begin{equation}\label{eq: first jet sequence}
0\to\sN_{X/\PP^N}^\vee(1) \to \sO_X^{\oplus(N+1)}\to\sP^1(\sO_X(1))\to 0\,.
\end{equation}

The next proposition outlines a few important properties of polar degrees. We refer to \cite{holme1988geometric} for more details.

\begin{proposition}\label{prop: properties polar degrees}
Let $X\subset\PP^N$ be an irreducible projective variety of dimension $n$, and let $\alpha(X)\coloneqq n-\mathrm{def}(X)$. Then
\begin{enumerate}
    \item $\mu_i(X)>0$ if and only if $0\le i\le\alpha(X)$
    \item $\mu_0(X)=\deg(X)$
    \item $\mu_{\alpha(X)}(X)=\deg(X^\vee)$
    \item $\mu_i(X)=\mu_{\alpha(X)-i}(X^\vee)$
\end{enumerate}
\end{proposition}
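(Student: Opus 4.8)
The plan is to deduce all four statements from the geometry of the conormal variety $W_X$ together with the reflexivity theorem (Theorem \ref{thm: reflexivity}) and the interpretation of $\mu_i(X)$ as an intersection number. The key tool is the identification of polar degrees with the multidegrees of $[W_X]$: I first want to establish the equality
\[
\mu_i(X)=\delta_i(X)\qquad (0\le i\le n),
\]
by computing $\sP(X,L)$ via the Gauss map and translating the incidence condition $\dim(T_xX\cap L)>n-\codim(L)$ into a pair of generic linear conditions on $W_X$. Concretely, $x\in\sP(X,L)$ for generic $L$ of dimension $N-n+i-2$ means that there is a hyperplane through $L$ tangent to $X$ at $x$, which is exactly the statement that $\pi_1^{-1}(x)$ meets the Schubert-type cycle $\pr_2^{-1}(L^\vee)$; intersecting further with a generic $\pr_1^{-1}(L_1)$ of the complementary dimension and using \eqref{eq: interpretation multidegrees delta_i} gives $\mu_i(X)=\delta_i(X)$. (If the authors prefer to cite this as classical, via \cite{piene1978polar} or \cite{holme1988geometric}, that shortens the argument considerably.)

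Once $\mu_i(X)=\delta_i(X)$ is in hand, statements (1)--(4) become statements about the bidegree of $[W_X]$. For (1), positivity of $\delta_i(X)$ is equivalent to the nonemptiness of $W_X\cap(L_1\times L_2)$ for $L_1,L_2$ generic of dimensions $N-i$ and $i+1$; since $W_X$ has dimension $N-1$ and $\pr_1(W_X)=X$ has dimension $n$ while the generic fiber of $\pi_1$ has dimension $N-1-n$, a dimension count via the two projections shows the intersection is nonempty exactly when $i\le n$ and $i\le N-1-\dim X^\vee=N-1-(N-1-\mathrm{def}(X))+\ldots$; rewriting, $\dim X^\vee=N-1-\mathrm{def}(X)$ forces the range to be $0\le i\le n-\mathrm{def}(X)=\alpha(X)$. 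For (2), take $i=0$: $L$ has dimension $N-n-2$, the condition $\dim(T_xX\cap L)>-2$ is vacuous, so $\sP(X,L)=X$ and $\mu_0(X)=\deg(X)$ — this was already observed in the text. For (3), take $i=\alpha(X)$: here $\sP(X,L)$ is the image under $\pr_2$ of the part of $W_X$ lying over a generic codimension-$(\alpha(X)+1)$ subspace in the dual, which by the reflexivity symmetry of $W_X=W_{X^\vee}$ is precisely a generic hyperplane section of $X^\vee$, so $\mu_{\alpha(X)}(X)=\deg(X^\vee)$.

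The heart of the matter is (4), the self-duality relation $\mu_i(X)=\mu_{\alpha(X)-i}(X^\vee)$, and this is the step I expect to be the main obstacle. The idea is that reflexivity gives $W_X=W_{X^\vee}$ as subvarieties of $\PP^N\times(\PP^N)^\vee$ up to swapping the two factors; concretely, under the involution $\iota\colon\PP^N\times(\PP^N)^\vee\to(\PP^N)^\vee\times\PP^N$ exchanging coordinates, $\iota(W_X)=W_{X^\vee}$. Applying $\iota$ to \eqref{eq: def multidegrees conormal} exchanges the roles of $h$ and $h'$, hence $\delta_i(X)=\delta_{N-1-i}(X^\vee)$. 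Combining with $\mu_i(X)=\delta_i(X)$ and the fact that $\dim X^\vee=N-1-\mathrm{def}(X)$ so that $\alpha(X^\vee)=\dim X^\vee-\mathrm{def}(X^\vee)$, one has to check the bookkeeping $\alpha(X^\vee)=\dim X-\mathrm{def}(X)$... wait, more carefully: $\mathrm{def}(X^\vee)$ is determined by $\dim X$ via reflexivity, and the index shift $N-1-i$ must be re-expressed relative to the dimension $n'=\dim X^\vee$ of the dual as $\alpha(X)-i$ after accounting for the defect. Getting this index arithmetic exactly right — reconciling the ``absolute'' shift $i\mapsto N-1-i$ coming from $\iota$ with the ``intrinsic'' shift $i\mapsto\alpha(X)-i$ in the statement — is where care is needed; the clean way is to note that $\delta_i(X)=0$ outside $0\le i\le\alpha(X)$ by part (1), so only the indices in that window matter and the two shifts agree on that window. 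I would present (4) as a direct consequence of the involution identity plus part (1), and relegate the index check to a short computation.
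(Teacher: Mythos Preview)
The paper does not supply its own proof of this proposition; it is stated as a classical fact with a reference to \cite{holme1988geometric}. So there is no in-paper argument to compare against, and the relevant question is simply whether your proposal is correct.

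Your overall strategy---reduce everything to the multidegrees $\delta_i(X)$ of $[W_X]$ and then use the reflexivity isomorphism $W_X\cong W_{X^\vee}$ to swap the two factors---is the standard one and would succeed. However, the key identity you write down is wrong: you claim $\mu_i(X)=\delta_i(X)$, but the correct relation (this is exactly Proposition~\ref{prop: multidegrees and polar classes Kleiman} in the paper, due to Kleiman) is
\[
\mu_i(X)=\delta_{n-i}(X)\,.
\]
You can see this directly from your own computation: with $\dim L=N-n+i-2$ one has $L^\vee$ of codimension $N-n+i-1$ in $(\PP^N)^\vee$, and $\sP(X,L)$ has dimension $n-i$, so cutting by a generic linear space of codimension $n-i$ in $\PP^N$ and invoking \eqref{eq: interpretation multidegrees delta_i} yields $\int h^{\,n-i}(h')^{\,N-1-(n-i)}[W_X]=\delta_{n-i}(X)$, not $\delta_i(X)$.

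This index slip is not cosmetic: it breaks parts (1) and (4) as you have written them. With your (incorrect) identification, $\mu_i(X)>0$ would correspond to $\mathrm{def}(X)\le i\le n$ rather than $0\le i\le\alpha(X)$, and in (4) the factor swap gives $\delta_j(X)=\delta_{N-1-j}(X^\vee)$, which under $\mu_i=\delta_i$ would yield $\mu_i(X)=\mu_{N-1-i}(X^\vee)$ rather than $\mu_{\alpha(X)-i}(X^\vee)$; no amount of ``index bookkeeping'' will reconcile these. Once you replace $\mu_i(X)=\delta_i(X)$ by $\mu_i(X)=\delta_{n-i}(X)$ throughout, the arithmetic in (1)--(4) goes through cleanly: (1) becomes $\delta_j(X)>0\iff\mathrm{def}(X)\le j\le n$, (2) becomes $\delta_n(X)=\deg(X)$, (3) becomes $\delta_{\mathrm{def}(X)}(X)=\deg(X^\vee)$, and (4) follows from $\delta_{n-i}(X)=\delta_{N-1-n+i}(X^\vee)=\delta_{n'-(\alpha(X)-i)}(X^\vee)$ with $n'=\dim X^\vee$. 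Your arguments for (2) and (3) that bypass the $\delta$'s are fine as sketched.
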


We conclude this section by recalling an important correspondence between the polar degrees of $X$ and the multidegrees of $W_X$. In Proposition \ref{prop: properties relative polar ranks}, we show a similar relation in the relative setting.

\begin{proposition}{\cite[Prop. (3), p. 187]{kleiman1986tangency}}\label{prop: multidegrees and polar classes Kleiman}
Let $X\subset\PP^N$ be a projective variety of dimension $n$. Then
\begin{equation}
    \mu_i(X)=\delta_{n-i}(X)\quad\forall\,0\le i\le n\,.
\end{equation}
In particular
\begin{enumerate}
    \item $\delta_i(X)>0$ if and only if $\mathrm{def}(X)\le i\le n$
    \item $\delta_{\mathrm{def}(X)}(X)=\deg(X^\vee)$
    \item $\delta_n(X)=\deg(X)$
    \item $\delta_i(X)=\delta_{n+\mathrm{def}(X)-i}(X^\vee)$
\end{enumerate}
\end{proposition}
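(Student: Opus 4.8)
The strategy is to work on the conormal variety $W_X$ itself and compare two ways of cutting it down to a finite set of points: one via generic linear spaces in the two factors (which computes $\delta_i(X)$ by \eqref{eq: interpretation multidegrees delta_i}), the other via the Gauss-map description of polar varieties (which computes $\mu_i(X)$). First I would recall that $\pi_1\colon W_X \to X$ restricts over $X_{\sm}$ to the projective bundle $\PP(\sN_{X/\PP^N}^\vee)$, so that a point of $W_X^\circ$ is a pair $(x,H)$ with $x$ smooth and $H \supset T_xX$. The key observation is that the condition ``$H \supset T_xX$'' is exactly the condition that realizes the Schubert-type incidence defining a polar variety: fixing a generic linear space $L \subset \PP^N$ of dimension $N-n+i-2$ and intersecting $W_X$ with $\pr_1^{-1}(\PP^N) \cap \pr_2^{-1}(L^\vee)$ — where $L^\vee \subset (\PP^N)^\vee$ has dimension $n-i+1$ — picks out pairs $(x,H)$ with $H \supset T_xX$ and $H \supset L$, i.e. $H \supset \langle T_xX, L\rangle$, which is nonempty precisely when $\dim(T_xX \cap L) \ge n - \codim(L)$, i.e. when $x \in \sP(X,L)$.

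Concretely, the plan is: (1) express $\delta_{n-i}(X)$ via \eqref{eq: interpretation multidegrees delta_i} as $\int_{W_X} h^{n-i}(h')^{N-1-(n-i)}[W_X]$, with $h^{n-i} = \pr_1^*[L_1]$ for $L_1$ generic of dimension $N-n+i$ and $(h')^{N-1-n+i} = \pr_2^*[L_2]$ for $L_2 = L_1^\vee$-type generic of dimension $n-i$; (2) push this intersection forward along $\pi_1$: the fiber of $\pi_1$ over a generic point of the polar locus is finite (indeed a single point when the contact is at one point), and the generic linear section in the second factor, intersected with the projective bundle fibers $\PP(\sN^\vee_{X/\PP^N,x}) \subset \pr_2^{-1}$, has the right codimension to see exactly the polar variety $\sP(X,L)$ with the correct multiplicity $1$; (3) conclude $\delta_{n-i}(X) = \deg \sP(X,L) = \mu_i(X)$ using that $[\sP(X,L)] = p_i(X)$ by Definition \ref{def: polar classes} and Definition \ref{def: polar variety}. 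The degenerate ranges are handled by Definition \ref{def: polar classes}'s remark that $p_i(X)$ is a class of codimension $i$ (so $\mu_i(X) = 0$ for $i$ outside $[0,n]$, matching $\delta_j(X) = 0$ outside the appropriate range).

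The consequences (1)--(4) then follow mechanically: (2) of the current statement is $\mu_0(X) = \deg X$, (3) is Proposition \ref{prop: properties polar degrees}(2)--(3) combined with $\alpha(X) = n - \mathrm{def}(X)$ and the vanishing range, (4) is Proposition \ref{prop: properties polar degrees}(1) reindexed by $i \mapsto n-i$, and the symmetry $\delta_i(X) = \delta_{n+\mathrm{def}(X)-i}(X^\vee)$ comes from combining $\mu_i(X) = \mu_{\alpha(X)-i}(X^\vee)$ of Proposition \ref{prop: properties polar degrees}(4) with reflexivity (Theorem \ref{thm: reflexivity}), which gives $W_{X^\vee} = W_X$ and hence $\dim X^\vee = N - 1 - \mathrm{def}(X)$, $\mathrm{def}(X^\vee) = \dim X - (N-1-\dim X^\vee)$... one tracks the indices through $\mu_j(X^\vee) = \delta_{\dim X^\vee - j}(X^\vee)$. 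The main obstacle I anticipate is step (2): justifying that the pushforward $\pi_{1*}$ of the relevant intersection class on $W_X$ equals $[\sP(X,L)]$ on the nose, with multiplicity one and no excess-intersection correction. This requires a transversality/genericity argument — that for generic $L$ the scheme-theoretic intersection $W_X \cap \pr_2^{-1}(L^\vee)$ maps birationally onto $\sP(X,L)$ over its generic point — which is where the characteristic-zero hypothesis and generic smoothness (Kleiman's transversality, already invoked for reflexivity) do the real work. Everything else is bookkeeping in the Chow ring of $\PP^N \times (\PP^N)^\vee$, using that $A^*(\PP^N \times (\PP^N)^\vee) = \Z[h,h']/(h^{N+1}, (h')^{N+1})$.
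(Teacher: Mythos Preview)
The paper does not give its own proof of this proposition: it is cited as \cite[Prop.~(3), p.~187]{kleiman1986tangency} and stated without argument. What the paper \emph{does} prove is the relative generalization, Proposition~\ref{prop: properties relative polar ranks}, and that proof specializes to the present statement when $Z=X$. Comparing your plan with that proof: they are essentially the same. The paper first isolates, as Lemma~\ref{lem: identity relative polar locus}, the identity $\sP(X,L,Z)=\pi_{1,Z}(\pi_{2,Z}^{-1}(L^\vee))$ (for $Z=X$ this is exactly your observation that $H\supset\langle T_xX,L\rangle$ cuts out the polar locus), and then runs a Chow-ring computation using the projection formula \eqref{eq: projection formula} to convert $\int[H]^{d-i}\cdot\pi_{1,Z*}(\pi_{2,Z}^*[L^\vee])$ directly into $\int h^{d-i}(h')^{N-1-n+i}[W_{X,Z}]=\delta_{d-i}(X,Z)$.

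The one place where the paper's argument is tighter than your outline is precisely the ``main obstacle'' you flag. Rather than arguing set-theoretically that $W_X\cap\pr_2^{-1}(L^\vee)$ maps birationally onto $\sP(X,L)$ and then invoking transversality to rule out excess contributions, the paper works throughout with rational equivalence classes: the passage from $[\pi_{1,Z}(\pi_{2,Z}^{-1}(L^\vee))]$ to $\pi_{1,Z*}(\pi_{2,Z}^*[L^\vee])$ is taken as the definition of proper pushforward of a cycle, and the projection formula then does the bookkeeping without any separate multiplicity analysis. (Genericity of $L$ is used only to ensure that $\sP(X,L,Z)$ has the expected dimension $d-i$, so that its degree is computed by intersecting with $[H]^{d-i}$.) So your plan is correct, but you can streamline step~(2) by appealing to the projection formula from the outset rather than arguing geometrically about fibers. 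Your derivation of the consequences~(1)--(4) from Proposition~\ref{prop: properties polar degrees} and the index shift $i\mapsto n-i$ is fine.
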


\section{Relative duality and relative polar classes}\label{sec: relative polar classes of a projective variety}

Throughout the rest of the paper, $Z$ denotes a $d$-dimensional reduced and irreducible subvariety of $X\subset\PP^N$, hence $0\le d\le n\le N-1$.

\begin{definition}\label{def: relative conormal variety}
Define the incidence variety $W_{X,Z}^{\circ}\coloneqq W_X^{\circ}\cap[Z\times(\PP^N)^\vee]$, namely
\begin{align}\label{eq: def Con 0 relative}
W_{X,Z}^{\circ} = \{(x,H)\in\PP^N\times(\PP^N)^\vee\mid\text{$x\in X_{\sm}\cap Z$ and $H$ is tangent to $X$ at $x$}\}\,.
\end{align}
The {\em conormal variety} of $X\hookrightarrow\PP^N$ {\em relative to $Z$} is the Zariski closure $\overline{W_{X,Z}^{\circ}}\subset\PP^N\times(\PP^N)^\vee$. Its projection $X_Z^\vee\coloneqq\pr_2(W_{X,Z})$ is the {\em dual variety} of $X$ {\em relative to $Z$}.
\end{definition}

\begin{proposition}\label{prop: dimension relative conormal variety}
Let $X\subset\PP^N$ be a projective variety of dimension $n$, and let $Z\subset X$ be an irreducible projective subvariety of dimension $d$ not contained in $X_{\sing}$. Then $W_{X,Z}$ is irreducible of dimension $N-1-n+d$. Also $X_Z^\vee$ is irreducible.
\end{proposition}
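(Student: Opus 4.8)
\emph{Proof proposal.} The plan is to reduce everything to the projective bundle structure of the conormal variety over the smooth locus. Recall from the discussion following Definition \ref{def: conormal variety} that $\pr_1$ exhibits $W_X^{\circ}$ as a projective bundle over $X_{\sm}$ of rank $N-n-1$; in particular $W_X^{\circ}$ is irreducible of dimension $N-1$, with fibre over $x\in X_{\sm}$ the projective space $\PP(\sN_{X/\PP^N,x}^\vee)\cong\PP^{N-n-1}$. Since $Z$ is irreducible and $Z\not\subset X_{\sing}$ by hypothesis, the set $U\coloneqq Z\cap X_{\sm}$ is a nonempty Zariski-open, hence dense and irreducible, subset of $Z$, of dimension $d$.

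First I would note that, by its very definition, $W_{X,Z}^{\circ}=W_X^{\circ}\cap[Z\times(\PP^N)^\vee]$ coincides with the preimage $\pr_1^{-1}(U)$ inside $W_X^{\circ}$, and therefore it is the restriction of the projective bundle $\pr_1\colon W_X^{\circ}\to X_{\sm}$ to the locally closed irreducible base $U$. The restriction of a Zariski-locally-trivial $\PP^{N-n-1}$-bundle to an irreducible base is again a Zariski-locally-trivial $\PP^{N-n-1}$-bundle over that base, hence irreducible; and its dimension is $\dim(U)+(N-n-1)=d+(N-n-1)=N-1-n+d$. Passing to Zariski closures, $W_{X,Z}=\overline{W_{X,Z}^{\circ}}$ is then irreducible of dimension $N-1-n+d$ as well, the closure of an irreducible set being irreducible of the same dimension.

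For the last assertion, $X_Z^\vee$ is by Definition \ref{def: relative conormal variety} the image $\pr_2(W_{X,Z})$ of the irreducible variety $W_{X,Z}$ under the morphism $\pr_2$. The continuous image of an irreducible topological space is irreducible, and since $\pr_2\colon\PP^N\times(\PP^N)^\vee\to(\PP^N)^\vee$ is proper, $\pr_2(W_{X,Z})$ is in fact closed; hence $X_Z^\vee$ is a closed irreducible subvariety of $(\PP^N)^\vee$, as claimed. The argument is formal once the bundle picture is set up, so I do not expect a genuine obstacle; the only points that deserve explicit care are that $Z\cap X_{\sm}$ is nonempty and dense in $Z$ (this is exactly where the hypotheses $Z\not\subset X_{\sing}$ and $Z$ irreducible are used) and that restricting a locally trivial projective bundle to an irreducible locally closed subvariety preserves irreducibility and yields the expected dimension.
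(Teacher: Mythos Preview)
Your proposal is correct and follows essentially the same argument as the paper: both exploit that $\pr_1$ gives $W_{X,Z}^{\circ}$ the structure of a $\PP^{N-n-1}$-bundle over the irreducible base $Z\cap X_{\sm}$, then take closures and project. Your version is slightly more explicit about why $Z\cap X_{\sm}$ is dense in $Z$ and why the image under $\pr_2$ is closed, but the strategy is identical.
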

\begin{proof}
The first projection $\pr_1$ gives $W_{X,Z}^{\circ}$ the structure of a projective bundle on $X_{\sm}\cap Z$ of rank $N-n-1$. This implies that its closure $W_{X,Z}$ is irreducible of dimension $\dim(\pr_1^{-1}(x))+\dim(Z)=N-1-n+d$. It follows immediately that $X_Z^\vee=\pr_2(W_{X,Z})$ is irreducible as well.
\end{proof}

We also consider the surjective morphisms $\pi_{1,Z}\colon W_{X,Z}\to Z$ and $\pi_{2,Z}\colon W_{X,Z}\to X_Z^\vee$ induced by $\pr_1$ and $\pr_2$, namely $\pi_{i,Z}=\pr_i\circ j_Z$, where $j_Z\colon W_{X,Z}\hookrightarrow\PP^N\times(\PP^N)^\vee$ is the inclusion.

Given $H\in X_Z^\vee$, the fiber $\mathrm{Cont}(H,X,Z)\coloneqq\pi_{1,Z}(\pi_{2,Z}^{-1}(H))$ is the {\em contact locus} of $H$ with $X$ {\em relative to} $Z$.
The {\em relative dual defect} of the nested pair $Z\subset X$ is $\mathrm{def}(X,Z)\coloneqq\dim(\mathrm{Cont}(H,X,Z))$ for a generic $H\in(\PP^N)^\vee$. A variety $X$ is {\em dual defective relative to a subvariety $Z$} if $\mathrm{def}(X,Z)>0$, otherwise it is {\em dual non-defective relative to $Z$}. The name is motivated by the identity $\dim(X_Z^\vee)=N-1-\codim_X(Z)-\mathrm{def}(X,Z)$.
The following are general properties relating $X^\vee$ and $X_Z^\vee$.

\begin{proposition}\label{prop: formulas relative codim X dual X Z dual}
Let $Z\subset X$ be two varieties such that $Z\not\subset X_{\sing}$. Then
\begin{enumerate}
    \item $W_{X,Z}=W_X\cap W_Z$.
    \item $X_Z^\vee\subset X^\vee\cap Z^\vee$.
    \item $\codim_{X^\vee}(X_Z^\vee)=\codim_{X}(Z)+\mathrm{def}(X,Z)-\mathrm{def}(X)$.
    \item $\codim_{X^\vee}(X_Z^\vee)=\codim_{X}(Z)$ if and only if $\mathrm{def}(X,Z)=\mathrm{def}(X)$.
\end{enumerate}
\end{proposition}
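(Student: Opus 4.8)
The plan is to establish the four items essentially in the order listed, since each feeds into the next. For (1), I would unwind the definitions: $W_{X,Z}^\circ = W_X^\circ \cap [Z\times(\PP^N)^\vee]$ by Definition \ref{def: relative conormal variety}, so a point $(x,H)$ with $x\in X_\sm\cap Z_\sm$ lies in $W_{X,Z}^\circ$ exactly when $H\supset T_xX$. The subtlety is that $W_X\cap W_Z$ could a priori be larger than $\overline{W_{X,Z}^\circ}$, picking up components supported over $Z_\sing$ or over points where the tangency is to $Z$ but not to $X$. Here I would use the hypothesis $Z\not\subset X_\sing$, which guarantees $X_\sm\cap Z_\sm$ is a dense open subset of $Z$, together with the irreducibility of $W_X$ and $W_Z$ (Proposition \ref{prop: dimension relative conormal variety} applied to each, or \eqref{eq: dim conormal}) and a dimension count: $\dim W_X = \dim W_Z = N-1$, and a generic point of $W_{X,Z}$ lies over $X_\sm\cap Z_\sm$ where $T_xZ\subset T_xX$ forces the fiber conditions to coincide on the relevant open set. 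A clean way is to observe $W_{X,Z} = \overline{W_X^\circ \cap [Z\times(\PP^N)^\vee]}$ and that over $X_\sm\cap Z_\sm$ we have $W_X^\circ\cap[Z\times(\PP^N)^\vee] = W_X^\circ\cap W_Z^\circ$, since $H\supset T_xX \Leftrightarrow H\supset T_xX$ and automatically $H\supset T_xZ$; then take closures, using that $W_X, W_Z$ are the closures of their smooth-locus parts and that $Z_\sing\times(\PP^N)^\vee$ meets $W_X$ in smaller dimension.

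Item (2) is then immediate: applying $\pr_2$ to the inclusion $W_{X,Z}=W_X\cap W_Z\subset W_X$ gives $X_Z^\vee = \pr_2(W_{X,Z})\subset \pr_2(W_X)=X^\vee$, and symmetrically $\subset \pr_2(W_Z)=Z^\vee$, so $X_Z^\vee\subset X^\vee\cap Z^\vee$.

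For (3), the computation is via the fiber dimension of $\pi_{2,Z}$ versus $\pi_2$. On one hand, $\dim W_X = N-1$ and the generic fiber of $\pi_2\colon W_X\to X^\vee$ has dimension $\mathrm{def}(X)$, so $\dim X^\vee = N-1-\mathrm{def}(X)$. On the other, $\dim W_{X,Z} = N-1-n+d$ by Proposition \ref{prop: dimension relative conormal variety}, and the generic fiber of $\pi_{2,Z}$ has dimension $\mathrm{def}(X,Z)$ by definition of the relative dual defect, so $\dim X_Z^\vee = N-1-n+d-\mathrm{def}(X,Z) = N-1-\codim_X(Z)-\mathrm{def}(X,Z)$ (using $\codim_X(Z)=n-d$). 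Subtracting,
\[
\codim_{X^\vee}(X_Z^\vee) = \dim X^\vee - \dim X_Z^\vee = \bigl(N-1-\mathrm{def}(X)\bigr) - \bigl(N-1-\codim_X(Z)-\mathrm{def}(X,Z)\bigr) = \codim_X(Z) + \mathrm{def}(X,Z) - \mathrm{def}(X).
\]
The one point requiring care is that $\mathrm{def}(X,Z)$ is defined via the fiber over a \emph{generic} $H\in(\PP^N)^\vee$, whereas here I want the generic fiber of $\pi_{2,Z}$ over $X_Z^\vee$; these agree because $X_Z^\vee$ is irreducible (Proposition \ref{prop: dimension relative conormal variety}) and $\pi_{2,Z}$ is dominant, so a generic fiber over $X_Z^\vee$ is the fiber over a generic point of $X_Z^\vee$, which is the intersection of $\mathrm{Cont}(H,X,Z)$-type fibers with generic $H$. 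Finally, item (4) is the logical equivalence extracted from (3): $\codim_{X^\vee}(X_Z^\vee)=\codim_X(Z)$ iff $\mathrm{def}(X,Z)-\mathrm{def}(X)=0$ iff $\mathrm{def}(X,Z)=\mathrm{def}(X)$.

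The main obstacle I anticipate is item (1): proving the set-theoretic (indeed scheme-theoretic, or at least cycle-theoretic) equality $W_{X,Z}=W_X\cap W_Z$ rather than merely an inclusion $\subseteq$. The inclusion $W_{X,Z}\subseteq W_X\cap W_Z$ is formal from $W_{X,Z}^\circ\subset W_X^\circ$ and $W_{X,Z}^\circ\subset W_Z^\circ$ (the latter because $T_xZ\subset T_xX$ at smooth points of both). The reverse inclusion is where one must rule out extraneous components of $W_X\cap W_Z$; I would argue that any component of $W_X\cap W_Z$ projects under $\pr_1$ into $Z$ (since it lies in $W_Z\subset Z\times(\PP^N)^\vee$) and, intersecting with the dense open $X_\sm\cap Z_\sm$ where the conormal conditions for $X$ and $Z$ relative to membership in $Z$ coincide, must be contained in $\overline{W_{X,Z}^\circ}=W_{X,Z}$ — here the hypothesis $Z\not\subset X_\sing$ is exactly what keeps $X_\sm\cap Z$ dense in $Z$, and a dimension bound shows nothing new can appear over $X_\sing\cap Z$ or $Z_\sing$. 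If a fully scheme-theoretic statement is wanted one would additionally invoke generic reducedness/transversality, but for the downstream applications (the defect formulas in (3)–(4), and later the multidegree identity $[W_{X,Z}]$) the identification of the underlying cycle suffices.
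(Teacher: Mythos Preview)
Your treatment of (2)--(4) matches the paper's exactly: (2) by projecting the inclusion $W_{X,Z}\subseteq W_X\cap W_Z$, and (3)--(4) by the dimension count
\[
\dim X^\vee = N-1-\mathrm{def}(X),\qquad \dim X_Z^\vee = N-1-\codim_X(Z)-\mathrm{def}(X,Z),
\]
followed by subtraction.

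For (1) there is an interesting discrepancy. The paper declares the inclusion $W_X\cap W_Z\subseteq W_{X,Z}$ to be ``immediate'' and then argues the \emph{other} inclusion $W_{X,Z}\subseteq W_X\cap W_Z$ via $T_xZ\subset T_xX$ on $X_\sm\cap Z_\sm$ and taking closures---precisely the direction you call ``formal''. Your instinct that $W_X\cap W_Z\subseteq W_{X,Z}$ is the delicate direction is the right one: nothing prevents $W_X\cap W_Z$ from acquiring components supported entirely over $(X_\sing\cap Z)\cup Z_\sing$, and neither the paper's one-word dismissal nor your proposed ``dimension bound'' actually excludes this (there is no a~priori control on the fibers of $W_X$ over $X_\sing$, so the bound you invoke is not available in general). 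That said, this gap is harmless for the rest of the proposition: item (2) needs only $W_{X,Z}\subseteq W_X\cap W_Z$, which both you and the paper establish cleanly, and items (3)--(4) do not use (1) at all. So your proof of (2)--(4) is complete and coincides with the paper's; the full set-theoretic equality in (1) would require a separate argument (and does become genuinely immediate when $X$ is smooth, since then $W_X\cap[Z\times(\PP^N)^\vee]=W_{X,Z}$).
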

\begin{proof}
The inclusion $W_{X,Z}\supset W_X\cap W_Z$ in $(1)$ is immediate. The other inclusion follows by the inclusion $T_xZ\subset T_xX$ for all $x\in Z_{\sm}\cap X_{\sm}$, and by taking closures.
Furtheremore, by definition $X_Z^\vee = \pi_{2,Z}(W_{X,Z})$ and
\[
\pi_{2,Z}(W_{X,Z})=\pr_2(W_{X,Z})=\pr_2(W_X\cap W_Z)\subset \pr_2(W_X)\cap \pr_2(W_Z)\,,
\]
where the last quantity is exactly $X^\vee\cap Z^\vee$. This shows $(2)$.
Part $(3)$ follows from the identities
\begin{align*}
\dim(X^\vee) &= N-1-\mathrm{def}(X)\\
\dim(X_Z^\vee) &= N-1-\codim_{X}(Z)-\mathrm{def}(X,Z)\,.
\end{align*}
Property $(4)$ follows immediately by part $(3)$.
\end{proof}

\begin{remark}\label{rmk: relative dual strictly contained in intersection X dual Z dual}
The containment $X_Z^\vee\subset X^\vee\cap Z^\vee$ of Lemma \ref{prop: formulas relative codim X dual X Z dual}$(2)$ might be strict. Consider a fixed smooth conic $C\subset\PP^2$ and the varieties $Z\subset X$ of Example \ref{ex: conics tangent to fixed conic}. Let $Q$ be a conic in $\PP^2$, seen as a point in $X^\vee\cap Z^\vee\subset\PP(S^2\C^3)$. Since $Q\in X^\vee$, then $Q$ is singular, in particular $Q=L_1\cup L_2$ for some lines $L_1,L_2$. Moreover because $Q\in Z^\vee$, $Q$ is tangent to $C$. Define the subsets
\begin{align*}
Y_1 &\coloneqq\{Q\in\PP(S^2\C^3)\mid\text{$Q=L_1\cup L_2$ for some lines $L_1,L_2$ such that $L_1\cap L_2\in C$}\}\\
Y_2 &\coloneqq\{Q\in\PP(S^2\C^3)\mid\text{$Q=L_1\cup L_2$ for some lines $L_1,L_2$ such that $L_1=T_PC$ for some $P\in C$}\}
\end{align*}
Then $X^\vee\cap Z^\vee=Y_1\cup Y_2$. We now show that $Y_1\cup Y_2$  is an irreducible decomposition of $X^\vee\cap Z^\vee$. First of all, we have that $Y_1\not\subset Y_2$ and $Y_2\not\subset Y_1$. Define the maps
\begin{align*}
&f_1\colon (\PP^2\setminus\{P\})^{\times 2}\times C\to\PP(S^2\C^3)\,,\quad f_1(P_1,P_2,P)\coloneqq \overline{P_1P}\cup\overline{P_2P}\\
&f_2\colon (\PP^2)^\vee\times C\to\PP(S^2\C^3)\,,\quad f_2(L,P)\coloneqq L\cup T_PC\,.
\end{align*}
Then $Y_i=\image(f_i)$ for all $i\in\{1,2\}$. This implies that $Y_1$ and $Y_2$ are irreducible varieties. Using the maps $f_1,f_2$, we can also compute the dimensions of $Y_1,Y_2$. Consider the conic $Q=\overline{P_1P}\cup\overline{P_2P}\in Y_1$. Then $Q=\overline{P_1'P}\cup\overline{P_2'P}$ for any other pair $(P_1',P_2')\in\overline{P_1'P}\times \overline{P_2'P}$, with $P_i'\neq P$. This means that the fiber $f_1^{-1}(\overline{P_1P}\cup\overline{P_2P})$ is isomorphic to the the product $\PP^1\times\PP^1$, in particular $\dim(f_1^{-1}(Q))=2$ for all $Q\in Y_1$. Hence $\dim(Y_1)=\dim((\PP^2\setminus\{P\})^{\times 2}\times C)-\dim(f_1^{-1}(Q))=3$. Now suppose that $Q=L\cup T_PC\in Y_2$. In this case $f_2^{-1}(Q)=\{(L,P)\}$, namely $f_2$ is injective. This implies that $\dim(Y_2)=\dim((\PP^2)^\vee\times C)=3$. 

We have thus shown that $X^\vee\cap Z^\vee$ is the union of the two irreducible varieties $Y_1$ and $Y_2$ of dimension 3 in $\PP^5$. Now we show that $X_Z^\vee=Y_1$. This implies immediately that $X_Z^\vee\subsetneq X^\vee\cap Z^\vee$.

Recall that $X$ is the Veronese surface in $\PP^5$, image of the map $\nu_2\colon\PP^2\to\PP^5$ sending a point $P=[x_1,x_2,x_3]$ to $\nu(P)\coloneqq[x_1^2,x_1x_2,x_1x_3,x_2^2,x_2x_3,x_3^2]$.
The projectivized row span of the Jacobian matrix of $\nu$ defines the projective tangent space $T_{\nu(P)}X$:
\[
\begin{pmatrix}
	\frac{\partial\nu}{\partial x_1}\\[2pt]
	\frac{\partial\nu}{\partial x_2}\\[2pt]
	\frac{\partial\nu}{\partial x_3}
\end{pmatrix}
=
\begin{pmatrix}
	2\,x_{1} & x_{2} & x_{3} & 0 & 0 & 0\\
	0 & x_{1} & 0 & 2\,x_{2} & x_{3} & 0\\
	0 & 0& x_{1} & 0 & x_{2} & 2\,x_{3}
\end{pmatrix}\,.
\]
Up to a linear change of coordinates, we assume that $P=E_3=[0,0,1]$. Then $\nu(E_3)=[0,0,0,0,0,1]$ and $T_{\nu(E_3)}X$ is the projective subspace spanned by the rows of the matrix
\[
A=
\begin{pmatrix}
	0 & 0 & 1 & 0 & 0 & 0\\
	0 & 0 & 0 & 0 & 1 & 0\\
	0 & 0 & 0 & 0 & 0 & 2	
\end{pmatrix}\,.
\]
Let $\alpha=(\alpha_{200},\alpha_{110},\ldots,\alpha_{002})$ be the vector of coefficients of the hyperplane $H\subset\PP^5$. The preimage $\nu^{-1}(H\cap X)$ is the conic of equation $\alpha_{200}x_1^2+\alpha_{110}x_1x_2+\cdots+\alpha_{002}x_3^2=0$. We have that $T_{\nu(E_3)}X\subset H$ if and only if $\alpha^\mT\in\ker(A)$. This yields the conditions $\alpha_{101}=\alpha_{011}=\alpha_{002}=0$. Therefore $H$ induces the singular conic of equation $\alpha_{200}x_1^2+\alpha_{110}x_1x_2+\alpha_{020}x_2^2=0$, which is the union of two lines passing through $E_3$. More generally, we have that
\[
T_{\nu(P)}X\subset H \quad\Longleftrightarrow\quad\text{$\nu^{-1}(H\cap X) = L_1\cup L_2$ and $P\in L_1\cap L_2$}\,.
\]
It follows that the variety $X_Z^\vee$ corresponds to the variety of hyperplanes $H\subset\PP^5$ such that $\nu^{-1}(H\cap X)$ is a union of two lines passing through a point of $C$, or equivalently $X_Z^\vee=Y_1$.
\end{remark}

\begin{definition}\label{def: dual regular relative Z}
Let $Z\subset X$ be projective varieties. We say that $X$ is {\em dual regular relative to $Z$} if there exists a pair $(x,H)\in W_{X,Z}$ where $x\in X_{\sm}$ and $H\in(X^\vee)_{\sm}$. 
\end{definition}

Note that, if $X$ is dual regular relative to $Z$, then necessarily $Z\not\subset X_{\sing}$ and $X_Z^\vee\not\subset (X^\vee)_{\sing}$.

\begin{lemma}\label{lem: properties relative duality}
Let $Z\subset X$ be two varieties such that $X$ is dual regular relative to $Z$. Then
\begin{enumerate}
    \item $\mathrm{Cont}(H,X,Z)=\mathrm{Cont}(H,X)\cap Z$ for all $H\in X_Z^\vee\cap(X^\vee)_{\sm}$.
    \item $0\le\mathrm{def}(X,Z)\le\mathrm{def}(X)$.
    \item $0\le\codim_{X^\vee}(X_Z^\vee)\le\codim_{X}(Z)$.
\end{enumerate}
\end{lemma}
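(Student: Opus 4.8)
The plan is to prove the three items in order, using the decomposition $W_{X,Z}=W_X\cap W_Z$ from Proposition \ref{prop: formulas relative codim X dual X Z dual}(1) together with the reflexivity of $X$ (Theorem \ref{thm: reflexivity}) and the characterization of codimensions via defects (Proposition \ref{prop: formulas relative codim X dual X Z dual}(3)--(4)). The key point making everything work is the dual regularity hypothesis, which guarantees the existence of a pair $(x,H)\in W_{X,Z}$ with $x\in X_{\sm}$ and $H\in(X^\vee)_{\sm}$; this will let me apply the pointwise form of reflexivity.

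\emph{Part (1).} Fix $H\in X_Z^\vee\cap(X^\vee)_{\sm}$. The inclusion $\mathrm{Cont}(H,X,Z)\subset\mathrm{Cont}(H,X)\cap Z$ is essentially formal: by definition $\pi_{2,Z}^{-1}(H)\subset\pi_2^{-1}(H)$ and $\pi_{1,Z}$ is the restriction of $\pi_1$, while every point of $W_{X,Z}$ projects into $Z$ under $\pr_1$; taking closures gives the containment. For the reverse inclusion, let $x\in\mathrm{Cont}(H,X)\cap Z$; I may assume $x\in X_{\sm}$ since $\mathrm{Cont}(H,X)$ is the closure of its smooth part and $Z\not\subset X_{\sing}$ (a consequence of dual regularity). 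Then $(x,H)\in W_X^\circ$ and $x\in Z$, so $(x,H)\in W_{X,Z}^\circ\subset W_{X,Z}$, whence $x\in\pi_{1,Z}(\pi_{2,Z}^{-1}(H))=\mathrm{Cont}(H,X,Z)$. Taking closures on both sides finishes Part (1). The only subtlety is checking that $H$ being a smooth point of $X^\vee$ is what lets me identify $\pi_2^{-1}(H)$ with the genuine conormal fiber rather than something larger; here reflexivity of $X$ (Theorem \ref{thm: reflexivity}) is what I invoke.

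\emph{Part (2).} I want $\mathrm{def}(X,Z)\le\mathrm{def}(X)$. Taking $H$ generic in $(\PP^N)^\vee$ — so that in particular $H\in(X^\vee)_{\sm}$ and the contact loci attain their generic dimensions — Part (1) gives $\mathrm{Cont}(H,X,Z)=\mathrm{Cont}(H,X)\cap Z$, a closed subset of $\mathrm{Cont}(H,X)$, hence of dimension at most $\dim\mathrm{Cont}(H,X)=\mathrm{def}(X)$. The lower bound $\mathrm{def}(X,Z)\ge 0$ is trivial since the contact locus is nonempty for $H\in X_Z^\vee$. The mild issue here is a genericity bookkeeping point: I must ensure the generic $H$ realizing $\mathrm{def}(X)$ can simultaneously be taken in $X_Z^\vee$ (or argue that if generic $H\notin X_Z^\vee$ then $\mathrm{def}(X,Z)$ is computed over a smaller locus, only decreasing the dimension); dual regularity plus irreducibility of $X_Z^\vee$ (Proposition \ref{prop: dimension relative conormal variety}) handles this.

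\emph{Part (3).} This is now immediate: by Proposition \ref{prop: formulas relative codim X dual X Z dual}(3), $\codim_{X^\vee}(X_Z^\vee)=\codim_X(Z)+\mathrm{def}(X,Z)-\mathrm{def}(X)$, and Part (2) gives $\mathrm{def}(X,Z)-\mathrm{def}(X)\le 0$, so $\codim_{X^\vee}(X_Z^\vee)\le\codim_X(Z)$; the lower bound $\ge 0$ is clear since $X_Z^\vee\subset X^\vee$ by Proposition \ref{prop: formulas relative codim X dual X Z dual}(2). I expect Part (1) — specifically the reverse inclusion and the careful use of the smoothness of $H$ in $X^\vee$ via reflexivity — to be the main obstacle; Parts (2) and (3) are short corollaries once Part (1) is in hand.
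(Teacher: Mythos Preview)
Your proposal is correct and follows essentially the same route as the paper. One place where the paper is cleaner than your sketch: in Part~(2) you take $H$ generic in $(\PP^N)^\vee$ and then worry about whether such $H$ lands in $X_Z^\vee$. The paper instead takes $H$ generic in $X_Z^\vee$; dual regularity then gives $H\in(X^\vee)_{\sm}$, and reflexivity (via $W_X=W_{X^\vee}$) guarantees that for \emph{any} smooth point $H$ of $X^\vee$ one has $\dim\mathrm{Cont}(H,X)=\mathrm{def}(X)$, not just for generic $H$---so no bookkeeping is needed.
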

\begin{proof} 
$(1)$ By dual regularity of $X$ relative to $Z$, there exists $H\in X_Z^\vee\cap(X^\vee)_{\sm}$. Then the set $\{x\in\PP^N\mid\text{$x\in X_{\sm}$ and $H\supset T_xX$}\}$ is nonempty and open, hence dense in $\mathrm{Cont}(H,X)$. But then, since we are assuming $X_{\sm}\cap Z\neq\emptyset$, the set $\{x\in\PP^N\mid\text{$x\in X_{\sm}\cap Z$ and $H\supset T_xX$}\}$ is also nonempty and open, hence dense in $\mathrm{Cont}(H,X)\cap Z$, and
\[
\mathrm{Cont}(H,X,Z)=\pi_{1,Z}(\pi_{2,Z}^{-1}(H))=\overline{\{x\in\PP^N\mid\text{$x\in X_{\sm}\cap Z$ and $H\supset T_xX$}\}}\,,
\]
therefore we conclude that $\mathrm{Cont}(H,X,Z)=\mathrm{Cont}(H,X)\cap Z$.

$(2)$ By assumption $X_Z^\vee\cap(X^\vee)_{\sm}$ is a nonempty open dense subset of $X_Z^\vee$. By part $(1)$, for every $H\in X_Z^\vee\cap(X^\vee)_{\sm}$ we have that $\mathrm{Cont}(H,X,Z)=\mathrm{Cont}(H,X)\cap Z$, in particular $\dim(\mathrm{Cont}(H,X,Z))\le\dim(\mathrm{Cont}(H,X))$.
Since $H$ is smooth in $X^\vee$ (hence a generic tangent hyperplane to $X$), we have that $\dim(\mathrm{Cont}(H,X))=\mathrm{def}(X)$. Similarly $H$ is generic in $X_Z^\vee$, hence $\dim(\mathrm{Cont}(H,X,Z))=\mathrm{def}(X,Z)$. Therefore we conclude that $0\le\mathrm{def}(X,Z)\le\mathrm{def}(X)$.

Property $(3)$ is an immediate consequence of Proposition \ref{prop: formulas relative codim X dual X Z dual}$(3)$ and property $(2)$.
\end{proof}

In particular, Lemma \ref{lem: properties relative duality}$(1)$ tells us that, despite the generic contact loci $\mathrm{Cont}(H,X)$ being linear, relative contact loci are not necessarily linear spaces.
Furthermore, the following example shows that the assumption of relative dual regularity in Lemma \ref{lem: properties relative duality}$(2)$ cannot be dropped.

\begin{example}\label{ex: Cayley projective}
Consider the cubic surface in $X\subset\PP^3$ defined by the homogeneous polynomial
\begin{equation}\label{eq: Cayley projective}
f(x_1,x_2,x_3,x_4) = 16\,x_{1}x_{2}x_{3}+4(x_{1}^{2}+x_{2}^{2}+x_{3}^{2})x_4-x_4^3=0\,.
\end{equation}
This surface corresponds to the Zariski closure in $\PP^3$ of Cayley's cubic surface discussed in Example \ref{ex: Cayley}. The surface $X$ has four singular points. Consider the projective line $Z=V(x_1-x_2,2\,x_3+x_4)$ contained in $X$ and containing two of its singular points. We identify $(\PP^3)^\vee$ with $\PP^3$ via the quadratic form $q(x)=x_1^2+x_2^2+x_3^2+x_4^2$, see the discussion after Proposition \ref{prop: proj data locus is irreducible}.
On one hand, the dual variety of $X$ is the quartic surface
\begin{equation}
X^\vee=V(x_{1}^{2}x_{2}^{2}+x_{1}^{2}x_{3}^{2}+x_{2}^{2}x_{3}^{2}+4\,x_{1}x_{2}x_{3}x_{4})
\end{equation}
whose singular locus is a union of three lines:
\begin{equation}
(X^\vee)_{\sing}=V(x_{1},x_{2})\cup V(x_{1},x_{3})\cup V(x_{2},x_{3})\,.
\end{equation}
On the other hand, we verified symbolically that $X_Z^\vee$ is a point contained in $(X^\vee)_{\sing}$:
\begin{equation}
X_Z^\vee=V(x_{1},x_{2},x_{3}-2\,x_{4})\subset (X^\vee)_{\sing}\,.
\end{equation}
This example shows that $X_Z^\vee$ might be contained in $(X^\vee)_{\sing}$ even though $Z$ is not contained in $X_{\sing}$. Since $\codim_X(Z)=1$ and $\codim_{X^\vee}(X_Z^\vee)=2$, by Proposition \ref{prop: formulas relative codim X dual X Z dual}$(3)$ we conclude that $\mathrm{def}(X,Z)-\mathrm{def}(X)=1$, therefore $\mathrm{def}(X)=0$ and $\mathrm{def}(X,Z)=1$ because $X^\vee$ is a hypersurface.\hfill$\diamondsuit$
\end{example}

Now suppose that $Z$ is a subvariety of $X$ not contained in $X_{\sing}$. Then $X_Z^\vee$ is a nonempty subvariety of $X^\vee$, possibly equal to $X^\vee$. Hence, considering the dual variety of $X^\vee$ relative to $X_Z^\vee$ is natural. The next two lemmas fully describe this object.

\begin{lemma}\label{lem: Z contained in double relative dual}
Let $Z\subset X\subset\PP^N$ be irreducible varieties such that $X$ is dual regular relative to $Z$. Then $Z\subset(X^\vee)_{X_Z^\vee}^\vee\subset X$.
\end{lemma}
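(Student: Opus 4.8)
The plan is to establish the two inclusions $Z\subset(X^\vee)_{X_Z^\vee}^\vee$ and $(X^\vee)_{X_Z^\vee}^\vee\subset X$ separately, using classical reflexivity (Theorem \ref{thm: reflexivity}) as the main engine to pass tangency conditions back and forth between $X$ and $X^\vee$. Throughout I work over the algebraically closed field $\K$ of characteristic zero, so $X$ is reflexive, i.e. $W_{X^\vee}=W_X$ as subvarieties of $\PP^N\times(\PP^N)^\vee$, and likewise biduality $(X^\vee)^\vee=X$ holds.

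For the inclusion $Z\subset(X^\vee)_{X_Z^\vee}^\vee$, the idea is to exhibit a dense subset of $Z$ inside the right-hand side and then take closures. By dual regularity of $X$ relative to $Z$, the set of pairs $(x,H)\in W_{X,Z}^{\circ}$ with $x\in X_{\sm}$ and $H\in(X^\vee)_{\sm}$ is nonempty; since $\pi_{1,Z}\colon W_{X,Z}\to Z$ is surjective (Definition \ref{def: relative conormal variety} and Proposition \ref{prop: dimension relative conormal variety}), its image is a dense constructible subset of $Z$. Fix such a pair $(x,H)$. Because $H$ is a smooth point of $X^\vee$ tangent to $X$ at the smooth point $x$, reflexivity (Theorem \ref{thm: reflexivity}) gives that the hyperplane $x^\vee\in(\PP^N)^{\vee\vee}=\PP^N$ is tangent to $X^\vee$ at $H$. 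Moreover $H\in X_Z^\vee$ by construction, and $H$ is a smooth point of $X^\vee$, so in particular $(H,x^\vee)$ (reading $X^\vee\subset(\PP^N)^\vee$ with dual ambient $\PP^N$) lies in $W_{X^\vee,X_Z^\vee}^{\circ}$, whence $x=(x^\vee)^\vee\in(X^\vee)_{X_Z^\vee}^\vee$. This shows the dense subset $\pi_{1,Z}(W_{X,Z}^{\circ}\cap(X_{\sm}\times(X^\vee)_{\sm}))$ of $Z$ is contained in $(X^\vee)_{X_Z^\vee}^\vee$; taking Zariski closure yields $Z\subset(X^\vee)_{X_Z^\vee}^\vee$.

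For the inclusion $(X^\vee)_{X_Z^\vee}^\vee\subset X$, I would argue that $(X^\vee)_{X_Z^\vee}^\vee$ is by definition a subvariety of the full dual $(X^\vee)^\vee$, which by biduality equals $X$. Concretely, from Proposition \ref{prop: formulas relative codim X dual X Z dual}$(2)$ applied to the nested pair $X_Z^\vee\subset X^\vee$ (valid since dual regularity forces $X_Z^\vee\not\subset(X^\vee)_{\sing}$, by the remark after Definition \ref{def: dual regular relative Z}), we get $(X^\vee)_{X_Z^\vee}^\vee\subset(X^\vee)^\vee\cap(X_Z^\vee)^\vee\subset(X^\vee)^\vee=X$, where the last equality is biduality. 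This closes the argument.

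The main obstacle I anticipate is the bookkeeping around smoothness and the correct identification of ambient spaces under double dualization: one must make sure that the generic pair produced by dual regularity simultaneously has $x\in X_{\sm}$, $H\in(X^\vee)_{\sm}$, and that $H$ is a smooth point of $X_Z^\vee$ (or at least lies in it) so that the relative conormal variety $W_{X^\vee,X_Z^\vee}$ genuinely receives the pair $(H,x^\vee)$ rather than just its closure. Dual regularity of $X$ relative to $Z$ gives smoothness of $x$ in $X$ and of $H$ in $X^\vee$ directly; the point $H$ lies in $X_Z^\vee$ by construction, and since $X_Z^\vee$ is irreducible (Proposition \ref{prop: dimension relative conormal variety}) with $X_Z^\vee\not\subset(X^\vee)_{\sing}$, the locus of such ``good'' pairs is dense in $W_{X,Z}$, so taking closures is legitimate. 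The only genuinely delicate point is that reflexivity is applied to $X^\vee$, not $X$: one uses $W_{X^\vee}=W_X$ together with the symmetry of the conormal relation to conclude $(H,x^\vee)\in W_{X^\vee}^{\circ}$, and intersecting with $X_Z^\vee\times\PP^N$ lands it in $W_{X^\vee,X_Z^\vee}^{\circ}$.
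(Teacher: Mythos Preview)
Your proof is correct and follows essentially the same approach as the paper: both use reflexivity (Theorem~\ref{thm: reflexivity}) to show that for a pair $(x,H)\in W_{X,Z}$ with $x\in X_{\sm}$ and $H\in(X^\vee)_{\sm}$ the point $x$ lies in $(X^\vee)_{X_Z^\vee}^\vee$, then close up to get $Z\subset(X^\vee)_{X_Z^\vee}^\vee$, and obtain the second inclusion directly from biduality $(X^\vee)^\vee=X$. Your treatment of the density and smoothness bookkeeping is in fact a bit more careful than the paper's.
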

\begin{proof}
The hypotheses ensure that $X_{\sm}\cap Z$ and $X_Z^\vee\cap(X^\vee)_{\sm}$ are open dense subsets of $Z$ and $X_Z^\vee$, respectively.
Define
\begin{equation}\label{eq: open subset V X Z}
\sV_{X,Z}\coloneqq\{x\in X_{\sm}\mid\text{$x\supset T_H(X^\vee)$ for some $H\in X_Z^\vee\cap(X^\vee)_{\sm}$}\}\,.
\end{equation}
Let $x\in X_{\sm}\cap Z$, hence there exists a hyperplane $H$ that is tangent to $X$ at $x$, in particular $H\in X_Z^\vee\cap (X^\vee)_{\sm}$. By Theorem \ref{thm: reflexivity}, $x$ is tangent to $X^\vee$ at $H$. We have thus shown that $X_{\sm}\cap Z\subset\sV_{X,Z}$, and $\sV_{X,Z}\subset(X^\vee)_{X_Z^\vee}^\vee$. Hence, we conclude that $Z\subset(X^\vee)_{X_Z^\vee}^\vee$. The other inclusion $(X^\vee)_{X_Z^\vee}^\vee\subset X$ follows immediately by $(X^\vee)^\vee=X$.
\end{proof}

Using Theorem \ref{thm: reflexivity}, we can also write relative dual varieties by means of contact loci as
\begin{equation}
    X_Z^\vee=\overline{\{ H \in X^\vee_{\sm} \mid \mathrm{Cont}(H,X) \cap X_{\sm} \cap Z \neq \emptyset\}}\,.
\end{equation}
Next, we also describe the relative dual variety $(X^\vee)_{X_Z^\vee}^\vee$ in terms of the contact loci $\mathrm{Cont}(H,X)$.

\begin{lemma}\label{lem: identity double relative dual variety}
Let $Z\subset X\subset\PP^N$ be irreducible varieties such that $X$ is dual regular relative to $Z$. Then
\begin{equation}\label{eq: identity double relative dual variety}
(X^\vee)_{X_Z^\vee}^\vee = \overline{\bigcup_{\mathrm{Cont}(H,X)\cap Z\neq\emptyset}\mathrm{Cont}(H,X)}\,.
\end{equation}
\end{lemma}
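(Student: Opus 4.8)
The plan is to prove the two inclusions in \eqref{eq: identity double relative dual variety} separately, both times translating statements about the relative dual variety $(X^\vee)_{X_Z^\vee}^\vee$ into statements about contact loci via the Biduality Theorem \ref{thm: reflexivity}. The key technical fact, which should already be available from the proof of Lemma \ref{lem: Z contained in double relative dual}, is the description of $(X^\vee)_{X_Z^\vee}^\vee$ as (the closure of) the set $\sV_{X,Z}$ of smooth points $x\in X_{\sm}$ such that $x$ (viewed as a hyperplane $x^\vee$ in $(\PP^N)^\vee$) is tangent to $X^\vee$ at some $H\in X_Z^\vee\cap(X^\vee)_{\sm}$. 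By reflexivity of $X$ (Theorem \ref{thm: reflexivity}), for $x\in X_{\sm}$ and $H\in(X^\vee)_{\sm}$ the hyperplane $x^\vee$ is tangent to $X^\vee$ at $H$ if and only if $H$ is tangent to $X$ at $x$, i.e. $x\in\mathrm{Cont}(H,X)$. So $\sV_{X,Z}=\{x\in X_{\sm}\mid x\in\mathrm{Cont}(H,X)\text{ for some }H\in X_Z^\vee\cap(X^\vee)_{\sm}\}$.

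For the inclusion ``$\subseteq$'', take $x\in\sV_{X,Z}$, so $x\in\mathrm{Cont}(H,X)$ for some $H\in X_Z^\vee\cap(X^\vee)_{\sm}$. By dual regularity and Lemma \ref{lem: properties relative duality}$(1)$, for such $H$ we have $\mathrm{Cont}(H,X)\cap Z=\mathrm{Cont}(H,X,Z)=\pi_{1,Z}(\pi_{2,Z}^{-1}(H))$, which is nonempty precisely because $H\in X_Z^\vee=\pr_2(W_{X,Z})$. Hence $x$ lies in a contact locus $\mathrm{Cont}(H,X)$ with $\mathrm{Cont}(H,X)\cap Z\neq\emptyset$, so $x$ lies in the right-hand side of \eqref{eq: identity double relative dual variety}; taking closures gives the inclusion.

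For the reverse inclusion ``$\supseteq$'', it suffices to show that the right-hand side is contained in the closed set $(X^\vee)_{X_Z^\vee}^\vee$. Fix $H$ with $\mathrm{Cont}(H,X)\cap Z\neq\emptyset$; I want $\mathrm{Cont}(H,X)\subset(X^\vee)_{X_Z^\vee}^\vee$. Here the main obstacle is that a priori $H$ need not be a smooth point of $X^\vee$, and $\mathrm{Cont}(H,X)$ need not be contained in $X_{\sm}$, so one cannot directly invoke reflexivity at every point. The fix is a genericity/specialization argument: first handle the case $H\in X_Z^\vee\cap(X^\vee)_{\sm}$, where $\mathrm{Cont}(H,X)\cap X_{\sm}\cap Z\neq\emptyset$ forces $H\in X_Z^\vee$, and every $x\in\mathrm{Cont}(H,X)\cap X_{\sm}$ lies in $\sV_{X,Z}$ by the reflexivity argument above, so $\mathrm{Cont}(H,X)=\overline{\mathrm{Cont}(H,X)\cap X_{\sm}}\subset(X^\vee)_{X_Z^\vee}^\vee$. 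For a general $H$ with $\mathrm{Cont}(H,X)\cap Z\neq\emptyset$, observe that the union $\bigcup_{\mathrm{Cont}(H,X)\cap Z\neq\emptyset}\mathrm{Cont}(H,X)$ is the image under $\pi_1$ of $\pi_2^{-1}(\pi_2(\pi_1^{-1}(Z)\cap W_X))\subset W_X$; since $X_Z^\vee\cap(X^\vee)_{\sm}$ is dense in $X_Z^\vee=\pi_2(\pi_1^{-1}(Z)\cap W_X)$ and the morphisms $\pi_1,\pi_2$ are proper, the closure of $\bigcup_{H\in X_Z^\vee\cap(X^\vee)_{\sm}}\mathrm{Cont}(H,X)$ already equals the closure of the full union, reducing to the smooth case just treated. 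Combining the two inclusions yields \eqref{eq: identity double relative dual variety}.
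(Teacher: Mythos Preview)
Your proof follows essentially the same route as the paper's: both identify $(X^\vee)_{X_Z^\vee}^\vee$ with the closure of the set $\sV_{X,Z}$, obtain the inclusion ``$\subseteq$'' via Theorem~\ref{thm: reflexivity} together with Lemma~\ref{lem: properties relative duality}(1), and for ``$\supseteq$'' first treat hyperplanes $H\in X_Z^\vee\cap(X^\vee)_{\sm}$ by reflexivity and then reduce the general $H$ with $\mathrm{Cont}(H,X)\cap Z\neq\emptyset$ to this case by a density argument. The one step to watch is your assertion $X_Z^\vee=\pi_2(\pi_1^{-1}(Z)\cap W_X)$: in general $\pi_1^{-1}(Z)=W_X\cap(Z\times(\PP^N)^\vee)$ may strictly contain $W_{X,Z}$ (through components lying over $Z\cap X_{\sing}$), so the index set $\{H:\mathrm{Cont}(H,X)\cap Z\neq\emptyset\}$ could a priori be larger than $X_Z^\vee$; the paper's corresponding reduction is stated with comparable brevity.
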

\begin{proof}
We assume first that $Z\not\subset X_{\sing}$ and $X_Z^\vee\not\subset(X^\vee)_{\sing}$.
In particular, $X_{\sm}\cap Z$ and $X_Z^\vee\cap(X^\vee)_{\sm}$ are open dense subsets of $Z$ and $X_Z^\vee$, respectively.
Define
\begin{equation}\label{eq: open subset U X Z}
\sU_{X,Z}\coloneqq\{H\in(X^\vee)_{\sm}\mid\text{$H\supset T_xX$ for some $x\in X_{\sm}\cap Z$}\}\,.
\end{equation}
In particular $\overline{\sU_{X,Z}}=X_Z^\vee$. Note that here we use $X_Z^\vee\not\subset(X^\vee)_{\sing}$.
Consider also the set $\sV_{X,Z}$ introduced in \eqref{eq: open subset V X Z}. Since we are assuming $Z\not\subset X_{\sing}$, using the containment $Z\subset(X^\vee)_{X_Z^\vee}^\vee$ of Lemma \ref{lem: Z contained in double relative dual}, we conclude that $(X^\vee)_{X_Z^\vee}^\vee\not\subset X_{\sing}$, or equivalently that $(X^\vee)_{X_Z^\vee}^\vee\cap X_{\sm}$ is dense in $(X^\vee)_{X_Z^\vee}^\vee$, therefore $\overline{\sV_{X,Z}}=(X^\vee)_{X_Z^\vee}^\vee$.

Suppose that $x\in\sV_{X,Z}$, in particular $x\in X_{\sm}$ and $x$ is tangent to $X^\vee$ at some point $H\in X_Z^\vee\cap(X^\vee)_{\sm}$. Theorem \ref{thm: reflexivity} ensures also that $H$ is tangent to $X$ at $x$, namely $x\in\mathrm{Cont}(H,X)$.
Furthermore, since $H\in X_Z^\vee\cap(X^\vee)_{\sm}$, then $\emptyset\neq\mathrm{Cont}(H,X,Z)=\mathrm{Cont}(H,X)\cap Z$ by Lemma \ref{lem: properties relative duality}$(1)$.
We have thus shown that
\[
\sV_{X,Z} \subset \bigcup_{H\in X_Z^\vee\cap(X^\vee)_{\sm}}\mathrm{Cont}(H,X) \subset \bigcup_{\mathrm{Cont}(H,X)\cap Z\neq\emptyset}\mathrm{Cont}(H,X)\,.
\]
Therefore, by taking closures we obtain that
\begin{equation}\label{eq: first containment double relative dual variety}
(X^\vee)_{X_Z^\vee}^\vee \subset \overline{\bigcup_{\mathrm{Cont}(H,X)\cap Z\neq\emptyset}\mathrm{Cont}(H,X)}\,.
\end{equation}
Now suppose that $x\in\mathrm{Cont}(H,X)\cap X_{\sm}$ for some $H\in\sU_{X,Z}$, in particular $H\supset T_xX$. Furthermore, $H\in\sU_{X,Z}\subset  X_Z^\vee\cap(X^\vee)_{\sm}$ implies that $\mathrm{Cont}(H,X)\cap Z\neq\emptyset$. Again we can apply Theorem \ref{thm: reflexivity} and say that $x$ is tangent to $X^\vee$ at $H\in X_Z^\vee\cap(X^\vee)_{\sm}$, in particular $x\in\sV_{X,Z}$. We have shown that
\[
\left(\bigcup_{H\in\sU_{X,Z}}\mathrm{Cont}(H,X)\right)\cap X_{\sm} \subset \left(\bigcup_{\mathrm{Cont}(H,X)\cap Z\neq\emptyset}\mathrm{Cont}(H,X)\right)\cap X_{\sm} \subset \sV_{X,Z}\,.
\]
By taking closures, we get the reverse inclusion in \eqref{eq: first containment double relative dual variety}, hence the desired equality in \eqref{eq: identity double relative dual variety}.
Observe that in the proof we have also shown the identity
\begin{equation}\label{eq: identity double relative dual variety 2}
\overline{\bigcup_{\mathrm{Cont}(H,X)\cap Z\neq\emptyset}\mathrm{Cont}(H,X)}=\overline{\bigcup_{H\in X_Z^\vee\cap(X^\vee)_{\sm}}\mathrm{Cont}(H,X)}\,.
\end{equation}
Now if $Z\subset X_{\sing}$, then $X_Z^\vee=\emptyset$, so the right-hand side of \eqref{eq: identity double relative dual variety 2} is empty. But also $(X^\vee)_{X_Z^\vee}^\vee$ is empty.
Furthermore, if $X_Z^\vee\subset(X^\vee)_{\sing}$, then $(X^\vee)_{X_Z^\vee}^\vee$ is empty, but also $ X_Z^\vee\cap(X^\vee)_{\sm}=\emptyset$, hence the right-hand side of \eqref{eq: identity double relative dual variety 2} is empty. This means that \eqref{eq: identity double relative dual variety} is correct also when dropping the assumptions that $Z\not\subset X_{\sing}$ and $X_Z^\vee\not\subset(X^\vee)_{\sing}$.
\end{proof}

\begin{remark}\label{rmk: drop assumptions identity double relative dual variety}
The identity of Lemma \ref{lem: identity double relative dual variety} does not hold anymore if we drop the assumption of relative dual regularity.
For a counterexample, consider the surface $X\subset\PP^3$ defined by the equation
\[
c_{2}^{2}c_{3}^{2}-4\,c_{1}c_{3}^{3}-4\,c_{2}^{3}c_{4}+18\,c_{1}c_{2}c_{3}c_{4}-27\,c_{1}^{2}c_{4}^{2}=0\,,
\]
which is the discriminant locus of a cubic binary form $f(z,w)=c_1z^3+c_2z^2w+c_3zw^2+c_4w^3$.
The dual variety of $X$ is the rational normal cubic
\[
X^\vee=V(c_{3}^{2}-c_{2}c_{4},\,c_{2}c_{3}-c_{1}c_{4},\,c_{2}^{2}-c_{1}c_{3})\,.
\]
The previous three quadrics also define the radical ideal of $X^\vee$. Note also that $\mathrm{def}(X)=1$, so that $X$ is ruled by generic contact loci that are lines. If we let  $Z=X_{\sing}$, then at least one contact locus of $X$ meets $Z$. Hence, the right-hand side of \eqref{eq: identity double relative dual variety} is nonempty, but the left-hand side is.
\end{remark}

\begin{definition}\label{def: relative reflexive and reciprocal}
Let $Z\subset X\subset\PP^N$ be projective varieties.
Then $X$ is {\em reflexive relative to} $Z\subset X$ if $W_{X,Z}=W_{X^\vee,X_Z^\vee}$, while $X$ is {\em reciprocal relative to} $Z$ (or that {\em relative biduality} holds for the pair $Z\subset X$) when $Z=(X^\vee)_{X_Z^\vee}^\vee$.
\end{definition}

\begin{proof}[Proof of Theorem \ref{thm: conditions equivalent to relative biduality}]
$(1)\Rightarrow(2)$ Assume $\mathrm{def}(X,Z)=\mathrm{def}(X)$.
The hypotheses tell us that there exists an open dense subset $\sU\subset X_Z^\vee$ of points $H$ that are smooth for $X^\vee$ and $X_Z^\vee$, such that $\dim(\mathrm{Cont}(H,X,Z))=\dim(\mathrm{Cont}(H,X))$. Furthermore, similarly as in Lemma \ref{lem: properties relative duality}$(1)$ we also have that $\mathrm{Cont}(H,X,Z)=\mathrm{Cont}(H,X)\cap Z$ for all $H\in\sU$, then necessarily $\mathrm{Cont}(H,X,Z)=\mathrm{Cont}(H,X)$ for all $H\in\sU$.
Recall that $W_{X,Z}\subset\PP^N\times(\PP^N)^\vee$ and that $W_{X^\vee,X_Z^\vee}\subset(\PP^N)^\vee\times((\PP^N)^\vee)^\vee=(\PP^N)^\vee\times\PP^N$. Let $p$ and $q$ be the projections of $W_{X,Z}$ and $W_{X^\vee,X_Z^\vee}$ onto $(\PP^N)^\vee$, respectively. In particular $p(W_{X,Z})=q(W_{X^\vee,X_Z^\vee})=X_Z^\vee$.
On one hand, given $H\in\sU$, we have that
\[
p^{-1}(H)=\overline{\{x\in X_{\sm}\cap Z\mid H\supset T_xX\}}\times\{H\}=\mathrm{Cont}(H,X,Z)\times\{H\}\,.
\]
On the other hand, $q^{-1}(H)=\{H\}\times\overline{\sV_H}\subset\{H\}\times Z$, where
\[
\sV_H=\{x\in X_{\sm}\mid x\supset T_H(X^\vee)\}=\{x\in X_{\sm}\mid H\supset T_xX\}\,,
\]
and in the last identity, we have used Theorem \ref{thm: reflexivity}. But this implies that $\overline{\sV_H}=\mathrm{Cont}(H,X)$. Therefore $\mathrm{Cont}(H,X,Z)=\mathrm{Cont}(H,X)$ for all $H\in\sU$ implies that $p^{-1}(H)=q^{-1}(H)$ for all $H\in\sU$. In conclusion
\[
W_{X,Z}=\overline{\bigcup_{H\in\sU}p
^{-1}(H)}=\overline{\bigcup_{H\in\sU}q^
{-1}(H)}=W_{X^\vee,X_Z^\vee}\,.
\]
$(2)\Rightarrow(3)$ If $W_{X,Z}=W_{X^\vee,X_Z^\vee}$, they projections onto $\PP^N$ coincide as well, and they are equal respectively to $Z$ and $(X^\vee)_{X_Z^\vee}^\vee$.

$(3)\Rightarrow(1)$
By Lemma \ref{lem: Z contained in double relative dual}, we know that $Z\subset(X^\vee)_{X_Z^\vee}^\vee$. Using Lemma \ref{lem: identity double relative dual variety}, the identity $Z=(X^\vee)_{X_Z^\vee}^\vee$ implies that $\mathrm{Cont}(H,X)\subset Z$ for all $H\in X_Z^\vee\cap(X^\vee)_{\sm}$. Using  Lemma \ref{lem: properties relative duality}$(1)$, we conclude that $\mathrm{Cont}(H,X)=\mathrm{Cont}(H,X,Z)$ for all $H\in X_Z^\vee\cap(X^\vee)_{\sm}$. But this implies that $\mathrm{def}(X,Z)=\mathrm{def}(X)$.
\end{proof}

\begin{corollary}[Relative biduality]\label{corol: relative biduality}
Let $Z\subset X\subset\PP^N$ be irreducible varieties such that $X$ is dual regular relative to $Z$. Then
\begin{enumerate}
    \item If $\mathrm{def}(X)=0$, then $X$ is reciprocal relative to $Z$.
    \item If $X_Z^\vee=X^\vee$, then $(X^\vee)_{X_Z^\vee}^\vee=(X^\vee)^\vee=X$.
\end{enumerate}
\end{corollary}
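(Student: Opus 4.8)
The plan is to deduce both implications from results already in hand, and then to refute the converses with explicit examples built from one Segre threefold.

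For the first implication, note that since $X$ is dual regular relative to $Z$, Lemma~\ref{lem: properties relative duality}(2) gives $0\le\mathrm{def}(X,Z)\le\mathrm{def}(X)$; the hypothesis $\mathrm{def}(X)=0$ then forces $\mathrm{def}(X,Z)=\mathrm{def}(X)$, and the equivalence $(1)\Leftrightarrow(3)$ in Theorem~\ref{thm: conditions equivalent to relative biduality} says exactly that $X$ is reciprocal relative to $Z$.

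For the second implication, the key observation is that the relative dual of a variety with respect to itself is its ordinary dual. Indeed, for $Y\subset\PP^M$ one has $W_{Y,Y}^{\circ}=W_Y^{\circ}\cap[Y\times(\PP^M)^\vee]=W_Y^{\circ}$, because $\pr_1$ sends $W_Y^{\circ}$ into $Y_{\sm}\subset Y$; hence $W_{Y,Y}=W_Y$ and $Y_Y^\vee=\pr_2(W_{Y,Y})=Y^\vee$. Applying this with $Y=X^\vee$ and invoking $X_Z^\vee=X^\vee$ yields $(X^\vee)_{X_Z^\vee}^\vee=(X^\vee)_{X^\vee}^\vee=(X^\vee)^\vee$, and since $\K$ has characteristic zero $X$ is reflexive by Theorem~\ref{thm: reflexivity}, hence reciprocal, so $(X^\vee)^\vee=X$.

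To show that neither converse holds, I would use the Segre embedding $X=\PP^1\times\PP^2\hookrightarrow\PP^5$ of the variety of rank-one $2\times3$ matrices. Here both $X$ and $X^\vee$ are smooth, $\mathrm{def}(X)=1$, and a direct computation shows that the contact locus of a tangent hyperplane $H$ is a set of the form $\{p\}\times\ell$ with $p\in\PP^1$ and $\ell\subset\PP^2$ a line, every such pair arising this way. Taking $Z=X$ we get $X_Z^\vee=X^\vee$, so by the second part $(X^\vee)_{X_Z^\vee}^\vee=X=Z$, i.e.\ $X$ is reciprocal relative to $Z$ although $\mathrm{def}(X)=1\ne0$; this refutes the converse of $(1)$. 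Taking instead $Z=\PP^1\times\{q\}$ for a point $q\in\PP^2$, one checks $\mathrm{def}(X,Z)=0$, so Proposition~\ref{prop: formulas relative codim X dual X Z dual}(3) gives $\codim_{X^\vee}(X_Z^\vee)=\codim_X(Z)+\mathrm{def}(X,Z)-\mathrm{def}(X)=2+0-1=1$, whence $X_Z^\vee\subsetneq X^\vee$; on the other hand the contact loci meeting $Z$ are exactly those $\{p\}\times\ell$ with $q\in\ell$, and since the lines through $q$ cover $\PP^2$ their union is all of $\PP^1\times\PP^2$, so Lemma~\ref{lem: identity double relative dual variety} gives $(X^\vee)_{X_Z^\vee}^\vee=X$; this refutes the converse of $(2)$. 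The only step requiring genuine work is identifying the contact loci of the Segre embedding and checking $\mathrm{def}(X,Z)=0$ in the last example; everything else is formal.
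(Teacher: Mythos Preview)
Your argument for the two implications matches the paper's exactly: part~(1) via Lemma~\ref{lem: properties relative duality}(2) and Theorem~\ref{thm: conditions equivalent to relative biduality}, and part~(2) by unwinding $(X^\vee)_{X^\vee}^\vee=(X^\vee)^\vee=X$, which the paper simply calls ``immediate.'' Your counterexample for the converse of~(2) is identical to the paper's (Remark~\ref{rmk: implications cannot be reversed}(ii)), though you supply more detail via the contact-locus description and Lemma~\ref{lem: identity double relative dual variety}.

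The one difference worth noting is your counterexample for the converse of~(1). You take $Z=X$, which works formally but is somewhat degenerate: reciprocity of $X$ relative to itself is just ordinary biduality, so this example really only says that biduality holds for dual-defective varieties. The paper instead takes $Z=\{P\}\times\PP^2\subsetneq X=\PP^1\times\PP^2$, a proper subvariety, and observes that $X_Z^\vee$ is again of the form $\{P'\}\times\PP^2$ with $(X^\vee)_{X_Z^\vee}^\vee=Z$, despite $\mathrm{def}(X)=1$. Your example is certainly valid and simpler; the paper's is less trivial and shows the converse fails even when $Z\subsetneq X$ and $X_Z^\vee\subsetneq X^\vee$.
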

\begin{proof}
$(1)$ Suppose that $\mathrm{def}(X)=0$. Lemma \ref{lem: properties relative duality}$(2)$ tells us that $0\le\mathrm{def}(X,Z)\le\mathrm{def}(X)$, hence $\mathrm{def}(X,Z)=\mathrm{def}(X)=0$. Then $Z=(X^\vee)_{X_Z^\vee}^\vee$ by Theorem \ref{thm: conditions equivalent to relative biduality}.
The implication $(2)$ is immediate.\qedhere
\end{proof}

\begin{remark}\label{rmk: implications cannot be reversed}
Both implications in Corollary \ref{corol: relative biduality} cannot be reversed:
\begin{enumerate}
    \item[$(i)$] A counterexample for the other implication in Corollary \ref{corol: relative biduality}(1) is the following. Consider $X$ to be the Segre product $\PP^1\times\PP^2\subset\PP^5$. Let $Z=\{P\}\times\PP^2$ for some point $P\in\PP^1$. In this case $X_Z^\vee$ is also of the form $\{P'\}\times\PP^2$ for some point $P'\in\PP^1$, and $(X^\vee)_{X_Z^\vee}^\vee=Z$, but $X^\vee=X$, hence $\mathrm{def}(X)=1$.
    \item[$(ii)$] About Corollary \ref{corol: relative biduality}(2), there exist varieties $Z\subset X$ such that $X$ is dual regular relative to $Z$, $(X^\vee)_{X_Z^\vee}^\vee=X$ but with $X_Z^\vee\subsetneq X^\vee$. For example, let $X$ be as in $(i)$, but this time choose $Z=\PP^1\times\{P\}$ for some $P\in\PP^2$. On one hand, we have $\codim_{X^\vee}(X_Z^\vee)=\codim_{X}(X_Z^\vee)=1$ and $\deg(X_Z^\vee)=2$, but $(X^\vee)_{X_Z^\vee}^\vee=X$.
\end{enumerate}
\end{remark}

Our next goal is to study more in detail the relation between $\mathrm{def}(X,Z)$ and $\mathrm{def}(X)$.
The most important thing to understand is whether $Z$ intersects the generic contact locus $\mathrm{Cont}(H,X)$, namely if $Z\cap \mathrm{Cont}(H,X)\neq\emptyset$ for all $H$ in an open dense subset of $X_Z^\vee$. If this is the case, then it is immediate to conclude that $X_Z^\vee=X^\vee$. We can be more precise when $Z=X\cap Y$ for some generic complete intersection variety $Y$.

\begin{proposition}\label{prop: relation codim dual relative dual generic complete intersection}
Let $X\subset\PP^N$ be an irreducible variety, and let $Y$ be a generic complete intersection variety of codimension $c\le n=\dim(X)$. If $Z=X\cap Y$, then
\[
\codim_{X^\vee}(X_Z^\vee)=\max\{0,c-\mathrm{def}(X)\}\quad\text{and}\quad\mathrm{def}(X,Z)=\max\{0,\mathrm{def}(X)-c\}\,.
\]
\end{proposition}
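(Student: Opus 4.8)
The plan is to argue by induction on the codimension $c$, peeling off one generic hypersurface at a time; by Proposition~\ref{prop: formulas relative codim X dual X Z dual}(3) (with $\codim_X(Z)=c$) the two displayed formulas are equivalent, so it suffices to establish $\mathrm{def}(X,Z)=\max\{0,\mathrm{def}(X)-c\}$, where I read $\mathrm{def}(X,Z)$ as the generic fibre dimension of $\pi_{2,Z}\colon W_{X,Z}\to X_Z^\vee$. When $c=0$ we have $Z=X$, $W_{X,X}=W_X$, $X_X^\vee=X^\vee$, hence $\mathrm{def}(X,X)=\mathrm{def}(X)$, and a generic point of the irreducible variety $W_X$ has both coordinates smooth, so $X$ is dual regular relative to $X$; this is the base case. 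For the step, write the generic complete intersection as $Y=Y_0\cap G$ with $Y_0$ a generic complete intersection of codimension $c-1$ and $G$ a generic hypersurface, and set $Z_0\coloneqq X\cap Y_0$. By the inductive hypothesis $Z_0$ is irreducible of codimension $c-1$, is not contained in $X_{\sing}$, $X$ is dual regular relative to $Z_0$, and $\mathrm{def}(X,Z_0)=\max\{0,\mathrm{def}(X)-(c-1)\}$; so everything reduces to the single claim that, for $Z=Z_0\cap G$ with $G$ generic, $\mathrm{def}(X,Z)=\max\{0,\mathrm{def}(X,Z_0)-1\}$ and $X$ remains dual regular relative to $Z$. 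I would prove this by distinguishing whether $\mathrm{def}(X,Z_0)$ is positive or zero.

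\emph{Case $\mathrm{def}(X,Z_0)\geq 1$.} For $H$ in a dense open subset of $X_{Z_0}^\vee$ contained in $(X^\vee)_{\sm}$, Lemma~\ref{lem: properties relative duality}(1) identifies $\mathrm{Cont}(H,X,Z_0)$ with $\mathrm{Cont}(H,X)\cap Z_0$, a projective variety of dimension $\mathrm{def}(X,Z_0)\geq 1$; such a variety meets every hypersurface, and for generic $G$ it meets $G$ at a point of $X_{\sm}$. Hence $X_{Z_0}^\vee\subseteq X_Z^\vee$, and since $Z\subseteq Z_0$ forces the reverse inclusion, $X_Z^\vee=X_{Z_0}^\vee$ and dual regularity relative to $Z$ persists. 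Moreover $\mathrm{Cont}(H,X,Z)=\mathrm{Cont}(H,X,Z_0)\cap G$, whose dimension (for generic $G$ and generic such $H$) is $\mathrm{def}(X,Z_0)-1\geq 0$; thus $\mathrm{def}(X,Z)=\mathrm{def}(X,Z_0)-1=\max\{0,\mathrm{def}(X,Z_0)-1\}$.

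\emph{Case $\mathrm{def}(X,Z_0)=0$.} Now $\pi_{2,Z_0}\colon W_{X,Z_0}\to X_{Z_0}^\vee$ has finite generic fibres; let $U\subseteq X_{Z_0}^\vee$ be the dense open where these fibres are finite and which lies in $(X^\vee)_{\sm}$. For generic $G$ one has $W_{X,Z}=\overline{W_{X,Z_0}\cap\pr_1^{-1}(G)}$, which has dimension $\dim W_{X,Z_0}-1$ because $\pr_1$ maps $W_{X,Z_0}$ onto the positive-dimensional $Z_0$. The points of the contact loci $\mathrm{Cont}(H,X)\cap Z_0$ with $H\in U$ sweep a dense subset of $Z_0$ (the image under $\pr_1$ of the dense open $\pi_{2,Z_0}^{-1}(U)$), so a generic $G$ meets that subset and $W_{X,Z}\cap\pi_{2,Z}^{-1}(U)$ is a nonempty, hence dense, open subset of the irreducible $W_{X,Z}$. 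Over $U$ the fibres of $\pi_{2,Z}$ sit inside the finite fibres of $\pi_{2,Z_0}$, so they are finite; therefore $\dim X_Z^\vee\geq\dim\pi_{2,Z}\!\left(W_{X,Z}\cap\pi_{2,Z}^{-1}(U)\right)=\dim W_{X,Z}$. Combined with $\dim X_Z^\vee\leq\dim W_{X,Z}$ this gives $\dim X_Z^\vee=\dim W_{X,Z}$, i.e.\ $\mathrm{def}(X,Z)=0=\max\{0,\mathrm{def}(X,Z_0)-1\}$, and dual regularity relative to $Z$ is immediate since the $H$ in question lie in $(X^\vee)_{\sm}$ and $X_{\sm}\cap Z\neq\emptyset$. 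Unrolling the recursion from $c=0$ then yields $\mathrm{def}(X,Z)=\max\{0,\mathrm{def}(X)-c\}$, and the codimension formula follows by Proposition~\ref{prop: formulas relative codim X dual X Z dual}(3).

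The step I expect to be the main obstacle is the case $\mathrm{def}(X,Z_0)=0$: one has to rule out that a generic hypersurface section of $Z_0$ secretly produces positive-dimensional relative contact loci where none existed, and the only leverage is the dimension bookkeeping among $W_{X,Z_0}$, $X_{Z_0}^\vee$, $W_{X,Z}$ and $X_Z^\vee$ together with generic finiteness of $\pi_{2,Z_0}$. Everything else is routine and only needs Bertini-type facts applied uniformly over families — a generic hypersurface section of an irreducible variety of dimension $\geq 2$ is irreducible of dimension one less; a positive-dimensional projective subvariety meets every hypersurface, while a generic hypersurface misses a fixed finite set and meets any prescribed dense constructible subset of a positive-dimensional variety — together with the identity $W_{X,Z}=\overline{W_{X,Z_0}\cap\pr_1^{-1}(G)}$ for generic $G$ (clear at the level of the open parts, since $W_{X,Z}^\circ=W_{X,Z_0}^\circ\cap\pr_1^{-1}(G)$). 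The degenerate case $c=n$, where $Z$ is zero-dimensional and the standing assumption that $Z$ be irreducible is strained, should be dispatched by a direct check or by running the induction only to $c=n-1$ and treating the last section separately.
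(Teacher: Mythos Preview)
Your inductive approach is correct and genuinely different from the paper's argument. The paper proceeds directly: it uses Lemma~\ref{lem: properties relative duality}(1) to write $\mathrm{Cont}(H,X,Z)=\mathrm{Cont}(H,X)\cap Y$ for generic $H\in X_Z^\vee$, then splits once into the two cases $c\le\mathrm{def}(X)$ and $c>\mathrm{def}(X)$. In the first case the linear contact locus $\mathrm{Cont}(H,X)$ has dimension $\mathrm{def}(X)\ge c$, so it always meets $Y$ and $X_Z^\vee=X^\vee$; in the second case the generic contact locus misses $Y$, and when it does meet $Y$ the paper uses a short projection-from-a-point argument to force the intersection to be a single point, giving $\mathrm{def}(X,Z)=0$. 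Your induction replaces this projection trick by repeated generic hyperplane sections, which is more elementary and makes the mechanism (the defect drops by one per section until it hits zero) completely transparent; the price is that you have to carry dual regularity and irreducibility of $Z_0$ through the induction and handle the $c=n$ endpoint separately, as you note. A small point you glossed over in the positive-defect case: the claim that for a \emph{fixed} generic $G$ the intersection $\mathrm{Cont}(H,X,Z_0)\cap G$ has dimension exactly $\mathrm{def}(X,Z_0)-1$ for generic $H$ needs the observation that $\mathrm{Cont}(H,X,Z_0)\not\subset G$ for generic $H$, which holds because the contact loci sweep out $Z_0\not\subset G$; this is routine but worth one sentence.
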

\begin{proof}
The genericity of $Y$ tells us that $\codim_{X}(Z)=c$, and ensures that $X$ is dual regular relative to $Z$. In particular, using Lemma \ref{lem: properties relative duality}(1), we conclude that for a generic $H\in X_Z^\vee$,
\begin{equation}\label{eq: contact locus intersect Y}
    \mathrm{Cont}(H,X,Z)=\mathrm{Cont}(H,X)\cap Z=\mathrm{Cont}(H,X)\cap Y\,.
\end{equation}
There are two scenarios:
\begin{enumerate}
    \item First, assume that $c\le\mathrm{def}(X)$. Consider the projections $\pi_2\colon W_X\to X^\vee$ and $\pi_{2,Z}\colon W_{X,Z}\to X_Z^\vee$.
    For a generic $H\in X_Z^\vee$, its contact locus $\mathrm{Cont}(H,X)$ is linear of dimension $\mathrm{def}(X)$. Then necessarily $\mathrm{Cont}(H,X)$ meets $Y$, implying that $X_Z^\vee=X^\vee$. Hence $\codim_{X^\vee}(X_Z^\vee)=0$ and $\mathrm{def}(X,Z)=\mathrm{def}(X)-c$ by Proposition \ref{prop: formulas relative codim X dual X Z dual}(3).
    \item Otherwise $c>\mathrm{def}(X)$.
    In this case, the generic contact locus $\mathrm{Cont}(H,X)$ does not meet $Y$. If instead $\mathrm{Cont}(H,X)\cap Y\neq\emptyset$, we claim that $\mathrm{Cont}(H,X)\cap Y$ consists of one point only. To prove the claim, let $L=\mathrm{Cont}(H,X)$ and assume by contradiction that $Y \cap L$ contains at least two points $p,q$.
    Consider a hyperplane $H$ not containing $p$ and the projection $\pi\colon\PP^N\dashrightarrow H$.
    Notice that $\pi(X)=\overline{X,p}\cap H$ for any subvariety $X$, where $\overline{X,p}\coloneqq\bigcup_{q\in X}\overline{qp}$ is the cone over $X$ with vertex $p$, see \cite[Example 3.10]{harris1992algebraic}.
    It follows that $\pi(q)\in \pi(L)\cap\pi(Y)$. On the other hand, $\dim(\pi(L))=\dim(L\cap H)=\dim(L)-1$ and $\dim(\pi(Y))=\dim(\overline{Y,p}\cap H)=\dim(Y)$. This implies that $\dim(\pi(L))+\dim(\pi(Y))=\dim(L)+\dim(Y)-1<\dim(H)$. Since $\pi(Y)$ is generic too, then the previous dimension count tells us that $\pi(L)\cap\pi(Y)=\emptyset$, thus giving a contradiction.
    Using \eqref{eq: contact locus intersect Y}, our claim implies that $\mathrm{def}(X,Z)=0$. Furthermore $\codim_{X^\vee}(X_Z^\vee)=c-\mathrm{def}(X)$ by Proposition \ref{prop: formulas relative codim X dual X Z dual}(3).\qedhere
\end{enumerate} 
\end{proof}

\begin{example}\label{ex: discriminant cubic binary form}
For a nonlinear example, consider again the discriminant of a cubic binary form introduced in Remark \ref{rmk: drop assumptions identity double relative dual variety}. Recall that $\mathrm{def}(X)=1$.
We consider several possible subvarieties $Z\subset X$ and the corresponding relative dual varieties $X_Z^\vee$:
\begin{enumerate}
    \item First, by Proposition \ref{prop: relation codim dual relative dual generic complete intersection}, if $Y$ is a generic hypersurface in $\PP^3$ and we consider $Z=X\cap Y$, then $X_Z^\vee=X^\vee$, because $\codim(Y)=1=\mathrm{def}(X)$. Instead, if $Y$ is a generic complete intersection curve of degree $d$, then $X_Z^\vee$ has codimension one within $X^\vee$, or rather $X_Z^\vee$ is zero-dimensional. We show in Proposition \ref{prop: class relative conormal generic Y} that in this case $\deg(X_Z^\vee)=\deg(Z)=\deg(Y)\deg(X)=4d$.
    \item Secondly, consider the line $Z=V(c_1,c_2)$ in $X$. In this case $\sI(X_Z^\vee)=\langle c_2,c_3,c_4\rangle$, which corresponds to the point $[1,0,0,0]$.
    \item Finally, consider the point $Z=V(c_1,c_2,\alpha c_3+\beta c_4)$ for arbitrary $\alpha,\beta$ and with $\beta\neq 0$. The last condition ensures that $Z$ is not contained in the singular locus of $X$ because $\langle c_1,c_2\rangle+\sI(X_{\sing})=\langle c_1,c_2,c_3^3\rangle$. Also in this case we get that $\sI(X_Z^\vee)=\langle c_2,c_3,c_4\rangle$.
\end{enumerate}
The last two cases show that the relations in Proposition \ref{prop: relation codim dual relative dual generic complete intersection} are no longer valid when $Z$ is not the intersection between $X$ and a generic complete intersection variety.
We refer to Example \ref{ex: discriminant binary form degree N} for a more general study on relative dual varieties of discriminants of binary forms.\hfill$\diamondsuit$
\end{example}

Recall the notations in \eqref{eq: notation hyperplane classes}.
As for the conormal variety $W_X$, we might consider the class of $W_{X,Z}$ in the Chow ring $A^*(\PP^N\times(\PP^N)^\vee)$, which can be written as
\begin{equation}\label{eq: def relative multidegrees}
[W_{X,Z}]=\delta_0(X,Z)h^N(h')^{n-d+1}+\cdots+\delta_{N-1-n+d}(X,Z)h^{n-d+1}(h')^N\,,
\end{equation}
for some nonnegative integer coefficients $\delta_i(X,Z)$, called the multidegrees of $W_{X,Z}$. Their interpretation is similar to the multidegrees $\delta_i(X)$, namely each coefficient $\delta_i(X,Z)$ is the intersection number
\begin{equation}\label{def: ranks delta_i(X,Z)}
\delta_i(X,Z) \coloneqq \int h^i(h')^{N-n+d-i-1}[W_{X,Z}]\,.
\end{equation}

The following result is inspired by \cite[Theorem 3.4]{holme1988geometric}.

\begin{proposition}\label{prop: relative version of Holme}
Let $Z$ be a $d$-dimensional subvariety of $X\subset\PP^N$. Let $\varepsilon_{X,Z}$ be the degree of the generic fiber of $\pi_{2,Z}\colon W_{X,Z}\to X_Z^\vee$. Consider the multidegrees $\delta_i(X,Z)$ introduced in \eqref{eq: def relative multidegrees}. Then
\begin{enumerate}
\item $X_Z^\vee$ has dimension $N-1-r$ if $\delta_i(X,Z)=0$ for all $0\le i\le r-n+d-1$ and $\delta_{r-n+d}(X,Z)\neq 0$. In this case $\delta_{r-n+d}(X,Z)=\varepsilon_{X,Z}\cdot\deg(X_Z^\vee)$.
\item The last nonzero multidegree of $W_{X,Z}$ is equal to $\deg(Z)$.
\end{enumerate}
\end{proposition}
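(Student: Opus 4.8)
The plan is to adapt the proof of \cite[Theorem 3.4]{holme1988geometric} to the relative conormal variety. Everything rests on the interpretation \eqref{def: ranks delta_i(X,Z)}: for generic linear subspaces $L_1\subset\PP^N$ of codimension $j$ and $L_2\subset(\PP^N)^\vee$ of codimension $N-1-n+d-j$, one has $\delta_j(X,Z)=\#\big(W_{X,Z}\cap(L_1\times L_2)\big)$, and the idea is to analyse this finite set by pushing it forward along the two projections $\pi_{1,Z}\colon W_{X,Z}\to Z$ and $\pi_{2,Z}\colon W_{X,Z}\to X_Z^\vee$. We may assume $Z\not\subset X_{\sing}$, since otherwise $W_{X,Z}=\emptyset$ and there is nothing to prove; then Proposition \ref{prop: dimension relative conormal variety} gives that $W_{X,Z}$ is irreducible of dimension $N-1-n+d$, so $\pi_{1,Z}$ and $\pi_{2,Z}$ are dominant morphisms of irreducible varieties whose generic fibres are equidimensional of the expected dimension, and $\varepsilon_{X,Z}$ is the (well-defined) degree of the generic fibre of $\pi_{2,Z}$.

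For part~(1), write $\dim X_Z^\vee=N-1-r$, so the generic fibre of $\pi_{2,Z}$ has dimension $e:=(N-1-n+d)-(N-1-r)=r-n+d$; note $e=\mathrm{def}(X,Z)$, since $\pr_1$ is injective on each fibre and $\pr_1(\pi_{2,Z}^{-1}(H))=\mathrm{Cont}(H,X,Z)$. Because $W_{X,Z}\cap(\PP^N\times L_2)=\pi_{2,Z}^{-1}(X_Z^\vee\cap L_2)$, a generic $L_2$ of codimension $N-1-n+d-j$ avoids $X_Z^\vee$ exactly when that codimension exceeds $\dim X_Z^\vee$, i.e.\ when $j<e$; this gives $\delta_j(X,Z)=0$ for $j\le r-n+d-1$. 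When $j=e$ the codimension of $L_2$ equals $\dim X_Z^\vee$, so by Bertini $X_Z^\vee\cap L_2$ consists of $\deg(X_Z^\vee)$ reduced points, all lying in the dense open where the fibre of $\pi_{2,Z}$ is $e$-dimensional of degree $\varepsilon_{X,Z}$; slicing each such fibre (a subvariety of $\PP^N\times\{H\}\cong\PP^N$) by the generic codimension-$e$ subspace $L_1$ produces $\varepsilon_{X,Z}$ points, so $\delta_e(X,Z)=\varepsilon_{X,Z}\deg(X_Z^\vee)\ne0$. Thus $e$ is the least index with $\delta_e(X,Z)\ne0$, and reading this backwards is exactly the assertion: the hypotheses force $r-n+d=e=(N-1-n+d)-\dim X_Z^\vee$, hence $\dim X_Z^\vee=N-1-r$ and $\delta_{r-n+d}(X,Z)=\varepsilon_{X,Z}\deg(X_Z^\vee)$.

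Part~(2) is the mirror-image computation via $\pi_{1,Z}\colon W_{X,Z}\to Z$, whose fibre over $x\in X_{\sm}\cap Z$ is the \emph{linear} space $\{x\}\times\PP^{N-n-1}$ (the projectivised conormal space of $X$ at $x$), of degree $1$. Here $W_{X,Z}\cap(L_1\times\PP^N)=\pi_{1,Z}^{-1}(Z\cap L_1)$, which is empty for generic $L_1$ of codimension $j>d=\dim Z$; hence $\delta_j(X,Z)=0$ for $j>d$, and since $d\le N-1-n+d$ (because $n\le N-1$) the last multidegree of $W_{X,Z}$ is $\delta_d(X,Z)$. For $j=d$, Bertini gives $Z\cap L_1$ as $\deg(Z)$ reduced points of $X_{\sm}\cap Z$, over each of which the fibre is a $\PP^{N-n-1}$; intersecting it with the generic codimension-$(N-1-n)$ subspace $L_2$ meets this linear fibre in a single point, so $\delta_d(X,Z)=\deg(Z)$.

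The one delicate ingredient, as always with such arguments, is the Bertini bookkeeping: I would need to check that for generic $L_1,L_2$ the slices $X_Z^\vee\cap L_2$ and $Z\cap L_1$ are transverse — hence reduced and of the predicted cardinality — that they land in the open loci over which the fibres of $\pi_{2,Z}$, resp.\ $\pi_{1,Z}$, have the expected dimension and degree, and that $L_1$, resp.\ $L_2$, then meets each of the finitely many resulting fibres properly and with multiplicity one along every component, so that no excess intersection inflates the count. All of this is routine in characteristic zero and relies only on the irreducibility and dimension of $W_{X,Z}$ recorded in Proposition \ref{prop: dimension relative conormal variety}.
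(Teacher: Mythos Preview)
Your argument is correct and follows essentially the same route as the paper's proof: both reduce to slicing $W_{X,Z}$ by generic linear subspaces and reading off the multidegrees via the projections $\pi_{1,Z}$ and $\pi_{2,Z}$. The paper packages the vanishing statement as a chain of equivalences $(\dim X_Z^\vee\le N-1-r)\Leftrightarrow(X_Z^\vee\cap L=\emptyset)\Leftrightarrow([W_{X,Z}]\cdot(h')^{N-r}=0)\Leftrightarrow(\delta_0=\cdots=\delta_{r-1-n+d}=0)$ and then declares the identity $\delta_{r-n+d}(X,Z)=\varepsilon_{X,Z}\cdot\deg(X_Z^\vee)$ and part~(2) ``immediate''; your fibre-by-fibre point count is exactly what makes those immediate steps explicit, and your Bertini bookkeeping paragraph covers precisely the transversality needed.
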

\begin{proof}
Let $L$ be a generic subspace of $(\PP^N)^\vee$ of dimension $r$. Then the following facts are equivalent:
\begin{itemize}
    \item[$(i)$] $\dim(X_Z^\vee)\le N-1-r$
    \item[$(ii)$] $X_Z^\vee\cap L=\emptyset$
    \item[$(iii)$] $W_{X,Z}\cap \pr_2^{-1}(L)=\emptyset$
    \item[$(iv)$] $[W_{X,Z}]\cdot (h')^{N-r}=0$
    \item[$(v)$] $\sum_{j=0}^{r-1-n+d}\delta_j(X,Z)h^{N-j}(h')^{N+1+n-d-r+j}=0$
    \item[$(vi)$] $\delta_0(X,Z)=\cdots=\delta_{r-1-n+d}(X,Z)=0$ 
\end{itemize}
About the equivalence between $(ii)$ and $(iii)$, note that $X_Z^\vee\cap L=\emptyset$ if and only if $\pr_2^{-1}(X_Z^\vee\cap L)=\emptyset$, and $\pr_2^{-1}(X_Z^\vee\cap L)=\pr_2^{-1}(X_Z^\vee)\cap \pr_2^{-1}(L)=W_{X,Z}\cap \pr_2^{-1}(L)$. Then by genericity of $L$, we conclude that $[W_{X,Z}\cap \pr_2^{-1}(L)]=[W_{X,Z}]\cdot[\pr_2^{-1}(L)]=[W_{X,Z}]\cdot \pr_2^*([H']^{N-r})=[W_{X,Z}]\cdot (h')^{N-r}$.
The identity $\delta_{r-n+d}(X,Z)=\varepsilon_{X,Z}\cdot\deg(X_Z^\vee)$ and part $(2)$ are immediate.
\end{proof}

\begin{corollary}\label{cor: formula class relative conormal}
Let $Z$ be a subvariety of $X$ of dimension $d$. Then $0\le N-\dim(X_Z^\vee)-n+d-1\le d$ and
\[
[W_{X,Z}]=\sum_{i=N-\dim(X_Z^\vee)-n+d-1}^{d}\delta_i(X,Z)h^{N-i}(h')^{n-d+1+i}\,.
\]
\end{corollary}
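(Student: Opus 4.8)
The plan is to read off the two endpoints of the sum from results already in hand: by \eqref{eq: def relative multidegrees} the class $[W_{X,Z}]$ is a combination of the monomials $h^{N-j}(h')^{n-d+1+j}$ for $0\le j\le N-1-n+d$, so the statement reduces to showing that $\delta_j(X,Z)=0$ unless $N-n_Z'-n+d-1\le j\le d$, together with the fact that this interval lies in $[0,d]$.

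I would first dispatch the inequality. By Proposition \ref{prop: dimension relative conormal variety}, $\dim W_{X,Z}=N-1-n+d$; moreover the generic fibre of $\pi_{2,Z}\colon W_{X,Z}\to X_Z^\vee$ over $H$ is $\mathrm{Cont}(H,X,Z)$, of dimension $\mathrm{def}(X,Z)$, so $n_Z'=\dim X_Z^\vee=N-1-n+d-\mathrm{def}(X,Z)$, i.e.\ $N-n_Z'-n+d-1=\mathrm{def}(X,Z)$. Since $\mathrm{Cont}(H,X,Z)=\pi_{1,Z}(\pi_{2,Z}^{-1}(H))\subseteq\pi_{1,Z}(W_{X,Z})=Z$ is nonempty for generic $H\in X_Z^\vee$, we get $0\le\mathrm{def}(X,Z)\le\dim Z=d$, which is exactly $0\le N-n_Z'-n+d-1\le d$.

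It then remains to prove the two vanishing ranges. For the lower one I would invoke Proposition \ref{prop: relative version of Holme}(1), whose proof gives the equivalence $\dim X_Z^\vee\le N-1-r\iff\delta_0(X,Z)=\cdots=\delta_{r-1-n+d}(X,Z)=0$; taking $r=N-1-n_Z'$ (the largest $r$ for which $\dim X_Z^\vee\le N-1-r$) yields $\delta_j(X,Z)=0$ for $0\le j\le N-2-n_Z'-n+d=\mathrm{def}(X,Z)-1$. For the upper one I would run the mirror argument through $\pr_1$ instead of $\pr_2$: since $\pr_1(W_{X,Z})=Z$ has dimension $d$, a generic linear subspace $L\subset\PP^N$ of codimension $j$ with $j>d$ satisfies $Z\cap L=\emptyset$, hence $W_{X,Z}\cap\pr_1^{-1}(L)=\pi_{1,Z}^{-1}(Z\cap L)=\emptyset$ and so $[W_{X,Z}]\cdot h^j=0$ in $A^*(\PP^N\times(\PP^N)^\vee)$; pairing with $(h')^{N-1-n+d-j}$ and using that $\delta_j(X,Z)=\int h^j(h')^{N-1-n+d-j}[W_{X,Z}]$ gives $\delta_j(X,Z)=0$ for all $d<j\le N-1-n+d$. (Equivalently: $W_{X,Z}\subseteq Z\times(\PP^N)^\vee$ and $h^{d+1}$ restricts to $0$ on the $d$-dimensional $Z$, so $[W_{X,Z}]\cdot h^{d+1}=0$.) Plugging both vanishing ranges into \eqref{eq: def relative multidegrees} gives the displayed formula.

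I do not expect a genuine obstacle: the argument is just Propositions \ref{prop: dimension relative conormal variety} and \ref{prop: relative version of Holme} combined with the elementary intersection calculus on $\PP^N\times(\PP^N)^\vee$. The only points to watch are the index arithmetic and the standing hypothesis $Z\not\subset X_{\sing}$ — it is what makes $W_{X,Z}$ nonempty, what makes $\pr_1(W_{X,Z})=Z$ literally true, and what underpins the dimension count for $n_Z'$.
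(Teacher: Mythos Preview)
Your proof is correct and follows exactly the route the paper intends: the corollary is stated without proof as an immediate consequence of Proposition~\ref{prop: relative version of Holme}, and you have simply filled in the details---part (1) of that proposition gives the lower cutoff, while your mirror argument through $\pr_1$ is precisely the ``immediate'' proof of part (2) that the paper omits. The identification $N-n_Z'-n+d-1=\mathrm{def}(X,Z)$ and the resulting inequality are also correct and implicit in the paper's formula $n_Z'=N-1-\codim_X(Z)-\mathrm{def}(X,Z)$.
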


Example \ref{ex: discriminant cubic binary form} tells us that understanding the multidegrees $\delta_i(X,Z)$ for arbitrary $X$ and $Z\subset X$ might be challenging. The following result is a consequence of Propositions \ref{prop: relation codim dual relative dual generic complete intersection} and \ref{prop: relative version of Holme}, and describes completely the relation between the multidegrees $\delta_i(X)$ and the relative multidegrees $\delta_i(X,Z)$, when $Z$ is the intersection between $X$ and a generic complete intersection variety.

\begin{proposition}\label{prop: class relative conormal generic Y}
Let $X\subset\PP^N$ be an irreducible variety, and let $Y$ be a generic complete intersection variety of codimension $c\le n=\dim(X)$. If $Z=X\cap Y$, then
\begin{equation}\label{eq: identities relative multidegrees}
\delta_{i-c}(X,Z)=\deg(Y)\delta_i(X)\quad\forall\,i\in\{\max\{\mathrm{def}(X),c\},\ldots,n\}\,.
\end{equation}
In particular, considering the degree $\varepsilon_{X,Z}$ introduced in Proposition \ref{prop: relative version of Holme}, we have
\begin{equation}
\varepsilon_{X,Z}=
\begin{cases}
\deg(Y) & \text{if $c\le\mathrm{def}(X)$}\\
1 & \text{if $c>\mathrm{def}(X)$}
\end{cases}
\quad\text{and}\quad
\deg(X_Z^\vee)=
\begin{cases}
\delta_{\mathrm{def}(X)}(X) & \text{if $c\le\mathrm{def}(X)$}\\
\deg(Y)\delta_{c}(X) & \text{if $c>\mathrm{def}(X)$.}
\end{cases}
\end{equation}
\end{proposition}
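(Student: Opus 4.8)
The plan is to realize the relative conormal variety $W_{X,Z}$ as a proper, generically transverse intersection of $W_X$ with a pulled-back complete intersection; this produces the multidegree identities \eqref{eq: identities relative multidegrees} by a short computation in $A^*(\PP^N\times(\PP^N)^\vee)$, and the statements about $\varepsilon_{X,Z}$ and $\deg(X_Z^\vee)$ then follow by feeding that identity into Propositions \ref{prop: relation codim dual relative dual generic complete intersection} and \ref{prop: relative version of Holme}. First I would write $Y=Y_1\cap\cdots\cap Y_c$ with $Y_k$ a generic hypersurface of degree $e_k$, so $\deg(Y)=e_1\cdots e_c$ and $[Y\times(\PP^N)^\vee]=\pr_1^*[Y]=\deg(Y)\,h^c$. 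Set $\psi\coloneqq\pr_1|_{W_X}\colon W_X\to X\hookrightarrow\PP^N$. Since $W_X^\circ=\psi^{-1}(X_{\sm})$ (the projective bundle $W_X^\circ$ is closed in $\pr_1^{-1}(X_{\sm})$) and $\pr_1(W_X)=X$, we get $W_X\cap(Y\times(\PP^N)^\vee)=\psi^{-1}(X\cap Y)=\psi^{-1}(Z)$, while $W_{X,Z}=\overline{\psi^{-1}(X_{\sm}\cap Z)}$ is a union of components of $\psi^{-1}(Z)$.

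The key point is that $\psi^{-1}(Z)=W_{X,Z}$ with multiplicity one. Since $\mathrm{char}\,\K=0$ and $\mathrm{PGL}_{N+1}$ acts transitively on $\PP^N$, I would apply Kleiman's transversality theorem to $\psi$ and a generic translate of $Y$: for generic $Y$ the scheme $\psi^{-1}(Z)$ is of pure dimension $\dim W_X-c=N-1-c$ (it is nonempty, as $Z\not\subset X_{\sing}$, and being cut in $W_X$ by $c$ equations it is everywhere of dimension $\ge N-1-c$), and $\psi^{-1}(Z)\cap(W_X)_{\sm}$ is smooth (apply the theorem to $\psi|_{(W_X)_{\sm}}$ and to $Y$, smooth by Bertini). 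Any component $C$ of $\psi^{-1}(Z)$ with $C\not\subset W_{X,Z}$ satisfies $C\cap\psi^{-1}(X_{\sm}\cap Z)=\emptyset$, hence $C\subset\psi^{-1}(X_{\sing})$, a proper closed subset of the irreducible $(N-1)$-dimensional $W_X$ and so of dimension $\le N-2$; a second application of Kleiman's theorem to $\psi$ restricted to the components of $\psi^{-1}(X_{\sing})$ forces $\psi^{-1}(Z)\cap\psi^{-1}(X_{\sing})$ to have dimension $\le N-2-c<N-1-c$, contradicting purity. Thus $\psi^{-1}(Z)=W_{X,Z}$, its generic point is a smooth point of $W_X$, the intersection $W_X\cap(Y\times(\PP^N)^\vee)$ is proper and generically transverse, and therefore
\[
[W_{X,Z}]=[W_X]\cdot[Y\times(\PP^N)^\vee]=\deg(Y)\,h^c\,[W_X]\qquad\text{in }A^*(\PP^N\times(\PP^N)^\vee).
\]

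To finish, expand $[W_X]=\sum_{i=0}^{N-1}\delta_i(X)h^{N-i}(h')^{i+1}$; multiplying by $\deg(Y)h^c$ annihilates the terms with $i<c$, and since $d=\dim Z=n-c$ we have $[W_{X,Z}]=\sum_{j=0}^{N-1-c}\delta_j(X,Z)h^{N-j}(h')^{c+1+j}$, so comparing coefficients gives $\delta_j(X,Z)=\deg(Y)\delta_{j+c}(X)$ for $0\le j\le N-1-c$, which contains \eqref{eq: identities relative multidegrees}. For the remaining quantities I would split according to Proposition \ref{prop: relation codim dual relative dual generic complete intersection}. If $c\le\mathrm{def}(X)$, then $X_Z^\vee=X^\vee$ and $\mathrm{def}(X,Z)=\mathrm{def}(X)-c$, so the first nonzero multidegree of $W_{X,Z}$ equals $\varepsilon_{X,Z}\deg(X^\vee)$ by Proposition \ref{prop: relative version of Holme}(1) and equals $\deg(Y)\delta_{\mathrm{def}(X)}(X)=\deg(Y)\deg(X^\vee)$ by the identity above and Proposition \ref{prop: multidegrees and polar classes Kleiman}(2); since $\deg(X^\vee)>0$ this forces $\varepsilon_{X,Z}=\deg(Y)$ and $\deg(X_Z^\vee)=\delta_{\mathrm{def}(X)}(X)$. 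If $c>\mathrm{def}(X)$, then $\mathrm{def}(X,Z)=0$ and, as established in the proof of Proposition \ref{prop: relation codim dual relative dual generic complete intersection}, the generic fibre of $\pi_{2,Z}$ is a single point, so $\varepsilon_{X,Z}=1$; then Proposition \ref{prop: relative version of Holme}(1) together with $\delta_0(X,Z)=\deg(Y)\delta_c(X)$ yields $\deg(X_Z^\vee)=\deg(Y)\delta_c(X)$.

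The main obstacle is precisely the reduction carried out in the second paragraph: one must rule out spurious components of $\psi^{-1}(Z)$ lying over $X_{\sing}$, and the naive bound $\dim\psi^{-1}(X_{\sing})\le N-2$ is by itself insufficient — it has to be combined with the purity of $\psi^{-1}(Z)$ and a second application of Kleiman's theorem over $X_{\sing}$ to conclude $\psi^{-1}(Z)=W_{X,Z}$ scheme-theoretically up to multiplicity one. Only then is the clean Chow-ring identity $[W_{X,Z}]=\deg(Y)\,h^c\,[W_X]$ licensed, and everything downstream is then a formal computation.
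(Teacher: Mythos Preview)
Your proof is correct and follows the same approach as the paper: establish $[W_{X,Z}]=\deg(Y)\,h^c\,[W_X]$ via the identification $W_{X,Z}=W_X\cap(Y\times(\PP^N)^\vee)$, then read off the multidegrees and split into the two cases using Propositions \ref{prop: relation codim dual relative dual generic complete intersection} and \ref{prop: relative version of Holme}. The paper simply asserts the key identification ``by genericity of $Y$'', whereas you spell out the transversality and the exclusion of components over $X_{\sing}$ via Kleiman/Bertini and a purity argument; this added care is welcome, though your invocation of Kleiman for hypersurfaces of degree $e_k>1$ is more naturally phrased as Bertini for the base-point-free system $|\sO_{\PP^N}(e_k)|$ pulled back along $\psi$.
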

\begin{proof}
Thanks to \eqref{eq: def multidegrees conormal} and Proposition \ref{prop: multidegrees and polar classes Kleiman}, the class of $W_X$ in $A^*(\PP^N\times(\PP^N)^\vee)$ is equal to
\begin{equation}\label{eq: positive multidegrees conormal X}
[W_X] = \sum_{i=\mathrm{def}(X)}^n\delta_i(X)\,h^{N-i}(h')^{i+1}\,,
\end{equation}
where all the multidegrees $\delta_i(X)$ displayed are positive.
By genericity of $Y$, we have $W_{X,Z}=W_X\cap(Y\times(\PP^N)^\vee)$ and $[W_X\cap(Y\times(\PP^N)^\vee)] = [W_X]\cdot[Y\times(\PP^N)^\vee]$, where $[Y\times(\PP^N)^\vee]=\deg(Y)\,h^c$.
Then the identities \eqref{eq: identities relative multidegrees} follow immediately.
We have two scenarios:
\begin{enumerate}
    \item First, assume that $c\le\mathrm{def}(X)$. The first nonzero multidegree of $W_{X,Z}$ is $\delta_{\mathrm{def}(X)-c}(X,Z)=\deg(Y)\delta_{\mathrm{def}(X)}(X)$.
    By Proposition \ref{prop: relative version of Holme}(1), we also know that $\delta_{\mathrm{def}(X)-c}(X,Z)=\varepsilon_{X,Z}\deg(X_Z^\vee)$, and $X_Z^\vee=X^\vee$ by Proposition \ref{prop: relation codim dual relative dual generic complete intersection}.
    Since $\delta_{\mathrm{def}(X)}(X)=\deg(X^\vee)$, we necessarily have that $\varepsilon_{X,Z}=\deg(Y)$.
    \item Otherwise $c>\mathrm{def}(X)$. By Proposition \ref{prop: relation codim dual relative dual generic complete intersection}, we have that $X_Z^\vee$ is a proper subvariety of $X^\vee$ of codimension $c-\mathrm{def}(X)$. Furthermore, in part (2)  of the proof of Proposition \ref{prop: relation codim dual relative dual generic complete intersection}, we showed that the generic contact locus $\mathrm{Cont}(H,X,Z)$ consists of a single point. This implies that $\varepsilon_{X,Z}=1$. Since the first nonzero multidegree of $W_{X,Z}$ is $\delta_0(X,Z)=\deg(Y)\delta_{c}(X)$, we conclude again by Proposition \ref{prop: relative version of Holme}(1) that $\deg(X_Z^\vee)=\deg(Y)\delta_{c}(X)$.\qedhere
\end{enumerate} 
\end{proof}

\begin{example}\label{ex: discriminant binary form degree N}
Let $X\subset\PP^N$ be the hypersurface defined by the discriminant of a binary form of degree $N$. In particular $\deg(X)=2(N-1)$. The dual variety $X^\vee$ is a rational normal curve of degree $N$. Hence, we have that
\[
[W_X] = \delta_{N-2}(X)\,h^{2}(h')^{N-1}+\delta_{N-1}(X)\,h(h')^N=N\,h^{2}(h')^{N-1}+2(N-1)\,h(h')^N\,.
\]
Let $Y$ be a generic complete intersection variety of codimension $c$. Then
\[
[W_{X,Z}] =
\begin{cases}
    \deg(Y)[N\,h^{c+2}(h')^{N-1}+2(N-1)\,h^{c+1}(h')^N] & \text{if $c\le N-2$}\\
    2(N-1)\deg(Y)\,h^N(h')^N & \text{if $c=N-1$}    
\end{cases}
\]
We discarded the case when $c=n-1$, namely when $Y$ is a set of generic points that do not belong to $X$. Observe that, when $c=n-2$, there is only one positive multidegree of $W_{X,Z}$, which is actually its degree, since $W_{X,Z}$ is now a set of points in $\PP^N\times(\PP^N)^\vee$.\hfill$\diamondsuit$
\end{example}

\begin{example}\label{ex: determinantal 3x3 matrices}
Let $X\subset\PP(\C^3\otimes\C^3)\cong\PP^8$ be the determinantal variety of $3\times 3$ matrices with complex entries and with rank at most two. In particular $\deg(X)=3$. The dual variety $X^\vee$ is the determinantal variety of $3\times 3$ matrices with rank at most one, in particular $X^\vee\cong\PP^2\times\PP^2$. We have that
\[
[W_X] = 6\,h^{5}(h')^{4}+12\,h^{4}(h')^{5}+12\,h^{3}(h')^{6}+6\,h^{2}(h')^{7}+3\,h(h')^{8}\,.
\]
We consider several subvarieties of $X$:
\begin{enumerate}
    \item Let $Y$ be a generic complete intersection variety of codimension $c$, and consider the subvariety $Z=X\cap Y$ (hence $\codim_X(Z)=c$). Then $\max\{\mathrm{def}(X),c\}=\max\{3,c\}$. This means that
    \[
    [W_{X,Z}] = \deg(Y)[6\,h^{c+5}(h')^{4}+12\,h^{c+4}(h')^{5}+12\,h^{c+3}(h')^{6}+6\,h^{c+2}(h')^{7}+3\,h^{c+1}(h')^{8}]
    \]
    if $c\le 3$, while some of the first terms vanish if $c\ge 4$. Also for $c\le 3$ we have that $X_Z^\vee=X^\vee$, while $X_Z^\vee$ is a proper subvariety of $X^\vee$ for $c\ge 4$.
    \item Instead, let $Z$ be the linear subvariety of $X$ consisting of all $3\times 3$ matrices with the first row identically zero. We have $\codim_X(Z)=2$ and in this case $X_Z^\vee$ is the linear subvariety of $X$ consisting of all $3\times 3$ matrices with the second and third row identically zero. Hence it is a proper subvariety of $X^\vee$ and $\codim_{X^\vee}(X_Z^\vee)=2$.
    We expect from Proposition \ref{prop: relative version of Holme} and Corollary \ref{cor: formula class relative conormal} that
    \[
    [W_{X,Z}]=\delta_3(X,Z)h^{5}(h')^{6}+\delta_4(X,Z)h^{4}(h')^{7}+\delta_5(X,Z)h^{3}(h')^{8}\,.
    \]
    Indeed we have that $\delta_3(X,Z)=1$, $\delta_4(X,Z)=2$, and $\delta_5(X,Z)=1$. In this case, we also have $\varepsilon_{X,Z}=1$.
    \item Consider $\{x_{ij}\mid 1\le i,j\le 3\}$ as a set of homogeneous coordinates of $\PP^8$. Hence $X$ is cut out by the determinant of the matrix $(x_{ij})$. Let $Z=V(x_{11},x_{12},x_{21}x_{32}-x_{22}x_{31})$, hence $\codim_X(Z)=2$ and $\deg(Z)=2$.
    In this case, $X_Z^\vee$ is the subvariety consisting of all $3\times 3$ matrices with the third column identically zero and with the first two columns proportional. In particular, it is a proper subvariety of $X^\vee$ of degree three and dimension three. Therefore $\codim_{X^\vee}(X_Z^\vee)=1$, and by Proposition \ref{prop: formulas relative codim X dual X Z dual}$(3)$, we conclude that
    \[
    \mathrm{def}(X,Z)=\codim_{X^\vee}(X_Z^\vee)-\codim_X(Z)+\mathrm{def}(X)=1-2+3=2\,.
    \]
    Finally, using again Proposition \ref{prop: relative version of Holme} and Corollary \ref{cor: formula class relative conormal}, we conclude that the class of the relative conormal variety $W_{X,Z}$ is equal to
    \[
    [W_{X,Z}]=\delta_2(X,Z)h^{6}(h')^{5}+\delta_3(X,Z)h^{5}(h')^{6}+\delta_4(X,Z)h^{4}(h')^{7}+\delta_5(X,Z)h^{3}(h')^{8}\,.
    \]
    In particular we obtain that $\delta_2(X,Z)=\deg(X_Z^\vee)=3$, $\delta_3(X,Z)=5$, $\delta_4(X,Z)=4$, and $\delta_5(X,Z)=\deg(Z)=2$.
    In this case, we also have $\varepsilon_{X,Z}=1$.\hfill$\diamondsuit$
\end{enumerate}
\end{example}

Next, we introduce a relative analog of the polar degrees. We consider the restriction $\gamma_X|_Z\colon Z\dasharrow\G(n,N)$ of the Gauss map $\gamma_X$ to $Z$. We give the following definition.

\begin{definition}\label{def: relative polar variety}
The {\em polar variety} of an $m$-dimensional projective variety $X\subset\PP^N$ relative to a subvariety $Z\subset X$ and with respect to a subspace $L\subset\PP^N$ is
\[
\sP(X,L,Z)\coloneqq(\gamma_X|_Z)^{-1}(\Sigma_n(L))=\overline{\{x\in X_{\sm}\cap Z\mid\dim(T_xX\cap L)>\dim(L)-\codim(X)\}}\,.
\]
\end{definition}

If $X$ is a smooth variety, it follows immediately that $\sP(X,L,Z)=\sP(X,L)\cap Z$, otherwise a priori only the inclusion $\sP(X,L,Z)\subset\sP(X,L)\cap Z$ holds. Despite that, we can also conclude that $\dim(\sP(X,L,Z))=d-\codim(\sP(X,L))$ when $L$ is chosen generic. In particular, the class $[\sP(X,L,Z)]\in A^*(Z)$ does not depend on a generic choice of $L$.
More precisely, we have the following definition.

\begin{definition}\label{def: relative polar degrees}
For every $0\le i\le d$, let $L\subset\PP^N$ be a generic subspace of dimension $\dim(L)=N-n+i-2$. We define $p_i(X,Z)=[\sP(X,L,Z)]$ the {\em $i$th polar class of $X$ relative to $Z$}.
Furthermore, we define $\mu_i(X,Z)=\deg(p_i(X,Z))$ the {\em $i$th polar degree of $X$ relative to $Z$}.
\end{definition}

On one hand, for $i=0$ we have $\dim(L)=N-n-2$ and $\sP(X,L,Z)=Z$, hence $\mu_0(X,Z)=\deg(Z)$. On the other hand, assume $i>d$, or $\dim(L)>N-n+d-2$. Because $L$ is generic, we conclude that $\dim(\sP(X,L,Z))=d-\codim(\sP(X,L))=d-N+n-i<n-N<0$, namely $\sP(X,L,Z)=\emptyset$.
In general, the class $p_i(X,Z)$ represents a subvariety of $Z$ of codimension $i$. Therefore, the relative polar degrees $\mu_i(X,Z)$ can be positive only if $0\le i\le d$.

\begin{remark}\label{rmk: relations relative polar classes Chern classes jet bundle}
If $X$ is smooth and $Z$ is an irreducible subvariety of $X$, it follows by the identities \eqref{eq: relations polar classes Chern classes first jet bundle X} and Definition \ref{def: relative polar degrees} that
\begin{equation}\label{eq: relations relative polar classes Chern classes first jet bundle X restricted Z}
    p_i(X,Z)=c_i(\sP^1(\sO_X(1))|_Z)\quad\forall\,i\in\{0,\ldots,d\}\,,
\end{equation}
where $\sP^1(\sO_X(1))|_Z$ is the restriction to $Z$ of the first jet bundle of $\sO_X(1)$.    
\end{remark}

\begin{lemma}\label{lem: identity relative polar locus}
Let $Z\subset X\subset\PP^N$ be irreducible varieties.
Consider the projections $\pi_{1,Z}\colon W_{X,Z}\to Z$ and $\pi_{2,Z}\colon W_{X,Z}\to X_Z^\vee$. Then $\sP(X,L,Z) = \pi_{1,Z}(\pi_{2,Z}^{-1}(L^\vee))$ for every subspace $L\subset\PP^N$.
\end{lemma}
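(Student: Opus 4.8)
The plan is to unwind both sides into the same subset of $W_{X,Z}$, exploiting that $\pi_{1,Z}$ and $\pi_{2,Z}$ are morphisms of projective varieties, hence proper, so that they send closed sets to closed sets.

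First I would dispose of the degenerate case $Z\subset X_{\sing}$: then $W_{X,Z}^{\circ}=\emptyset$, hence $W_{X,Z}=\emptyset$, and both sides of the asserted identity are empty. So assume $Z\not\subset X_{\sing}$, so that $X_{\sm}\cap Z$ is dense open in $Z$. The condition that $H$ contain $T_xX$ is closed in $(X_{\sm}\cap Z)\times(\PP^N)^{\vee}$, so $W_{X,Z}^{\circ}$ is closed there; consequently $W_{X,Z}^{\circ}=W_{X,Z}\cap\pi_{1,Z}^{-1}(X_{\sm}\cap Z)$ is a dense open subset of $W_{X,Z}$ (open by this closedness, dense by definition of $W_{X,Z}$ as its closure).

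The heart of the argument is a linear-algebra translation. Fix $x\in X_{\sm}\cap Z$. A pair $(x,H)$ lies in $W_{X,Z}^{\circ}\cap\pi_{2,Z}^{-1}(L^{\vee})$ exactly when $H\supset T_xX$ and $H\supset L$, i.e.\ $H\supset T_xX+L$, and such a hyperplane $H$ exists iff $T_xX+L\ne\PP^N$; by the Grassmann formula $\dim(T_xX+L)=n+\dim(L)-\dim(T_xX\cap L)$ this is equivalent to $\dim(T_xX\cap L)>n+\dim(L)-N=\dim(L)-\codim(X)$. Writing $\sP^{\circ}:=\{x\in X_{\sm}\cap Z\mid\dim(T_xX\cap L)>\dim(L)-\codim(X)\}$, we thus obtain $\pi_{1,Z}\bigl(\pi_{2,Z}^{-1}(L^{\vee})\cap W_{X,Z}^{\circ}\bigr)=\sP^{\circ}$, and $\sP(X,L,Z)=\overline{\sP^{\circ}}$ by Definition \ref{def: relative polar variety}. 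Since $\pi_{2,Z}^{-1}(L^{\vee})$ is closed in $W_{X,Z}$ and $\pi_{1,Z}$ is proper, $\pi_{1,Z}(\pi_{2,Z}^{-1}(L^{\vee}))$ is closed and contains $\sP^{\circ}$, hence contains $\overline{\sP^{\circ}}=\sP(X,L,Z)$: one inclusion comes for free.

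For the reverse inclusion I would write $\pi_{2,Z}^{-1}(L^{\vee})$ as the union of its trace on $W_{X,Z}^{\circ}$, which projects into $\sP^{\circ}\subset\sP(X,L,Z)$ by the previous paragraph (using $W_{X,Z}^{\circ}=W_{X,Z}\cap\pi_{1,Z}^{-1}(X_{\sm}\cap Z)$), and its trace on the boundary $W_{X,Z}\setminus W_{X,Z}^{\circ}$, which lies over $Z\cap X_{\sing}$. The whole content then reduces to showing that the boundary part is still carried into $\sP(X,L,Z)$, equivalently that $\pi_{2,Z}^{-1}(L^{\vee})=\overline{\pi_{2,Z}^{-1}(L^{\vee})\cap W_{X,Z}^{\circ}}$; once this is known, applying the proper map $\pi_{1,Z}$ and commuting it past the closure finishes the proof. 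When $X$ is smooth this is immediate, since then $W_{X,Z}^{\circ}=W_{X,Z}$ and there is no boundary; in general one must show that the dense open set $W_{X,Z}^{\circ}$ is met densely by the linear section $\pi_{2,Z}^{-1}(L^{\vee})=W_{X,Z}\cap(\PP^N\times L^{\vee})$, which is where the genericity of $L$ underlying Definition \ref{def: relative polar degrees} enters. I expect this verification — that limiting tangent hyperplanes at singular points of $X$ lying on $Z$ do not contribute components of $\pi_{1,Z}(\pi_{2,Z}^{-1}(L^{\vee}))$ outside $\sP(X,L,Z)$ — to be the only genuinely delicate step; everything else is formal.
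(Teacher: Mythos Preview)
Your argument is the paper's, essentially line for line: rewrite $H\in L^{\vee}$ and $H\supset T_xX$ as $H\supset\langle L,T_xX\rangle$, use the Grassmann formula to obtain the polar inequality $\dim(T_xX\cap L)>\dim(L)-\codim(X)$, then close up and project via the proper map $\pi_{1,Z}$.

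The difference is that you are more scrupulous about the boundary. The paper opens its proof by asserting
\[
\pi_{2,Z}^{-1}(L^{\vee})=\overline{\{(x,H)\mid x\in X_{\sm}\cap Z,\ H\in L^{\vee},\ H\supset T_xX\}}
\]
and never comments on why the closed set $W_{X,Z}\cap(\PP^N\times L^{\vee})$ should equal the closure of its trace on the dense open $W_{X,Z}^{\circ}$; it then pushes the closure through $\pi_{1,Z}$ (legitimate by properness, as you note) and reads off $\sP(X,L,Z)$. So the ``genuinely delicate step'' you single out is exactly the step the paper takes for granted. Your instinct is right that for \emph{every} $L$ this would require controlling limiting tangent hyperplanes over $Z\cap X_{\sing}$, whereas for generic $L$ --- which is all that is ever used downstream, in Definition~\ref{def: relative polar degrees} and the proof of Proposition~\ref{prop: properties relative polar ranks} --- a dimension count on the boundary stratum of $W_{X,Z}$ disposes of the issue.
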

\begin{proof}
Recall that $L^\vee=\{H\in(\PP^N)^\vee\mid\text{$H\supset L$ as hyperplane in $\PP^N$}\}$. Then
\begin{align*}
\pi_{2,Z}^{-1}(L^\vee) &= \overline{\{(x,H)\mid\text{$x\in X_{\sm}\cap Z$, $H\in L^\vee$, and $H\supset T_xX$}\}}\\
&= \overline{\{(x,H)\mid\text{$x\in X_{\sm}\cap Z$, $H\supset L$, and $H\supset T_xX$}\}}\\
&= \overline{\{(x,H)\mid\text{$x\in X_{\sm}\cap Z$ and $H\supset\langle L,T_xX\rangle$}\}}\,.
\end{align*}
Therefore
\begin{align*}
\pi_{1,Z}(\pi_{2,Z}^{-1}(L^\vee)) &= \overline{\{x\in X_{\sm}\cap Z\mid\text{$H\supset\langle L,T_xX\rangle$ for some $H\in(\PP^N)^\vee$}\}}\\
&= \overline{\{x\in X_{\sm}\cap Z\mid\dim(\langle L,T_xX\rangle)<N\}}\\
&= \overline{\{x\in X_{\sm}\cap Z\mid\dim(L\cap T_xX)>\dim(L)-\codim(X)\}}\\
&= \sP(X,L,Z)\,.\qedhere
\end{align*}
\end{proof}

When $Z=X$, the previous result tells us that $\sP(X,L) = \pi_1(\pi_2^{-1}(L^\vee))$, where $\pi_1$ and $\pi_2$ are now the projections of $W_X$ onto $X$ and $X^\vee$, respectively.
The next result extends Proposition \ref{prop: multidegrees and polar classes Kleiman} to the relative setting.

\begin{proof}[Proof of Proposition \ref{prop: properties relative polar ranks}]
Since $X$ and $Z$ are smooth varieties, the variety $W_{X,Z}$ has the structure of a projective bundle over $Z$, hence is a smooth variety as well.

Let $L$ be a generic projective subspace of $\PP^N$ of dimension $N-n+i-2$. Then by definition $\mu_i(X,Z)=\deg(p_i(X,Z))=\deg(\sP(X,L,Z))$. The dimension of $L^\vee$ is $N-1-\dim(L)=n-i+1$. Let $H$ and $H'$ be hyperplanes in $\PP^N$ and $(\PP^N)^\vee$, respectively. In particular $[L^\vee]=[H']^{N-1-n+i}$. Recalling that $j_Z\colon W_{X,Z}\hookrightarrow\PP^N\times(\PP^N)^\vee$ is the inclusion, the notations in \eqref{eq: notation hyperplane classes}, and knowing that $\dim(\sP(X,L,Z))=d-i$, we have
\begin{align*}
\mu_i(X,Z) &= \deg(\sP(X,L,Z)) = \int [H]^{d-i}\cdot[\sP(X,L,Z)]\\
&\stackrel{\mathclap{(*)}}{=} \int [H]^{d-i}\cdot[\pi_{1,Z}(\pi_{2,Z}^{-1}(L^\vee))] = \int [H]^{d-i}\cdot \pi_{1,Z*}(\pi_{2,Z}^*([L^\vee]))\\
&= \int [H]^{d-i}\cdot \pi_{1,Z*}(\pi_{2,Z}^*([H']^{N-1-n+i}))\stackrel{\mathclap{(**)}}{=} \int \pi_{1,Z*}(\pi_{1,Z}^*([H]^{d-i})\cdot\pi_{2,Z}^*([H']^{N-1-n+i}))\\
&= \int \pi_{1,Z*}(\pi_{1,Z}^*([H])^{d-i}\cdot\pi_{2,Z}^*([H'])^{N-1-n+i}) = \int \pi_{1,Z*}(j_Z^*(h^{d-i})\cdot j_Z^*((h')^{N-1-n+i}))\\
&= \int \pi_{1,Z*}(j_Z^*(h^{d-i}\cdot (h')^{N-1-n+i})) = \int \pi_{1,Z*}(h^{d-i}\cdot (h')^{N-1-n+i}\cdot[W_{X,Z}])\\
&\stackrel{\mathclap{(***)}}{=} \int h^{d-i}\cdot (h')^{N-1-n+i}\cdot[W_{X,Z}] = \delta_{d-i}(X,Z)\,.
\end{align*}
In $(*)$ we applied Lemma \ref{lem: identity relative polar locus}. In $(**)$ we applied the general projection formula
\begin{equation}\label{eq: projection formula}
    f_*(\alpha\cdot f^*(\beta)) = f_*(\alpha)\cdot\beta\,,
\end{equation}
where $f\colon X\to Y$ is a proper morphism of smooth projective varieties, while $\alpha$ and $\beta$ are elements of $A^*(X)$ and $A^*(Y)$, respectively. In $(***)$, we used the fact that $h^{d-i}(h')^{N-n+i-1}[W_{X,Z}]$ is a zero-dimensional cycle, so the degree is preserved under projection.
\end{proof}

An immediate consequence of Propositions \ref{prop: properties relative polar ranks} and \ref{prop: class relative conormal generic Y} is the following result, that generalizes the formula in the case $c=1$, observed in \cite[p. 272]{piene1978polar}.

\begin{corollary}\label{cor: new identity relative polar degrees}
Let $X\subset\PP^N$ be a smooth variety and let $Y$ be a generic complete intersection variety of codimension $c\le n=\dim(X)$. If $Z=X\cap Y$, then
\begin{equation}
    \mu_i(X,Z)=\deg(Y)\mu_i(X)\quad\forall\,i\in\{0,\ldots,n-\max\{\mathrm{def}(X),c\}\}\,.
\end{equation}
\end{corollary}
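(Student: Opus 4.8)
The plan is to chain together three results already in hand: the relative Kleiman formula $\mu_i(X,Z)=\delta_{d-i}(X,Z)$ of Proposition \ref{prop: properties relative polar ranks}, the comparison of multidegrees $\delta_{i-c}(X,Z)=\deg(Y)\delta_i(X)$ for a generic complete intersection section of Proposition \ref{prop: class relative conormal generic Y}, and the classical identification $\mu_i(X)=\delta_{n-i}(X)$ of Proposition \ref{prop: multidegrees and polar classes Kleiman}. So the argument is essentially a matching of indices, once the hypotheses are checked.

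First I would record the set-up. Since $Y$ is a generic complete intersection of codimension $c\le n$, Bertini's theorem guarantees that $Z=X\cap Y$ is smooth of dimension $d=n-c$ (and irreducible when $c<n$; the case $c=n$, in which $Z$ is a finite reduced scheme, forces $i=0$ and is settled directly since $\mu_0(X,Z)=\deg(Z)=\deg(Y)\deg(X)=\deg(Y)\mu_0(X)$). In particular $X$ and $Z$ are smooth, so Proposition \ref{prop: properties relative polar ranks} applies. Moreover $n-\max\{\mathrm{def}(X),c\}\le n-c=d$, so every index $i$ in the asserted range satisfies $0\le i\le d$. Now fix such an $i$. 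Proposition \ref{prop: properties relative polar ranks} gives $\mu_i(X,Z)=\delta_{d-i}(X,Z)=\delta_{(n-i)-c}(X,Z)$. The condition $i\le n-\max\{\mathrm{def}(X),c\}$ is equivalent to $\max\{\mathrm{def}(X),c\}\le n-i\le n$, which is precisely the range of indices for which Proposition \ref{prop: class relative conormal generic Y} is stated; invoking it with index $n-i$ yields $\delta_{(n-i)-c}(X,Z)=\deg(Y)\,\delta_{n-i}(X)$. Finally, Proposition \ref{prop: multidegrees and polar classes Kleiman} identifies $\delta_{n-i}(X)=\mu_i(X)$, and stringing the three equalities together gives $\mu_i(X,Z)=\deg(Y)\,\mu_i(X)$.

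There is no genuine obstacle: the only point deserving care is the verification that the interval $\{0,\dots,n-\max\{\mathrm{def}(X),c\}\}$ is exactly the overlap of the two domains of validity — namely $0\le i\le d$ for the relative Kleiman formula and $\max\{\mathrm{def}(X),c\}\le n-i\le n$ for Proposition \ref{prop: class relative conormal generic Y} — together with the observation that $Z$ is smooth by genericity of $Y$, which is what licenses the use of Proposition \ref{prop: properties relative polar ranks}. I would then remark that specializing to $c=1$ recovers the hyperplane-section identity of \cite[p.~272]{piene1978polar}.
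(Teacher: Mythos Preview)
Your proof is correct and follows exactly the approach indicated by the paper, which simply records the corollary as an immediate consequence of Propositions \ref{prop: properties relative polar ranks} and \ref{prop: class relative conormal generic Y}. You have spelled out the index bookkeeping and the smoothness of $Z$ via Bertini carefully; the paper leaves these implicit.
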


\section{Relative tangency in determinantal varieties}\label{sec: relative tang determinantal}

Consider two vector spaces $V$ and $W$ of dimensions $N+1$ and $M+1$ respectively, with $N\ge M$. The tensor product $V^*\otimes W$ is isomorphic to the vector space $\mathrm{Hom}(V,W)$ of linear maps $f\colon V\to W$. After fixing bases of $W$ and $V$, the elements of $V^*\otimes W\cong\C^{M+1}\otimes\C^{N+1}\cong\C^{(M+1)\times (N+1)}$ are simply $(M+1)\times(N+1)$ matrices with complex entries.
Given an integer $1\le r\le M+1$, we consider the subset
\begin{equation}
X_r\coloneqq\{A\in V^*\otimes W\mid\rank(A)\le r\}\,.
\end{equation}
This is a projective variety in $\PP(V^*\otimes W)$. In particular $(X_r)_{\sm}=\{A\in V^*\otimes W\mid\rank(A)=r\}$, or equivalently $(X_r)_{\sing}=X_{r-1}$.
The codimension and the degree of $X_r$ are well-known:
\begin{equation}\label{eq: dim deg X_r}
    \codim(X_r)=(M+1-r)(N+1-r)\,,\quad\deg(X_r)=\prod_{i=0}^{M+1-r}\frac{\binom{N+1+i}{r}}{\binom{r+i}{r}}\,.
\end{equation}
The previous formulas can be found for example in \cite[\S II.5]{arbarello1985geometry}, and are a consequence of Porteous' formula that computes the fundamental class of a degeneracy locus of a morphism of vector bundles in terms of Chern classes.
The conormal variety of $X_r$ is (see \cite[Prop. I.4.11 and Lemma I.4.12]{gelfand1994discriminants})
\begin{align}\label{eq: conormal determinantal}
\begin{split}
W_{X_r} &= \{(A,B)\mid\text{$A\in X_r$, $B\in X_{M+1-r}$, $\image(A)\subset\ker(B^\mT)$ and $\image(A^\mT)\subset\ker(B)$}\}\\
&= \{(A,B)\mid\text{$A\in X_r$, $B\in X_{M+1-r}$, $B^\mT A=0$ and $BA^\mT=0$}\}\,.
\end{split}
\end{align}
As a consequence, the dual variety $X_r^\vee$ coincides with $X_{M+1-r}$. In particular we have that
\begin{equation}\label{eq: defect X_r}
\mathrm{def}(X_r)=\codim(X_r^\vee)-1=\codim(X_{M+1-r})-1=r(N-M+r)-1\,.
\end{equation}

\subsection{Duality of low-rank matrices relative to special column and row spaces}\label{sec: relative duality row column space}

Pick two linear subspaces $L_1 \subset V$ and $L_2 \subset W$ of dimensions $\ell_1+1$ and $\ell_2+1$, and consider the tensor product $L_1^*\otimes L_2\subset V^*\otimes W$. In coordinates, this subspace corresponds to the set of all $(M+1)\times(N+1)$ matrices $A$ such that the column span of $A$ (corresponding to the range of $A$) is contained in $L_2$ and the row span of $A$ (corresponding to the range of $A^\mT$) is contained in $L_1$.
It is natural to consider the intersection $Z=X_r\cap\PP(L_1^*\otimes L_2)$ and to study the relative dual variety $(X_r)_Z^\vee$. In the next result, we compute its dimension, particularly whether $(X_r)_Z^\vee$ coincides with the classical dual variety $X_r^\vee$.

\begin{proposition}\label{prop: dim relative dual Z X_r}
Let $X_r\subset\PP(V^*\otimes W)$ denote the projective variety of linear maps $f\colon V\to W$ of rank at most $r$, where $\dim(\PP(V))=N\ge M=\dim(\PP(W))$.
Let $\PP(L_1)\subset\PP(V)$, $\PP(L_2)\subset\PP(W)$ be subspaces of dimensions $\ell_1$ and $\ell_2$, and consider $Z=X_r\cap\PP(L_1^*\otimes L_2)$.
\begin{itemize}
    \item[$(i)$] If $\min\{\ell_1,\ell_2\}<r-1$, then $(X_r)_Z^\vee=\emptyset$.
    \item[$(ii)$] If instead $\min\{\ell_1,\ell_2\}\ge r-1$, then
    \begin{equation}\label{eq: defect X_r Z}
    \mathrm{def}(X_r,Z)=r\,\max\{\ell_1-M,0\}+r^2-1\,.
    \end{equation}
    As a consequence, we have that
    \begin{equation}\label{eq: relative codim X_r Z dual}
    \codim_{X_r^\vee}((X_r)_Z^\vee)=r(2M-\ell_1-\ell_2)+r\,\max\{\ell_1-M,0\}\,.
    \end{equation}
    In particular $(X_r)_Z^\vee=X_r^\vee$ if and only if $\min\{\ell_1,\ell_2\}\ge M$.
\end{itemize}
\end{proposition}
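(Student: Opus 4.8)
The plan is to dispose of $(i)$ by a one-line support argument, and to treat $(ii)$ by passing to adapted coordinates, writing the relative conormal variety $W_{X_r,Z}$ explicitly, and reading off $\mathrm{def}(X_r,Z)$ as the dimension of a generic fibre of $\pi_{2,Z}\colon W_{X_r,Z}\to (X_r)_Z^\vee$. For $(i)$: if $\min\{\ell_1,\ell_2\}<r-1$, then $\min\{\ell_1+1,\ell_2+1\}\le r-1$, so every matrix whose row span lies in $L_1$ and whose column span lies in $L_2$ has rank at most $r-1$; hence $Z\subseteq X_{r-1}=(X_r)_{\sing}$, so $(X_r)_{\sm}\cap Z=\emptyset$, which forces $W_{X_r,Z}^{\circ}=\emptyset$ (cf.\ Definition \ref{def: relative conormal variety}) and therefore $(X_r)_Z^\vee=\emptyset$.

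For $(ii)$: choose bases of $V$ and $W$ so that $L_1$ and $L_2$ are spanned by initial segments; then $\PP(L_1^*\otimes L_2)$ is the linear space of matrices supported on the first $\ell_2+1$ rows and first $\ell_1+1$ columns, $Z$ is the generic determinantal variety of rank $\le r$ in that block, and $W_{X_r}$ is described by \eqref{eq: conormal determinantal}. Set $\Lambda_1\subseteq V$ equal to the span of the last $N-\ell_1$ basis vectors of $V$, and $\Lambda_2=L_2\subseteq W$. A point $A$ lies in $(X_r)_{\sm}\cap Z$ iff $\rank(A)=r$, $\ker(A)\supseteq\Lambda_1$ and $\image(A)\subseteq\Lambda_2$; for such $A$ the conormal fibre over $A$ is $\PP\bigl(\mathrm{Hom}(W/\image(A),\ker(A))\bigr)$ by \eqref{eq: conormal determinantal}. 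Thus $\pi_{1,Z}$ makes $W_{X_r,Z}^{\circ}$ a projective bundle over $(X_r)_{\sm}\cap Z$; in particular $W_{X_r,Z}$ is irreducible (Proposition \ref{prop: dimension relative conormal variety}), $X_r$ is dual regular relative to $Z$, and — since $\dim(W/\image(A))=M+1-r\le N+1-r=\dim\ker(A)$ — a general point of $W_{X_r,Z}$ has $B$ of rank $M+1-r$ with $\image(B)$ a general $(M+1-r)$-dimensional subspace of $\ker(A)$, where $\ker(A)=K'\oplus\Lambda_1$ for a general $(\ell_1+1-r)$-dimensional subspace $K'$ of the span of the first $\ell_1+1$ basis vectors of $V$. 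Because $\pi_{2,Z}$ is dominant, the same description applies to $B$ a general point of $(X_r)_Z^\vee$, so in particular $B$ is a smooth point of $X_r^\vee=X_{M+1-r}$.

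Since $\pi_{1,Z}$ is injective on the fibres of $\pi_{2,Z}$, we have $\mathrm{def}(X_r,Z)=\dim\pi_{2,Z}^{-1}(B)$ for a general $B\in(X_r)_Z^\vee$. For such a $B$ (of rank $M+1-r$), the bilinear equations \eqref{eq: conormal determinantal} together with the support conditions cutting out $Z$ identify $\pi_{2,Z}^{-1}(B)$ with the projectivisation of the linear space $\mathrm{Hom}\bigl(V/(\image(B)+\Lambda_1),\,\ker(B)\cap\Lambda_2\bigr)$. Here $\ker(B)$ has dimension $r$ and coincides with $\image(A)\subseteq\Lambda_2$ for the $A$'s in the fibre, so $\dim(\ker(B)\cap\Lambda_2)=r$; moreover $\dim(\image(B)\cap\Lambda_1)=\max\{0,M-\ell_1\}$ by the generic position of $\image(B)$ inside $\ker(A)$, whence $\dim(\image(B)+\Lambda_1)=(N-\ell_1)+(M+1-r)-\max\{0,M-\ell_1\}$ and $\dim V/(\image(B)+\Lambda_1)=r+\max\{\ell_1-M,0\}$. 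Therefore $\mathrm{def}(X_r,Z)=r\bigl(r+\max\{\ell_1-M,0\}\bigr)-1$, which is \eqref{eq: defect X_r Z}. Formula \eqref{eq: relative codim X_r Z dual} then follows from Proposition \ref{prop: formulas relative codim X dual X Z dual}$(3)$ using $\mathrm{def}(X_r)=r(N-M+r)-1$ from \eqref{eq: defect X_r} and $\codim_{X_r}(Z)=r(M+N-\ell_1-\ell_2)$, the latter being a direct dimension count from \eqref{eq: dim deg X_r} applied to $X_r$ and to the determinantal variety $Z$. Finally, as $(X_r)_Z^\vee\subseteq X_r^\vee$ with $X_r^\vee$ irreducible, equality holds iff the codimension in \eqref{eq: relative codim X_r Z dual} is zero; dividing by $r$ and distinguishing $\ell_1\le M$ from $\ell_1>M$ shows this happens exactly when $\ell_2=M$ and $\ell_1\ge M$, i.e.\ when $\min\{\ell_1,\ell_2\}\ge M$.

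The delicate point is the genericity bookkeeping in the last two paragraphs: one must verify that at a general point of the irreducible variety $W_{X_r,Z}$ the subspace $\image(B)$ is in general position inside $\ker(A)$, so that $\dim(\image(B)\cap\Lambda_1)$ attains its expected value $\max\{0,M-\ell_1\}$ rather than jumping on a proper closed subset, and likewise that a general $B\in(X_r)_Z^\vee$ really is a smooth point of $X_r^\vee$. This is precisely where the dichotomy $\ell_1\lessgtr M$ enters and produces the extra term $r\max\{\ell_1-M,0\}$; the remaining ingredients are elementary linear algebra and dimension counts.
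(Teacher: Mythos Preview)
Your argument is sound in structure and essentially matches the paper's approach: both compute $\mathrm{def}(X_r,Z)$ as the dimension of the fibre of $\pi_{2,Z}$ over a generic $B\in(X_r)_Z^\vee$, use \eqref{eq: conormal determinantal} to write the bilinear conditions, and then apply Proposition~\ref{prop: formulas relative codim X dual X Z dual}(3) for the codimension. The paper carries out an explicit case split on $\ell_1\lessgtr M$ and $\ell_2\lessgtr M$ and parameterises the contact locus via $\mathrm{GL}(r)$ times a Grassmannian; you instead identify the fibre in one stroke as a projectivised $\mathrm{Hom}$ space and absorb the case distinction into the computation of $\dim(\image(B^{\mT})\cap\Lambda_1)=\max\{0,M-\ell_1\}$. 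This is a perfectly legitimate and slightly cleaner packaging of the same computation.

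There is, however, a persistent notational slip that you must correct. With the paper's conventions, both $A$ and $B$ lie in $V^*\otimes W=\mathrm{Hom}(V,W)$, so $\image(B)\subseteq W$ and $\ker(B)\subseteq V$. Thus expressions such as ``$\image(B)+\Lambda_1$'' (with $\Lambda_1\subseteq V$) or ``$\ker(B)\cap\Lambda_2$'' (with $\Lambda_2\subseteq W$) do not typecheck as written. What you are actually using throughout is the identification of the conormal fibre with $\PP\bigl(\mathrm{Hom}(W/\image(A),\ker(A))\bigr)$ via $B\mapsto B^{\mT}$: it is $\image(B^{\mT})\subseteq\ker(A)\subseteq V$ that meets $\Lambda_1$, and $\ker(B^{\mT})\subseteq W$ that meets $\Lambda_2$. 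Once you replace every $\image(B)$ and $\ker(B)$ in your fibre description by $\image(B^{\mT})$ and $\ker(B^{\mT})$, all of your dimension counts are correct and the proof goes through. The genericity claim you flag as ``delicate'' is fine: since $W_{X_r,Z}$ is irreducible and $\pi_{1,Z}$ exhibits its open part as a projective bundle over $(X_r)_{\sm}\cap Z$, a generic point of $W_{X_r,Z}$ has $A$ generic in $Z$ and $B^{\mT}$ generic in $\mathrm{Hom}(W/\image(A),\ker(A))$, whence $\image(B^{\mT})$ is a generic $(M+1-r)$-plane in $\ker(A)$ and the expected intersection dimension with $\Lambda_1$ is attained.
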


\begin{proof}
$(i)$ First, consider the case $\min\{\ell_1,\ell_2\}<r-1$: then every element in $L_1^*\otimes L_2$ has rank smaller than $r$, hence $Z$ is contained in the singular locus of $X_r$ and both $W_{X,Z}$ and $(X_r)_Z^\vee$ are empty.

$(ii)$ Assume $\min\{\ell_1,\ell_2\}\ge r-1$. To prove the statement, we need to study the defect $\mathrm{def}(X_r,Z)$ depending on the chosen parameters $M$, $N$, $\ell_1$, $\ell_2$, and $r$.
First, we have that
\begin{align*}
W_{X_r,Z} &= \overline{\{(A,B)\mid\text{$\rank(A)=r$, $\rank(B)=M+1-r$, $\image(A)\subset\ker(B^\mT)\cap L_2$, $\image(A^\mT)\subset\ker(B)\cap L_1$}\}}\,.
\end{align*}
For a generic matrix $B\in X_r^\vee=X_{M+1-r}$, we have $\rank(B)=M+1-r$ and
\[
\mathrm{Cont}(B,X_r,Z)=\overline{\{A\mid\text{$\rank(A)=r$, $\image(A)\subset\ker(B^\mT)\cap L_2$ and $\image(A^\mT)\subset\ker(B)\cap L_1$}\}}\,.
\]
We know that $\dim(\ker(B^\mT))=r$, therefore a generic $B$ is such that the $r$-dimensional subspace $\image(A)$ is contained in $\ker(B^\mT)\cap L_2$ only if $L_2=W$, namely $\ell_2=M$. Similarly, for a generic $B$, the $r$-dimensional subspace $\image(A^\mT)$ is contained in $\ker(B)\cap L_1$ only if
\begin{align*}
\dim(\PP(\ker(B)))+\dim(\PP(L_1)) &\ge \dim(\PP(V))+\dim(\PP(\image(A^\mT)))\\
N-M+r-1+\ell_1 &\ge N+r-1\,,
\end{align*}
namely $\ell_1\ge M$. The previous considerations lead to the fact that $(X_r)_Z^\vee=X_r^\vee$ if and only if $\ell_2=M$ and $\ell_1\ge M$.

Now what happens if $\ell_2<M$, namely $L_2\subsetneq W$? In this case, the generic matrix $B\in X_r^\vee$ is such that $\ker(B^\mT)\not\subset L_2$, therefore the $r$-dimensional subspace $\image(A)$ cannot be contained in the intersection $\ker(B^\mT)\cap L_2$ by dimensional reasons. This implies that $\mathrm{Cont}(B,X_r,Z)=\emptyset$ for a generic $B\in X_r^\vee$, or rather that $(X_r)_Z^\vee\subsetneq X_r^\vee$. Instead, consider a generic $B\in (X_r)_Z^\vee$. Our goal is to compute $\dim(\mathrm{Cont}(B,X_r,Z))$. Since $\mathrm{Cont}(B,X_r,Z)\neq\emptyset$ by assumption, there exists a matrix $A$ of rank $r$ such that $\image(A)\subset\ker(B^\mT)\cap L_2$. Again, a generic $B\in(X_r)_Z^\vee$ has rank $M+1-r$, so $\dim(\ker(B^\mT))=r-1$ and by genericity $\ker(B^\mT)\subset L_2$, therefore the previous condition is equivalent to $\image(A)=\ker(B^\mT)$.
Regarding the second condition $\image(A^\mT)\subset\ker(B)\cap L_1$, there are two options:
\begin{enumerate}
    \item $\ell_1\ge M$: in this case, we know that for the generic $B$, the condition $\image(A^\mT)\subset\ker(B)\cap L_1$ has to be satisfied by dimensional reasons, in particular $\dim(\PP(\ker(B)\cap L_1))=\ell_1-M+r-1$. A similar computation tells us that
    \begin{align*}
    \mathrm{def}(X_r,Z) &= \dim(\mathrm{Cont}(B,X_r,Z))\\
    &=\dim(\mathrm{GL}(r))-1+\dim(\G(r-1,\ell_1-M+r-1))\\
    &= r^2-1+r(\ell_1-M)\\
    &= r(\ell_1-M+r)-1\,.
    \end{align*}
    \item $\ell_2<M$: keeping into account that $\mathrm{Cont}(B,X_r,Z)\neq\emptyset$ we know that for the generic $B$, the intersection $\PP(\ker(B)\cap L_2)$ has dimension exactly $r-1$, therefore the inclusion $\image(A^\mT)\subset\ker(B)\cap L_2$ is an equality. Hence, we only need to consider the action of $\mathrm{GL}(r)$, namely 
    \[
    \mathrm{def}(X_r,Z) = \dim(\mathrm{Cont}(B,X_r,Z)) = \dim(\mathrm{GL}(r))-1=r^2-1\,.
    \]
\end{enumerate}
It remains to consider the case $\ell_2=M$ and $\ell_1<M$, but this case is similar to $\ell_2<M$ and $\ell_1<M$, hence also in this case $\mathrm{def}(X_r,Z)=r^2-1$.
All the previous subcases lead to the relative defect formula in \eqref{eq: defect X_r Z}. Finally, knowing that
\begin{align*}
\codim_{X_r}(Z) &= \dim(X_r)-\dim(Z)\\
&= [r(M+N+2)-r^2-1]-[r(\ell_1+\ell_2+2)-r^2-1]\\
&= r(N-\ell_1+M-\ell_2)\,.
\end{align*}
and applying Proposition \ref{prop: formulas relative codim X dual X Z dual}$(3)$, we obtain equation \eqref{eq: relative codim X_r Z dual}:
\begin{align*}
    \codim_{X_r^\vee}((X_r)_Z^\vee) &= \codim_{X_r}(Z)+\mathrm{def}(X_r,Z)-\mathrm{def}(X_r)\\
    &= r(N-\ell_1+M-\ell_2)+r\,\max\{\ell_1-M,0\}+r^2-1-r(N-M+r)+1\\
    &= r(2M-\ell_1-\ell_2)+r\,\max\{\ell_1-M,0\}\,.
\end{align*}
This completes the proof.
\end{proof}

\begin{remark}\label{rmk: relative duality tensor subspaces}
A similar study may be carried out for higher-order tensors.
Let $k\ge 2$ be an integer. For all $i\in[k]$, let $V_i$ be a complex vector space of dimension $n_i+1$. The tensor product $V=V_1^*\otimes\cdots\otimes V_{k-1}^*\otimes V_k$ is isomorphic to the vector space of multilinear maps $f\colon V_1\times\cdots\times V_{k-1}\to V_k$. After fixing bases for the spaces $V_i$, these multilinear maps are represented by $k$-dimensional arrays filled with complex numbers. We call them {\em tensors of format $(n_1+1,\ldots,n_k+1)$}.
For all $i\in[k]$, we consider $\PP^{n_i}=\PP(V_i)$ and we call $X$ the Segre embedding of $\PP^{n_1}\times\cdots\times\PP^{n_k}$ into $\PP(V)$. Its elements are (classes of) {\em decomposable} or {\em rank-one} tensors.
For a given integer $r\ge 1$, we call $X_r$ the $r$th secant variety of $X$. When $k=2$, we recover the variety of matrices of rank at most $r$. For $k\ge 3$, the notion of rank is subtler, and we refer to \cite{landsberg2012tensors} for more details.
 
For all $i\in[k]$, we fix linear subspaces $L_i\subset V_i$, and we consider the intersection $Z=X_r\cap\PP(L_1\otimes\cdots\otimes L_k)$. It would be interesting to study the relative dual varieties $X_Z^\vee$.
This is a hard problem already in the non relative setting.  The dual-defectivity of the varieties $X_r$ is well understood only in the case $r=1$ where $X^\vee$ is a hypersurface in $\PP(V^*)$ if and only if $n_i\le\sum_{j\neq i}n_j$ for all $i\in[k]$, see \cite[Chapter 14, Theorem 1.3]{gelfand1994discriminants}.
Furthermore, this case is related to the study of generalized Kalman varieties of matrices and tensors, which we briefly recall in Remark \ref{rmk: Kalman varieties data loci}.
\end{remark}

\subsection{Duality of low-rank matrices relative to symmetric and skew-symmetric low-rank matrices}\label{sec: relative dual determinantal symmetric}

In this section, we assume that $V=W$, and we consider the subspaces $S=S^2V$ and $A=\bigwedge^2V$ of $V\otimes V$. In coordinates, these correspond to the subspaces of symmetric and skew-symmetric $(N+1)\times(N+1)$ matrices, respectively. Define $S_r\coloneqq X_r\cap S$ and $A_r\coloneqq X_r\cap A$, namely the varieties of symmetric and skew-symmetric $(N+1)\times(N+1)$ matrices of rank at most $r$, respectively. We also denote by $S_r$ and $A_r$ the corresponding projective varieties in $\PP(V\otimes V)\cong\PP^{N(N+2)}$. We have that
\begin{equation}\label{eq: dim deg S_r}
\dim(S_r)=\binom{N+2}{2}-1-\binom{N-r+2}{2}\,,\quad\deg(S_r)=\prod_{i=0}^{N-r}\frac{\binom{N+1+i}{N+1-r-i}}{\binom{2i+1}{i}}
\end{equation}
whereas
\begin{equation}\label{eq: dim deg A_r}
\dim(A_r)=\binom{N+1}{2}-1-\binom{N+1-r}{2}\,,\quad\deg(A_r)=\frac{1}{2^{N-r}}\prod_{i=0}^{N-r-1}\frac{\binom{N+1+i}{N-r-i}}{\binom{2i+1}{i}}
\end{equation}
for all even $r$, while $A_r=A_{r-1}$ for every odd integer $r\ge 1$ because the rank of a skew-symmetric matrix is always even.
The degree formulas are due to Corrado Segre in the symmetric case and Giovanni Giambelli in the skew-symmetric case.
Similarly to the non-symmetric case explained in \eqref{eq: conormal determinantal}, we have that (see also \cite[Example 5.15]{rostalski2013dualities})
\begin{equation}\label{eq: conormal determinantal symmetric}
W_{S_r} = \{(A,B)\mid\text{$A\in S_r$, $B\in S_{n-r}$ and $AB=0$}\}\,,
\end{equation}
whereas
\begin{equation}\label{eq: conormal determinantal antisymmetric}
W_{A_r} =
\begin{cases}
    \{(A,B)\mid\text{$A\in A_r$, $B\in A_{n-r}$ and $AB=0$}\} & \text{if $r$ is even}\\
    \emptyset & \text{if $r$ is odd.}
\end{cases}
\end{equation}
Therefore, in the symmetric case, the projection of $W_{S_r}$ on the second factor is $S_r^\vee=S_{n-r}$.
In the skew-symmetric case, the projection of $W_{A_r}$ on the second factor is $A_r^\vee=A_{n-r}$ for $r$ even, while $A_r^\vee=\emptyset$ for $r$ odd. 

In this section, we consider $S_r$ and $A_r$ as subvarieties of $X_r$, and describe the relative dual varieties $(X_r)_{S_r}^\vee$ and $(X_r)_{A_r}^\vee$. First, we recall the following definition.

\begin{definition}\label{def: compound matrix}
Let $A$ be an $(M+1)\times(N+1)$ matrix for some integers $1\le M\le N$. Given subsets $I$ and $J$ of $[M+1]$ and $[N+1]$ with $|I|=|J|$, we denote by $A[I;J]$ the submatrix of $A$ obtained by selecting the rows and the columns of $A$ indexed by $I$ and $J$, respectively. Consider an integer $r\ge 1$. If $r>M+1$, we define $C_r(A)\coloneqq 0$. Otherwise $C_r(A)$ is a matrix of size $\binom{M+1}{r}\times\binom{N+1}{r}$. The rows and the columns of $C_r(A)$ are indexed by all subsets of $[M+1]$ and $[N+1]$ with $r$ elements, ordered with the lexicographic order. Finally, we set
\[
(C_r(A))_{I,J}=\det(A[I;J])\quad\forall (I,J)\in 2^{[M+1]}\times 2^{[N+1]}\,,|I|=|J|=r\,.
\]
The matrix $C_r(A)$ is called the {\em $r$th compound matrix} of $A$. It is also denoted by $\bigwedge^rA$.
\end{definition}

\begin{proposition}\label{prop: relative dual determinantal wrt subspace symmetric}
Consider the determinantal variety $X_r\subset\PP(V\otimes V)\cong\PP^{N(N+2)}$ and its subvarieties $S_r$ and $A_r$. Then
\begin{equation}
(X_r)_{S_r}^\vee=\overline{\{B\mid\text{$\rank(B)=N+1-r$ and $C_{N+1-r}(B)$ is symmetric}\}}\,.
\end{equation}
Furthermore $(X_r)_{A_r}^\vee=(X_r)_{S_r}^\vee$ for $r$ even and $(X_r)_{A_r}^\vee=\emptyset$ for $r$ odd.
\end{proposition}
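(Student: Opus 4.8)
The plan is to describe the relative conormal variety $W_{X_r,S_r}$ fiber-by-fiber over $(X_r)_{S_r}^\vee$ and then re-encode the resulting condition on the matrix $B$ in terms of compound matrices. Since $V=W$ has dimension $N+1$, we have $X_r^\vee=X_{N+1-r}$, and by \eqref{eq: conormal determinantal} a pair $(A,B)$ with $\rank(A)=r$ lies in $W_{X_r}$ iff $B^\mT A=0$ and $BA^\mT=0$; when moreover $A=A^\mT$ both conditions read $\image(A)\subseteq\ker B\cap\ker B^\mT$. Hence $W_{X_r,S_r}^\circ=\{(A,B)\mid \rank(A)=r,\ A=A^\mT,\ \image(A)\subseteq\ker B\cap\ker B^\mT\}$, and, $\pr_2$ being proper, $(X_r)_{S_r}^\vee=\overline{\pr_2(W_{X_r,S_r}^\circ)}$. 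The first observation I would make is that a matrix $B$ lies in $\pr_2(W_{X_r,S_r}^\circ)$ exactly when $\ker B\cap\ker B^\mT$ contains the image of some symmetric rank-$r$ matrix; and this happens whenever $\dim(\ker B\cap\ker B^\mT)\ge r$, since any $r$-dimensional subspace $U\subseteq\C^{N+1}$ is the image of the symmetric rank-$r$ matrix $MM^\mT$, where the columns of $M$ form a basis of $U$ (full column rank of $M$ forces $\rank(MM^\mT)=\rank(M)=r$).

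Next I would pin down the generic point of $(X_r)_{S_r}^\vee$. It is irreducible by Proposition \ref{prop: dimension relative conormal variety} and contained in $X_{N+1-r}$. The pair $(A_0,B_0)$, with $A_0=\mathrm{diag}(1,\dots,1,0,\dots,0)$ ($r$ ones) and $B_0=\mathrm{diag}(0,\dots,0,1,\dots,1)$ ($N+1-r$ ones), lies in $W_{X_r,S_r}^\circ$, so $(X_r)_{S_r}^\vee\not\subseteq (X_{N+1-r})_{\sing}=X_{N-r}$ and a generic $B\in(X_r)_{S_r}^\vee$ has rank exactly $N+1-r$. For such a $B$ the kernels $\ker B$ and $\ker B^\mT$ each have dimension $r$, so the condition $\dim(\ker B\cap\ker B^\mT)\ge r$ forces $\ker B=\ker B^\mT$; conversely any rank-$(N+1-r)$ matrix $B$ with $\ker B=\ker B^\mT$ lies in $\pr_2(W_{X_r,S_r}^\circ)$. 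By irreducibility of $(X_r)_{S_r}^\vee$ this yields $(X_r)_{S_r}^\vee=\overline{\{B\mid \rank(B)=N+1-r,\ \ker B=\ker B^\mT\}}$.

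The heart of the argument is then to show, for $B$ of rank exactly $k\coloneqq N+1-r$, that $\ker B=\ker B^\mT$ is equivalent to $C_k(B)$ being symmetric. Taking a rank factorization $B=PQ$ with $P$ of size $(N+1)\times k$ and $Q$ of size $k\times(N+1)$, Cauchy--Binet (multiplicativity of compound matrices) gives $C_k(B)=C_k(P)\,C_k(Q)$, where $C_k(P)$ is a nonzero column vector $v$ — the vector of maximal minors of $P$, i.e.\ the Plücker coordinates of the column space $\mathrm{Col}(B)$ — and $C_k(Q)$ is a nonzero row vector $w^\mT$, with $w$ the Plücker coordinates of the row space $\mathrm{Row}(B)$; both $v,w$ lie in the same coordinate space $\bigwedge^k\C^{N+1}$. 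Thus $C_k(B)=vw^\mT$ has rank one, and it is symmetric iff $v$ and $w$ are proportional, iff (injectivity of the Plücker embedding) $\mathrm{Col}(B)=\mathrm{Row}(B)$, iff $\ker B^\mT=\ker B$ (take orthogonal complements for the form $\sum_i x_iy_i$, using $\mathrm{Col}(B)^\perp=\ker B^\mT$ and $\mathrm{Row}(B)^\perp=\ker B$). Substituting this equivalence into the description obtained in the previous paragraph gives the claimed formula for $(X_r)_{S_r}^\vee$.

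Finally, for the skew-symmetric case and $r$ even the argument repeats verbatim with ``symmetric'' replaced by ``skew-symmetric'': over $\C$ a finite-dimensional space admits a nonsingular alternating form precisely when its dimension is even, so an $r$-dimensional subspace is the image of a skew-symmetric rank-$r$ matrix $MJM^\mT$ ($J$ a nonsingular alternating $r\times r$ matrix) exactly when $r$ is even, and the defining condition $\dim(\ker B\cap\ker B^\mT)\ge r$ is the same as in the symmetric case, whence $(X_r)_{A_r}^\vee=(X_r)_{S_r}^\vee$. For $r$ odd one has $A_r=A_{r-1}\subseteq X_{r-1}=(X_r)_{\sing}$, so $(X_r)_{\sm}\cap A_r=\emptyset$, $W_{X_r,A_r}^\circ=\emptyset$, and $(X_r)_{A_r}^\vee=\emptyset$. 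I expect the compound-matrix step (third paragraph) to be the only genuinely non-routine point; the remaining steps are a standard genericity/dimension count together with the existence of nonsingular symmetric (resp.\ alternating) forms over $\C$, which is exactly where the distinction between $S_r$ and $A_r$, and the parity of $r$, enters.
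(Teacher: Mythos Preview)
Your proof is correct and follows essentially the same route as the paper's: both identify the open part of the relative conormal variety, reduce the description of a generic $B\in(X_r)_{S_r}^\vee$ to the condition $\ker B=\ker B^\mT$ (equivalently $\image B=\image B^\mT$), and then recognize this via the rank-one factorization $C_k(B)=vw^\mT$ in Pl\"ucker coordinates. The only cosmetic difference is that the paper restricts from the outset to pairs $(A,B)$ with both ranks exact, whereas you deduce $\rank(B)=N+1-r$ generically from irreducibility and an explicit witness; your justification of $\rank(MM^\mT)=r$ and the Cauchy--Binet step are a bit more explicit than the paper's, but the argument is the same.
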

\begin{proof}
Using \eqref{eq: conormal determinantal}, the relative conormal variety $W_{X_r,S_r}$ is the Zariski closure of the set
\begin{align}\label{eq: relative conormal determinantal symmetric}
\begin{split}
\sU_r &= \{(A,B)\mid\text{$A\in S_r\setminus S_{r-1}$, $B\in X_{N+1-r}\setminus X_{N-r}$ and $AB=AB^\mT=0$}\}\\
&= \{(A,B)\mid\text{$A\in S_r\setminus S_{r-1}$, $B\in X_{N+1-r} \setminus X_{N-r}$ and $\image(A)=\ker(B)=\ker(B^\mT)$}\}\,.
\end{split}
\end{align}
To compute $(X_r)_{S_r}^\vee$, it is enough to consider the projection of $\sU_r$ which is dense in $W_{X_r,S_r}$. From the last identity in \eqref{eq: relative conormal determinantal symmetric}, we see that
\begin{align}\label{eq: projection U_r}
\begin{split}
    \pr_2(\sU_r) &= \{B\mid\text{$\rank(B)=N+1-r$ and $\ker(B) = \ker(B^\mT)$}\}\\
    &= \{B\mid\text{$\rank(B)=N+1-r$ and $\image(B) = \image(B^\mT)$}\}\,.
\end{split}
\end{align}
We claim that the last property is equivalent to imposing that the compound matrix $C_{N+1-r}(B)$ is symmetric. Let $R$ be a matrix of rank $p$, and suppose that $v$ and $w$ are the vectors in $\bigwedge^p\C^{N+1}$ corresponding to the subspaces $\image(A)$ and $\image(A^\mT)$, seen as elements of $\G(p-1,N)$. We have that
\[
C_p(R) = vw^\mT\,,
\]
in particular $C_p(R)$ has rank one, hence $C_p(R)$ is symmetric if and only if $v=w$. Continuing the chain of equalities in \eqref{eq: projection U_r}, we have that $B\in \pr_2(\sU_r)$ if and only if $C_{N+1-r}(B)$ is symmetric.

Regarding the second part of the proof, if $r$ is even, then $A_{r-1}=A_{r-2}$ and $W_{X_r,A_r}=\overline{\sV_r}$, where
\begin{equation}\label{eq: relative conormal determinantal antisymmetric}
\sV_r = \{(P,Q)\mid\text{$P\in A_r\setminus A_{r-2}$, $Q\in X_{N+1-r}\setminus X_{N-r}$ and $PQ=PQ^\mT=0$}\}\,.
\end{equation}
Since the second projection of $\sV_r$ is equal to the second projection of $\sU_r$, we conclude that $(X_r)_{A_r}^\vee=(X_r)_{S_r}^\vee$. If $r$ is odd, then $A_r=A_{r-1}$ and for this reason $W_{X_r,S_r}=\emptyset$, hence $(X_r)_{A_r}^\vee=\emptyset$.
\end{proof}

\begin{corollary}\label{cor: relative dual determinantal symmetric corank 1}
Assume $r=N$. After identifying the spaces $\PP(V\otimes V)$ and $\PP(V\otimes V)^\vee$, we have
\[
(X_N)_{S_N}^\vee=X_1\cap S=S_1\,.
\]
Furthermore $(X_N)_{A_N}^\vee=(X_N)_{S_N}^\vee=S_1$ if $N$ is even, otherwise $(X_N)_{A_N}^\vee=\emptyset$ if $N$ is odd.
\end{corollary}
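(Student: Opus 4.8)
The plan is to obtain Corollary \ref{cor: relative dual determinantal symmetric corank 1} as a direct specialization of Proposition \ref{prop: relative dual determinantal wrt subspace symmetric} to $r = N$, so I do not anticipate any real difficulty. The key first observation is that for $r = N$ the integer $N+1-r$ equals $1$, and by Definition \ref{def: compound matrix} the first compound matrix $C_1(B)$ of a square matrix $B$ is $B$ itself: for singletons $I = \{i\}$ and $J = \{j\}$ the minor $\det(B[I;J])$ is simply the entry $B_{ij}$. Consequently the condition ``$C_{N+1-r}(B)$ is symmetric'' appearing in Proposition \ref{prop: relative dual determinantal wrt subspace symmetric} reduces, for $r = N$, to ``$B$ is symmetric'', and the proposition gives
\[
(X_N)_{S_N}^\vee = \overline{\{B \mid \rank(B) = 1 \text{ and } B \text{ is symmetric}\}}\,.
\]

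The second step is to identify this closure. By definition $S_1 = X_1 \cap S$ is the projective variety of symmetric $(N+1)\times(N+1)$ matrices of rank at most one; it is irreducible, being the image of the second Veronese embedding of $\PP(V)$, and since $(S_1)_{\sing} = S_0$ is empty as a projective variety, its smooth locus consists precisely of the symmetric matrices of rank exactly one. Hence $S_1$ equals the closure of the set on the right-hand side above, and we conclude $(X_N)_{S_N}^\vee = S_1 = X_1 \cap S$, which is the first assertion of the corollary.

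For the skew-symmetric statement I would simply invoke the final sentence of Proposition \ref{prop: relative dual determinantal wrt subspace symmetric} evaluated at $r = N$: when $N$ is even one has $(X_N)_{A_N}^\vee = (X_N)_{S_N}^\vee = S_1$, while when $N$ is odd one has $A_N = A_{N-1}$, which forces the relative conormal variety $W_{X_N, A_N}$ to be empty and hence $(X_N)_{A_N}^\vee = \emptyset$. The only points needing more than a bare substitution are the elementary identity $C_1(B) = B$ and the standard description of $S_1$ as the closure of the rank-one symmetric matrices; neither is an obstacle, so the argument is essentially immediate.
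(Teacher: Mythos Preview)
Your proof is correct and follows exactly the same approach as the paper: specialize Proposition \ref{prop: relative dual determinantal wrt subspace symmetric} to $r=N$, use $C_1(B)=B$, and identify the resulting closure with $S_1=X_1\cap S$. The paper's version is simply more terse, compressing your two steps into a single displayed line and declaring the skew-symmetric part immediate.
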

\begin{proof}
By the previous result, we have that
\[
(X_N)_{S_N}^\vee=\overline{\{B \in X_1\setminus\{0\}\mid\text{$C_1(B)=B$ is symmetric}\}}=X_1\cap S=S_1\,.
\]
The last part is immediate.
\end{proof}

Corollary \ref{cor: relative dual determinantal symmetric corank 1} does not generalize to arbitrary $r$, namely it is not true that $(X_r)_{S_r}^\vee=X_{N+1-r}\cap S=S_{N+1-r}$. The first counterexample appears for $N=2$ and $r=1$. On one hand $S_{N+1-r}=S_{2}$ has codimension $5$ in $\PP^8$ and degree $3$. On the other hand, the relative dual variety $(X_1)_{S_1}^\vee$ has codimension 3 and degree 7. In particular, it is properly contained in $X_1^\vee=X_2$, which is the hypersurface defined by the determinant of a $3\times 3$ matrix.

A special property of determinantal varieties, not valid in general, is that the inclusions $X_r\subset X_{r+1}$ are reversed when taking duals, namely $(X_{r+1})^\vee\subset(X_r)^\vee$. A similar phenomenon happens when considering their relative dual varieties with respect to their intersections with the subspace $S$.

\begin{corollary}
For any $0\le r\le N$, we have that $(X_{r+1})_{S_{r+1}}^\vee\subset(X_r)_{S_r}^\vee$.
\end{corollary}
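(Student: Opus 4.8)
The plan is to reduce the statement to a one-parameter degeneration, using the explicit description of the relative dual varieties. By Proposition \ref{prop: relative dual determinantal wrt subspace symmetric} together with the rank-one factorization $C_p(B)=vw^\mT$ established in its proof (so that, for $B$ of rank $p$, the compound $C_p(B)$ is symmetric exactly when $\image(B)=\image(B^\mT)$), for every $s$ one has
\[
(X_s)_{S_s}^\vee=\overline{R_{N+1-s}},\qquad R_p\coloneqq\{B\mid\rank(B)=p,\ \image(B)=\image(B^\mT)\}.
\]
Thus $(X_{r+1})_{S_{r+1}}^\vee=\overline{R_{N-r}}$ and $(X_r)_{S_r}^\vee=\overline{R_{N+1-r}}$, and since the Zariski closure is monotone it suffices to prove the set-theoretic inclusion $R_{N-r}\subseteq\overline{R_{N+1-r}}$. (When $r=N$ this is vacuous: $X_{N+1}=\PP(V\otimes V)$ and $(X_{N+1})_{S_{N+1}}^\vee=\emptyset$.)

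To prove this inclusion I would fix $B_0\in R_{N-r}$, put $U\coloneqq\image(B_0)=\image(B_0^\mT)$ --- so $\dim U=N-r$ and, since $\ker(B)=(\image(B^\mT))^\perp$ for the standard bilinear form, $\ker(B_0)=\ker(B_0^\mT)=U^\perp$ --- pick any $v\in V\setminus U$, and consider the pencil $B_t\coloneqq B_0+t\,vv^\mT$, with $B_t^\mT=B_0^\mT+t\,vv^\mT$. The core computation is that for $t\ne 0$,
\[
\ker(B_t)=\ker(B_t^\mT)=U^\perp\cap v^\perp.
\]
Indeed $B_tx=0$ gives $B_0x=-t(v^\mT x)v$, and $v\notin U=\image(B_0)$ forces $v^\mT x=0$, hence $B_0x=0$ and $x\in U^\perp\cap v^\perp$; the reverse inclusion is immediate, and the identical argument applies to $B_t^\mT$ because $\ker(B_0^\mT)=U^\perp$ as well. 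Since the form is nondegenerate, $(U^\perp)^\perp=U$, so $v\notin U$ gives $U^\perp\not\subseteq v^\perp$ and therefore $\dim(U^\perp\cap v^\perp)=r$; hence $\rank(B_t)=N+1-r$, while $\ker(B_t)=\ker(B_t^\mT)$ is equivalent to $\image(B_t)=\image(B_t^\mT)$. So $B_t\in R_{N+1-r}$ for all $t\ne 0$.

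Finally, the morphism $t\mapsto B_t$ carries the dense open subset $t\ne 0$ of the irreducible affine line into $R_{N+1-r}$, so its value $B_0$ at $t=0$ lies in $\overline{R_{N+1-r}}=(X_r)_{S_r}^\vee$; this yields $R_{N-r}\subseteq(X_r)_{S_r}^\vee$ and hence, taking closures, $(X_{r+1})_{S_{r+1}}^\vee\subseteq(X_r)_{S_r}^\vee$. I expect the only non-formal point to be the kernel computation displayed above --- equivalently, the observation that $v\notin\image(B_0)$ is precisely what is needed for the perturbation $vv^\mT$ to raise the rank by one while preserving the range-symmetry; everything else is bookkeeping with closures and dimensions of kernels. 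A minor point to watch is that $vv^\mT$ must be symmetric (it is) so that $B_t$ and $B_t^\mT$ are perturbed by the same term, which is exactly what keeps the two kernels equal along the pencil.
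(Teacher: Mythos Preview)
Your proof is correct and takes a genuinely different route from the paper's. The paper argues directly from the compound-matrix description in Proposition~\ref{prop: relative dual determinantal wrt subspace symmetric}: a matrix $B$ of rank at most $N-r$ has $C_{N+1-r}(B)=0$, and the zero matrix is trivially symmetric. This is a one-line observation, but strictly speaking it only places such $B$ in the closed set $\mathcal{SC}_{N+1-r}$, not a priori in the closure $\overline{R_{N+1-r}}=(X_r)_{S_r}^\vee$; the passage from the former to the latter is left implicit, and the paper itself later notes that $\mathcal{SC}_{N+1-r}$ generally has several irreducible components beyond $(X_r)_{S_r}^\vee$. Your approach instead uses the equivalent characterization $\image(B)=\image(B^\mT)$ and exhibits an explicit one-parameter family $B_t=B_0+t\,vv^\mT$ that lies in $R_{N+1-r}$ for $t\neq 0$ and limits to the given $B_0\in R_{N-r}$. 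This is slightly more work, but it establishes the closure relation $R_{N-r}\subseteq\overline{R_{N+1-r}}$ directly and constructively, without needing any information about the component structure of $\mathcal{SC}_{N+1-r}$. The kernel computation is clean, and the use of nondegeneracy of the standard bilinear form to pass between kernels and orthogonal complements of images is handled correctly.
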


\begin{proof}
The proof is an immediate consequence of Proposition \ref{prop: relative dual determinantal wrt subspace symmetric}, because if $A\in X_j$, then all minors of $A$ of size larger than $j$ vanish, in particular $C_i(A)=0$ for all $j>i$, and the zero matrix is a special symmetric matrix.
\end{proof}

Proposition \ref{prop: relative dual determinantal wrt subspace symmetric} suggests to investigate in more detail the varieties
\begin{equation}\label{eq: def varieties SC_r}
    \mathcal{SC}_r\coloneqq\{B\in\C^{(N+1)\times(N+1)}\mid\text{$C_r(B)$ is symmetric}\}\,,
\end{equation}
where $r\in[N+1]$. With a little abuse of notation, we use the same notation for the corresponding projective variety. It is immediate to verify that $\mathcal{SC}_1=S$ and that $S\subset\mathcal{SC}_r$ for all $r\in[N+1]$.

\begin{example}
For $N=2$, we verified symbolically that $\mathcal{SC}_2$ has two irreducible components, namely $\mathcal{SC}_1=S=S^2\C^3$ and $(X_1)_{S_1}^\vee$. For $N=3$, we verified that
\begin{itemize}
    \item $\mathcal{SC}_2$ has three irreducible components, respectively $S=S^2\C^4$, $A=\bigwedge^2\C^4$, and $(X_2)_{S_2}^\vee$. This last component has codimension 8 and degree 38.
    \item $\mathcal{SC}_3$ has three irreducible components, respectively $S$, $(X_1)_{S_1}^\vee$ of codimension 4 and degree 24, and another component of codimension 4 and degree 20.\hfill$\diamondsuit$
\end{itemize}
\end{example}

The previous example tells us that in general $\mathcal{SC}_i$ is not contained in $\mathcal{SC}_j$ for all $i\le j\in[N+1]$.
We suggest the following conjecture based on more experimental evidence not discussed here.

\begin{conjecture}
For all $r\in[N+1]$, consider the variety $\mathcal{SC}_r$ introduced in \eqref{eq: def varieties SC_r}. Then
\begin{itemize}
    \item[$(i)$] $\mathcal{SC}_1=S=S^2\C^{N+1}$ is an irreducible component of $\mathcal{SC}_r$ for every $r\in[N+1]$.
    \item[$(ii)$] $(X_{n-r})_{S_{n-r}}^\vee$ is an irreducible component of $\mathcal{SC}_r$ for every $2\le r\le N+1$.
    \item[$(iii)$] $A=\bigwedge^2\C^{N+1}$ is an irreducible component of $\mathcal{SC}_r$ for all $N\ge 3$ and $r\in[N]$ even. 
\end{itemize}
\end{conjecture}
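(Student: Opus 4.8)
The plan is to identify each of the three varieties as the unique component of $\mathcal{SC}_r$ through a carefully chosen point, by a Zariski tangent-space computation. Throughout write $n=N+1=\dim V$ as in the statement, and view $\mathcal{SC}_r\subset\C^{(N+1)\times(N+1)}$ as the affine cone, so that $\dim S=\binom{N+2}{2}$ and $\dim A=\binom{N+1}{2}$. By Proposition~\ref{prop: relative dual determinantal wrt subspace symmetric} together with the Cauchy--Binet identity $C_r(PQ^\mT)=pq^\mT$ valid on $\{\rank=r\}$ (with $p,q$ the Pl\"ucker vectors of $\image(B)$ and $\image(B^\mT)$ for $B=PQ^\mT$), the candidate $Y_r\coloneqq(X_{n-r})_{S_{n-r}}^\vee$ equals $\overline{\{B\mid\rank(B)=r,\ \image(B)=\image(B^\mT)\}}$, which is irreducible of dimension $r(N+1)$. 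Three observations organize the argument: $(a)$ the group $\mathrm{GL}(N+1)$ acts by $B\mapsto g^\mT Bg$, and since $C_r(g^\mT Bg)=C_r(g)^\mT C_r(B)C_r(g)$ this preserves $\mathcal{SC}_r$ and each of $S$, $A$, $Y_r$, so one may compute at any convenient point of the relevant orbit; $(b)$ viewing $\mathcal{SC}_r=C_r^{-1}(\operatorname{Sym})$, its Zariski tangent space at $b\in\mathcal{SC}_r$ is $T_b\mathcal{SC}_r=\{\dot B\mid D_bC_r(\dot B)\text{ is symmetric}\}$, where $D_bC_r$ is the differential of the compound map; $(c)$ in the rank stratification one has $\{\rank<r\}\subset\mathcal{SC}_r$, $\{\rank=r\}\cap\mathcal{SC}_r$ is the smooth locus of $Y_r$, and $S$, as well as $A$ when $r$ is even, meet $\mathcal{SC}_r$ in $\{\rank>r\}$ generically. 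The common criterion is then: for $Z\in\{S,A,Y_r\}$ irreducible and contained in $\mathcal{SC}_r$, it suffices to find one $b\in Z$ with $\dim T_b\mathcal{SC}_r=\dim Z$; then $\mathcal{SC}_r$ is smooth of that dimension at $b$, so $Z$ is its unique component through $b$ and hence a component.

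For $(i)$, restricting to $1\le r\le N$ (the value $r=N+1$ is degenerate, as $C_{N+1}=\det$ and $\mathcal{SC}_{N+1}$ is everything), pick $b=\mathrm{Id}$. Here $D_bC_r$ is the $r$-th additive compound, whose $(I,J)$-entry is $\sum_{i\in I}\dot B_{ii}$ when $I=J$, equals $(-1)^{p+q}\dot B_{ab}$ when $I$ and $J$ differ in exactly one row $a$ (the $p$-th element of $I$) and one column $b$ (the $q$-th of $J$), and vanishes when $|I\cap J|\le r-2$. Since the sign $(-1)^{p+q}$ is unchanged under swapping $I$ and $J$, one reads off that $D_bC_r(\dot B)$ is symmetric iff $\dot B_{ab}=\dot B_{ba}$ for every $a\ne b$ admitting an $(r-1)$-subset of $[N+1]$ disjoint from $\{a,b\}$, hence for every $a\ne b$ as soon as $r\le N$. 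Therefore $T_b\mathcal{SC}_r=S$ and $(i)$ follows.

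Parts $(ii)$ and $(iii)$ run the same way, with heavier computations. For $(ii)$, with $2\le r\le N$, take $b=\operatorname{diag}(M,0)$ for $M\in\mathrm{GL}(r)$ generic and not symmetric, whose orbit is dense in the smooth locus of $Y_r$. Since $b$ is supported on the block $[r]\times[r]$, the nonzero entries of $D_bC_r(\dot B)$ are indexed by pairs $(I,J)$ each of which leaves $[r]$ in at most one place, and each is a signed cofactor of $M$ times one entry of $\dot B$; imposing symmetry leaves the $[r]\times[r]$ block of $\dot B$ free, kills the complementary $(N+1-r)\times(N+1-r)$ block (generic cofactor ratios cannot be matched by signs), and halves the two off-diagonal blocks, yielding $\dim T_b\mathcal{SC}_r=r^2+r(N+1-r)=r(N+1)=\dim Y_r$, so $(ii)$ follows. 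For $(iii)$, with $N\ge3$ and $r$ in the even range for which a generic element of $A$ has rank $>r$ — so that such an element is not already in $Y_r$; this is exactly what fails for $N=2$, where a generic skew matrix has rank $2=r$ and $A\subset(X_{n-2})_{S_{n-2}}^\vee$ — take $b$ a generic skew form. Its stabilizer under $B\mapsto g^\mT Bg$ is a symplectic group (or, for $N+1$ odd, the stabilizer of a degenerate alternating form), under which $\C^{(N+1)\times(N+1)}=A\oplus S$ with $S$ irreducible and $A$ a sum of the trivial and one irreducible representation; $T_b\mathcal{SC}_r$ is a submodule containing $A$, so it equals $A$ precisely when $S\not\subset T_b\mathcal{SC}_r$, i.e.\ when $D_bC_r(\dot B)$ fails to be symmetric for some — hence every — nonzero $\dot B\in S$. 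One reduces this to evaluating $D_bC_r$ on a single symmetric rank-one direction $\dot B=vv^\mT$.

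The main obstacle is exactly this tangent-space bookkeeping. In $(ii)$ one must confirm that the cofactor relations of the generic matrix $M$ have the stated rank, watching the signs on diagonal entries; in $(iii)$ one must carry out the perturbation computation $D_bC_r(vv^\mT)$ for a generic skew $b$ — and treat the non-reductive stabilizer when $N+1$ is odd — showing the failure of symmetry occurs precisely under the stated bounds on $N$ and $r$. I would also stress that $(i)$--$(iii)$ only claim these varieties are among the components: one must additionally check that the candidates are pairwise non-nested at their generic points — for $A$ versus $Y_r$ this is again the rank inequality above — while the examples already show that $\mathcal{SC}_r$ carries further components in general.
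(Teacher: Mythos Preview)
The paper does not prove this statement: it is explicitly presented as a \emph{conjecture} based on symbolic experiments for $N\le 3$, with only the classical fact that even-order compounds of skew matrices are symmetric offered as motivation for the inclusion $A\subset\mathcal{SC}_r$ in $(iii)$. There is no argument in the paper for any of the three parts, so there is nothing to compare your proposal against.

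That said, your tangent-space approach is a genuine attack, and your argument for $(i)$ is essentially complete and correct for $r\le N$: the differential of $C_r$ at the identity is the additive compound, and your sign analysis shows that symmetry of $D_{\mathrm{Id}}C_r(\dot B)$ forces $\dot B$ symmetric exactly when $r\le N$, whence $T_{\mathrm{Id}}\mathcal{SC}_r=S$ and $S$ is the unique component through $\mathrm{Id}$. (As you note, $r=N+1$ is degenerate; in fact $\mathcal{SC}_{N+1}$ is the whole matrix space, so $S$ is not a component there and the conjecture is slightly misstated at that endpoint.) Your sketch for $(ii)$ also goes through: at $b=\operatorname{diag}(M,0)$ with $M\in\mathrm{GL}(r)$ generic, the only nonvanishing entries of $D_bC_r(\dot B)$ occur at pairs $(I,J)$ each leaving $[r]$ in at most one position, and the resulting symmetry constraints kill the bottom-right block, leave the top-left block free, and cut the off-diagonal blocks to half their dimension (the coefficient matrix being, up to signs, the invertible adjugate of $M$), giving $\dim T_b\mathcal{SC}_r=r(N+1)=\dim Y_r$.

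For $(iii)$ there is a real gap. You reduce to showing $D_bC_r(vv^\mT)$ is not symmetric for a generic skew $b$ and some nonzero $v$, but you do not carry out this computation; moreover your representation-theoretic reduction is clean only when $N+1$ is even (stabilizer $\mathrm{Sp}_{N+1}$), while for $N+1$ odd the stabilizer of a degenerate alternating form is not reductive and the claim that $S$ is irreducible under it needs a separate argument. More seriously, your own caveat---that one needs a generic element of $A$ to have rank strictly greater than $r$ so that $A\not\subset Y_r$---is not satisfied in the full range asserted by the conjecture. When $N$ is even and $r=N$, a generic skew $(N{+}1)\times(N{+}1)$ matrix has rank exactly $N=r$ and satisfies $\image(B)=\image(B^\mT)$, so it lies in the smooth locus of $Y_r$; hence $A\subset Y_r$ with $\dim A=\binom{N+1}{2}<N(N+1)=\dim Y_r$, and $A$ is \emph{not} an irreducible component of $\mathcal{SC}_r$ in that case. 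Thus either $(iii)$ requires the additional hypothesis $r<N$ when $N$ is even, or the conjecture is false as stated; your method cannot succeed there without amending the range.
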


Part $(iii)$ of the previous conjecture is motivated by the fact that, if $B\in A$ and $r$ is even, then $C_r(B)$ is symmetric. Otherwise $r$ is odd and $C_r(B)$ is skew-symmetric (see \cite[Chapter 8]{cullis1913matrices}, \cite[Chapter 5]{aitken1939determinants}, and \cite{prells2002compound}). This implies that $A\subset\mathcal{SC}_r$ for every even $r\in[N+1]$. For $N=2$, we have that $A$ is contained in the irreducible component $(X_1)_{S_1}^\vee$ of $\mathcal{SC}_2$.

Motivated by Remark \ref{rmk: relative duality tensor subspaces}, we suggest the following multi-dimensional generalization of the problem studied in this section. Assuming $V_1=\cdots=V_k=V'$, we work in the tensor space $V=(V')^{\otimes k}$, and we consider the $r$th secant variety $X_r$ of the variety $X$ of decomposable tensors in $V$. The analog problem is then studying the relative dual variety $(X_r)_Z^\vee$, where in this case either $Z=X_r\cap\PP(S^kV')$ or $Z=X_r\cap\PP(\bigwedge^kV')$. Again, the problem is challenging already for $r=1$, and is related to the generalized symmetric Kalman varieties introduced in \cite[\S6]{shahidi2021degrees}.

\section{Euclidean Distance data loci of affine varieties}\label{sec: affine ED data loci}

In Sections \ref{sec: preliminaries} and \ref{sec: relative polar classes of a projective variety}, we have introduced the relative notions of tangency and duality in projective algebraic geometry, in particular the relative analog of the polar varieties of a given algebraic variety. Our motivation comes from the need to study the so-called ED data loci of algebraic varieties. To define them rigorously, we briefly recall the notion of Euclidean Distance degree of \cite{DHOST}. In this and the following sections the ground field is either $\R$ or $\C.$

Our ambient space is the Euclidean space $(V^\mR,\langle\,,\rangle^\mR)$, where $V^\mR$ is a real vector space of dimension $N+1$, and $\langle\,,\rangle^\mR\colon V^\mR\times V^\mR\to\R$ is a real inner product on $V^\mR$, which induces a positive definite quadratic form $q^\mR\colon V^\mR\to\R$ defined by $q^\mR(v)=\langle v,v\rangle^\mR$. 

Let $u\in V^\mR$, and $X^\mR\subset V^\mR$ be a real algebraic variety.  A well-known problem is computing the critical points of the squared Euclidean distance function $x\mapsto q^\mR(x-u)$ from the point $u$, when restricted to the points $x\in X^\mR$. This problem relaxes the more difficult problem of finding the closest point to $u$ on $X^\mR$.
Algebraic geometers love to consider complex algebraic varieties to study algebraically the complexity of the given problem. Here, by complexity, we intend the number of such critical points when it is finite. Therefore, we rephrase the previous problem in a complex setting. First, we consider the complex vector space $V\coloneqq V^\mR\otimes\C$. Secondly, we define the bilinear form $\langle\,,\rangle\colon V\times V\to\C$ and the complex-valued polynomial function $q\colon V\to\C$ taking the same values of $\langle\,,\rangle^\mR$ and $q^\mR$ over $V^\mR$, respectively.
After fixing a coordinate system $\{x_0,\ldots,x_N\}$ of $V$, we identify $V$ with $\C^{N+1}$ and assume that $\langle\,,\rangle^\mR$ is the standard Euclidean inner product on $V^\mR$, or equivalently that $q(x)=x_0^2+\cdots+x_N^2$.
Finally, we let $X\subset V$ be the Zariski closure of $X^\mR$.
All this leads to studying the critical points of the polynomial objective function
\begin{equation}\label{eq: def objective function}
d_{X,u}\colon X\to\C\,,\quad d_{X,u}(x)\coloneqq q(u-x)=\sum_{i=0}^N(u_i-x_i)^2\,.
\end{equation}
Since both $X$ and $q$ are defined by polynomials, the subset of critical points of $d_{X,u}$ is the vanishing locus of a certain polynomial ideal.
Furthermore, we assume that $X$ is a reduced and irreducible complex algebraic variety, and we denote by $I_X$ the radical ideal whose vanishing locus is $X$. In particular $I_X=\langle f_1,\ldots,f_s\rangle\subset\C[x]\coloneqq\C[x_0,\ldots,x_N]$ for some vector of polynomials $f\coloneqq(f_1,\ldots,f_s)$. If $J(f)$ is the Jacobian matrix of the vector $f$ and $c=\codim(X)$, then $\rank(J(f))=c$ when evaluated at a smooth point of $X$. Hence the singular locus $X_{\sing}=X\setminus X_{\sm}$ of $X$ is the vanishing locus of the ideal
\begin{equation}\label{eq: ideal sing X}
I_{X_{\sing}} = I_X+\langle\text{$c\times c$ minors of $J(f)$}\rangle\,.
\end{equation}
Now consider the ideal
\begin{equation}\label{eq: ideal ED correspondence}
I_{X,u}\coloneqq\left(I_X+\left\langle\text{$(c+1)\times(c+1)$ minors of $\begin{pmatrix}u-x\\J(f)\end{pmatrix}$}\right\rangle\right)\colon I_{X_{\sing}}^\infty\,,
\end{equation}
If $u\in V$ is fixed, then $I_{X,u}$ is an ideal in $\C[x]$ and its vanishing locus is the variety of complex critical points of $d_{X,u}$ restricted to $X$. Indeed, a smooth point $x$ of $X$ is {\em critical} for $d_{X,u}$ when the gradient of $d_{X,u}$ at $x$, which is proportional to $u-x$, lies in the normal space of $X$ at $x$
\[
N_xX\coloneqq\{y\in V\mid\text{$\langle y,v\rangle=0$ for all $v\in T_xX$}\}\,,
\]
and $N_xX$ is spanned by the rows of $J(f)$ evaluated at $x$.
Otherwise $I_{X,u}$ lives in $\C[x,u]$ and therefore defines a variety in the cartesian product $V_x\times V_u$. The subscripts in $V_x$ and $V_u$ are needed to clarify which coordinates are used in the specific copy of $V$.

\begin{definition}\label{def: ED correspondence}
The {\em ED correspondence} of $X$ is the subvariety $\sE_X\subset V_x\times V_u$ defined by the ideal $I_{X,u}$ introduced in \eqref{eq: ideal ED correspondence}, seen as an ideal in $\C[x,u]$. In particular
\begin{equation}\label{eq: ED correspondence}
\sE_X=\overline{\{(x,u)\in V_x\times V_u\mid\text{$x\in X_{\sm}$ and $x$ is critical for $d_{X,u}$}\}}\,.
\end{equation}
\end{definition}

It is worth noting that the previous complex relaxation has forgotten the original real setting of distance minimization. Therefore, one might start directly by fixing a bilinear form $\langle\,,\rangle\colon V\times V\to\C$, inducing the quadratic form $q(x)=\langle x,x\rangle$, and by considering a reduced and irreducible complex algebraic variety $X\subset V$. Then Definition \ref{def: ED correspondence} remains unchanged.

The variety $\sE_X$ is equipped with the two projections

\begin{equation}\label{eq: diagram ED correspondence}
\begin{tikzcd}
& \sE_X \arrow{dl}[swap]{\pr_1} \arrow{dr}{\pr_2} & \\
V_x & & V_u\,.
\end{tikzcd}
\end{equation}

\begin{theorem}{\cite[Theorem 4.1]{DHOST}}\label{thm: ED correspondence irreducible}
Let $X\subset V\cong\C^{N+1}$ be an irreducible variety of codimension $c$.
The ED correspondence $\sE_X$ is an irreducible variety of dimension $N+1$ inside $V_x\times V_u$.
The first projection $\pr_1\colon\sE_X\to V_x$ induces an affine vector bundle of rank $c$ over $X_{\sm}$.
Furthermore, if $T_xX\cap N_xX=\{0\}$ at some point $x\in X_{\sm}$, then the second projection $\pr_2\colon\sE_X\to V_u$ is a dominant map, and its generic fibers are positive of constant cardinality.
\end{theorem}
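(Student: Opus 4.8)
The plan is to treat the three assertions separately, the last being the only one that needs a genuine idea. By Definition~\ref{def: ED correspondence}, $\sE_X$ is the Zariski closure of $\sE_X^{\circ}\coloneqq\{(x,u)\in V_x\times V_u\mid x\in X_{\sm},\ u-x\in N_xX\}$. Since $\langle\,,\rangle$ is nondegenerate one has $\dim N_xX=c$ for all $x\in X_{\sm}$, and $x\mapsto N_xX$ is the fibre over $x$ of the normal bundle $\sN_{X_{\sm}/V}\subset X_{\sm}\times V$, a vector bundle of rank $c$. The map $(x,v)\mapsto(x,x+v)$ is an isomorphism of varieties over $X_{\sm}$ from $\sN_{X_{\sm}/V}$ onto $\sE_X^{\circ}$; composing with $\pr_1$ this yields the asserted affine vector bundle structure of $\pr_1$ over $X_{\sm}$ (the natural zero section being $x\mapsto(x,x)$, whence ``affine''). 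Because $X_{\sm}$ is irreducible of dimension $n=N+1-c$, the total space $\sE_X^{\circ}$, and hence its closure $\sE_X$, is irreducible of dimension $n+c=N+1$.

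For the last assertion I would exploit one special data point. Fix a smooth point $x_0\in X_{\sm}$ with $T_{x_0}X\cap N_{x_0}X=\{0\}$ and take $u=x_0$, so that $(x_0,x_0)\in\sE_X^{\circ}$. The ambient gradient of $x\mapsto q(x_0-x)$ vanishes at $x_0$, so $x_0$ is a critical point of $d_{X,x_0}$ and the Hessian of $d_{X,x_0}|_X$ at $x_0$ equals the restriction of the ambient Hessian to $T_{x_0}X$, i.e.\ $2\,q|_{T_{x_0}X}$. Its kernel is $T_{x_0}X\cap(T_{x_0}X)^{\perp}=T_{x_0}X\cap N_{x_0}X=\{0\}$, so $x_0$ is a nondegenerate critical point of $d_{X,x_0}$, hence an isolated one; equivalently the fibre $\pr_2^{-1}(x_0)$ of $\pr_2\colon\sE_X\to V_u$ is $0$-dimensional at the point $(x_0,x_0)$.

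I would then conclude with the theorem on the dimension of fibres. The function $p\mapsto\dim_p\pr_2^{-1}(\pr_2(p))$ is upper semicontinuous on the irreducible variety $\sE_X$, so by the previous step it is $0$ on a dense open subset of $\sE_X$. On the other hand, for the dominant morphism $\pr_2\colon\sE_X\to\overline{\image(\pr_2)}$ every irreducible component of every nonempty fibre has dimension at least $\dim\sE_X-\dim\overline{\image(\pr_2)}$; hence this number is $\le 0$, so it is $0$ and $\dim\overline{\image(\pr_2)}=N+1=\dim V_u$. Therefore $\pr_2$ is dominant with finite generic fibre, and since we are in characteristic zero $\pr_2$ is generically \'etale, so its fibres over a dense open subset of $V_u$ all have the same cardinality, which is positive because $\pr_2$ is dominant.

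I expect the main obstacle to be precisely the identification, in the last step, of the geometric hypothesis $T_xX\cap N_xX=\{0\}$ with a statement that forces $\pr_2$ to be generically finite. Computing $d\pr_2$ directly along $\sE_X^{\circ}$ produces, at a point lying over $x\in X_{\sm}$ and under the assumption $T_xX\cap N_xX=\{0\}$, an endomorphism of $T_xX$ of the shape $\mathrm{Id}+\pi_{T_xX}\!\bigl(\sum_i\lambda_iH_i(x)\bigr)$, with $H_i$ the Hessians of the defining equations and $\lambda$ the ``normal coordinates'' of the point; its invertibility for generic $\lambda$ holds but is not transparent. Short-circuiting this by the single value $u=x_0$ (where $\lambda=0$ and the relevant Hessian collapses to $q|_{T_{x_0}X}$) is the step I would be most careful to carry out correctly.
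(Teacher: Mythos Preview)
The paper does not prove this theorem: it is quoted verbatim from \cite[Theorem~4.1]{DHOST} and used as a black box. So there is no proof in the paper to compare against. Your argument is nonetheless correct and self-contained.

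For the first two assertions, your approach coincides with what the paper does in the relative setting (Proposition~\ref{prop: ED correspondence Z irreducible}): the fibre of $\pr_1$ over a smooth point is the affine translate $x+N_xX$, a $c$-dimensional affine space, giving $\sE_X^\circ$ the structure of an affine bundle of rank $c$ over $X_{\sm}$; irreducibility and the dimension count follow.

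For the dominance of $\pr_2$, your trick of specializing to $u=x_0$ is clean and works: the ambient Hessian of $q(x_0-x)$ is $2\,\mathrm{Id}$, so the Hessian of $d_{X,x_0}|_X$ at $x_0$ is $2\,q|_{T_{x_0}X}$, whose kernel is $T_{x_0}X\cap N_{x_0}X=\{0\}$; hence $(x_0,x_0)$ is an isolated (indeed reduced) point of $\pr_2^{-1}(x_0)$. Upper semicontinuity of fibre dimension on the irreducible $\sE_X$ then forces the generic fibre to be finite, whence $\pr_2$ is dominant, and generic \'etaleness in characteristic zero gives constant positive cardinality. Your closing paragraph correctly identifies this as the delicate step and your resolution of it is valid; the alternative differential computation you sketch (invertibility of $\mathrm{Id}+\pi_{T_xX}(\sum_i\lambda_i H_i(x))$ for generic $\lambda$) is exactly the route taken in the original \cite{DHOST}, so your shortcut via $\lambda=0$ is a genuine simplification.
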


\begin{definition}\label{def: ED degree}
Consider the diagram \eqref{eq: diagram ED correspondence} and let $\mathrm{Crit}(d_{X,u})\coloneqq\pr_1(\pr_2^{-1}(u))\subset X$ be the {\em critical locus} of $d_{X,u}$ for all $u\in V_u$. By Theorem \ref{thm: ED correspondence irreducible}, there exists an open dense subset $\sU=\sU(d_{X,u})\subset V_u$ such that $\mathrm{Crit}(d_{X,u})$ is zero dimensional and consists of simple points. The cardinality $|\mathrm{Crit}(d_{X,u})|$ given $u\in\sU$ is called {\em Euclidean Distance degree} of $X$ and is denoted by $\mathrm{EDD}(X)$.
\end{definition}

The main goal of this section is rephrasing the notions of ED correspondence and ED degree in a ``conditional'' setting, as we motivated in the introduction. Let $Z\subset X$ be a reduced and irreducible subvariety, and consider the ideal
\begin{equation}\label{eq: ideal ED correspondence Z}
I_{X,u|Z}\coloneqq\left(I_Z+\left\langle\text{$(c+1)\times(c+1)$ minors of $\begin{pmatrix}u-x\\J(f)\end{pmatrix}$}\right\rangle\right)\colon I_{X_{\sing}}^\infty\,.
\end{equation}

\begin{definition}\label{def: ED correspondence Z}
The {\em (conditional) ED correspondence of $X$ given $Z$} is the subvariety $\sE_{X|Z}\subset V_x\times V_u$ defined by the ideal $I_{X,u|Z}$ introduced in \eqref{eq: ideal ED correspondence Z}, seen as an ideal in $\C[x,u]$. In particular
\begin{equation}\label{eq: ED correspondence Z}
\sE_{X|Z}=\overline{\{(x,u)\in V_x\times V_u\mid\text{$x\in X_{\sm}\cap Z$ and $x$ is critical for $d_{X,u}$}\}}\,.
\end{equation}
\end{definition}

\begin{proposition}\label{prop: ED correspondence Z irreducible}
Let $X\subset V\cong\C^{N+1}$ be an irreducible affine variety of codimension $c$.
Let $Z\subset X$ be an irreducible affine subvariety of codimension $\delta$ in $X$, such that $Z\not\subset X_{\sing}$.
The variety $\sE_{X|Z}$ is irreducible of dimension $N+1-\delta$ inside $V_x\times V_u$. Its projection onto the second factor $V_u$ is irreducible as well.
\end{proposition}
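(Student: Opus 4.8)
The plan is to realize $\sE_{X|Z}$ as the closure of the restriction, over the subvariety $Z$, of the affine vector bundle $\pr_1\colon\sE_X\to V_x$ produced by Theorem \ref{thm: ED correspondence irreducible}, and then to transport irreducibility and the dimension count across this restriction.

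First I would record that, by the very shape of the ideal $I_{X,u|Z}$ in \eqref{eq: ideal ED correspondence Z} --- where the saturation by $I_{X_{\sing}}^\infty$ deletes every component supported over $X_{\sing}$ --- one has $\sE_{X|Z}=\overline{\sE_{X|Z}^\circ}$, with
\[
\sE_{X|Z}^\circ=\{(x,u)\in V_x\times V_u\mid x\in X_{\sm}\cap Z\ \text{and}\ u-x\in N_xX\}\,,
\]
as already spelled out in \eqref{eq: ED correspondence Z}. Comparing with \eqref{eq: ED correspondence}, this locus is exactly $\pr_1^{-1}(X_{\sm}\cap Z)$, where $\pr_1\colon\sE_X\to V_x$ is the first projection restricted to the open part of $\sE_X$ lying over $X_{\sm}$.

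Next I would invoke the hypothesis $Z\not\subset X_{\sing}$: since $Z$ is irreducible, $X_{\sm}\cap Z$ is a nonempty Zariski-open --- hence dense --- subset of $Z$, so it is irreducible of dimension $\dim Z=\dim X-\delta=(N+1-c)-\delta$. By Theorem \ref{thm: ED correspondence irreducible}, $\pr_1$ restricts to an affine vector bundle of rank $c$ over $X_{\sm}$; pulling this structure back along the inclusion $X_{\sm}\cap Z\hookrightarrow X_{\sm}$ shows that $\sE_{X|Z}^\circ\to X_{\sm}\cap Z$ is again an affine vector bundle of rank $c$. The total space of an affine vector bundle over an irreducible base is irreducible (the bundle is Zariski-locally a product $U\times\C^c$ and the charts overlap because the base is irreducible) and has dimension equal to the base dimension plus the rank. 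Hence $\sE_{X|Z}^\circ$ is irreducible of dimension $(N+1-c-\delta)+c=N+1-\delta$, and so is its closure $\sE_{X|Z}$.

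Finally, $\pr_2(\sE_{X|Z})\subset V_u$ --- and its Zariski closure --- is irreducible, being the image of an irreducible variety under a morphism. I do not expect a genuine obstacle here; the only delicate bookkeeping is making sure that the saturation in \eqref{eq: ideal ED correspondence Z} yields precisely $\overline{\pr_1^{-1}(X_{\sm}\cap Z)}$, i.e. that no spurious component lying over $Z\cap X_{\sing}$ survives, and that the ``affine vector bundle over an irreducible base is irreducible'' claim is applied correctly. Everything else is a transfer of the properties of $\sE_X$ from Theorem \ref{thm: ED correspondence irreducible} to its restriction over $Z$.
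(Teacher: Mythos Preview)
Your proof is correct and follows essentially the same approach as the paper: both show that $\pr_1$ makes $\sE_{X|Z}^\circ$ an affine vector bundle of rank $c$ over the irreducible open set $X_{\sm}\cap Z$, yielding irreducibility and the dimension count $\dim(Z)+c=N+1-\delta$. The only cosmetic difference is that you obtain the bundle structure by restricting the one from Theorem~\ref{thm: ED correspondence irreducible}, whereas the paper re-derives it directly from the rank of $J(f)$ at points of $X_{\sm}\cap Z$.
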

\begin{proof}
For a fixed $z\in X_{\sm}\cap Z$, the Jacobian matrix $J(f)$ has rank $c=\codim(X)$, so the data points $u$ where the $(c+1)\times(c+1)$-minors of the augmented Jacobian matrix in \eqref{eq: ideal ED correspondence Z} vanish form an affine-linear subspace in $V$ of dimension $c$. This means that the first projection $\pr_1$ in the diagram
\begin{equation}\label{eq: diagram ED correspondence Z}
\begin{tikzcd}
& \sE_{X|Z} \arrow{dl}[swap]{\pr_1} \arrow{dr}{\pr_2} & \\
V_x & & V_u\,.
\end{tikzcd}
\end{equation}
induces an affine vector bundle of rank $c$ over $X_{\sm}\cap Z$. Hence, the variety $\sE_{X|Z}$ is irreducible of dimension $\dim(Z)+c=N+1-\delta$. In turn, the image $\pr_2(\sE_{X|Z})$ is irreducible.
\end{proof}

We are ready to introduce ED data loci in the context of ED optimization. The problem is natural: identify the locus of data points $u\in V$ having at least one smooth critical point of the function $d_{X,u}$ on a given subvariety $Z$.

\begin{definition}\label{def: ED data locus}
Let $Z$ be an affine subvariety of $X$. The {\em ED data locus of $X$ given $Z$} is the variety
\begin{equation}\label{eq: ED data locus}
\mathrm{DL}_{X|Z}\coloneqq\overline{\pr_2(\sE_{X|Z})}=\overline{\{u\in V\mid\text{$\exists\,x\in X_{\sm}\cap Z$ such that $x$ is critical for $d_{X,u}$}\}}\,.
\end{equation}
\end{definition}

Given a vector $a\in V$ and a subset $S\subset V$, we write $a+S\coloneqq\{a+s\mid s\in S\}$. This allows us to rewrite $\mathrm{DL}_{X|Z}$ as
\begin{equation}\label{eq: alternative description of ED data locus}
\mathrm{DL}_{X|Z} = \overline{\bigcup_{x\in X_{\sm}\cap Z}x+N_xX}\,,
\end{equation}
where the bar indicates Zariski closure in $V$.
The easiest ED data loci to consider are the ones with respect to single smooth points of $X$.

\begin{example}
Let $x$ be a smooth point of $X$, and define $Z=\{x\}$. In this particular case, we have that $\mathrm{DL}_{X|Z}=x+N_xX$.\hfill$\diamondsuit$
\end{example}

A crucial property of $\mathrm{DL}_{X|Z}$, which descends immediately from Proposition \ref{prop: ED correspondence Z irreducible}, is that $\mathrm{DL}_{X|Z}$ is an irreducible variety of $V$. We also expect $\mathrm{DL}_{X|Z}$ to be a proper subvariety of $V$ because $\mathrm{Crit}(d_{X,u})\cap Z=\emptyset$ for a generic data point $u$.

\begin{definition}\label{def: conditional ED regularity}
Let $Z$ be a subvariety of $X\subset V$. We say that $X$ is {\em ED regular given $Z$} if $Z\not\subset X_{\sing}$ and if $\mathrm{DL}_{X|Z}\not\subset V_u\setminus\sU$, where $\sU$ is introduced in Definition \ref{def: ED degree}.
\end{definition}

The following Proposition is a generalization of \cite[Proposition 3.4]{horobet2023does}.
\begin{proposition}\label{prop: data locus is irreducible}
Let $X\subset V\cong\C^{N+1}$ be an irreducible affine variety.
Let $Z\subset X$ be an irreducible affine subvariety of codimension $\delta$ in $X$. If $X$ is ED regular given $Z$, then $\mathrm{DL}_{X|Z}$ is irreducible of codimension $\delta$ in $V$.
\end{proposition}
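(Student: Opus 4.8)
The strategy is to combine the irreducibility statement of Proposition~\ref{prop: ED correspondence Z irreducible} with a fibre-dimension analysis of the second projection $\pr_2\colon\sE_{X|Z}\to V_u$. Proposition~\ref{prop: ED correspondence Z irreducible} already yields that $\sE_{X|Z}$ is irreducible of dimension $N+1-\delta$ and that $\pr_2(\sE_{X|Z})$ is irreducible; taking Zariski closures, $\mathrm{DL}_{X|Z}=\overline{\pr_2(\sE_{X|Z})}$ is irreducible. Hence it only remains to prove that $\dim\mathrm{DL}_{X|Z}=N+1-\delta$, equivalently that the dominant morphism $\pr_2\colon\sE_{X|Z}\to\mathrm{DL}_{X|Z}$ has a zero-dimensional generic fibre.

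First I would observe, straight from the descriptions \eqref{eq: ED correspondence} and \eqref{eq: ED correspondence Z}, that the distinguished dense open subset of $\sE_{X|Z}$ (pairs $(x,u)$ with $x\in X_{\sm}\cap Z$ critical for $d_{X,u}$) is contained in the distinguished dense open subset of $\sE_X$, so that $\sE_{X|Z}\subseteq\sE_X$. Next, fix a data point $u$ in the dense open set $\sU=\sU(d_{X,u})$ of Definition~\ref{def: ED degree}, over which $\mathrm{Crit}(d_{X,u})=\pr_1(\pr_2^{-1}(u))$ is finite. For such $u$, the restriction of $\pr_1$ to the fibre $\pr_2^{-1}(u)$ is injective, since every point of that fibre has second coordinate the fixed point $u$; therefore $\pr_2^{-1}(u)\cap\sE_X$ is in bijection with $\mathrm{Crit}(d_{X,u})$ and is finite, and a fortiori $\pr_2^{-1}(u)\cap\sE_{X|Z}$ is finite as well.

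Now I invoke the ED regularity hypothesis: by Definition~\ref{def: conditional ED regularity} it says exactly that $\mathrm{DL}_{X|Z}\not\subseteq V_u\setminus\sU$, so $\mathrm{DL}_{X|Z}\cap\sU$ is a nonempty open subset of the irreducible variety $\mathrm{DL}_{X|Z}$, hence dense in it. Suppose for contradiction that the generic fibre of $\pr_2|_{\sE_{X|Z}}$ over $\mathrm{DL}_{X|Z}$ had dimension $e\ge 1$; then $e=\dim\sE_{X|Z}-\dim\mathrm{DL}_{X|Z}$, and by the theorem on dimension of fibres there is a nonempty open subset $U_0\subseteq\mathrm{DL}_{X|Z}$, contained in $\pr_2(\sE_{X|Z})$, on which every fibre has dimension exactly $e$. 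Since two nonempty opens of an irreducible variety meet, $U_0\cap(\mathrm{DL}_{X|Z}\cap\sU)\ne\emptyset$; picking $u$ there gives $\dim\bigl(\pr_2^{-1}(u)\cap\sE_{X|Z}\bigr)=e\ge 1$, contradicting the finiteness established above. Hence the generic fibre is zero-dimensional, so $\dim\mathrm{DL}_{X|Z}=\dim\sE_{X|Z}=N+1-\delta$, i.e. $\codim_V(\mathrm{DL}_{X|Z})=\delta$.

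I expect the only mildly delicate point to be the bookkeeping with the generic-fibre-dimension theorem together with the remark that a nonempty open subset of the irreducible $\mathrm{DL}_{X|Z}$ is dense; the injectivity of $\pr_1$ on fibres of $\pr_2$ — the mechanism turning finiteness of $\mathrm{Crit}(d_{X,u})$ into finiteness of the $\pr_2$-fibre — is elementary once noticed, and the inclusion $\sE_{X|Z}\subseteq\sE_X$ is immediate from the definitions.
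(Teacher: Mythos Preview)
Your proof is correct and follows essentially the same approach as the paper: irreducibility comes from Proposition~\ref{prop: ED correspondence Z irreducible}, and the dimension statement follows because ED regularity forces $\pr_2|_{\sE_{X|Z}}$ to be generically finite. The paper's proof is terser---it simply asserts that the assumptions imply generic finiteness of $\pr_2|_{\sE_{X|Z}}$ with degree bounded by $\mathrm{EDD}(X)$---while you spell out the mechanism (fibres over $\sU$ are finite via the inclusion $\sE_{X|Z}\subseteq\sE_X$, and ED regularity guarantees $\mathrm{DL}_{X|Z}\cap\sU\neq\emptyset$), which is exactly the content behind the paper's one-line claim.
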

\begin{proof}
The irreducibility of $\mathrm{DL}_{X|Z}$ follows immediately by Proposition \ref{prop: ED correspondence Z irreducible}.
For the second part, the assumptions given imply that the morphism $\pr_2|_{\sE_{X|Z}}\colon\sE_{X|Z}\to \pr_2(\sE_{X|Z})$ is also generically finite, and its degree is bounded above by $\deg(\pr_2)=\mathrm{EDD}(X)$.
This implies that $\dim(\mathrm{DL}_{X|Z})=\dim(\sE_{X|Z})=N+1-\delta$.
\end{proof}

\begin{definition}\label{def: ED degree Z}
Let $Z\subset X\subset V$ be irreducible affine varieties such that $X$ is ED regular given $Z$.
Consider the diagram \eqref{eq: diagram ED correspondence Z}.
We define {\em (conditional) Euclidean Distance degree of $X$ given $Z$} the cardinality of the generic fiber of the restriction $\pr_2|_{\sE_{X|Z}}\colon\sE_{X|Z}\to \pr_2(\sE_{X|Z})$. We denote this invariant by $\mathrm{EDD}(X|Z)$.    
\end{definition}

In Proposition \ref{prop: double data strictly contained}, we give a sufficient condition on $Z$ under which $\mathrm{EDD}(X|Z)=1$, no matter what is the value of $\mathrm{EDD}(X)$. This would mean that a generic data point on $\mathrm{DL}_{X|Z}$ admits exactly one critical point of the objective function $d_{X,u}$ on $Z$.
The following is an easy example of an affine variety $X$ and a subvariety $Z$ such that $\mathrm{EDD}(X|Z)>1$.

\begin{example}\label{ex: projection not birational}
Let $S$ be a sphere in $\R^n$ with center $c$. For every $u\in\R^n$ distinct from $c$, there are exactly two critical points of $d_{X,u}|_S$.
They can be computed by intersecting $S$ with the line joining $u$ and the center of $S$. This tells us that $\mathrm{EDD}(S)=2$.
Let $H$ be a hyperplane containing $c$, and call $Z=S\cap H$. It is easy to see that $\mathrm{DL}_{S|Z}=H$, and for every $u\in H$ distinct from $c$, the two critical points of $d_{X,u}$ on $S$ belong to $H$. This means that the generic fiber of the restriction $\pr_2|_{\sE_{S|Z}}\colon\sE_{S|Z}\to \pr_2(\sE_{S|Z})$ has cardinality two, hence $\mathrm{EDD}(S|Z)=2$. This case is illustrated on the left of Figure \ref{fig: ED data locus sphere}.
Otherwise, suppose that $c\notin H$ and that $H$ meets $S$ in a smooth quadric $Z$.
Then $\mathrm{DL}_{S|Z}$ is the green quadric surface on the right of Figure \ref{fig: ED data locus sphere}, and $\mathrm{EDD}(S|Z)=1$.\hfill$\diamondsuit$
\end{example}
\begin{figure}[ht]
\begin{overpic}[width=2.2in]{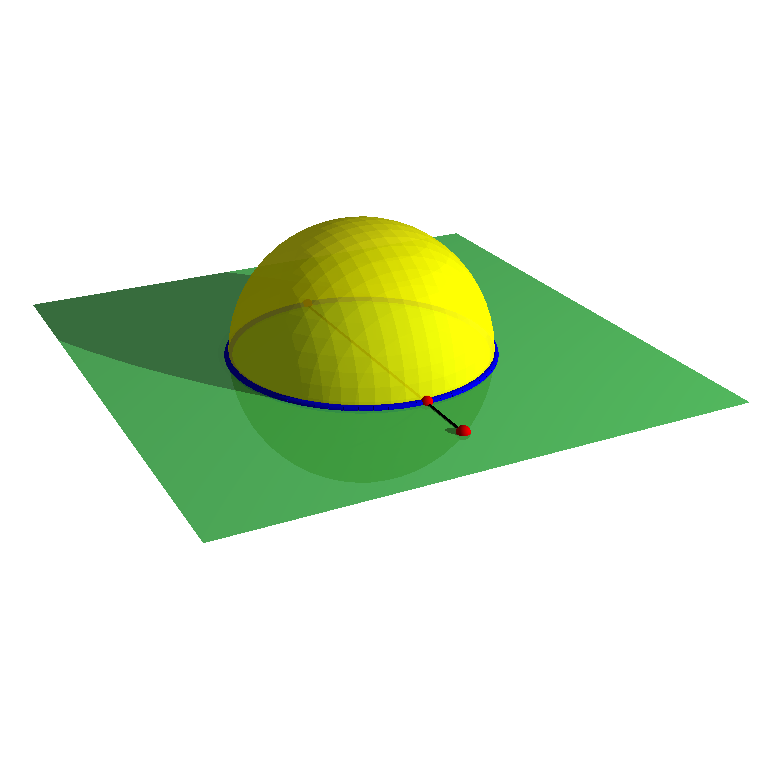}
\put (45,74) {{\scriptsize $S$}}
\put (35,44) {{\scriptsize\blue{$Z$}}}
\put (15,34) {{\small $\mathrm{DL}_{S|Z}$}}
\put (60,41) {{\small\red{$u$}}}
\end{overpic}
\hspace{20pt}
\begin{overpic}[width=2.2in]{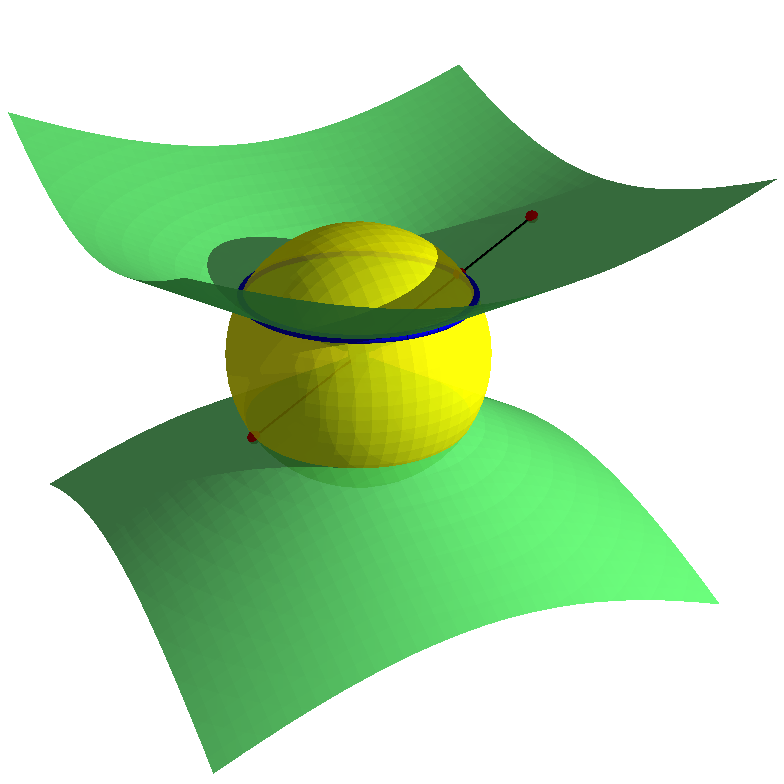}
\put (45,74) {{\scriptsize $S$}}
\put (48,52) {{\scriptsize\blue{$Z$}}}
\put (30,25) {{\small $\mathrm{DL}_{S|Z}$}}
\put (65,74) {{\small\red{$u$}}}
\end{overpic}
\caption{On the left, the green plane $x_3=0$ is the ED data locus of the unit sphere $S$ through the origin given the blue unit circle $Z=V(x_1^2+x_2^2-1,x_3)$. On the right, the green quadric cone $V(x_1^2+x_2^2-3x_3^2)$ is the ED data locus of $S$ given the blue circle $Z=V(x_1^2+x_2^2-1/4,x_3-1/2)$.}\label{fig: ED data locus sphere}
\end{figure}

The varieties $\sE_X$ and $\sE_{X|Z}$ are closely related to each other. More precisely, we have that
\begin{equation}\label{eq: containment ED correspondence Z}
\sE_{X|Z} \subset \sE_X\cap(Z\times V_u)\,.
\end{equation}
It is immediate to verify that the previous inclusion is an equality when $Z$ does not intersect the singular locus of $X$, in particular when $X$ is a smooth variety. Furthermore, the previous inclusion yields another inclusion when projecting both sides to $V_u$:
\begin{equation}\label{eq: containment data loci}
\mathrm{DL}_{X|Z} \subset \overline{\pr_2(\sE_X\cap(Z\times V_u))}\,.
\end{equation}
The right-hand side of \eqref{eq: containment data loci} coincides with the data locus of $Z$ introduced in \cite[\S3]{horobet2022data} and denoted there by $\mathrm{DL}_Z$. On one hand, our notion of ED data locus is clearly inspired by the work just cited. On the other hand, the ED data locus at the left-hand side of \eqref{eq: containment data loci} is always irreducible when $Z$ is. The next example shows a case when the containment in \eqref{eq: containment data loci} is strict.

\begin{example}\label{ex: Cayley}
Consider again Cayley's nodal cubic surface of Example \ref{ex: Cayley intro}.
The real locus of $X$ is shown in Figure \ref{fig: ED data locus Cayley circle}. Its singular locus is
\[
X_{\sing} = \left\{\left(-\frac{1}{2},\frac{1}{2},\frac{1}{2}\right),\left(\frac{1}{2},-\frac{1}{2},\frac{1}{2}\right),\left(\frac{1}{2},\frac{1}{2},-\frac{1}{2}\right),\left(-\frac{1}{2},-\frac{1}{2},-\frac{1}{2}\right)\right\}\,.
\]
Let $Z$ be the circle obtained by intersecting $X$ with the plane $x_1=0$. In particular $Z$ is parametrized by the map $t\mapsto(0,\cos(t)/2,\sin(t)/2)$ for $t\in[0,2\pi]$. The ED data locus $\mathrm{DL}_{X|Z}$ is the sextic surface of equation
\begin{equation}\label{eq: ED data locus Cayley circle}
4\,u_{1}^{2}u_{2}^{4}+8\,u_{1}^{2}u_{2}^{2}u_{3}^{2}-4\,u_{2}^{4}u_{3}^{2}+4\,u_{1}^{2}u_{3}^{4}-4\,u_{2}^{2}u_{3}^{4}+4\,u_{1}u_{2}^{3}u_{3}+4\,u_{1}u_{2}u_{3}^{3}+u_{2}^{2}u_{3}^{2}=0\,.
\end{equation}
Since $J(f)=8(2\,x_{2}x_{3}+x_{1},\,2\,x_{1}x_{3}+x_{2},\,2\,x_{1}x_{2}+x_{3})$, at the points $x\in Z$ the normal line $N_xZ$ is spanned by the vector $(\cos(t)\sin(t),\cos(t),\sin(t))$.
Hence we can write $\mathrm{DL}_{X|Z}$ as the image of the parametrization
\begin{equation}
    (t,s)\mapsto \left(s\cos(t)\sin(t),\frac{\cos(t)}{2}+s\cos(t),\frac{\sin(t)}{2}+s\sin(t)\right)\,,
\end{equation}
where $(t,s)\in[0,2\pi]\times\R$. In this case $\mathrm{DL}_{X|Z}$ coincides with $\overline{\pr_2(\sE_X\cap(Z\times V_u))}$.

Secondly, we let $Z$ be the line contained in $X$ joining the singular points $P_1=(1/2,1/2,-1/2)$ and $P_2=(-1/2,-1/2,-1/2)$. On one hand, the ED data locus $\mathrm{DL}_{X|Z}$ is just the plane $u_1-u_2=0$, whereas $\overline{\pr_2(\sE_X\cap(Z\times V_u))}$ consists of three irreducible components: the variety $\mathrm{DL}_{X|Z}$, and the two quadric surfaces whose equations are
\begin{align}\label{eq: components ED data locus Cayley}
\begin{split}
4\,u_{1}u_{2}-4\,u_{1}u_{3}-4\,u_{2}u_{3}-4\,u_{1}-4\,u_{2}+4\,u_{3}+3 &= 0\\
4\,u_{1}u_{2}+4\,u_{1}u_{3}+4\,u_{2}u_{3}+4\,u_{1}+4\,u_{2}+4\,u_{3}+3 &= 0\,.
\end{split}
\end{align}
They correspond to the normal cones of $X$ at the two singularities $P_1$ and $P_2$, see Figure \ref{fig: ED data locus Cayley line}.\hfill$\diamondsuit$
\end{example}

We recall the construction of \cite[\S2.2]{horobet2022data}, useful for us to compute the degrees of ED data loci in Section \ref{sec: degrees ED data loci}.

\begin{definition}\label{def: normal variety}
Given an affine variety $X\subset V$, the {\em affine conormal variety} of $X$ is
\begin{equation}\label{eq: affine conormal variety}
\sN_X\coloneqq\overline{\{(x,y)\in V\times V\mid\text{$x\in X_{\sm}$ and $y\in N_xX$}\}}\,.
\end{equation}
Furthermore, if $Z$ is a subvariety of $X$, the {\em affine conormal variety} of $X$ {\em relative to} $Z$ is
\begin{equation}\label{eq: affine conormal variety Z}
\sN_{X|Z}\coloneqq\overline{\{(x,y)\in V\times V\mid\text{$x\in X_{\sm}\cap Z$ and $y\in N_xX$}\}}\,.
\end{equation}
\end{definition}
The expressions in \eqref{eq: affine conormal variety} and \eqref{eq: affine conormal variety Z} depend on the choice of $\langle\,,\rangle$, which we are assuming equal to the standard Euclidean bilinear form.
The notation $\sN_X$ is used in \cite[\S5]{DHOST}. It is almost immediate that $\sN_X$ and $\sN_{X|Z}$ are cut out respectively by the ideals
\begin{equation}\label{eq: ideal normal space}
N_X \coloneqq \left(I_X+\left\langle\text{$(c+1)\times(c+1)$ minors of $\begin{pmatrix}y\\J(f)\end{pmatrix}$}\right\rangle\right)\colon\left(I_{X_{\sing}}\right)^\infty
\end{equation}
and $N_{X|Z}$, which is obtained by replacing $I_X$ with $I_Z$ in \eqref{eq: ideal normal space}.
Similarly as in \eqref{eq: containment ED correspondence Z}, we have the containment
\begin{equation}\label{eq: containment normal space Z}
\sN_{X|Z} \subset \sN_X\cap(Z\times V_y)\,.
\end{equation}
Consider the map $\Gamma\colon\sN_X\to V_u$ defined by $\Gamma(x,y)\coloneqq x+y$. Its graph is
\begin{equation}\label{eq: graph Gamma}
\sG(\Gamma)=\overline{\{(x,y,u)\mid\text{$(x,y)\in\sN_X$ and $u=x+y$}\}}\,.
\end{equation}
This variety in $V_x\times V_y\times V_u$ is called joint ED correspondence in \cite[\S5]{DHOST}. We also consider the restriction $\Gamma_Z\coloneqq\Gamma|_{\sN_{X|Z}}$, and its graph $\sG(\Gamma_Z)$. Thus we obtain the two diagrams illustrated below:
\begin{equation}\label{eq: diagram graph Gamma}
\begin{tikzpicture}[node distance={17mm}, thick] 
\node (1) {$\sG(\Gamma)$}; 
\node (2) [below left of=1] {$\sN_X$}; 
\node (3) [below right of=1] {$\sE_X$}; 
\node (4) [below left of=2] {$X\subset V_x$}; 
\node (5) [below right of=2] {$V_y$}; 
\node (6) [below right of=3] {$V_u$}; 
\draw[->] (1) to (2); 
\draw[->] (1) to (3); 
\draw[->] (2) to (4); 
\draw[->] (2) to (5);
\draw[->, draw=blue] (2) -- node[midway, above right, sloped, pos=0.55] {{\scriptsize\blue{$\Gamma$}}} (6);
\draw[->] (3) to (4); 
\draw[->] (3) to (6);
\draw[->] (1) to [out=345, in=100, looseness=1] (6);
\end{tikzpicture}
\quad
\begin{tikzpicture}[node distance={17mm}, thick] 
\node (1) {$\sG(\Gamma_Z)$}; 
\node (2) [below left of=1] {$\sN_{X|Z}$}; 
\node (3) [below right of=1] {$\sE_{X|Z}$}; 
\node (4) [below left of=2] {$Z\subset V_x$}; 
\node (5) [below right of=2] {$V_y$}; 
\node (6) [below right of=3] {$V_u$}; 
\draw[->] (1) to (2); 
\draw[->] (1) to (3); 
\draw[->] (2) to (4); 
\draw[->] (2) to (5); 
\draw[->, draw=blue] (2) -- node[midway, above right, sloped, pos=0.55] {{\scriptsize\blue{$\Gamma_Z$}}} (6);
\draw[->] (3) to (4); 
\draw[->] (3) to (6);
\draw[->] (1) to [out=345, in=100, looseness=1] (6);
\end{tikzpicture}
\end{equation}
All arrows in \eqref{eq: diagram graph Gamma} denote projection maps, except for the blue arrows corresponding to the maps $\Gamma$ and $\Gamma_Z$. The varieties $\sG(\Gamma)$ and $\sG(\Gamma_Z)$ are irreducible because the varieties $\sE_X$ and $\sE_{X|Z}$ are, and the relation between $\sG(\Gamma)$ and $\sG(\Gamma_Z)$ is the following:
\begin{equation}\label{eq: containment graph gamma Z}
\sG(\Gamma_Z) \subset \sG(\Gamma)\cap (Z\times V_y\times V_u)\,.
\end{equation}
The projections $\sG(\Gamma)\to\sE_X$, $\sG(\Gamma_Z)\to\sE_{X|Z}$ are birational with inverse maps given by $(x,u)\mapsto(x,u-x,u)$. Furthermore, the ED data locus $\mathrm{DL}_{X|Z}$ of Definition \ref{def: ED data locus} is also equal to the projection of $\sG(\Gamma_Z)$ onto $V_u$. Instead, the authors of \cite{horobet2022data} define the ED data locus as the closure of the projection onto $V_u$ of the right-hand side in \eqref{eq: containment graph gamma Z}.
We have already discussed that the latter subset may not be irreducible, in particular, it may contain strictly the irreducible variety $\mathrm{DL}_{X|Z}$.

The following result is useful to determine a rational parametrization of $\mathrm{DL}_{X|Z}$, provided that $X$ and $Z$ are rational. Since its proof is very similar to Proposition 5.14 in \cite{DHOST} we only give a brief outline.

\begin{proposition}\label{prop: rationality ED data loci}
Let $X$ be a rational variety. Let $Z\subset X$ be a rational subvariety. Then the conditional ED correspondence of $X$ given $Z$ is rational.
\end{proposition}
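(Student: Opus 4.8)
The plan is to exhibit an explicit dominant rational map from an affine space onto $\sE_{X|Z}$ and then verify it is birational. We may assume $Z\not\subset X_{\sing}$, the statement being vacuous otherwise, since in that case $X_{\sm}\cap Z=\emptyset$ and the saturation by $I_{X_{\sing}}^\infty$ in \eqref{eq: ideal ED correspondence Z} forces $\sE_{X|Z}=\emptyset$. Write $d=\dim Z$ and $c=\codim(X)$, fix generators $I_X=\langle f_1,\dots,f_s\rangle$ with Jacobian $J(f)$, and recall that on $X_{\sm}$ the rows of $J(f)$ span $N_xX$ and $\rank J(f)=c$ there. Since $Z$ is rational, fix a birational parametrization $\phi\colon\mathbb{A}^d\dashrightarrow Z$; because $Z\not\subset X_{\sing}$, the locus $X_{\sm}\cap Z$ is open dense in $Z$, so there is an open dense $U\subset\mathbb{A}^d$ on which $\phi$ is defined with $\phi(t)\in X_{\sm}\cap Z$.

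Next I would choose indices $1\le i_1<\dots<i_c\le s$ for which the rows $v_{i_1}(t),\dots,v_{i_c}(t)$ of $J(f)(\phi(t))$ are linearly independent for generic $t\in U$ (possible since $\rank J(f)(\phi(t))=c$), and define
\[
\Psi\colon\mathbb{A}^d\times\mathbb{A}^c\dashrightarrow V_x\times V_u\,,\qquad \Psi(t,\lambda)=\Bigl(\phi(t),\ \phi(t)+\textstyle\sum_{j=1}^c\lambda_j v_{i_j}(t)\Bigr)\,.
\]
Using the description \eqref{eq: ED correspondence Z} together with the fact that a smooth point $x$ is critical for $d_{X,u}$ exactly when $u-x\in N_xX=\langle v_1(x),\dots,v_s(x)\rangle$, the image of $\Psi$ lies in $\sE_{X|Z}$. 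The key point is that it is \emph{dense}: by Proposition \ref{prop: ED correspondence Z irreducible} the map $\pr_1\colon\sE_{X|Z}\to V_x$ restricts to an affine vector bundle of rank $c$ over $X_{\sm}\cap Z$, whose fiber over $\phi(t)$ is precisely $\{\phi(t)\}\times(\phi(t)+N_{\phi(t)}X)$, and for each $t\in U$ the map $\lambda\mapsto\Psi(t,\lambda)$ surjects onto this fiber because the chosen rows form a basis of $N_{\phi(t)}X$; since $\phi(U)$ is dense in $Z$, the union of these fibers is dense in $\sE_{X|Z}$. Hence $\Psi$ is a dominant rational map.

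Finally I would compare dimensions and check injectivity. By Proposition \ref{prop: ED correspondence Z irreducible}, $\dim\sE_{X|Z}=\dim Z+c=d+c$, which is exactly the dimension of the source of $\Psi$; so in characteristic zero it suffices to show $\Psi$ is generically injective. Given a generic point $(x,u)=\Psi(t,\lambda)$, the first coordinate determines $t=\phi^{-1}(x)$ uniquely because $\phi$ is birational, and then $u-x=\sum_j\lambda_j v_{i_j}(t)$ determines $\lambda$ uniquely because the $v_{i_j}(t)$ are linearly independent for generic $t$. Therefore $\Psi$ is birational and $\sE_{X|Z}$ is rational. The main obstacle in this plan is the density claim for $\image(\Psi)$, which I expect to handle cleanly via the affine-bundle structure of Proposition \ref{prop: ED correspondence Z irreducible}; note also that the argument only uses the rationality of $Z$.
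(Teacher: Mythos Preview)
Your proof is correct and follows essentially the same bundle-theoretic idea as the paper, but with one genuine difference in how the normal space is accessed. The paper first uses the assumed rationality of $X$ to fix a parametrization $\varphi\colon\C^m\dashrightarrow X$, reads off $T_{\varphi(t)}X$ from the columns of $J\varphi$, and then takes a rational basis $\beta_1(t),\dots,\beta_c(t)$ of the left kernel of $J\varphi$ to span $N_{\varphi(t)}X$; the parametrization of $Z$ is obtained by pulling $Z$ back through $\varphi$ and parametrizing $\varphi^{-1}(Z)$. You instead parametrize $Z$ directly and obtain $N_xX$ from the rows of the Jacobian $J(f)$ of the \emph{implicit} equations of $X$, selecting $c$ rows that are generically independent along $Z$. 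Both constructions produce a rational map from $\mathbb{A}^{d+c}$ that is dominant and generically injective onto $\sE_{X|Z}$ via the affine-bundle description in Proposition~\ref{prop: ED correspondence Z irreducible}, so the conclusions agree. The payoff of your route, as you note, is that the hypothesis that $X$ be rational is never used: only the rationality of $Z$ (together with the polynomial equations of $X$) is needed. The paper's route, by contrast, has the minor convenience of avoiding the choice of a $c$-element subset of rows, since the $\beta_i$ come canonically as a kernel basis.
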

\begin{proof}
Let, $m=\dim(X)$, $c=\codim(X)=N+1-m$, $d=\dim(Z)$ and consider the map $\varphi\colon\C^m\to V$ that parametrizes $X$, so $X=\overline{\image(\varphi)}$. The components of $\varphi$ are rational functions in $t=(t_1,\ldots,t_m)$.
We denote by $J\varphi$ the jacobian matrix of $\varphi.$ 
The normal space $N_{\varphi(t)}X$ is parametrized by a basis $\{\beta_1(t),\ldots,\beta_c(t)\}$ of the left kernel of $J\varphi$, where each vector $\beta_i$ is a vector of rational functions in $t$.
Now consider $W=\varphi^{-1}(Z)$, which is also rational.
So there exists a rational parametrization $\psi\colon\C^d\to\C^m$ of $W$, namely $W=\overline{\image(\psi)}$. The components of $\psi$ are rational functions in $v=(v_1,\ldots,v_d)$.
Finally, we introduce the rational function
\[
\theta\colon\C^d\times\C^c\to V_x\times V_u\,,\theta(v,w)=\left(\varphi(\psi(v)),\sum_{i=1}^cw_i\beta_i(\varphi(\psi(v)))\right)\,.
\]
The previous is a rational parametrization of $\sE_{X|Z}$.
\end{proof}

\begin{example}\label{ex: smooth quadric 2}
Consider the smooth affine surface $X=V(x_1-x_2x_3)\subset\C^3$, and let $Z$ be the line $V(x_1,x_2)$ contained in $X$.
In particular $X$ and $Z$ are parametrized by the maps $\varphi\colon\C^2\to\C^3$, $(t_1,t_2)\mapsto(t_1t_2,t_1,t_2)$ and $\psi=\varphi(0,t)\colon\C^1\to\C^3$, $t\mapsto(0,0,t)$. Then
\[
J\varphi(t_1,t_2)=
\begin{pmatrix}
t_2 & t_1 \\
1 & 0 \\
0 & 1
\end{pmatrix}\,
\]
and the left-kernel of $J\varphi$ is spanned by the vector $\beta(t_1,t_2)=(1,-t_2,-t_1)$.
Therefore, the variety $\sE_{X|Z}$ is parametrized by the map $(t,s)\mapsto(\varphi(0,t),\varphi(0,t)+s\beta(0,t))=((0,0,t),(s,-st,t))$.
Hence $\sE_{X|Z}$ is the surface defined by the ideal $\langle x_2,x_1,u_1u_3+u_2,x_3-u_3\rangle\subset\C[x_1,x_2,x_3,u_1,u_2,u_3]$.
Eliminating the variables in $\{x_1,x_2,x_3\}$, one gets the ideal $\langle u_1u_3+u_2\rangle$ which defines the ED data locus $\mathrm{DL}_{X|Z}$. We obtain the smooth quadric hypersurface in Figure \ref{fig: ED data locus quadric}, that is ruled by the family of normal lines $N_xX$ at the points $x\in Z$.\hfill$\diamondsuit$
\end{example}

\begin{figure}[htbp]
\begin{overpic}[width=2.5in]{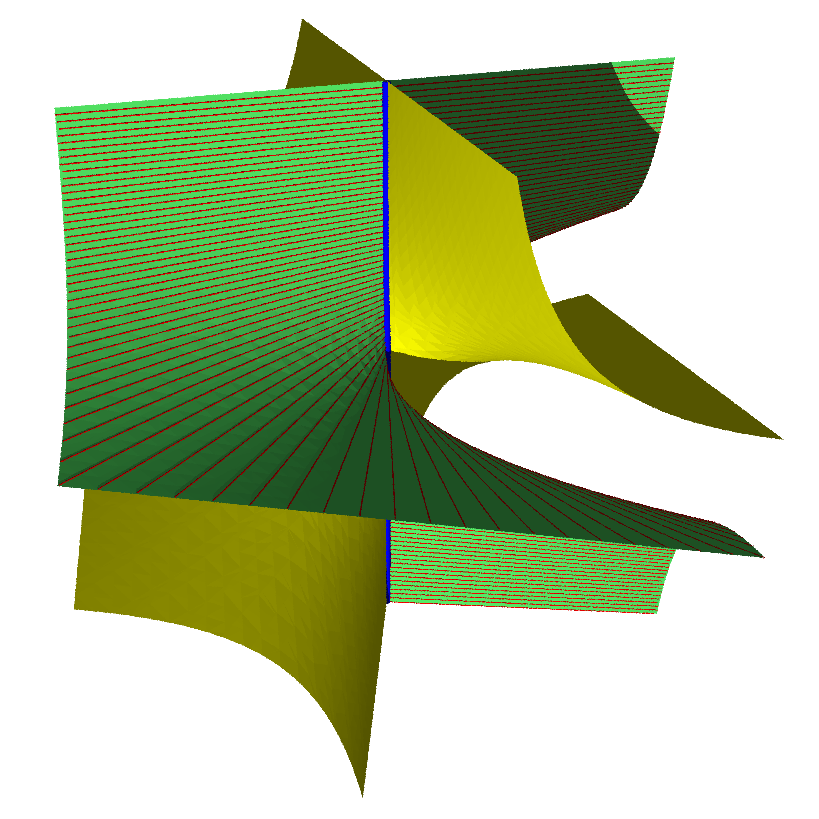}
\put (12,28) {{\scriptsize $X$}}
\put (48,70) {{\scriptsize\blue{$Z$}}}
\put (15,75) {{\small $\mathrm{DL}_{X|Z}$}}
\end{overpic}
\caption{The yellow quadric surface $X$ and the green ED data locus $\mathrm{DL}_{X|Z}$ given the line $Z\subset X$, ruled by the red lines normal to $X$.}\label{fig: ED data locus quadric}
\end{figure}

We conclude this section with a formula providing an upper bound for the product $\deg(\mathrm{DL}_{X|Z})\cdot\mathrm{EDD}(X|Z)$, in terms of the degrees of the generators of the ideal of $X$.
We do not provide proof of the next statement. Indeed, the argument needed in the proof follow from Theorem \ref{thm: degree data loci complete intersection} and arguments as in \cite[\S6]{DHOST}.

\begin{proposition}\label{prop: upper bound degree data loci affine varieties}
Let $X\subset V$ be an affine variety of codimension $c$ that is cut out by the polynomials $f_1,\ldots,f_c,\ldots,f_s$ of degrees $d_1\ge\cdots\ge d_c\ge\cdots\ge d_s$. Let $Z\subset X$ be a subvariety of dimension $d$, and assume that $X$ is ED regular given $Z$.
Then
\begin{equation}
\deg(\mathrm{DL}_{X|Z})\cdot\mathrm{EDD}(X|Z)\le \deg(Z)\sum_{i_1+\cdots+i_c\le d}(d_1-1)^{i_1}\cdots(d_c-1)^{i_c}\,.
\end{equation}
Equality holds when $X$ is a generic complete intersection of codimension $c$.
\end{proposition}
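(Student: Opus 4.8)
The plan is to follow the strategy of \cite[\S6]{DHOST}, in two stages: first reduce to the case where $X$ is a complete intersection of exactly $c=\codim(X)$ hypersurfaces of degrees $d_1,\ldots,d_c$, and then deduce the bound for complete intersections from the equality case of Theorem~\ref{thm: degree data loci complete intersection}.

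For the reduction, since $d_1\ge\cdots\ge d_c\ge\cdots\ge d_s$, for each $j\in\{1,\ldots,c\}$ I would choose a generic combination $g_j=\sum_{i\,:\,d_i\le d_j}\lambda_{ij}\,p_{ij}\,f_i$ with $\lambda_{ij}\in\C$ generic and $p_{ij}$ a generic form of degree $d_j-d_i$, so that $\deg(g_j)=d_j$, and set $X'\coloneqq V(g_1,\ldots,g_c)\supseteq X$. At any $z\in X$ one has $f_i(z)=0$, so $\nabla g_j(z)=\sum_i\lambda_{ij}p_{ij}(z)\nabla f_i(z)$, i.e.\ $J(g)(z)=D(z)J(f)(z)$ with $D(z)$ generic; choosing $z\in X_{\sm}\cap Z$ generic (possible since $Z\not\subset X_{\sing}$) makes $J(f)(z)$ of rank $c$, hence $J(g)(z)$ of rank $c$, so $z\in X'_{\sm}$ with $\codim_z X'=c$ and, as $X\subseteq X'$ with both of dimension $N+1-c$ near $z$, also $N_zX'=N_zX$. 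Therefore $\sN_{X|Z}=\sN_{X'|Z}$, and consequently $\mathrm{DL}_{X|Z}=\mathrm{DL}_{X'|Z}$, $\mathrm{EDD}(X|Z)=\mathrm{EDD}(X'|Z)$, $X'$ is ED regular given $Z$, and $\deg(X')=d_1\cdots d_c$. So the quantity to be bounded is unchanged if one replaces $X$ by $X'$.

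For the complete intersection case I would argue as follows. First, express the product $\mathrm{EDD}(X|Z)\deg(\mathrm{DL}_{X|Z})$ as $\sum_{i=0}^d\delta_i(X,Z)$ minus the (non-negative) contribution of $\sPN_{X,Z}\cap\Delta(\PP^N)$ — the non-generic version of Theorem~\ref{thm: degree data locus no genericity X, Z} — so that it is at most $\sum_{i=0}^d\delta_i(X,Z)=\sum_{i=0}^d\mu_i(X,Z)$ by Proposition~\ref{prop: properties relative polar ranks}. On the dense open locus $Z_{\sm}\cap X_{\sm}$ the jet sequence \eqref{eq: first jet sequence} together with $\sN_{X/\PP^N}\cong\bigoplus_{j=1}^c\sO_X(d_j)$ gives $c(\sP^1(\sO_X(1)))=\prod_{j=1}^c(1-(d_j-1)h)^{-1}$, so for a generic complete intersection
\begin{equation*}
\sum_{i=0}^d\mu_i(X,Z)=\deg(Z)\left[\frac{1}{1-h}\prod_{j=1}^c\frac{1}{1-(d_j-1)h}\right]_{h^d}=\deg(Z)\sum_{i_1+\cdots+i_c\le d}(d_1-1)^{i_1}\cdots(d_c-1)^{i_c};
\end{equation*}
and an arbitrary complete intersection with the same multidegrees can only make this sum smaller, by semicontinuity of the relative multidegrees $\delta_i(X,Z)$ under a flat degeneration to a generic one. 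The equality statement, when $X$ is a generic complete intersection of codimension $c$, is precisely Theorem~\ref{thm: degree data loci complete intersection} taken with $s=c$.

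The hard part will be making the complete intersection step rigorous: one must show that the isotropic/diagonal correction term is genuinely non-negative, and that the relative polar numbers $\mu_i(X,Z)$ of a possibly singular, non-generic complete intersection do not exceed those of a generic one, in a setting where $Z$ need not be smooth and does not obviously deform together with $X$. This is exactly the semicontinuity bookkeeping of \cite[\S6]{DHOST}, now applied to the relative conormal variety $W_{X,Z}$ and the conditional correspondence $\sE_{X|Z}$ in place of $W_X$ and $\sE_X$.
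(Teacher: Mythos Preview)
The paper does not actually give a proof of this proposition; it states only that ``the argument needed in the proof follows from Theorem~\ref{thm: degree data loci complete intersection} and arguments as in \cite[\S6]{DHOST}.'' Your proposal is precisely an attempt to spell out that strategy, so at the level of approach you are aligned with the paper.

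Your reduction step is correct and is exactly the \cite[\S6]{DHOST} manoeuvre: replacing the generators $f_1,\ldots,f_s$ by $c$ generic homogeneous combinations $g_1,\ldots,g_c$ produces a complete intersection $X'\supseteq X$ with $N_zX'=N_zX$ on a dense open of $Z\cap X_{\sm}$, hence $\sE_{X|Z}=\sE_{X'|Z}$ and the left-hand side is unchanged. Your appeal to Theorem~\ref{thm: degree data loci complete intersection} for the equality clause is also the intended one (note that theorem assumes $Z$ smooth, a hypothesis the proposition does not carry; this is a small gap in the paper's own cross-reference, not in your reading of it).

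Where your write-up drifts from the DHOST template is in the inequality for a \emph{non-generic} complete intersection. You phrase it as ``semicontinuity of $\delta_i(X,Z)$ under a flat degeneration to a generic one,'' and then correctly flag that $Z$ does not deform with $X$. That obstacle is real, and it is why the DHOST~\S6 argument does \emph{not} proceed via semicontinuity of polar numbers. Instead it bounds the count directly: after the reduction, one introduces Lagrange multipliers $\lambda_1,\ldots,\lambda_c$ and writes the critical conditions on $Z$ as the system $x\in Z$, $u-x=\sum_i\lambda_i\nabla g_i(x)$, with $u$ constrained to a generic affine subspace of $V_u$ of dimension $\codim_X(Z)$. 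The number of solutions, counted with multiplicity and \emph{without} saturating by $(X')_{\sing}$, is bounded by a straight B\'ezout/excess-intersection computation that yields exactly $\deg(Z)\sum_{i_1+\cdots+i_c\le d}(d_1-1)^{i_1}\cdots(d_c-1)^{i_c}$; the saturation can only discard points, giving the inequality. No deformation of $X$ or $Z$ is needed, so the difficulty you isolate in your last paragraph dissolves once you switch from the polar-degree/semicontinuity viewpoint to the direct Lagrange-multiplier count.
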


\section{Euclidean Distance data loci of projective varieties}\label{sec: projective}

In the previous section, we developed a theory of ED data loci for affine varieties and subvarieties. In this section we restrict to affine varieties $X$ and $Z$ that are cones in $V\cong\C^{N+1}$ through the origin, namely $x\in X$ if and only if $\lambda\,x\in X$ for all $\lambda\in\C$. Equivalently, we work with projective varieties in $\PP(V)=\PP^N$.
A crucial role in this section is played by the isotropic quadric in $V$, namely by the projective quadric $Q\subset\PP^N$ defined by the equation $q(x)=0$. We recall that $q$ is the quadratic form associated to the real inner product $\langle\,,\rangle$ fixed at the beginning of Section \ref{sec: affine ED data loci}. Up to a linear change of coordinates in $V$, we are assuming that $q(x)=x_0^2+\cdots+x_N^2$.

Consider the ideal $I_{X,u|Z}$ in \eqref{eq: ideal ED correspondence Z}, which defines the conditional ED correspondence of $X$ given $Z$.
To take advantage of the homogeneous equations of $X$ and $Z$, and of the symmetry of the equation of $Q$, we replace the ideal \eqref{eq: ideal ED correspondence Z} with the following homogeneous ideal in $\R[x_0,\ldots,x_N]$:

\begin{equation}\label{eq: ideal projective ED correspondence Z}
\left(I_Z+\left\langle\text{$(c+2)\times(c+2)$ minors of $\begin{pmatrix}u\\x\\J(f)\end{pmatrix}$}\right\rangle\right)\colon\left(I_{X_{\sing}}\cdot\langle x_0^2+\cdots+x_N^2\rangle\right)^\infty\,.
\end{equation}

In \cite[\S4]{DHOST}, the authors introduced the projective ED correspondence $\sPE_X$ of $X$, as the closure of the image of $\sE_X\cap[(V_x\setminus\{0\})\times V_u]$ under the map
\begin{equation}\label{eq: map affine-projective}
(V_x\setminus\{0\})\times V_u\to\PP_x^N\times V_u\,,\quad (x,u)\mapsto([x],u)\,.
\end{equation}
Similarly we introduce the following.  

\begin{definition}\label{def: proj ED correspondence Z}
The {\em projective conditional ED correspondence of $X$ given $Z$}, denoted by $\sPE_{X|Z}$, is the closure of the image of $\sE_{X|Z}\cap[(V_x\setminus\{0\})\times V_u]$ under the map \eqref{eq: map affine-projective}.
\end{definition}

The incidence variety $\sPE_{X|Z}$ comes equipped with the two projections
\begin{equation}\label{eq: diagram projective ED correspondence Z}
\begin{tikzcd}
& \sPE_{X|Z} \arrow{dl}[swap]{\pr_1} \arrow{dr}{\pr_2} & \\
\PP_x^N=\PP(V_x) & & V_u\,.
\end{tikzcd}
\end{equation}
and satisfies the following properties, which follow similarly as in \cite[Theorem 4.4]{DHOST}.

\begin{proposition}\label{prop: projective ED correspondence Z irreducible}
Let $Z\subset X$ be irreducible affine cones in $V_x\cong\C^{N+1}$, where $\dim(X)=n+1$ and $\dim(Z)=d+1$. Assume that $Z\not\subset Q\cup X_{\sing}$.
Then $\sPE_{X|Z}\subset\PP_x^N\times V_u$ is irreducible of dimension $N+1-n+d$.
It is the zero set of the ideal \eqref{eq: ideal projective ED correspondence Z}.
Its projection onto the first factor $\PP^N$ is a vector bundle over $Z\setminus(X_{\sing}\cup Q)$ of rank $N-n+1$.
The projection of $\sPE_{X|Z}$ onto $V_u$ is irreducible and coincides with $\mathrm{DL}_{X|Z}$.
\end{proposition}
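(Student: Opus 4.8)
The plan is to transport the affine result, Proposition~\ref{prop: ED correspondence Z irreducible}, through the cone-to-projective-space passage \eqref{eq: map affine-projective}, exactly as \cite[Theorem 4.4]{DHOST} does for $\sPE_X$. Since $Z\not\subset X_{\sing}$ and $\codim_X(Z)=(n+1)-(d+1)=n-d$, Proposition~\ref{prop: ED correspondence Z irreducible} already gives that $\sE_{X|Z}$ is irreducible of dimension $N+1-(n-d)=N+1-n+d$. Because $Z$ is a cone of positive dimension meeting $X_{\sm}$, the locus $\sE_{X|Z}^\circ:=\sE_{X|Z}\cap[(V_x\setminus\{0\})\times V_u]$ is a nonempty, hence dense, open subset of $\sE_{X|Z}$, so its image $\sPE_{X|Z}$ under \eqref{eq: map affine-projective} is irreducible. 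As \eqref{eq: map affine-projective} leaves the $u$-coordinate untouched and $\PP^N\times V_u\to V_u$ is closed, the projection of $\sPE_{X|Z}$ onto $V_u$ is closed, irreducible, and equal to $\overline{\pr_2(\sE_{X|Z}^\circ)}=\overline{\pr_2(\sE_{X|Z})}=\mathrm{DL}_{X|Z}$; this disposes of the last sentence of the statement, modulo the dimension count and the bundle structure.

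For those, I would localize on the $\C^*$-invariant dense open subset $Z^\circ:=Z\setminus(X_{\sing}\cup Q)$ of $Z$, which is nonempty precisely by the hypothesis $Z\not\subset Q\cup X_{\sing}$. For $x\in Z^\circ$ one has $x\in X_{\sm}$, $x\neq 0$, $q(x)\neq 0$, and since $x\in T_xX$ with $\langle x,x\rangle\neq 0$ and $N_xX=(T_xX)^\perp$, the sum $E_x:=\C x\oplus N_xX$ is direct, has dimension $c+1=N-n+1$, and equals $(x^\perp\cap T_xX)^\perp$. Hence $[x]\mapsto E_x$ is regular and the spaces $E_x$ glue to a rank-$(N-n+1)$ subbundle $E$ of the trivial bundle over the image $Z^\circ_{\PP}\subset\PP^N$ of $Z^\circ$. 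Using $T_{\lambda x}X=T_xX$ (a cone has ray-constant tangent and normal spaces), the fiber of $\sE_{X|Z}^\circ$ over $[x]\in Z^\circ_{\PP}$ is $\bigcup_{\lambda\in\C^*}(\lambda x+N_xX)=E_x\setminus N_xX$, whose fiberwise closure is $E_x$. Therefore $\sPE_{X|Z}$ restricted over $Z^\circ_{\PP}$ is the total space of $E|_{Z^\circ_{\PP}}$, irreducible of dimension $d+(N-n+1)=N+1-n+d$; since $\sPE_{X|Z}$ is the closure of this restriction over the dense open $Z^\circ_{\PP}$, we conclude $\dim\sPE_{X|Z}=N+1-n+d$ and that $\pr_1\colon\sPE_{X|Z}\to\PP^N$ realizes the vector bundle $E$, of rank $N-n+1$, over (the image of) $Z\setminus(X_{\sing}\cup Q)$.

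It remains to match this with the ideal \eqref{eq: ideal projective ED correspondence Z}. The key local observation is that for $x\in X_{\sm}$ with $q(x)\neq 0$ the rows of $J(f)(x)$ span $N_xX$ and $x\notin N_xX$, so $x$ together with those rows spans $E_x$; hence the $(c+2)\times(c+2)$ minors of the matrix in \eqref{eq: ideal projective ED correspondence Z} vanish at $([x],u)$ exactly when $u\in E_x$. Combined with $x\in Z$, this identifies the zero locus of the unsaturated ideal, away from $X_{\sing}\cup Q$, with a dense subset of $\sPE_{X|Z}$ by the previous paragraph, and saturating by $(I_{X_{\sing}}\cdot\langle q\rangle)^\infty$ is exactly what removes the extraneous components supported over $X_{\sing}$ and over $Q$. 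I would verify that this saturation removes all of those components and none of the genuine correspondence by repeating verbatim the saturation analysis of \cite[Theorem 4.4]{DHOST} (and \cite[\S2.2]{horobet2022data}). This bookkeeping with the saturation — pinning down precisely where the hypothesis $Z\not\subset Q\cup X_{\sing}$ enters — is the only delicate point; the rest is the formal transfer of Proposition~\ref{prop: ED correspondence Z irreducible} along the projectivization, where I expect no genuine obstacle.
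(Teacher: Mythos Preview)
Your proposal is correct and follows essentially the same approach as the paper, which does not give a proof at all but simply states that the properties ``follow similarly as in \cite[Theorem 4.4]{DHOST}''. Your write-up is in fact more detailed than what the paper provides: the explicit description of the fiber over $[x]$ as $E_x=\C x\oplus N_xX$ via the direct sum (using $q(x)\neq 0$) and the identification $\bigcup_{\lambda\in\C^*}(\lambda x+N_xX)=E_x\setminus N_xX$ are exactly the computations underlying \cite[Theorem 4.4]{DHOST}, and your deferral of the saturation bookkeeping to that reference matches the paper's own deferral.
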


From the previous result, we can also derive that
\begin{equation}\label{eq: def projective ED data loci}
\mathrm{DL}_{X,Z}=\overline{\pr_2(\sPE_{X|Z})}=\overline{\{u\in V\mid\text{$\exists\,[x]\in Z\setminus(X_{\sing}\cup Q)$ such that $x$ is critical for $d_{X,u}$}\}}\,,
\end{equation}
where in the previous equation $X$, $Z$ and $Q$ are regarded as projective varieties.
It is immediate that $\mathrm{DL}_{X,Z}$ is an affine cone in $V_u$, provided that $X$ and $Z$ are affine cones in $V_x$.

We slightly adapt the notion of ED regularity of Definition \ref{def: conditional ED regularity} in the case of projective varieties.
\begin{definition}\label{def: projective conditional ED regularity}
Let $Z\subset X$ be projective varieties in $\PP(V)$, seen as affine cones in $V$. We say that $X$ is {\em ED regular given $Z$} if $Z\not\subset X_{\sing}\cup Q$ and if $\mathrm{DL}_{X|Z}$ is not contained in the proper subvariety of data points $u$ such that $\dim(\pr_2(\pr_1^{-1}(u)))>0$, where $\pr_1$ and $\pr_2$ are the projections of $\sPE_X$ onto the factors $\PP_x^N$ and $V_u$.
\end{definition}

The following is an immediate consequence of Proposition \ref{prop: projective ED correspondence Z irreducible}.

\begin{proposition}\label{prop: proj data locus is irreducible}
Let $Z\subset X$ be irreducible affine cones in $V\cong\C^{N+1}$, where $\dim(X)=n+1$ and $\dim(Z)=d+1$. Assume that $X$ is ED regular given $Z$. Then $\mathrm{DL}_{X|Z}$ is irreducible of dimension $N+1-n+d$.
\end{proposition}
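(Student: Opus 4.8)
The plan is to obtain both assertions from Proposition~\ref{prop: projective ED correspondence Z irreducible}, mirroring the affine argument of Proposition~\ref{prop: data locus is irreducible}.

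For irreducibility: by Proposition~\ref{prop: projective ED correspondence Z irreducible} the variety $\sPE_{X|Z}$ is irreducible of dimension $N+1-n+d$, and $\mathrm{DL}_{X|Z}$ is exactly the closure of its image under the projection $\pr_2\colon\PP^N_x\times V_u\to V_u$. Since the closure of the image of an irreducible variety under a morphism is irreducible, $\mathrm{DL}_{X|Z}$ is irreducible; no further hypothesis is needed for this part.

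For the dimension: it suffices to show that $\pr_2|_{\sPE_{X|Z}}\colon\sPE_{X|Z}\to\mathrm{DL}_{X|Z}$ is generically finite, since then $\dim\mathrm{DL}_{X|Z}=\dim\sPE_{X|Z}=N+1-n+d$ by Proposition~\ref{prop: projective ED correspondence Z irreducible}. To see this, first note that the containment $\sE_{X|Z}\subset\sE_X\cap(Z\times V_u)$ of \eqref{eq: containment ED correspondence Z} restricts to $(V_x\setminus\{0\})\times V_u$ and, after applying the map \eqref{eq: map affine-projective}, yields $\sPE_{X|Z}\subset\sPE_X$ as subvarieties of $\PP^N_x\times V_u$, compatibly with the projections onto $V_u$ (here one uses $Z\not\subset X_{\sing}\cup Q$, so that $\sE_{X|Z}$ meets $(V_x\setminus\{0\})\times V_u$ densely). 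Now the hypothesis that $X$ is ED regular given $Z$ (Definition~\ref{def: projective conditional ED regularity}) says that $\mathrm{DL}_{X|Z}$ is not contained in the proper closed subset of $u\in V_u$ over which the fiber of $\pr_2\colon\sPE_X\to V_u$ is positive-dimensional; hence, $\mathrm{DL}_{X|Z}$ being irreducible, for $u$ in a dense open subset of $\mathrm{DL}_{X|Z}$ the fiber $\pr_2^{-1}(u)\cap\sPE_X$ is finite. A fortiori the fiber $\pr_2^{-1}(u)\cap\sPE_{X|Z}$ over such $u$ is finite, being a closed subset of it, which gives the desired generic finiteness; in fact $\deg(\pr_2|_{\sPE_{X|Z}})\le\deg(\pr_2|_{\sPE_X})=\mathrm{EDD}(X)$.

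The only delicate point --- and the step I expect to be the main, if minor, obstacle --- is the bookkeeping in passing from the affine correspondences $\sE_X,\sE_{X|Z}$ to their projective counterparts: one must check that the affine inclusion $\sE_{X|Z}\subset\sE_X$ genuinely induces $\sPE_{X|Z}\subset\sPE_X$ over $V_u$, and that the fiber-dimension condition of Definition~\ref{def: projective conditional ED regularity}, stated for $\sPE_X$, transfers to $\sPE_{X|Z}$. Both follow directly from the definitions and from the assumption $Z\not\subset X_{\sing}\cup Q$ built into ED regularity.
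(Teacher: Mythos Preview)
Your proposal is correct and follows essentially the same approach as the paper: the paper states only that the result is an immediate consequence of Proposition~\ref{prop: projective ED correspondence Z irreducible}, and your argument spells out precisely that deduction, mirroring the affine proof of Proposition~\ref{prop: data locus is irreducible}. The passage from $\sE_{X|Z}\subset\sE_X$ to $\sPE_{X|Z}\subset\sPE_X$ and the use of Definition~\ref{def: projective conditional ED regularity} to force generic finiteness of $\pr_2|_{\sPE_{X|Z}}$ are exactly the intended steps.
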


The nondegenerate bilinear form $\langle\,,\rangle$ induces an isomorphism $\varphi\colon V\to V^*$ defined by $\varphi(x)\coloneqq\langle x,\cdot\rangle\in V^*$. At the level of projective spaces, the previous isomorphism yields a notion of orthogonality, or polarity, between points and hyperplanes in $\PP^N$. More precisely, if $[p]=[p_0\colon\cdots\colon p_N]$ is a point in $\PP^N$, the polar hyperplane to $P$ is $P^\perp\coloneqq \langle p,x\rangle=0$.

Let $X$ be an affine cone in $V\cong\C^{N+1}$, or equivalently a projective variety in $\PP(V)=\PP^N$. We have already introduced in Definition \ref{def: normal variety} the variety $\sN_X$. Now consider the map
\begin{equation}\label{eq: map affine-projective 2}
\psi\colon(V_x\setminus\{0\})\times(V_y\setminus\{0\})\longrightarrow\PP_x^N\times\PP_y^N\,,\quad \psi(x,y)=([x],[y])\,,
\end{equation}
and define
\begin{equation}\label{eq: def PN X}
\sPN_X\coloneqq\overline{\psi(\sN_X\cap[(V_x\setminus\{0\})\times(V_y\setminus\{0\})])}\,.  
\end{equation}
It is easy to see that the map $\varphi$ induces an isomorphism between the variety $\sPN_X\subset\PP_x^N\times\PP_y^N$ and the conormal variety $W_X\subset\PP^N\times(\PP^N)^\vee$.
This isomorphism lets us see the variety $X$ and its dual variety $X^\vee$ in the same projective space $\PP^N$.
More precisely, we define $X^\perp$ as the image of $\sPN_X$ under the projection onto $\PP_y^N$. Then $X^\perp\cong X^\vee$, and the isomorphism depends on the map $\varphi$.
In the same vein, we define
\begin{equation}\label{eq: def PN X Z}
\sPN_{X|Z}\coloneqq\overline{\psi(\sN_{X,Z}\cap[(V_x\setminus\{0\})\times(V_y\setminus\{0\})])}\,.  
\end{equation}
Then $\sPN_{X,Z}$ is isomorphic to the relative conormal variety $W_{X,Z}$. We also define $X_Z^\perp$ as the image of $\sPN_{X,Z}$ under the projection onto $\PP_y^N$, and we have the isomorphism $X_Z^\perp\cong X_Z^\vee$. As an affine cone in $V$, we can rewrite $X_Z^\perp$ as
\begin{equation}\label{eq: alternative form relative perp}
X_Z^\perp = \overline{\bigcup_{x\in X_{\sm}\cap Z}N_xX}\,,
\end{equation}
Finally, we also consider the map
\begin{equation}\label{eq: map affine-projective 3}
(V_x\setminus\{0\})\times(V_y\setminus\{0\})\times V_u\to\PP_x^N\times\PP_y^N\times V_u\,,\quad (x,y,u)\mapsto([x],[y],u)\,,
\end{equation}
and we define the varieties $\sPG(\Gamma)$ and $\sPG(\Gamma_Z)$ by taking the closures of the images under the previous map of the intersections of $\sG(\Gamma)$ and $\sG(\Gamma_Z)$, respectively, with $(V_x\setminus\{0\})\times(V_y\setminus\{0\})\times V_u$.
The variety $\sPG(\Gamma)$ coincides with the {\em projective joint ED correspondence of $X$} defined in \cite[\S5]{DHOST}, therefore we may address $\sPG(\Gamma_Z)$ as the {\em projective joint conditional ED correspondence of $X$ given $Z$}.
We may also consider the following projective analog of the diagrams in \eqref{eq: diagram graph Gamma}:
\begin{equation}\label{eq: diagram graph projective Gamma}
\begin{tikzpicture}[node distance={17mm}, thick] 
\node (1) {$\sPG(\Gamma)$}; 
\node (2) [below left of=1] {$\sPN_X$}; 
\node (3) [below right of=1] {$\sPE_X$}; 
\node (4) [below left of=2] {$X\subset\PP_x^N$}; 
\node (5) [below right of=2] {$\PP_y^N$}; 
\node (6) [below right of=3] {$V_u$}; 
\draw[->] (1) to (2); 
\draw[->] (1) to (3); 
\draw[->] (2) to (4); 
\draw[->] (2) to (5);
\draw[->] (3) to (4); 
\draw[->] (3) to (6);
\draw[->] (1) to [out=345, in=100, looseness=1] (6);
\end{tikzpicture}
\quad
\begin{tikzpicture}[node distance={17mm}, thick] 
\node (1) {$\sPG(\Gamma_Z)$}; 
\node (2) [below left of=1] {$\sPN_{X,Z}$}; 
\node (3) [below right of=1] {$\sPE_{X|Z}$}; 
\node (4) [below left of=2] {$Z\subset\PP_x^N$}; 
\node (5) [below right of=2] {$\PP_y^N$}; 
\node (6) [below right of=3] {$V_u$}; 
\draw[->] (1) to (2); 
\draw[->] (1) to (3); 
\draw[->] (2) to (4); 
\draw[->] (2) to (5);
\draw[->] (3) to (4); 
\draw[->] (3) to (6);
\draw[->] (1) to [out=345, in=100, looseness=1] (6);
\end{tikzpicture}
\end{equation}

The next lemma is an immediate consequence of the previous considerations and Theorem \ref{thm: reflexivity}.

\begin{lemma}\label{lem: relative bi-duality same space}
Let $Z\subset X$ be irreducible projective varieties such that $X$ is dual regular relative to $Z$.
There exists an open set of points $(x,y)\in\sPN_{X,Z}$ (seen as an affine cone in $V_x\times V_y$) such that $x$ is a smooth point of $X$ in $Z$, $y$ is a smooth point of $X^\perp$ in $X_Z^\perp$ and the following properties hold:
\[
y\perp T_xX\quad\text{and}\quad x\perp T_yX^\perp\,.
\]
\end{lemma}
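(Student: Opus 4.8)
The plan is to transport the statement to the conormal variety $W_{X,Z}$ via the isomorphism $\varphi$ and then apply Kleiman's reflexivity. Recall from the discussion preceding the lemma that $\varphi$ induces an isomorphism $\sPN_{X,Z}\cong W_{X,Z}$ that is the identity on the first factor and sends $[y]\in\PP_y^N$ to the polar hyperplane $H_y\coloneqq\{[v]\mid\langle y,v\rangle=0\}$ on the second, carrying $X^\perp$ to $X^\vee$ and $X_Z^\perp$ to $X_Z^\vee$. Under this dictionary, $y\perp T_xX$ is precisely $H_y\supset T_xX$, i.e. $H_y$ tangent to $X$ at $x$; and $x\perp T_yX^\perp$ is precisely $x^\vee\supset T_{H_y}X^\vee$, where $x^\vee\subset(\PP^N)^\vee$ denotes the hyperplane of hyperplanes through $x$ (indeed $\varphi$ maps $x^\vee$ to $x^\perp$ and $T_yX^\perp$ to $T_{H_y}X^\vee$). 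So it suffices to produce a dense open subset $\sU\subset W_{X,Z}$ consisting of pairs $(x,H)$ with $x\in X_{\sm}\cap Z$, $H\in(X^\vee)_{\sm}\cap X_Z^\vee$, $H\supset T_xX$, and $x^\vee\supset T_HX^\vee$.

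To build $\sU$: by Definition~\ref{def: dual regular relative Z}, dual regularity of $X$ relative to $Z$ gives a point $(x_0,H_0)\in W_{X,Z}$ with $x_0\in X_{\sm}$ and $H_0\in(X^\vee)_{\sm}$. The subset of $W_{X,Z}$ where $x\in X_{\sm}$ and $H\in(X^\vee)_{\sm}$ is the intersection of the $\pi_{1,Z}$-preimage of the open set $X_{\sm}\cap Z\subset Z$ with the $\pi_{2,Z}$-preimage of the open set $(X^\vee)_{\sm}\cap X_Z^\vee\subset X_Z^\vee$, hence open; it is non-empty by $(x_0,H_0)$, and since $W_{X,Z}$ is irreducible (Proposition~\ref{prop: dimension relative conormal variety}), it is dense. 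Call it $\sU$. Every point of $W_{X,Z}$ projects into $Z$ and into $X_Z^\vee$, so $x\in Z$ and $H\in X_Z^\vee$ automatically on $\sU$; and for $(x,H)\in\sU$ with $x\in X_{\sm}$ we have $(x,H)\in W_{X,Z}^\circ$, so $H\supset T_xX$. Finally, the ground field has characteristic zero, so $X$ is reflexive by Theorem~\ref{thm: reflexivity}: for $(x,H)\in\sU$, since $x$ is a smooth point of $X$, $H$ is a smooth point of $X^\vee$, and $H$ is tangent to $X$ at $x$, it follows that $x^\vee$ is tangent to $X^\vee$ at $H$, i.e. $x^\vee\supset T_HX^\vee$. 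This gives $\sU$ the last required property.

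Transporting $\sU$ back through $\varphi$ produces the desired dense open subset of $\sPN_{X,Z}$, and passing to affine cones changes nothing, which finishes the argument. I do not expect a genuine obstacle here: the existence of the open set is exactly the content of relative dual regularity combined with irreducibility of $W_{X,Z}$, and the reverse tangency is immediate from reflexivity. The one place that needs to be written with care — and where I would spend the detail — is the translation in the first paragraph, verifying that $\varphi$ identifies the projective tangency relation $x^\vee\supset T_HX^\vee$ of Theorem~\ref{thm: reflexivity} with the orthogonality relation $x\perp T_yX^\perp$ asserted in the lemma, and likewise for $y\perp T_xX$.
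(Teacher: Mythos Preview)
Your proof is correct and follows exactly the approach the paper indicates: the paper states that the lemma ``is an immediate consequence of the previous considerations and Theorem~\ref{thm: reflexivity}'', and you have simply unpacked those two ingredients---the isomorphism $\sPN_{X,Z}\cong W_{X,Z}$ induced by $\varphi$, and Kleiman's reflexivity---in the expected way. Your care in verifying that $W_{X,Z}\cap[(X_{\sm}\cap Z)\times(\PP^N)^\vee]=W_{X,Z}^\circ$ (so that $H\supset T_xX$ holds on $\sU$) and in spelling out the dictionary between $x\perp T_yX^\perp$ and $x^\vee\supset T_HX^\vee$ is appropriate and fills in precisely the details the paper leaves implicit.
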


\begin{proposition}\label{prop: ideal projective joint relative}
Let $Z\subset X\subset V$ be irreducible affine cones, and consider the variety $X_Z^\perp$. Assume that neither $Z$ nor $X_Z^\perp$ are contained in the isotropic quadric $Q$, where we assume that $q(x)=x_0^2+\cdots+x_N^2$.
Then $\sPG(\Gamma_Z)$ is an irreducible variety in $\PP_x^N\times\PP_y^N\times V_u$ of dimension $N+1-\codim_X(Z)$.
It is the zero set of the tri-homogeneous ideal
\begin{equation}\label{eq: ideal projective joint relative}
\left(N_{X,Z}+\left\langle\text{$3\times 3$ minors of $\begin{pmatrix}u\\x\\y\end{pmatrix}$}\right\rangle\right)\colon\langle q(x)\cdot q(y)\rangle^\infty\,.
\end{equation}
\end{proposition}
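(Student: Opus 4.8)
The plan is to realize $\sPG(\Gamma_Z)$ as the unique irreducible component of the zero set of the unsaturated ideal $N_{X,Z}+\langle 3\times 3\text{ minors}\rangle$ underlying \eqref{eq: ideal projective joint relative} that survives the saturation, and to obtain irreducibility and the dimension count from the vector bundle structure of $\sN_{X|Z}$. I would first note that $X_Z^\perp\not\subset Q$ forces $X_Z^\perp\ne\emptyset$, hence $\sN_{X|Z}\ne\emptyset$, hence $Z\not\subset X_{\sing}$, so this regularity is automatically in force.

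For irreducibility and dimension: $\sG(\Gamma_Z)$ is the graph of the morphism $\Gamma_Z\colon\sN_{X|Z}\to V_u$, $(x,y)\mapsto x+y$, hence isomorphic to $\sN_{X|Z}$, which is irreducible since $\pr_1$ makes it a rank-$c$ affine bundle over $X_{\sm}\cap Z$; therefore $\dim\sN_{X|Z}=\dim Z+c=N+1-\codim_X(Z)$. Because $X$ and $Z$ are cones one has $T_{\lambda x}X=T_xX$ along rays of smooth points, so $N_{\lambda x}X=N_xX$ and $\sN_{X|Z}$ is stable under the two independent scalings $(x,y)\mapsto(\lambda x,\mu y)$; also $\langle x,y\rangle=0$ identically on $\sN_{X|Z}$, since $x\in T_xX$ and $y\perp T_xX$ on the dense defining set. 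Since $Z\not\subset Q$, a generic $(x,y)\in\sN_{X|Z}$ has $q(x)\ne 0$, hence $x,y$ linearly independent, and then the map $(x,y,u)\mapsto([x],[y],u)$ is generically injective on $\sG(\Gamma_Z)$ (the rescaling recovering $(x,y)$ from $([x],[y],x+y)$ is unique). This gives that $\sPG(\Gamma_Z)$ is irreducible of dimension $N+1-\codim_X(Z)$.

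For the ideal: let $\mathfrak{J}$ be the ideal \eqref{eq: ideal projective joint relative}, so $V(\mathfrak{J})$ is the union of the components of $V(N_{X,Z}+\langle\text{minors}\rangle)$ not contained in $V(q(x))\cup V(q(y))$. On the one hand, every point of $\sPG(\Gamma_Z)$ has first two coordinates in $\sPN_{X,Z}=V(N_{X,Z})$ and $u\in\langle x,y\rangle$, so the $3\times 3$ minors vanish; and $\sPG(\Gamma_Z)$ is irreducible and dominates $\sPN_{X,Z}$, on which $q(x),q(y)$ vanish only on proper closed subsets (using $Z\not\subset Q$, $X_Z^\perp\not\subset Q$ and that $\sPN_{X,Z}$ dominates $Z$ and $X_Z^\perp$), so $\sPG(\Gamma_Z)\subseteq V(\mathfrak{J})$. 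On the other hand, on $\{q(x)q(y)\ne 0\}$ every $([x],[y])\in\sPN_{X,Z}$ has $x,y$ linearly independent (again from $\langle x,y\rangle=0$), so $V(N_{X,Z}+\langle\text{minors}\rangle)\cap\{q(x)q(y)\ne 0\}$ is the total space of a rank-$2$ bundle over the irreducible $\sPN_{X,Z}\cap\{q(x)q(y)\ne 0\}$, which has dimension $\dim\sPN_{X,Z}=N-1-\codim_X(Z)$ by Proposition \ref{prop: dimension relative conormal variety} (via $\sPN_{X,Z}\cong W_{X,Z}$). So $V(\mathfrak{J})$, being the closure of this bundle, is irreducible of dimension $N+1-\codim_X(Z)$, and comparing with the inclusion above yields $\sPG(\Gamma_Z)=V(\mathfrak{J})$.

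The main obstacle is the bookkeeping around the saturation. Over $\sPN_{X,Z}\cap V(q(x))$ the rows $u,x,y$ are automatically dependent, so the $u$-fibre of $V(N_{X,Z}+\langle\text{minors}\rangle)$ jumps to all of $V_u$, producing a genuine spurious component of dimension $2N-1-\codim_X(Z)$, which for $N\ge 3$ exceeds $\dim\sPG(\Gamma_Z)$; one must confirm that this --- and any other extraneous component --- lies in $V(q(x))\cup V(q(y))$ and is excised. The identity $\langle x,y\rangle=0$ on $\sPN_{X,Z}$, forced by the cone hypothesis, is what makes ``$x,y$ dependent'' equivalent to ``$q(x)=0$'' there, confining all degeneracies to $V(q(x))$; the hypotheses $Z\not\subset Q$ and $X_Z^\perp\not\subset Q$ serve precisely to guarantee that $V(q(x))$ and $V(q(y))$ meet $\sPN_{X,Z}$ properly.
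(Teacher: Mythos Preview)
Your proof is correct and follows essentially the approach the paper alludes to via \cite[Proposition 5.3]{DHOST}. The paper does not spell out the details and instead invokes Lemma~\ref{lem: relative bi-duality same space}; your argument is more self-contained and, notably, does \emph{not} need relative biduality at all --- the only fact you use about $\sPN_{X,Z}$ beyond its dimension is $\langle x,y\rangle=0$, which follows directly from the cone hypothesis ($x\in T_xX$, $y\perp T_xX$). This is a cleaner route, and it sidesteps the mild awkwardness that Lemma~\ref{lem: relative bi-duality same space} assumes dual regularity of $X$ relative to $Z$, which is not among the hypotheses of the Proposition.

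One small wording issue in your final paragraph: you write that $\langle x,y\rangle=0$ ``makes `$x,y$ dependent' equivalent to `$q(x)=0$'\,''. Only the forward implication holds (dependence together with $\langle x,y\rangle=0$ forces $q(x)=q(y)=0$), and only that implication is needed --- it is exactly what confines the fibre-jump locus to $V(q(x))\cap V(q(y))$, which the saturation then removes. The main body of your argument uses the implication correctly; it is just the summary sentence that overstates it.
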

The proof of Proposition \ref{prop: ideal projective joint relative} is essentially the same as the one of \cite[Proposition 5.3]{DHOST}. In our case, we apply Lemma \ref{lem: relative bi-duality same space}.

We conclude this section by investigating more on the similarity between identities \eqref{eq: alternative description of ED data locus} and \eqref{eq: alternative form relative perp}. Indeed, we have the following containment relation. Notice that \cite[Theorem 1]{horobet2017data} proves a similar containment for the different notion of data singular locus.
\begin{lemma}\label{lem: relative perp contained in data locus}
Let $X$ and $Z$ be affine cones in $V$. Then $X_Z^\perp\subset\mathrm{DL}_{X|Z}$.
\end{lemma}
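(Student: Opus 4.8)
The plan is to reduce the inclusion to a pointwise statement and then exploit the cone structure together with the closedness of $\mathrm{DL}_{X|Z}$. First I would recall the two descriptions available in the excerpt: by \eqref{eq: alternative form relative perp}, $X_Z^\perp=\overline{\bigcup_{x\in X_{\sm}\cap Z}N_xX}$, and by \eqref{eq: alternative description of ED data locus} (which applies here, since cones are in particular affine varieties), $\mathrm{DL}_{X|Z}=\overline{\bigcup_{x\in X_{\sm}\cap Z}(x+N_xX)}$. Since $\mathrm{DL}_{X|Z}$ is Zariski closed, it will be enough to show that $N_xX\subset\mathrm{DL}_{X|Z}$ for every $x\in X_{\sm}\cap Z$; taking the union over such $x$ and then the Zariski closure then yields $X_Z^\perp\subset\mathrm{DL}_{X|Z}$.

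To establish the pointwise claim, I would fix $x\in X_{\sm}\cap Z$ and use that $X$ and $Z$ are cones. The scaling map $v\mapsto\lambda v$ is a linear automorphism of $V$ preserving $X$ and $Z$, hence it preserves $X_{\sm}$; therefore $\lambda x\in X_{\sm}\cap Z$ for all $\lambda\in\C^\ast$. Moreover, writing $X=V(f_1,\ldots,f_s)$ with the $f_i$ homogeneous, the rows of the Jacobian at $\lambda x$ are nonzero scalar multiples of those at $x$, so the normal space is constant along the ruling: $N_{\lambda x}X=N_xX$ for every $\lambda\in\C^\ast$. Hence $\lambda x+N_xX=\lambda x+N_{\lambda x}X\subset\mathrm{DL}_{X|Z}$ for every $\lambda\in\C^\ast$, and so $\bigcup_{\lambda\in\C^\ast}(\lambda x+N_xX)\subset\mathrm{DL}_{X|Z}$.

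The last step is a closure argument: $\bigcup_{\lambda\in\C}(\lambda x+N_xX)=\C x+N_xX$ is a linear subspace, hence irreducible and closed, and its slice $\lambda=0$ is the closed subvariety $N_xX$, which is proper in it (when $x\notin N_xX$) or equal to it (when $x\in N_xX$, e.g.\ when $x$ is isotropic). In either case the Zariski closure of the part with $\lambda\neq0$ equals $\C x+N_xX$ and therefore contains $N_xX$. Since $\mathrm{DL}_{X|Z}$ is closed and contains $\bigcup_{\lambda\in\C^\ast}(\lambda x+N_xX)$, it contains $N_xX$, which is exactly the pointwise claim, and the proof is complete.

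I do not anticipate a genuine obstacle. The only mildly delicate points are that smoothness and the normal space behave well under the $\C^\ast$-action on a cone (both standard facts, following from Euler's relation for the homogeneous generators), and the degenerate situation where $x$ lies on the isotropic quadric $Q$, so that $x$ may belong to $N_xX$; the final closure argument is phrased so as to cover this case with no separate case distinction.
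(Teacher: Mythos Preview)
Your proof is correct and follows essentially the same strategy as the paper: both use the cone invariance $N_{\lambda x}X=N_xX$ for $\lambda\in\C^\ast$ to embed $N_xX$ in the closure of $\bigcup_{\lambda\neq 0}(\lambda x+N_xX)\subset\mathrm{DL}_{X|Z}$, then take the union over $x\in X_{\sm}\cap Z$ and the closure. Your write-up is slightly more explicit about why the normal space is constant along the ruling and handles the closure step carefully, but the argument is the same.
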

\begin{proof}
If $Z\subset X_{\sing}$, the containment is trivial, so we assume that there exists $x\in X_{\sm}\cap Z$. Clearly, the normal space $N_xX$ is contained in the union of the affine linear spaces $\lambda\,x+N_{\lambda\,x}X$ for all $\lambda\in\C$, and $N_{\lambda\,x}X=N_xX$ for all $\lambda\neq 0$, because $X$ is a cone. This yields the containment
\begin{align}\label{eq: chain containments relative perp data locus}
\begin{split}
\bigcup_{x\in X_{\sm}\cap Z}N_xX &\subset \bigcup_{x\in X_{\sm}\cap Z}\bigcup_{\lambda\in\C}(\lambda\,x+N_xX) = \bigcup_{x\in X_{\sm}\cap Z}\overline{\bigcup_{\lambda\neq 0}(\lambda\,x+N_xX)}\\
&\subset \overline{\bigcup_{x\in X_{\sm}\cap Z}\bigcup_{\lambda\neq 0}(\lambda\,x+N_xX)} = \overline{\bigcup_{y\in X_{\sm}\cap Z}(y+N_yX)} = \mathrm{DL}_{X|Z}\,.
\end{split}
\end{align}
In the second last equality, we used the fact that $X$ and $Z$ are affine cones. By taking closure on the left-hand side of \eqref{eq: chain containments relative perp data locus} and using \eqref{eq: alternative form relative perp}, we get the desired containment $X_Z^\perp\subset\mathrm{DL}_{X|Z}$.
\end{proof}

We can also say more about the relationship between the varieties $X_Z^\perp$ and $\mathrm{DL}_{X|Z}$ when $Z\subset X$ are affine cones in $V$. For simplicity, we consider the case when $\mathrm{def}(X)=\mathrm{def}(X,Z)$, seeing $X$ and $Z$ as varieties in $\PP(V)$. Applying Propositions \ref{prop: formulas relative codim X dual X Z dual}(3)-(4) and \ref{prop: data locus is irreducible}, it is almost immediate to see that
\begin{equation}
    \dim(X_Z^\perp)=\dim(\mathrm{DL}_{X|Z})-\mathrm{def}(X)-1\,.
\end{equation}
In order to interpret the previous identity, observe the role played by contact loci in $\PP(V)$ is replaced in $V$ by the linear spaces
\begin{equation}
    L_y\coloneqq\overline{\{x\in X_{\sm}\mid y\in N_xX\}}\,,
\end{equation}
where $y\in N_xX$ for some fixed $x\in X_{\sm}$. One may verify that $L_y$ is isomorphic to the affine cone over the contact locus $\mathrm{Cont}(H,X)$ of $X\subset\PP(V)$ with respect to the hyperplane $H$ induced by $y$. Furthermore, since $X$ is a cone, at least the line spanned by $x$ is contained in $L_y$, hence $\dim(L_y)\ge 1$. But the dimension of $L_y$ increases exactly as the value of $\mathrm{def}(X)$, more explicitly $\dim(L_y)=\mathrm{def}(X)+1$.

\section{Degrees of projective ED data loci}\label{sec: degrees ED data loci}

Our next goal is to provide a formula for the degree of $\mathrm{DL}_{X|Z}$ that can be applied when neither $X$ nor $Z$ are generic complete intersection varieties or when $Z=X\cap Y$ for some generic complete intersection variety $Y$. We require that the variety $\sPN_{X,Z}$ introduced in \eqref{eq: def PN X Z} is disjoint from the diagonal $\Delta(\PP^N)$ of $\PP^N\times\PP^N$.

Due to the isomorphism $\sPN_{X,Z}\cong W_{X,Z}$, we have that $\dim(\sPN_{X,Z})=N-1-\codim_X(Z)$. Furthermore, its class in the Chow group $A^*(\PP^N\times\PP^N)$ can be written similarly as in \eqref{eq: def relative multidegrees}: 
\begin{equation}\label{eq: class normal space Z}
[\sPN_{X,Z}]=\delta_0(X,Z)h^N(h')^{n-d+1}+\cdots+\delta_{N-1-n+d}(X,Z)h^{n-d+1}(h')^N\,,
\end{equation}
where we recall that $d=\dim(Z)$ and with the only clarification that, in this case, $h'$ denotes the class $[\PP^N\times H']$ for some hyperplane $H'$ of $\PP^N$, differently from \eqref{eq: notation hyperplane classes}.
We already know from Proposition \ref{prop: properties relative polar ranks} the relations $\delta_{d-i}(X,Z)=\mu_i(X,Z)$, where $\mu_i(X,Z)$ is the $i$th polar degree of $X$ relative to $Z$ given in Definition \ref{def: relative polar degrees}.
We are ready to prove Theorem \ref{thm: degree data locus no genericity X, Z}, which can be interpreted as a ``conditional'' version of \cite[Theorem 5.4]{DHOST}.

\begin{proof}[Proof of Theorem \ref{thm: degree data locus no genericity X, Z}]
The argument of the proof is identical to \cite[Theorem 5.4]{DHOST}. We shortly outline the main ingredients for the readers' convenience.

First, one needs to consider the variety $\sPG(\Gamma_Z)\subset\PP_x^N\times\PP_y^N\times V_u$ introduced after \eqref{eq: map affine-projective 3}. By Proposition \ref{prop: ideal projective joint relative}, it is irreducible of dimension $N+1-n+d$, where $n=\dim(X)$ and is cut out by the ideal in \eqref{eq: ideal projective joint relative}. Secondly, we define $\Lambda\coloneqq\{([x],[y],u)\in\PP_x^N\times\PP_y^N\times V_u\mid\dim(\langle x,y,u\rangle)\le 2\}$.
Using that $\sPN_{X,Z}\cap\Delta(\PP^N)=\emptyset$, one verifies that $\sPG(\Gamma_Z)=(\sPN_{X,Z} \times V_u)\cap \Lambda$.

It follows that $\mathrm{EDD}(X|Z)\cdot\deg(\mathrm{DL}_{X|Z})$ equals the length of a generic fiber of the projection $\pi\colon(\sPN_{X,Z} \times V_u)\cap \Lambda\to V_u$.
The generic fiber $\pi^{-1}(u)$ consists of simple points because of generic smoothness, and this fiber coincides scheme-theoretically with $\sPN_{X,Z}\cap \Lambda_u$, where $\Lambda_u$ is the fiber in $\Lambda$ over $u$.
The cardinality of this intersection is the coefficient of $h^N(h')^N$ in the class $[\sPN_{X,Z}\cap \Lambda_u]=[\sPN_{X,Z}]\cdot[\Lambda_u]\in A^*(\PP^N\times\PP^N)$. The determinantal variety $\Lambda_u$ has codimension $N-1$ and
\begin{equation}\label{eq: class lambda_u}
    [\Lambda_u] = \sum_{i=0}^{N-1}h^{N-1-i}(h')^i\,.
\end{equation}
By computing modulo $\langle h^{N+1},(h')^{N+1}\rangle$, we find
\[
[\sPN_{X,Z}]\cdot[\Lambda_u] = \sum_{i=0}^{N-1-n+d}\delta_i(X,Z)\,h^{N-i}(h')^{n-d+1+i}\cdot\sum_{i=0}^{N-1}h^{N-1-i}(h')^i = \left(\sum_{i=0}^d\delta_i(X,Z)\right)h^N(h')^N\,.
\]
This establishes the desired identity.\qedhere
\end{proof}

The following is an immediate consequence of Proposition \ref{prop: properties relative polar ranks} and Theorem \ref{thm: degree data locus no genericity X, Z}.

\begin{corollary}\label{corol: degree data locus no genericity X, Z with relative polar degrees}
Consider the assumptions of Theorem \ref{thm: degree data locus no genericity X, Z}. If $X$ and $Z$ are smooth, then
\begin{equation}\label{eq: degree data locus no genericity X, Z with relative polar degrees}
\mathrm{EDD}(X|Z)\deg(\mathrm{DL}_{X|Z})=\sum_{i=0}^d\mu_i(X,Z)\,.
\end{equation}
\end{corollary}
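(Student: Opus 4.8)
The plan is to combine Theorem~\ref{thm: degree data locus no genericity X, Z} with the identification of relative multidegrees and relative polar degrees furnished by Proposition~\ref{prop: properties relative polar ranks}. First I would invoke Theorem~\ref{thm: degree data locus no genericity X, Z} directly: its hypotheses (namely that $X$ is ED regular given $Z$ and that $\sPN_{X,Z}$ is disjoint from the diagonal $\Delta(\PP^N)$) are among the standing assumptions of the corollary, so we obtain
\[
\mathrm{EDD}(X|Z)\deg(\mathrm{DL}_{X|Z})=\sum_{i=0}^d\delta_i(X,Z)\,.
\]
Next, since $X$ and $Z$ are now in addition assumed smooth, Proposition~\ref{prop: properties relative polar ranks} applies and yields $\mu_i(X,Z)=\delta_{d-i}(X,Z)$ for every $0\le i\le d$. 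Substituting $j=d-i$ turns the right-hand side above into $\sum_{j=0}^d\delta_{d-j}(X,Z)=\sum_{j=0}^d\mu_j(X,Z)$, which is exactly \eqref{eq: degree data locus no genericity X, Z with relative polar degrees}.

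There is essentially no obstacle beyond verifying that the two cited results can be applied simultaneously. Theorem~\ref{thm: degree data locus no genericity X, Z} asks only for ED regularity of $X$ given $Z$ together with $\sPN_{X,Z}\cap\Delta(\PP^N)=\emptyset$, both of which are inherited from its statement; Proposition~\ref{prop: properties relative polar ranks} additionally requires smoothness of $X$ and $Z$, which is precisely the extra hypothesis imposed here. The only point worth a moment's care is that the reindexing is genuinely a bijection on the index set: the relative polar degrees $\mu_i(X,Z)$ can be nonzero only for $0\le i\le d$ (as noted after Definition~\ref{def: relative polar degrees}), and the multidegrees $\delta_i(X,Z)$ occurring in \eqref{eq: degree data locus no genericity X, Z} run over the same range $0\le i\le d$, so no summand is dropped or counted twice.

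Thus the proof is a one-line algebraic manipulation once the two earlier statements are in place, and I would present it as such.
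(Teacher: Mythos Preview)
Your proposal is correct and matches the paper's own argument essentially verbatim: the paper states the corollary as an immediate consequence of Proposition~\ref{prop: properties relative polar ranks} and Theorem~\ref{thm: degree data locus no genericity X, Z}, which is precisely the combination and reindexing you describe.
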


\begin{remark}\label{rmk: Kalman varieties data loci}
The hypotheses of Theorem \ref{thm: degree data locus no genericity X, Z} are satisfied whenever $X$ intersects the isotropic quadric $Q$ transversally along the points of $Z\subset X$. Changing our perspective, having fixed $X$ and $Z$, there exists an open dense subset of the space $\PP(S^2V)$ of quadratic forms $q$ on $V$ whose associated isotropic quadric $Q$ is transversal to $X$. Instead, for a specific choice of a quadratic form $q$, the right-hand side of \eqref{eq: degree data locus no genericity X, Z} might be strictly larger than $\mathrm{EDD}(X|Z)\cdot\deg(\mathrm{DL}_{X|Z})$. This happens when considering the so-called generalized Kalman varieties of matrices and tensors. We briefly recall their definition, and we refer to \cite{ottaviani2013matrices,OSh,shahidi2021degrees} for further details. We concentrate on nonsymmetric tensors for simplicity.

Consider the complex tensor space $V=V_1\otimes\cdots\otimes V_k$ and the Segre product $X\subset\PP(V)$ defined in Remark \ref{rmk: relative duality tensor subspaces}. Suppose the real tensor space $V^\mR$ comes equipped with an inner product $\langle\,,\rangle^\mR$ with associated quadratic form $q^\mR\in S^2V^\mR$. Given a real tensor $u\in V^\mR$, a {\em best rank-one approximation} of $u$ is a global minimizer $x\in X^\mR$ of the distance function $d_{X,u}(x)=\sqrt{q^\mR(u-x)}$. Hence, we are clearly in the setting of ED optimization. However, in this context, it is natural to fix a ``product'' quadratic form $q^\mR$ on $V^\mR$, that is, an element $q^\mR=q_1^\mR\otimes\cdots\otimes q_k^\mR$, where each factor $q_i^\mR\in S^2V_i^\mR$ defines an inner product on $V_i^\mR$. Such forms $q^\mR$ are also called {\em Frobenius} or {\em Bombieri-Weyl} quadratic forms. In such a Euclidean space, it turns out that the ED degree of $X$ is considerably smaller than the sum of its polar degrees. This is because the intersection between $X$ and the isotropic quadric $Q$ is rather special. The ED degree of $X$ is computed in \cite[Theorem 1]{friedland2014number}, while the work \cite{kozhasov2023minimal} studies the minimality of the ED degree of $X$ with respect to the Frobenius inner product.
For every $i\in[k]$ we let $Z_i\subset\PP(V_i)$ be an irreducible subvariety, and we call $Z$ the Segre embedding of $Z_1\times\cdots\times Z_k$ in $\PP(V)$. The generalized Kalman variety introduced in \cite[Definition 15]{shahidi2021degrees} coincides with the data locus $\mathrm{DL}_{X|Z}$, and the following formula provides its degree.    
\end{remark}

\begin{theorem}{\cite[Theorem 1]{shahidi2021degrees}}\label{thm: Kalman}
For every $i\in[k]$, let $Z_i\subset\PP(V_i)=\PP^{n_i}$ be an irreducible projective variety of codimension $\delta_i$. Let $X$ and $Z\subset X$ be the products $\prod_{i=1}^k\PP^{n_i}$ and $\prod_{i=1}^k Z_i$ embedded in $\PP(V)=\PP(V_1\otimes\cdots\otimes V_k)$.
We assume each $Z_i$ is not contained in the isotropic quadric $Q_i\subset\PP(V_i)$.
Let $\delta=(\delta_1,\ldots,\delta_k)$. The data locus $\mathrm{DL}_{X|Z}$ is connected of codimension $\delta\coloneqq\sum_{i=1}^k\delta_i$ in $\PP(V)$. Its degree is $d(n,\delta)\prod_{i=1}^k\deg(Z_i)$, where $d(n,\delta)$ is the coefficient of the monomial $h^{\delta}\prod_{i=1}^kt_{i}^{n_i-\delta_i}$ in the polynomial
\[
\prod_{i=1}^{k}\frac{(\widehat{t_i}+h)^{n_i+1}-t_i^{n_i+1}}{(\widehat{t_i}+h)-t_i}\,,\quad\widehat{t_i}\coloneqq\sum_{j\neq i}t_j\,.
\]
\end{theorem}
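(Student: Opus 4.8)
The plan is to reduce Theorem \ref{thm: Kalman} to Theorem \ref{thm: degree data locus no genericity X, Z} (or rather Corollary \ref{corol: degree data locus no genericity X, Z with relative polar degrees}) by verifying that its hypotheses hold in the Segre setting and then performing the Chern-class computation that identifies $\sum_{i=0}^d\mu_i(X,Z)$ with the stated coefficient extraction. Here $X=\prod_i\PP^{n_i}$ is smooth, $Z=\prod_i Z_i$ is irreducible of codimension $\delta=\sum_i\delta_i$ in $X$, so the dimension statement $\dim(\mathrm{DL}_{X|Z})=N+1-n+d$ with $n-d=\delta$ comes directly from Proposition \ref{prop: proj data locus is irreducible} once we check ED regularity; connectedness follows from irreducibility, which is Proposition \ref{prop: projective ED correspondence Z irreducible}. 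The first substantive step is therefore to check that $X$ is ED regular given $Z$ and that $\sPN_{X,Z}$ is disjoint from the diagonal $\Delta(\PP^N)$: the latter is the genericity-of-the-quadric condition discussed in Remark \ref{rmk: Kalman varieties data loci}, but here one must instead exploit the \emph{special} Frobenius form $q=q_1\otimes\cdots\otimes q_k$ and argue that, because each $Z_i\not\subset Q_i$, a rank-one tensor $x=x_1\otimes\cdots\otimes x_k$ with $[x_i]\in (Z_i)_{\sm}$ is never isotropic and never normal to itself, so no point of $\sPN_{X,Z}$ lands on the diagonal. (This is the place where the hypothesis $Z_i\not\subset Q_i$ is actually used.)

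Granting the hypotheses, Corollary \ref{corol: degree data locus no genericity X, Z with relative polar degrees} gives $\mathrm{EDD}(X|Z)\deg(\mathrm{DL}_{X|Z})=\sum_{i=0}^d\mu_i(X,Z)$, and by Remark \ref{rmk: relations relative polar classes Chern classes jet bundle} we have $p_i(X,Z)=c_i(\sP^1(\sO_X(1))|_Z)$. So $\sum_{i=0}^d\mu_i(X,Z)$ is the degree on $Z$ of the total Chern class $c(\sP^1(\sO_X(1)))$ restricted to $Z$. The next step is to compute $c(\sP^1(\sO_X(1)))$ on $X=\prod_i\PP^{n_i}$ explicitly. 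Writing $t_i$ for the hyperplane class pulled back from the $i$th factor and $h=\sum_i t_i$ for $c_1(\sO_X(1))$, the first-jet sequence \eqref{eq: first jet sequence} together with $\sN_{X/\PP^N}^\vee(1)$ and the Euler-type sequences on each $\PP^{n_i}$ yields $c(\sP^1(\sO_X(1)))=\prod_{i=1}^k\big(1+\widehat{t_i}+h\big)^{n_i+1}\big/\big(1+\widehat{t_i}+h-t_i\big)$ after the usual truncation — precisely the generating function appearing in the theorem — where $\widehat{t_i}=\sum_{j\neq i}t_j$ arises because the normal bundle of the $i$th Segre factor twisted by $\sO_X(1)$ contributes the $t_i$-independent shift. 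One then restricts to $Z$: since $[Z_j]$ has codimension $\delta_j$ and degree $\deg(Z_j)$ in $\PP^{n_j}$, integrating a monomial $h^{\delta}\prod_j t_j^{n_j-\delta_j}$ over $Z$ picks out exactly $d(n,\delta)\prod_j\deg(Z_j)$, which is the claimed degree.

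The main obstacle, as usual in these ED-degree arguments, is the combinatorial bookkeeping of the Chern-class computation — getting the shifts $\widehat{t_i}+h$ right and keeping track of which monomials survive restriction to $Z$ and truncation to $A^*(X)$ — but this is a routine (if lengthy) manipulation once the framework is set up, and it is already essentially carried out in \cite{shahidi2021degrees}. A more delicate conceptual point is confirming that the degree of $\mathrm{DL}_{X|Z}$, not merely the product $\mathrm{EDD}(X|Z)\deg(\mathrm{DL}_{X|Z})$, equals $d(n,\delta)\prod_i\deg(Z_i)$; for this one must know $\mathrm{EDD}(X|Z)=1$, which in the generalized-Kalman setting follows because the generic data point on $\mathrm{DL}_{X|Z}$ has a unique critical rank-one tensor with prescribed factors lying on the $Z_i$ — this can be extracted from Proposition \ref{prop: rationality ED data loci} (the correspondence is parametrized birationally) or cited from \cite{shahidi2021degrees}. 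I would structure the write-up as: (1) verify ED regularity and $\sPN_{X,Z}\cap\Delta=\emptyset$; (2) invoke Corollary \ref{corol: degree data locus no genericity X, Z with relative polar degrees}; (3) compute $c(\sP^1(\sO_X(1)))$ via \eqref{eq: first jet sequence}; (4) restrict to $Z$ and read off the coefficient; (5) note $\mathrm{EDD}(X|Z)=1$ to isolate $\deg(\mathrm{DL}_{X|Z})$.
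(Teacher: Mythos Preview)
Your approach has a fundamental gap at step (1): the hypothesis $\sPN_{X,Z}\cap\Delta(\PP^N)=\emptyset$ \emph{fails} for the Frobenius quadric, so Corollary \ref{corol: degree data locus no genericity X, Z with relative polar degrees} does not apply. This is exactly the point of Remark \ref{rmk: Kalman varieties data loci} and Example \ref{ex: degree Kalman}: with the product quadratic form $q=q_1\otimes\cdots\otimes q_k$, the intersection $X\cap Q$ is highly non-transversal (its singular locus contains the Segre image of $Q_1\times\cdots\times Q_k$), and the sum of relative polar degrees is \emph{strictly larger} than $\deg(\mathrm{DL}_{X|Z})$. In the explicit case $X=\PP^1\times\PP^2$, $Z=\PP^1\times L$ worked out in Example \ref{ex: degree Kalman}, your method would output $\sum_i\delta_i(X,Z)=3+3+2=8$, whereas the correct Kalman degree is $4$. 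Your claim that ``a rank-one tensor $x=x_1\otimes\cdots\otimes x_k$ with $[x_i]\in(Z_i)_{\sm}$ is never isotropic'' is false: since $q(x)=\prod_i q_i(x_i)$, any $x$ with a single isotropic factor $x_j\in Q_j$ lies in $Q$, and the hypothesis $Z_i\not\subset Q_i$ does not prevent $Z_i\cap Q_i\neq\emptyset$.

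Note also that the paper does not supply its own proof of Theorem \ref{thm: Kalman}; it is quoted from \cite{shahidi2021degrees}, precisely because the argument there does \emph{not} go through the polar-degree machinery of this paper but rather computes the class of the Kalman variety directly via a resolution adapted to the Frobenius structure. The relative-polar framework gives an upper bound (and the correct answer for a \emph{generic} quadric), but the Frobenius case requires a separate analysis.
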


\begin{example}\label{ex: degree Kalman}
We compare the formulas in Theorems \ref{thm: degree data locus no genericity X, Z} and \ref{thm: Kalman} in a specific example. Let $k=2$, $(n_1,n_2)=(1,2)$, so that $X=\PP^1\times\PP^2\subset\PP(\C^2\otimes\C^3)\cong\PP^5$. Fix a generic line $L\subset\PP^2$ and consider $Z=\PP^1\times L$ as a subvariety of $X$. In particular $(\delta_1,\delta_2)=(0,1)$ and $\delta=\delta_1+\delta_2=1$.
By Proposition \ref{prop: double data Segre strictly contained}, we have $\mathrm{EDD}(X|Z)=1$.
On one hand, applying Theorem \ref{thm: Kalman} we obtain that the degree of the data locus $\mathrm{DL}_{X|Z}$ with respect to a Frobenius quadratic form on $\R^2\otimes\R^3$ equals the coefficient of $ht_1t_2$ in the product
\[
(t_1+t_2+h)[(t_1+h)^2+(t_1+h)t_2+t_2^2]=\cdots+4ht_1t_2+\cdots
\]
Therefore $\deg(\mathrm{DL}_{X|Z})=4$. On the other hand, we verified symbolically on \verb|Macaulay2| that
\[
[W_{X,Z}]=3h^5(h')^2+3h^4(h')^3+2h^3(h')^4\,.
\] 
Hence $\deg(\mathrm{DL}_{X|Z})=4<8=3+3+2$, and $8$ is the degree of $\mathrm{DL}_{X|Z}$ expected by Theorem \ref{thm: degree data locus no genericity X, Z}. This is because, if $Q\subset\PP(V)$ is the Frobenius isotropic quadric induced by the isotropic quadrics $Q_1\subset\PP^1$, $Q_2\subset\PP^2$ on the two factors of $X$, then $(X\cap Q)_{\sing}$ coincides with the Segre embedding of $Q_1\times Q_2$, and $(\PP^1\times L)\cap(Q_1\times Q_2)=Q_1\times(Q_2\cap L)\neq\emptyset$. In conclusion, in this example, the hypotheses of Theorem \ref{thm: degree data locus no genericity X, Z} are not satisfied.\hfill$\diamondsuit$
\end{example}

The formula \eqref{eq: degree data locus no genericity X, Z} can be simplified when $Z$ is the intersection between $X$ and a generic complete intersection variety $Y$, as we see in the following result. 

\begin{corollary}\label{corol: degrees data loci Z intersection X Y}
Consider the assumptions of Theorem \ref{thm: degree data locus no genericity X, Z}. Assume also that $Z=X\cap Y$ for some generic complete intersection variety $Y$ of codimension $c\le n=\dim(X)$. Then $\mathrm{EDD}(X|Z)=1$ and
\begin{equation}\label{eq: degrees data loci}
\deg(\mathrm{DL}_{X|Z})=\deg(Y)\sum_{i=\max\{\mathrm{def}(X),c\}}^n\delta_i(X)\,.
\end{equation}
In particular, if $c\le\mathrm{def}(X)$, then $\deg(\mathrm{DL}_{X|Z})=\deg(Y)\mathrm{EDD}(X)$.
If additionally $Y$ is linear, then $\deg(\mathrm{DL}_{X|Z})=\mathrm{EDD}(Z)$.
\end{corollary}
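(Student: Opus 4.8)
The plan is to get everything from Theorem \ref{thm: degree data locus no genericity X, Z} once we know $\mathrm{EDD}(X|Z)=1$, so the first task is that birationality statement. Since $Y$ is generic we have $\codim_X(Z)=c$, hence $d=\dim(Z)=n-c$ and $Z\not\subset X_{\sing}$, so by (the equality case of) \eqref{eq: containment ED correspondence Z} the conditional ED correspondence is $\sE_{X|Z}=\sE_X\cap(Y\times V_u)$, and $\mathrm{EDD}(X|Z)$ is the degree of $\pr_2\colon\sE_{X|Z}\to\mathrm{DL}_{X|Z}$. To show this degree is $1$ I would run a dimension count over the parameter space $\sY$ of complete intersections of the prescribed degrees. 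Consider the incidence variety
\[
\mathcal J^{(2)}=\{(x,x',u,Y)\mid (x,u),(x',u)\in\sE_X,\ x\neq x',\ x\in Y,\ x'\in Y\}\,.
\]
Its projection to the locus of triples $(x,x',u)$ with $x\neq x'$ both critical for a common data point $u$ is a variety of dimension $N+1$ (here one uses that $\sE_X\to V_u$ is generically finite, which is part of ED regularity), and the fibre over such a triple is the set of $Y\in\sY$ through $x$ and $x'$, of codimension $2c$ in $\sY$: two critical points of a common generic $u$ are never proportional (as $X$ is a cone, $x\in T_xX$ forces $q(x)\neq0$ at critical points, so $x,x'$ impose $2c$ independent conditions on a generic codimension-$c$ complete intersection). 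Hence $\dim\mathcal J^{(2)}=N+1+\dim\sY-2c$, so for generic $Y$ the set of $u\in\mathrm{DL}_{X|Z}$ carrying at least two critical points on $Z$ has dimension at most $N+1-2c$, a proper closed subset of $\mathrm{DL}_{X|Z}$ (which has codimension exactly $c$ in $V_u$). Therefore $\mathrm{EDD}(X|Z)=1$ (the case $\mathrm{EDD}(X)=1$ being trivial, as $\mathrm{EDD}(X|Z)\le\mathrm{EDD}(X)$).

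Granting this, Theorem \ref{thm: degree data locus no genericity X, Z} gives $\deg(\mathrm{DL}_{X|Z})=\sum_{i=0}^d\delta_i(X,Z)$. By Proposition \ref{prop: class relative conormal generic Y} together with Proposition \ref{prop: relation codim dual relative dual generic complete intersection}, $\delta_j(X,Z)=0$ for $j<\max\{\mathrm{def}(X)-c,0\}$ while $\delta_{i-c}(X,Z)=\deg(Y)\,\delta_i(X)$ for $i\in\{\max\{\mathrm{def}(X),c\},\dots,n\}$; re-indexing $\sum_{j=0}^{n-c}\delta_j(X,Z)$ by $i=j+c$ collapses it to $\deg(Y)\sum_{i=\max\{\mathrm{def}(X),c\}}^{n}\delta_i(X)$, which is \eqref{eq: degrees data loci}. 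If $c\le\mathrm{def}(X)$ then $\max\{\mathrm{def}(X),c\}=\mathrm{def}(X)$, and since $\delta_i(X)=0$ for $i<\mathrm{def}(X)$ by Proposition \ref{prop: multidegrees and polar classes Kleiman}(1) this equals $\deg(Y)\sum_{i=0}^{n}\delta_i(X)=\deg(Y)\,\mathrm{EDD}(X)$; here the identity $\sum_{i=0}^n\delta_i(X)=\mathrm{EDD}(X)$ is the unconditional case of Theorem \ref{thm: degree data locus no genericity X, Z} applied to $Z=X$ (where $\deg(\mathrm{DL}_{X|X})=1$), which uses that $\sPN_X$ is disjoint from $\Delta(\PP^N)$, i.e.\ that $X$ is transversal to $Q$ — the natural strengthening of the standing hypothesis.

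For the last assertion, suppose $Y=L$ is linear, so $\deg(Y)=1$ and $L^\perp\subset V$ is a generic linear subspace of dimension $c$. For a smooth point $x\in Z$ one has $T_xZ=T_xX\cap L$, hence $N_xZ=N_xX+L^\perp$, and the sum is direct for generic $x$ since $N_xX\cap L^\perp=(T_xX+L)^\perp=0$. This gives a birational map $\sN_{X|Z}\times L^\perp\to\sN_Z$, $((x,y),\ell)\mapsto(x,y+\ell)$, under which the map $\sN_Z\to V_u$, $(x,w)\mapsto x+w$, becomes $((x,y),\ell)\mapsto\Gamma_Z(x,y)+\ell$. Because $\mathrm{EDD}(X|Z)=1$ makes $\Gamma_Z\colon\sN_{X|Z}\to\mathrm{DL}_{X|Z}$ birational, taking degrees over a generic $u\in V_u$ yields $\mathrm{EDD}(Z)=\#\!\big(\mathrm{DL}_{X|Z}\cap(u-L^\perp)\big)=\deg(\mathrm{DL}_{X|Z})$, since $u-L^\perp$ is a generic affine $c$-plane and $\mathrm{DL}_{X|Z}$ has codimension $c$; note this argument does not even require $c\le\mathrm{def}(X)$.

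The main obstacle is the first paragraph: making the parameter count rigorous — in particular confirming that distinct critical points of a common generic data point impose genuinely independent conditions on the generic complete intersection $Y$, and that the incidence $\mathcal J^{(2)}$ (and the auxiliary variety of triples $(x,x',u)$) has the expected dimension, which rests on the generic finiteness of $\sE_X\to V_u$ supplied by ED regularity. Once $\mathrm{EDD}(X|Z)=1$ is in hand, the rest is bookkeeping with the multidegree identities already proved, plus the standard transversality remark needed to recognize $\sum_i\delta_i(X)$ as $\mathrm{EDD}(X)$.
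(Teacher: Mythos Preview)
Your argument is correct, and the middle block (deriving \eqref{eq: degrees data loci} from Theorem \ref{thm: degree data locus no genericity X, Z} via the multidegree identities of Proposition \ref{prop: class relative conormal generic Y}) matches the paper's computation essentially verbatim. The two places where you genuinely diverge are the endpoints.

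For $\mathrm{EDD}(X|Z)=1$, the paper simply invokes Proposition \ref{prop: double data strictly contained}, whose proof is more elementary than your incidence--variety count: pick a generic $x\in Z\cap X_{\sm}$, view it as a data point, and observe that the remaining critical points $x_2,\ldots,x_{\mathrm{EDD}(X)}$ of $d_{X,x}$ are finitely many specific points of $X$ which a \emph{generic} complete intersection $Y$ through $x$ will miss. Your dimension count over the parameter space $\sY$ reaches the same conclusion and is correct (the key step, that two distinct projective points impose independent conditions of total codimension $2c$ on $\sY$, is standard), but it is heavier machinery than needed. You are right to flag that the claim ``distinct critical points are never proportional'' needs the observation that critical points lie off $Q$; the paper's argument sidesteps this entirely.

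For the linear case you take a genuinely different route. The paper quotes Piene's identity $\delta_i(Z)=\delta_{i+c}(X)$ for generic linear sections and telescopes the sum. Your argument is more geometric and self-contained: the orthogonal decomposition $N_xZ=N_xX\oplus L^\perp$ gives a birational identification $\sN_{X|Z}\times L^\perp\cong\sN_Z$, under which the ED map for $Z$ factors as $\Gamma_Z$ followed by translation by $L^\perp$, so $\mathrm{EDD}(Z)$ becomes the number of points of $\mathrm{DL}_{X|Z}$ on a generic affine $c$-plane. This is a nice alternative that avoids the external reference; it also makes transparent, as you note, that the conclusion $\deg(\mathrm{DL}_{X|Z})=\mathrm{EDD}(Z)$ does not require $c\le\mathrm{def}(X)$ (the paper's proof does not use it either, though the phrasing of the statement could suggest otherwise). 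Your caveat about needing $\sPN_X\cap\Delta(\PP^N)=\emptyset$ to identify $\sum_i\delta_i(X)$ with $\mathrm{EDD}(X)$ is well taken; the paper uses this silently, and for the $\mathrm{EDD}(Z)$ identity it likewise needs $\sPN_Z\cap\Delta(\PP^N)=\emptyset$, which it does note.
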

\begin{proof}
The identity $\mathrm{EDD}(X|Z)=1$ follows by Proposition \ref{prop: double data strictly contained}.
The projective version of the containment in \eqref{eq: containment normal space Z} is $\sPN_{X,Z} \subset \sPN_X\cap (Z\times \PP_y^N)$. Our genericity assumptions imply that the previous containment is, in fact, an equality. More precisely, we can say that $\sPN_{X,Z} = \sPN_X\cap (Y\times \PP_y^N)$.
Furthermore, the genericity of $Y$ allows us to say that
\[
\begin{gathered}
[\sPN_X\cap (Y\times \PP_y^N)] = [\sPN_X]\cdot[Y\times \PP_y^N] = \left(\sum_{i=\mathrm{def}(X)}^n\delta_i(X)\,h^{N-i}(h')^{i+1}\right)\cdot\deg(Y)\,h^c\\
= \deg(Y)\sum_{i=\mathrm{def}(X)}^n\delta_i(X)\,h^{N-i+c}(h')^{i+1} = \deg(Y)\sum_{i=\max\{\mathrm{def}(X),c\}}^n\delta_i(X)\,h^{N-i+c}(h')^{i+1}\,,
\end{gathered}
\]
where in the last equality we used that $h^{N-i+c}=0$ for all $i<c$. The rest of the proof of \eqref{eq: degrees data loci} follows by Theorem \ref{thm: degree data locus no genericity X, Z}.
In particular, if $c\le\mathrm{def}(X)$, using that $\delta_i(X)=0$ for all $i<\mathrm{def}(X)$ we conclude that $\sum_{i=c}^n\delta_i(X)=\mathrm{EDD}(X)$.

Finally, we assume that $Y$ is linear. A result of Piene \cite{piene1978polar} says that $\delta_i(Z)=\delta_{i+c}(X)$ for all $i$. Using these identities and applying \eqref{eq: degrees data loci}, we have that
\[
\deg(\mathrm{DL}_{X|Z})=\sum_{i=c}^n\delta_i(X)=\sum_{j=0}^{n-c}\delta_{j+c}(Z)=\sum_{i=0}^d\delta_i(Z)\,,
\]
and the last quantity is equal to $\mathrm{EDD}(Z)$, where we are using the fact that $\sPN_Z\cap\Delta(\PP^N)=\emptyset$.
\end{proof}

Let $X$ be a smooth projective variety of dimension $n$.
We denote by $c_i(X)$ the $i$-th Chern class of the tangent bundle $\sT_X$. From \cite[p. 150]{holme1988geometric}, we have the following linear relation between Chern classes of $X$ and multidegrees of $W_X$:
\begin{equation}\label{eq: polar vs Chern}
\delta_i(X)=\sum_{j=i}^n(-1)^{n-j}\binom{j+1}{i+1}\deg(c_{n-j}(X))\quad\forall\,i\in\{0,\ldots,n\}\,.
\end{equation}

\begin{proposition}\label{prop: degrees data loci Chern classes}
Consider the assumptions of Theorem \ref{thm: degree data locus no genericity X, Z}. Assume additionally that $X$ is smooth and that $Z=X\cap Y$ for some generic complete intersection variety $Y$ of codimension $e$.
Then
\begin{equation}\label{eq: degrees data loci Chern classes}
\deg(\mathrm{DL}_{X|Z})=\deg(Y)\sum_{k=0}^{n-e}(-1)^k\alpha_k(n,e)\deg(c_k(X))\,,
\end{equation}
where
\begin{equation}\label{eq: def alpha k}
\alpha_k(n,e) \coloneqq
\begin{cases}
\sum_{i=e}^{n-k}\binom{n-k+1}{i+1} & \text{if $0\le k\le n-e=\dim(Z)$}\\
0 & \text{otherwise.}
\end{cases}
\end{equation}
\end{proposition}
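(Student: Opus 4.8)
The plan is to combine Corollary~\ref{corol: degrees data loci Z intersection X Y} with the Chern-class expansion \eqref{eq: polar vs Chern} of the multidegrees $\delta_i(X)$, which applies since $X$ is smooth. Concretely, by Corollary~\ref{corol: degrees data loci Z intersection X Y}, under the stated hypotheses we have
\[
\deg(\mathrm{DL}_{X|Z})=\deg(Y)\sum_{i=\max\{\mathrm{def}(X),e\}}^n\delta_i(X)\,.
\]
Since $\delta_i(X)=0$ for $i<\mathrm{def}(X)$ by Proposition~\ref{prop: multidegrees and polar classes Kleiman}(1), the lower limit $\max\{\mathrm{def}(X),e\}$ can be replaced by $e$ without changing the sum, so it suffices to prove
\[
\sum_{i=e}^n\delta_i(X)=\sum_{k=0}^{n-e}(-1)^k\alpha_k(n,e)\deg(c_k(X))\,.
\]

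The first step is to substitute \eqref{eq: polar vs Chern} into $\sum_{i=e}^n\delta_i(X)$ and interchange the order of summation. Writing $k=n-j$ for the Chern index, one gets
\[
\sum_{i=e}^n\delta_i(X)=\sum_{i=e}^n\sum_{j=i}^n(-1)^{n-j}\binom{j+1}{i+1}\deg(c_{n-j}(X))
=\sum_{j=e}^n(-1)^{n-j}\deg(c_{n-j}(X))\sum_{i=e}^{j}\binom{j+1}{i+1}\,,
\]
where the swap is valid because the index set is $\{(i,j):e\le i\le j\le n\}$. Setting $k=n-j$ (so $j=n-k$ ranges over $0\le k\le n-e$) turns the sign $(-1)^{n-j}$ into $(-1)^k$ and the inner sum into $\sum_{i=e}^{n-k}\binom{n-k+1}{i+1}$, which is exactly $\alpha_k(n,e)$ as defined in \eqref{eq: def alpha k}. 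This yields the claimed identity term by term, and the range $0\le k\le n-e$ matches the support in \eqref{eq: def alpha k}. Since $Z=X\cap Y$ is smooth of dimension $n-e$ for generic $Y$, writing $e$ in place of the dimension bookkeeping is consistent with $\dim(Z)=n-e$.

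The only genuine subtlety is bookkeeping: one must check that the double sum's index set really is the triangle $e\le i\le j\le n$ (so that Fubini applies cleanly), and that the degenerate cases $\mathrm{def}(X)>e$ and $e>n$ — the latter giving $Z=\emptyset$, hence an empty sum on both sides — are consistent. There is no deep obstacle here; the argument is a finite combinatorial manipulation on top of results already established in the paper. I would also remark that, as a consistency check, setting $e=0$ (so $Z=X$) recovers the known Chern-class formula for $\mathrm{EDD}(X)\cdot\deg(X^\vee)$-type data from \cite[\S5]{DHOST}, with $\alpha_k(n,0)=\sum_{i=0}^{n-k}\binom{n-k+1}{i+1}=2^{n-k+1}-2$.

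\begin{proof}[Proof of Proposition~\ref{prop: degrees data loci Chern classes}]
Since $X$ is smooth and $Z=X\cap Y$ with $Y$ a generic complete intersection of codimension $e$, Corollary~\ref{corol: degrees data loci Z intersection X Y} gives
\[
\deg(\mathrm{DL}_{X|Z})=\deg(Y)\sum_{i=\max\{\mathrm{def}(X),e\}}^{n}\delta_i(X)\,.
\]
By Proposition~\ref{prop: multidegrees and polar classes Kleiman}(1), $\delta_i(X)=0$ whenever $i<\mathrm{def}(X)$, so the lower summation index may be replaced by $e$:
\[
\deg(\mathrm{DL}_{X|Z})=\deg(Y)\sum_{i=e}^{n}\delta_i(X)\,.
\]
Now apply the relation \eqref{eq: polar vs Chern}, valid because $X$ is smooth, to each $\delta_i(X)$:
\[
\sum_{i=e}^{n}\delta_i(X)=\sum_{i=e}^{n}\sum_{j=i}^{n}(-1)^{n-j}\binom{j+1}{i+1}\deg(c_{n-j}(X))\,.
\]
The index set of this double sum is $\{(i,j)\mid e\le i\le j\le n\}$, so interchanging the order of summation yields
\[
\sum_{i=e}^{n}\delta_i(X)=\sum_{j=e}^{n}(-1)^{n-j}\deg(c_{n-j}(X))\sum_{i=e}^{j}\binom{j+1}{i+1}\,.
\]
Substituting $k=n-j$, so that $j=n-k$ and $k$ ranges over $0\le k\le n-e$, the sign becomes $(-1)^{n-j}=(-1)^{k}$ and the inner sum becomes
\[
\sum_{i=e}^{n-k}\binom{n-k+1}{i+1}=\alpha_k(n,e)\,,
\]
by the definition \eqref{eq: def alpha k}. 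Hence
\[
\sum_{i=e}^{n}\delta_i(X)=\sum_{k=0}^{n-e}(-1)^{k}\alpha_k(n,e)\deg(c_k(X))\,,
\]
and multiplying by $\deg(Y)$ gives \eqref{eq: degrees data loci Chern classes}. When $e>n$ the variety $Z$ is empty and both sides are zero, so the formula holds trivially in that case as well.
\end{proof}
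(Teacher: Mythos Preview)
Your proof is correct and follows essentially the same approach as the paper: apply Corollary~\ref{corol: degrees data loci Z intersection X Y}, substitute the Chern-class expansion \eqref{eq: polar vs Chern}, and reindex the resulting double sum. The only difference is cosmetic order of operations in the index manipulation. One small slip in your aside: the consistency check should read $\alpha_k(n,0)=2^{n-k+1}-1$, not $2^{n-k+1}-2$, since $\sum_{i=0}^{n-k}\binom{n-k+1}{i+1}=\sum_{\ell=1}^{n-k+1}\binom{n-k+1}{\ell}=2^{n-k+1}-1$; this does not affect the proof itself.
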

\begin{proof}
Using \eqref{eq: polar vs Chern}, we get that
\begin{align}
\begin{split}
\sum_{i=e}^n\delta_i(X) &= \sum_{i=c}^n\sum_{j=i}^n(-1)^{n-j}\binom{j+1}{i+1}\deg(c_{n-j}(X)) = \sum_{i=e}^n\sum_{j=0}^n(-1)^{n-j}\binom{j+1}{i+1}\deg(c_{n-j}(X))\\
&= \sum_{i=e}^n\sum_{k=0}^n(-1)^k\binom{n-k+1}{i+1}\deg(c_k(X)) = \sum_{k=0}^n(-1)^k\deg(c_k(X))\sum_{i=e}^n\binom{n-k+1}{i+1}\,.
\end{split}
\end{align}
Note that $\binom{n-k+1}{i+1}=0$ if $i>n-k$, so $\sum_{i=e}^n\binom{n-k+1}{i+1}=\sum_{i=e}^{n-k}\binom{n-k+1}{i+1}$. The statement follows by Corollary \ref{corol: degrees data loci Z intersection X Y}.
\end{proof}

\begin{remark}
Observe that $\alpha_k(n,0)=2^{n-k+1}-1$ for all $k\in\{0,\ldots,n\}$. Hence, we can interpret Proposition \ref{prop: degrees data loci Chern classes} as a first ``conditional'' version of \cite[Theorem 5.8]{DHOST}, although the assumptions on $Z$ are quite restrictive. In order to drop the additional assumption on $Z$ in Proposition \ref{prop: degrees data loci Chern classes}, it would be interesting also to introduce a notion of Chern classes of vector bundles on a smooth variety $X$ ``relative'' to a subvariety $Z\subset X$. We leave this problem to future research.
\end{remark}

\begin{theorem}\label{thm: degree data loci Catanese}
Let $Z\subset X\subset\PP^N$ be irreducible smooth varieties of dimensions $d$ and $n$, respectively. Suppose that $X$ is ED regular given $Z$, and that $\sPN_{X,Z}$ is disjoint from the diagonal $\Delta(\PP^N)\subset\PP^N\times\PP^N$. Define $\sQ_{X,Z}\coloneqq\sO_Z^{\oplus(N+1)}/\sPE_{X|Z}$. Then
\begin{equation}\label{eq: degree data loci Catanese}
\mathrm{EDD}(X|Z)\deg(\mathrm{DL}_{X|Z})=\deg(c_d(\sQ_{X,Z}))= \int_Z\frac{1}{c(\sN_{X/\PP^N}^\vee|_Z\otimes\sO_Z(1))\cdot c(\sO_Z(-1))}\,,
\end{equation}
where $\int_Z$ indicates the coefficient of the class $c_1(\sO_Z(1))^d$.
\end{theorem}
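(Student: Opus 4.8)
The plan is to follow the template of \cite[Theorem 5.8]{DHOST}, but restricted to $Z$ via the jet-bundle description of relative polar classes in Remark \ref{rmk: relations relative polar classes Chern classes jet bundle}. First I would identify the quotient sheaf $\sQ_{X,Z}=\sO_Z^{\oplus(N+1)}/\sPE_{X|Z}$ with the restriction $\sP^1(\sO_X(1))|_Z$ of the first jet bundle. Indeed, as explained around \eqref{eq: first jet sequence}, there is on $X$ the short exact sequence $0\to\sN_{X/\PP^N}^\vee(1)\to\sO_X^{\oplus(N+1)}\to\sP^1(\sO_X(1))\to 0$, and the subbundle $\sN_{X/\PP^N}^\vee(1)$ of $\sO_X^{\oplus(N+1)}$ is precisely the fibrewise span of the normal directions twisted by $\sO_X(1)$; restricting to $Z$ and comparing with the fibre description of $\sPE_{X|Z}$ given in Proposition \ref{prop: projective ED correspondence Z irreducible} (where $\pr_1\colon\sPE_{X|Z}\to\PP^N$ is a vector bundle over $Z\setminus(X_{\sing}\cup Q)$ of rank $N-n+1$, whose fibre is the affine cone over the normal space), one sees that $\sQ_{X,Z}\cong\sP^1(\sO_X(1))|_Z$, a vector bundle of rank $n+1$ on $Z$ with $\rank>d$, so its top nonzero Chern class that can contribute is $c_d$. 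Combined with \eqref{eq: relations relative polar classes Chern classes first jet bundle X restricted Z}, this gives $\deg(c_d(\sQ_{X,Z}))=\mu_d(X,Z)$.

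Next I would relate this single Chern class to the \emph{sum} $\sum_{i=0}^d\mu_i(X,Z)$ appearing in Corollary \ref{corol: degree data locus no genericity X, Z with relative polar degrees}. This is where the hypothesis $Z=X\cap Y$ for a generic complete intersection $Y$ must implicitly enter — but since the statement as written does not assume this, the cleanest route is instead the direct intersection-theoretic computation: by the argument in the proof of Theorem \ref{thm: degree data locus no genericity X, Z}, $\mathrm{EDD}(X|Z)\deg(\mathrm{DL}_{X|Z})$ equals the coefficient of $h^N(h')^N$ in $[\sPN_{X,Z}]\cdot[\Lambda_u]$, and using $[\Lambda_u]=\sum_{i=0}^{N-1}h^{N-1-i}(h')^i$ and the expansion \eqref{eq: class normal space Z} this collapses to $\sum_{i=0}^d\delta_i(X,Z)=\sum_{i=0}^d\mu_i(X,Z)$. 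So the first equality in \eqref{eq: degree data loci Catanese} reduces to the purely formal identity $\sum_{i=0}^d\mu_i(X,Z)=\deg(c_d(\sQ_{X,Z}))$, which I would prove by the standard trick: $c_d(E)$ for a bundle $E$ of rank $n+1>d$ on a $d$-dimensional $Z$ is computed from the Whitney formula applied to the dual of \eqref{eq: first jet sequence} restricted to $Z$, namely $c(\sP^1(\sO_X(1))|_Z)=c(\sN_{X/\PP^N}^\vee(1)|_Z)^{-1}$ since $\sO_Z^{\oplus(N+1)}$ is trivial; hence $\deg(c_d(\sQ_{X,Z}))=\int_Z \left(c(\sN_{X/\PP^N}^\vee(1)|_Z)\right)^{-1}$ in the sense of taking the degree-$d$ part, and the sum $\sum_{i\le d}\mu_i(X,Z)$ is exactly the sum of all Chern classes $c_i$ of the same bundle, which is the degree-$d$ component of $(1-h_Z)^{-1}\cdot c(\sP^1(\sO_X(1))|_Z)=(1-h_Z)^{-1}c(\sN_{X/\PP^N}^\vee(1)|_Z)^{-1}$ where $h_Z=c_1(\sO_Z(1))$; this matches the right-hand side of \eqref{eq: degree data loci Catanese} after recognising $c(\sO_Z(-1))=1-h_Z$.

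The main obstacle I anticipate is the bookkeeping in the last step: one must be careful that ``$\sum_{i=0}^d\mu_i(X,Z)$'' and ``$c_d$ of the quotient'' genuinely agree and not merely up to the lower-degree terms. The resolution is the geometric-series identity $\sum_{i=0}^{d}c_i(E)\cap[Z]=c_d(E)\cap\left(\sum_{j\ge0}h_Z^j\cap[Z]\right)$ inside $A_*(Z)$ modulo the fact that $h_Z^{d+1}=0$ on a $d$-fold; equivalently, cap with $\frac{1}{1-h_Z}$ and extract top degree. Writing this as $\int_Z\frac{c(\sP^1(\sO_X(1))|_Z)}{c(\sO_Z(1))}$ and then substituting $c(\sP^1(\sO_X(1))|_Z)=c(\sN_{X/\PP^N}^\vee(1)|_Z)^{-1}$ produces exactly \eqref{eq: degree data loci Catanese}, since $c(\sO_Z(1))^{-1}=c(\sO_Z(-1))$. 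A secondary point requiring care is the genericity/transversality hypothesis $\sPN_{X,Z}\cap\Delta(\PP^N)=\emptyset$, which is needed (exactly as in Theorem \ref{thm: degree data locus no genericity X, Z}) so that $\sPG(\Gamma_Z)=(\sPN_{X,Z}\times V_u)\cap\Lambda$ and the fibre count is clean; I would simply invoke that theorem rather than reprove it.
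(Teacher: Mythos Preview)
There is a genuine gap in your identification of $\sQ_{X,Z}$. You claim $\sQ_{X,Z}\cong\sP^1(\sO_X(1))|_Z$, a bundle of rank $n+1$, but in fact $\sQ_{X,Z}$ has rank $n$: as you yourself quote from Proposition~\ref{prop: projective ED correspondence Z irreducible}, the fibre of $\sPE_{X|Z}$ over $[x]\in Z$ has dimension $N-n+1$, not $N-n$. The reason is visible in the ideal \eqref{eq: ideal projective ED correspondence Z}: the vanishing of the $(c+2)\times(c+2)$ minors forces $u$ to lie in the span of $x$ \emph{together with} the rows of $J(f)$, i.e., in $\langle x\rangle + N_xX$. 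That extra line $\langle x\rangle$ contributes an $\sO_Z(-1)$ summand, so the correct identification (the one the paper uses) is $\sPE_{X|Z}\cong\sN_{X/\PP^N}^\vee(1)|_Z\oplus\sO_Z(-1)=\sE_Z^\vee$, whence $c(\sQ_{X,Z})=\bigl(c(\sN_{X/\PP^N}^\vee(1)|_Z)\cdot c(\sO_Z(-1))\bigr)^{-1}$, not $c(\sN_{X/\PP^N}^\vee(1)|_Z)^{-1}$ alone. With your wrong identification you get $\deg(c_d(\sQ_{X,Z}))=\mu_d(X,Z)$ and are then forced to argue that $\mu_d(X,Z)=\sum_{i=0}^d\mu_i(X,Z)$; your ``geometric-series identity'' does not establish this, and indeed it is false in general. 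With the correct identification the factor $(1-h_Z)^{-1}$ is already built into $c(\sQ_{X,Z})$, and $\deg(c_d(\sQ_{X,Z}))=\sum_i\mu_i(X,Z)$ follows at once.

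Your route to the \emph{outer} equality (leftmost equals rightmost term of \eqref{eq: degree data loci Catanese}) via Corollary~\ref{corol: degree data locus no genericity X, Z with relative polar degrees} and the jet-bundle expansion is essentially sound, and is exactly the content of the Remark the paper places after the proof. The paper itself, however, proves the first equality $\mathrm{EDD}(X|Z)\deg(\mathrm{DL}_{X|Z})=\deg(c_d(\sQ_{X,Z}))$ by a direct degeneracy-locus argument rather than by invoking Theorem~\ref{thm: degree data locus no genericity X, Z}: each $u\in\C^{N+1}$ defines a global section $\sigma_u$ of $\sQ_{X,Z}$ whose zero scheme is exactly the set of critical points on $Z$, and for $n-d+1$ generic data points the degeneracy locus $D(\sigma_{u_1},\ldots,\sigma_{u_{n-d+1}})$ has length $\mathrm{EDD}(X|Z)\cdot\deg(\mathrm{DL}_{X|Z})$ while representing $c_d(\sQ_{X,Z})$. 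This makes the middle term geometrically meaningful on its own, independent of the polar-class machinery.
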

\begin{proof}
The assumption that $\sPN_{X,Z}$ is disjoint from the diagonal $\Delta(\PP^N)\subset\PP^N\times\PP^N$ and Proposition \ref{prop: projective ED correspondence Z irreducible} ensure that the incidence variety $\sPE_{X|Z}\subset Z\times\C^{N+1}$ is the total space of a projective bundle over $Z$ of rank $N-n+1$. Recalling that $H^0(Z,\sO_Z(1))\cong Z\times\C^{N+1}\cong\sO_Z^{\oplus(N+1)}$, we get a surjection $q\colon \sO_Z^{\oplus(N+1)}\to\sQ_{X,Z}$. In particular $\sQ_{X,Z}$ is a globally generated vector bundle on $Z$ of rank $N+1-(N-n+1)=n$. We claim that
\begin{equation}\label{eq: degree data loci inverse Chern class PE XZ}
\mathrm{EDD}(X|Z)\deg(\mathrm{DL}_{X|Z})=\deg(c_d(\sQ_{X,Z}))\,,
\end{equation}
where $c_d(\sQ_{X,Z})$ denotes the $d$th Chern class of $\sQ_{X,Z}$.
Notice that every $u\in\C^{N+1}$ defines a global section $\sigma_u$ of $\sQ_{X,Z}$ mapping $z\mapsto q(z,u)$. 
By construction we have $\sigma_u(z)=0$ if and only if $(z,u)\in\sPE_{X|Z}$, or equivalently $z$ is a critical point of $d_{X,u}$ lying on $Z$. 
The variety $\mathrm{DL}_{X|Z}$ is the image of the projection $\sPE_{X|Z}\to\C^{N+1}$. Proposition \ref{prop: proj data locus is irreducible} tells us that $\codim(\mathrm{DL}_{X|Z})=n-d.$ Considering $n-d+1$ generic points $u_1,\ldots,u_{n-d+1}$ and letting $L=\langle u_1,\ldots,u_{n-d+1}\rangle$, then $\PP(L)$ meets $\mathrm{DL}_{X|Z}\subset\PP^N$ in finitely many points, whose number equals $\deg(\mathrm{DL}_{X|Z})$.
It follows that if $u\in \mathrm{DL}_{X|Z}\cap L$, then $D(\sigma_u)\neq\emptyset$ by construction, and $D(\sigma_u)$ is zero-dimensional of cardinality $\mathrm{EDD}(X|Z)$ because $X$ is ED regular given $Z$ by assumption.
Because  the section $\sigma_u$ is a linear combination of the sections $\sigma_{u_i}$ it follows that $\sigma_u(z)=0$ if and only if  $\sigma_{u_1}(z)\wedge\cdots\wedge\sigma_{n-d+1}(z)=0$. Summing up, given a generic vector space $L=\langle u_1,\ldots,u_{n-d+1}\rangle$, the degeneracy locus $D(\sigma_{u_1},\ldots,\sigma_{u_{n-d+1}})=\{z\in Z\mid \sigma_{u_1}(z)\wedge\cdots\wedge\sigma_{n-d+1}(z)=0\}$ is zero-dimensional and
\[
\deg(D(\sigma_{u_1},\ldots,\sigma_{u_{n-d+1}}))=\mathrm{EDD}(X|Z)\deg(\mathrm{DL}_{X|Z})\,.
\]
Since $\sQ_{X,Z}$ is a globally generated vector bundle of rank $n$, the cycle $D(\sigma_{u_1},\ldots,\sigma_{u_{n-d+1}})$ is rationally equivalent to $c_d(\sQ_{X,Z})$, see \cite[\S III.3]{griffiths1978principles}. This completes the proof of identity \eqref{eq: degree data loci inverse Chern class PE XZ}.

Consider again the projective bundle $\sPE_{X|Z}\subset Z\times\C^{N+1}$.
We have that $\sPE_{X|Z}=\PP_{{\rm quot}}(\sE_Z)$, namely the projective bundle of hyperplanes in $\sE_Z$, where 
\begin{equation}
\sE_Z\coloneqq(\sN_{X/\PP^N}(-1)\oplus \sO_X(1))|_Z=\sN_{X/\PP^N}(-1)|_Z\oplus \sO_Z(1)\,.
\end{equation}
Using Fulton's notation \cite[Appendix B.5.5]{fulton1998intersection}, we write $\PP_{{\rm quot}}(\sE_Z)=\PP(\sE_Z^\vee)$. Since
\begin{equation}\label{eq: Segre class E_Z dual}
c(\sQ_{X,Z})=\frac{1}{c(\sE_Z^\vee)}=\frac{1}{c(\sN_{X/\PP^N}^\vee(1)|_Z\oplus\sO_Z(-1))}=\frac{1}{c(\sN_{X/\PP^N}^\vee(1)|_Z)\cdot c(\sO_Z(-1))}\,,
\end{equation}
the second identity in \eqref{eq: degree data loci Catanese} follows by \eqref{eq: degree data loci inverse Chern class PE XZ}.
\end{proof}

One may remove the assumption that $\sPN_{X,Z}$ is disjoint from the diagonal $\Delta(\PP^N)\subset\PP^N\times\PP^N$ in Theorem \ref{thm: degree data loci Catanese}. In this case $\sPE_{X|Z}$ may not be a vector bundle over $Z$, but it remains at least a projective cone over $Z$. Therefore, it is still possible to compute the product $\mathrm{EDD}(X|Z)\deg(\mathrm{DL}_{X|Z})$ in terms of the Segre classes of $\sPE_{X|Z}$, or equivalently of $\sE_Z^\vee$, via the identity
\begin{equation}\label{eq: identity segre class}
\mathrm{EDD}(X|Z)\deg(\mathrm{DL}_{X|Z})=\deg(s_d(\sE_Z^\vee))\,.
\end{equation}
We refer to \cite[Chapter 4]{fulton1998intersection} for more details on Segre classes of projective cones.

\begin{remark}
The previous result gives also an alternative proof of Corollary \ref{corol: degree data locus no genericity X, Z with relative polar degrees}.
Restricting the first jet exact sequence \eqref{eq: first jet sequence} to $Z$ yields another exact sequence
\begin{equation}
0\to\sN_{X/\PP^N}^\vee(1)|_Z \to \sO_Z^{\oplus(N+1)}\to\sP^1(\sO_X(1))|_Z\to 0\,.
\end{equation}
Hence, using Whitney's identity we get $c(\sN_{X/\PP^N}^\vee(1)|_Z)=1/c(\sP^1(\sO_X(1))|_Z)$. This allows us to rewrite \eqref{eq: Segre class E_Z dual} as 
\begin{equation}\label{eq: Segre class E_Z dual rewritten}
s(\sE_Z^\vee)=\frac{1}{c(\sE_Z^\vee)}=\frac{c(\sP^1(\sO_X(1))|_Z)}{c(\sO_Z(-1))}\,.
\end{equation}
Calling $\xi_Z=c_1(\sO_Z(1))$, we have $1/c(\sO_Z(-1))=1/(1-\xi_Z)=\sum_{i\ge 0}\xi_Z^i$ and 
\begin{equation}
s_d(\sE_Z^\vee)=\sum_{i=0}^dc_i(\sP^1(\sO_X(1))|_Z)\,\xi_Z^{d-i}\,,
\end{equation}
see also \cite[\S5]{piene2015polar}.
By \eqref{eq: relations relative polar classes Chern classes first jet bundle X restricted Z} we have that $c_i(\sP^1(\sO_X(1))|_Z)\,\xi_Z^{d-i}=\deg(p_i(X,Z))=\mu_i(X,Z)$ for all $i\in\{0,\ldots,d\}$. This yields the formula of Theorem \ref{thm: degree data locus no genericity X, Z}.
\end{remark}

\begin{proof}[Proof of Theorem \ref{thm: degree data loci complete intersection}]
Let $h$ denote the hyperplane divisor in $\PP^N$, so that the cohomology ring $H^*(\PP^N)$ is isomorphic to the quotient ring $\Z[h]/\langle h^n\rangle$. In particular, the total Chern class of the line bundle $\sO_{\PP^N}(1)$ is $1+h$. For every $i\in[s]$, the line bundle $\sO_{X_i}(1)$ is the pullback of $\sO_{\PP^N}(1)$. Since each $X_i$ is a hypersurface, each vector bundle $\sN_{X_i/\PP^N}$ is a line bundle. By \cite[Example II.8.20.3]{hartshorne1977algebraic}, we have $\sN_{X_i/\PP^N}=(\sO_{X_i}(1))^{\otimes d_i}$, so $\sN_{X_i/\PP^N}^\vee\otimes\sO_{X_i}(1)=\sO_{X_i}(-1)^{\otimes(d_i-1)}$.
In $H^*(\PP^N)$ we have
\begin{equation}\label{eq: Catanese complete intersection}
\frac{1}{c(\sO_{\PP^N}(-1))\cdot \prod_{i=1}^s c(\sO_{\PP^N}(-1)^{\otimes(d_i-1)})}=\frac{1}{(1-h)\prod_{i=1}^s(1-(d_i-1)h)}\,.
\end{equation}
By Theorem \ref{thm: degree data loci Catanese}, in order to compute the product $\mathrm{EDD}(X|Z)\deg(\mathrm{DL}_{X|Z})$, we need first to compute the coefficient of $h^d$ in the expression
\[
\frac{1}{c((\sN_{X/\PP^N}^\vee\otimes\sO_Z)\otimes\sO_Z(1))\cdot c(\sO_Z(-1))}\,,
\]
that corresponds to the coefficient of $h^d$ in \eqref{eq: Catanese complete intersection}. The latter equals
\[
\sum_{i_1+\cdots+i_s\le d}(d_1-1)^{i_1}\cdots(d_s-1)^{i_s}\,.
\]
Since the image of $h^d$ in $H^*(Z)$ under pullback is equal to $\deg(Z)$ times the class of a point, we obtain the degree formula \eqref{eq: degree data loci complete intersection}.
\end{proof}

\begin{example}\label{ex: applications formula with X complete intersection}
Consider some applications of Theorem \ref{thm: degree data loci complete intersection}:
\begin{enumerate}
    \item If $Z=\{z\}$ is a point of $X\subset\PP^N$ not in $Q$, then $\mathrm{DL}_{X|Z}$ is a linear space in $\C^{N+1}$, more precisely coincides with the linear span $\langle v, N_vX\rangle$, where $z=[v]$. In particular, it has degree $1$, and also $\mathrm{EDD}(X|Z)=1$.
    The right-hand side of \eqref{eq: degree data loci complete intersection} also equals $1$.
    \item Let $X$ be a smooth quadric surface in $\PP^3$. In particular, it is doubly ruled by two families of pairwise disjoint lines. Let $Z$ be one of such lines.
    Setting $s=1$ and $d_1=2$, the right-hand side of \eqref{eq: degree data loci complete intersection} is equal to $1\cdot(1+1)=2$, therefore $\mathrm{EDD}(X|Z)\deg(\mathrm{DL}_{X|Z})=2$. Notice that $X\cong\PP^1\times\PP^1\hookrightarrow\PP^3$ via the Segre embedding, and $Z\cong P\times\PP^1$ for some point $P\in\PP^1$. We will see in Proposition \ref{prop: double data Segre strictly contained} that $\mathrm{EDD}(X|Z)=1$. Hence we conclude that $\deg(\mathrm{DL}_{X|Z})=2$.
    \item More generally, if $X\subset\PP^N$ is a smooth hypersurface of degree $d$, and if $Z$ is a line contained in $X$, then $\mathrm{EDD}(X|Z)\deg(\mathrm{DL}_{X|Z})=d$. Furthermore, we have $\mathrm{EDD}(X|Z)=1$ by Proposition \ref{prop: double data hypersurface wrt subspace strictly contained}, hence $\deg(\mathrm{DL}_{X|Z})=d$.
    \item Instead, if $X$ is a smooth quadric hypersurface in $\PP^N$, and if $Z$ is a subspace contained in $X$, then $\mathrm{EDD}(X|Z)\deg(\mathrm{DL}_{X|Z})=\dim(Z)+1$. Similarly as before we have $\mathrm{EDD}(X|Z)=1$ and $\deg(\mathrm{DL}_{X|Z})=\dim(Z)+1$.\hfill$\diamondsuit$
\end{enumerate}
\end{example}

\section{Multiple ED data loci and singularities of ED data loci}\label{sec: multiple}

In Definitions \ref{def: conditional ED regularity} and \ref{def: projective conditional ED regularity}, we have introduced the notion of  ED regularity for affine or projective varieties $Z\subset X$. The motivation is to ensure that the conditional ED degree $\mathrm{EDD}(X|Z)$ is well-defined, in particular, that there exists an open dense subset of $\mathrm{DL}_{X|Z}$ (or of $\mathrm{DL}_{X|Z}$ for projective varieties) of data points admitting finitely many critical points of $d_{X,u}$ within $Z$. Now the natural question is: what is the actual value of $\mathrm{EDD}(X|Z)$? Typically, one expects that $\mathrm{EDD}(X|Z)=1$, or that a generic point on $\mathrm{DL}_{X|Z}$ has exactly one critical point of $d_{X,u}$ within $Z$. For this reason, we define the {\em conditional multiple ED data loci of $X$ given $Z$} as
\begin{equation}\label{eq: def ED data locus > s}
\mathrm{DL}_{X|Z}^{>s}\coloneqq\{u\in V\mid|\pr_2^{-1}(u)|\ge s+1\}\quad\forall\,s\in\Z_{\ge 1}\,,
\end{equation}
where $\pr_2$ is the second projection in the diagram \eqref{eq: diagram ED correspondence Z}.
We also define
\begin{equation}\label{eq: def ED data locus infty}
\mathrm{DL}_{X|Z}^{\infty}\coloneqq\{u\in V\mid\dim(\pr_2^{-1}(u))>0\}\,.
\end{equation}
Note that all varieties $\mathrm{DL}_{X|Z}^{>s}$ and $\mathrm{DL}_{X|Z}^{\infty}$ are Zariski closed in $V$ and
\begin{equation}\label{eq: chain containments multiple data loci}
\mathrm{DL}_{X|Z}\supset\mathrm{DL}_{X|Z}^{>1}\supset\mathrm{DL}_{X|Z}^{>2}\supset\cdots\supset\mathrm{DL}_{X|Z}^{\infty}\,.
\end{equation}
In the next result, we list a few basic properties of multiple data loci. 

\begin{lemma}\label{lem: properties multiple data loci}
Let $Z \subset X$ be varieties in $V$ such that $Z\not\subset X_{\sing}$. Then
\begin{enumerate}
    \item There exists an integer $s$ such that $\mathrm{DL}_{X|Z}^{\infty}=\mathrm{DL}_{X|Z}^{>i}$ for all $i\ge s$.
    \item $X$ is ED regular given $Z$ if and only if $\mathrm{DL}_{X|Z}^{\infty}\subsetneq\mathrm{DL}_{X|Z}$.
    \item For all $s\in\Z_{\ge 1}$, $\mathrm{EDD}(X|Z)=s$ if and only if  $\mathrm{DL}_{X|Z}=\mathrm{DL}_{X|Z}^{>1}=\cdots=\mathrm{DL}_{X|Z}^{>s-1}\supsetneq\mathrm{DL}_{X|Z}^{>s}$, where $\mathrm{DL}_{X|Z}^{>0}\coloneqq\mathrm{DL}_{X|Z}$.
    \item If for all pairs $(x_1,x_2)$ of distinct points of $X_{\sm}\cap Z$ we have $(x_1+N_{x_1}X)\cap(x_2+N_{x_2}X)=\emptyset$, then $\mathrm{DL}_{X|Z}^{>1}=\emptyset$.
\end{enumerate}
\end{lemma}
\begin{proof}
The first three properties follow by equations \eqref{eq: def ED data locus > s}, \eqref{eq: def ED data locus infty} and \eqref{eq: chain containments multiple data loci}. Part (4) is an immediate consequence of identity \eqref{eq: alternative description of ED data locus}.
\end{proof}

The next proposition furnishes a criterion for having $\mathrm{EDD}(X|Z)\le s$, using the multiple ED data loci $\mathrm{DL}_{X|Z}^{>s}$, $\mathrm{DL}_{X|Z}^{\infty}$, and the singular locus of $\mathrm{DL}_{X|Z}$.

\begin{proposition}\label{prop: relation EDD X,Z 1 and singular locus data locus}
Let $Z \subset X$ be irreducible varieties in $V$ such that $X$ is ED regular given $Z$. Consider an integer $s\ge 1$ and assume additionally that $\mathrm{DL}_{X|Z}^{>s}$ is irreducible. Then
\[
\mathrm{EDD}(X|Z)\le s \Longleftrightarrow (\mathrm{DL}_{X|Z}^{>s}=\mathrm{DL}_{X|Z}^{\infty})\vee(\mathrm{DL}_{X|Z}^{>s}\subset(\mathrm{DL}_{X|Z})_{\sing})\,.
\]
\end{proposition}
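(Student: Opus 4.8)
The plan is to prove both implications of the stated equivalence, working with the chain of inclusions \eqref{eq: chain containments multiple data loci} and the definitions \eqref{eq: def ED data locus > s}, \eqref{eq: def ED data locus infty}. First I would recall that ED regularity given $Z$ means, by Lemma \ref{lem: properties multiple data loci}(2), that $\mathrm{DL}_{X|Z}^\infty\subsetneq\mathrm{DL}_{X|Z}$, so $\mathrm{DL}_{X|Z}^\infty$ is a proper closed subvariety and the generic fiber of $\pr_2$ over $\mathrm{DL}_{X|Z}$ is finite; let $e\coloneqq\mathrm{EDD}(X|Z)$ be its cardinality. By Lemma \ref{lem: properties multiple data loci}(3), $e$ is characterized by $\mathrm{DL}_{X|Z}=\mathrm{DL}_{X|Z}^{>1}=\cdots=\mathrm{DL}_{X|Z}^{>e-1}\supsetneq\mathrm{DL}_{X|Z}^{>e}$. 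So the condition $\mathrm{EDD}(X|Z)\le s$ is equivalent to $\mathrm{DL}_{X|Z}^{>s}\subsetneq\mathrm{DL}_{X|Z}$ (a proper closed subset).

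For the direction ($\Leftarrow$): assume $\mathrm{DL}_{X|Z}^{>s}=\mathrm{DL}_{X|Z}^\infty$ or $\mathrm{DL}_{X|Z}^{>s}\subset(\mathrm{DL}_{X|Z})_{\sing}$. In the first case, $\mathrm{DL}_{X|Z}^{>s}=\mathrm{DL}_{X|Z}^\infty\subsetneq\mathrm{DL}_{X|Z}$ by ED regularity, hence $\mathrm{EDD}(X|Z)\le s$. In the second case, since $\mathrm{DL}_{X|Z}$ is irreducible (Proposition \ref{prop: data locus is irreducible}), its singular locus $(\mathrm{DL}_{X|Z})_{\sing}$ is a proper closed subvariety, so again $\mathrm{DL}_{X|Z}^{>s}\subsetneq\mathrm{DL}_{X|Z}$ and $\mathrm{EDD}(X|Z)\le s$.

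For the direction ($\Rightarrow$): assume $\mathrm{EDD}(X|Z)=e\le s$. If $\mathrm{DL}_{X|Z}^{>s}=\mathrm{DL}_{X|Z}^\infty$ we are done, so suppose $\mathrm{DL}_{X|Z}^{>s}\supsetneq\mathrm{DL}_{X|Z}^\infty$; I must then show $\mathrm{DL}_{X|Z}^{>s}\subset(\mathrm{DL}_{X|Z})_{\sing}$. The key idea is a monodromy/multisection argument: over the open set $U\coloneqq\mathrm{DL}_{X|Z}\setminus\mathrm{DL}_{X|Z}^{>e}$, the map $\pr_2\colon\sE_{X|Z}\to\mathrm{DL}_{X|Z}$ is an unramified $e$-sheeted cover, and a point $u$ lies in $\mathrm{DL}_{X|Z}^{>e}$ precisely when at least two of those $e$ sheets collide (or a branch point occurs). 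If $s\ge e$ and $u\in\mathrm{DL}_{X|Z}^{>s}\subset\mathrm{DL}_{X|Z}^{>e}$ but $u\notin\mathrm{DL}_{X|Z}^\infty$, then in a neighborhood of such a (generic, by irreducibility) point the fiber $\pr_2^{-1}(u)$ has strictly fewer than $e+1$ distinct points with one of multiplicity $\ge 2$; equivalently the branch divisor of $\pr_2$ is nonempty there. One then argues that the image of this branch locus must be contained in $(\mathrm{DL}_{X|Z})_{\sing}$: away from the singular locus $\mathrm{DL}_{X|Z}$ is smooth, and the incidence variety $\sE_{X|Z}$ is irreducible of the same dimension (Proposition \ref{prop: ED correspondence Z irreducible}), so $\pr_2$ restricted over the smooth locus of $\mathrm{DL}_{X|Z}$ is a finite flat morphism whose ramification is controlled; since $\mathrm{DL}_{X|Z}^{>s}$ is assumed irreducible and is a proper subvariety with constant fiber behavior $\ge s+1$, its generic point cannot map into the smooth locus without forcing $\mathrm{EDD}(X|Z)>s$, a contradiction.

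The main obstacle will be making the last paragraph rigorous: precisely, showing that if the generic point of the irreducible subvariety $\mathrm{DL}_{X|Z}^{>s}$ is a smooth point of $\mathrm{DL}_{X|Z}$, then $\mathrm{EDD}(X|Z)>s$. The cleanest route is probably to argue by specialization of the $e$-sheeted cover: pick a generic curve $C\subset\mathrm{DL}_{X|Z}$ through the generic point $u_0$ of $\mathrm{DL}_{X|Z}^{>s}$ and meeting $U$; the normalization of $\pr_2^{-1}(C)$ is a possibly-disconnected curve mapping $e$-to-$1$ to $C$, and over $u_0$ the reduced fiber has $\le s<e+1$ points, so some point has multiplicity $\ge 2$; this means $\pr_2$ is ramified over $u_0$. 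But ramification of a finite morphism from an irreducible variety of the same dimension onto a \emph{smooth} target forces the source to be singular over that point — contradicting that $\sE_{X|Z}$ is (at least generically) reduced and that, being a vector bundle over $X_\sm\cap Z$ via $\pr_1$ (Proposition \ref{prop: ED correspondence Z irreducible}), it is smooth over the locus we care about. Hence $u_0\in(\mathrm{DL}_{X|Z})_{\sing}$, and since $u_0$ is the generic point, $\mathrm{DL}_{X|Z}^{>s}\subset(\mathrm{DL}_{X|Z})_{\sing}$. I would double-check whether the irreducibility hypothesis on $\mathrm{DL}_{X|Z}^{>s}$ is used only to pass from "the generic point" to "the whole variety" (I believe it is), and whether one needs to separately handle the degenerate case where $\mathrm{DL}_{X|Z}^{>s}=\mathrm{DL}_{X|Z}^\infty$ has the wrong dimension — but that case is exactly the first disjunct and needs nothing.
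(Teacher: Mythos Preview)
Your $(\Leftarrow)$ direction is correct and matches the paper's argument.

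Your $(\Rightarrow)$ direction, however, rests on a misreading of what $\mathrm{DL}_{X|Z}^{>s}$ is. By definition, $u\in\mathrm{DL}_{X|Z}^{>s}$ means the fiber $\pr_2^{-1}(u)$ has \emph{at least} $s+1$ points, i.e.\ \emph{more} critical points than the generic $e=\mathrm{EDD}(X|Z)$. You repeatedly treat it as the locus where sheets of the $e$-fold cover \emph{collide} (fewer distinct points, ramification): you write that over $u_0$ ``the reduced fiber has $\le s<e+1$ points, so some point has multiplicity $\ge 2$'', but in fact the fiber has $\ge s+1\ge e+1$ \emph{distinct} points. The map $\pr_2\colon\sE_{X|Z}\to\mathrm{DL}_{X|Z}$ is only generically finite, not finite, so fibers can genuinely acquire extra points; this is not a ramification phenomenon, and purity-of-branch-locus reasoning does not apply. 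Your subsequent argument (normalize $\pr_2^{-1}(C)$, find ramification, force the source singular) therefore proves nothing about $u_0$. The auxiliary claim that $\sE_{X|Z}$ is smooth because it is a vector bundle over $X_{\sm}\cap Z$ is also unjustified: a vector bundle over a singular base is singular, and $Z$ is not assumed smooth.

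The paper handles the jump in fiber size by a different mechanism: it replaces $\pr_2$ by a \emph{birational} map. One forms the incidence variety $\sE_{X|Z}^{>(s-1)}$ of unordered $s$-tuples of distinct critical points (in $\Sigma_s(V_x)\times V_u$); when $\mathrm{EDD}(X|Z)=s$ this maps birationally onto $\mathrm{DL}_{X|Z}$. A point $u\in\mathrm{DL}_{X|Z}^{>s}$ with $\ge s+1$ distinct critical points then has $\ge\binom{s+1}{s}\ge 2$ preimages under this birational map. Now Zariski's Main Theorem applies: a birational proper morphism has connected fibers over normal points, so if $u$ were smooth (hence normal) in $\mathrm{DL}_{X|Z}$, the fiber would be connected, forcing it to be positive-dimensional, i.e.\ $u\in\mathrm{DL}_{X|Z}^{\infty}$. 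Thus either every point of $\mathrm{DL}_{X|Z}^{>s}$ meeting $(\mathrm{DL}_{X|Z})_{\sm}$ lies in $\mathrm{DL}_{X|Z}^{\infty}$ (and irreducibility of $\mathrm{DL}_{X|Z}^{>s}$ gives equality), or $\mathrm{DL}_{X|Z}^{>s}\subset(\mathrm{DL}_{X|Z})_{\sing}$. The case $e<s$ reduces to $e=s$ via the chain $\mathrm{DL}_{X|Z}^{>s}\subset\mathrm{DL}_{X|Z}^{>e}$.
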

\begin{proof}
The implication ``$\Leftarrow$'' is immediate: if $\mathrm{DL}_{X|Z}^{>s}=\mathrm{DL}_{X|Z}^{\infty}$, then Lemma \ref{lem: properties multiple data loci}(2)-(3) implies that $\mathrm{EDD}(X|Z)\le s$. Otherwise $\mathrm{DL}_{X|Z}^{>s}\subset(\mathrm{DL}_{X|Z})_{\sing}$, and $(\mathrm{DL}_{X|Z})_{\sing}\subsetneq\mathrm{DL}_{X|Z}$ by generic smoothness of $\mathrm{DL}_{X|Z}$, yielding $\mathrm{DL}_{X|Z}^{>s}\subsetneq\mathrm{DL}_{X|Z}$ and hence $\mathrm{EDD}(X|Z)\le s$.

We now show ``$\Rightarrow$''. Without loss of generality, we assume that $\mathrm{EDD}(X|Z)=s$: indeed, proving this case also implies the case $\mathrm{EDD}(X|Z)\le s$. For all $i\in[s+1]$, let $V_x^{(i)}$ be a copy of $V_x$, and define $\Sigma_{s+1}(V)\coloneqq (V_x^{(1)}\times\cdots\times V_x^{(s+1)})/\Sigma_{s+1}$, where $\Sigma_{s+1}$ is the symmetric group on $s+1$ elements which acts naturally on $V_x^{(1)}\times\cdots\times V_x^{(s+1)}$ by permuting its elements $x=(x_1,\ldots,x_{s+1})$.
Define the incidence variety
\begin{equation}
\sE_{X|Z}^{>s}\coloneqq\overline{\{(x,u)\in\Sigma_{s+1}(V)\times V_u\mid\text{$x_i\in X_{\sm}\cap Z$, $u-x_i\in N_{x_i}X$ and $x_i\neq x_j$ $\forall\,i\neq j$}\}}\,.
\end{equation}
It follows almost by definition that $\mathrm{DL}_{X|Z}^{>s}=\overline{\pr_2^{>s}(\sE_{X|Z}^{>s})}$, where $\pr_2^{>s}\colon\sE_{X|Z}^{>s}\to V_u$ is the projection.
Furthermore, the assumption $\mathrm{EDD}(X|Z)=s$ implies that $\pr_2^{>(s-1)}$ is birational onto its image $\mathrm{DL}_{X|Z}^{>(s-1)}=\mathrm{DL}_{X|Z}$.
Suppose that there exists a point $u\in\mathrm{DL}_{X|Z}^{>s}\cap(\mathrm{DL}_{X|Z})_{\sm}$. Then necessarily $|(\pr_2^{>(s-1)})^{-1}(u)|\ge 2$, and $u$ is a normal point of the image of $\pr_2^{>(s-1)}$. By Zariski's Main Theorem \cite[Corollary III.11.4]{hartshorne1977algebraic}, every fiber of a birational morphism between noetherian integral schemes at a normal point is connected. Since $|(\pr_2^{>(s-1)})^{-1}(u)|\ge 2$, the only possibility is that $\dim((\pr_2^{>(s-1)})^{-1}(u))>0$. Therefore $u\in\mathrm{DL}_{X|Z}^{\infty}$. Hence $\mathrm{DL}_{X|Z}^{>s}\cap(\mathrm{DL}_{X|Z})_{\sm}$ is nonempty and is also dense in $\mathrm{DL}_{X|Z}^{>s}$ because $\mathrm{DL}_{X|Z}^{>s}$ is irreducible by assumption. Summing up, we have shown that $\mathrm{DL}_{X|Z}^{>s}\subset\mathrm{DL}_{X|Z}^{\infty}$. The other inclusion follows by definition of $\mathrm{DL}_{X|Z}^{\infty}$.
The other possibility is that $\mathrm{DL}_{X|Z}^{>s}\cap(\mathrm{DL}_{X|Z})_{\sm}=\emptyset$, but then 
$\mathrm{DL}_{X|Z}^{>s}\subset(\mathrm{DL}_{X|Z})_{\sing}$.
\end{proof}

We leave to future research the study of the irreducibility of the varieties $\mathrm{DL}_{X|Z}^{>s}$ and $\mathrm{DL}_{X|Z}^{\infty}$.

\begin{remark}
Under the assumptions of Proposition \ref{prop: relation EDD X,Z 1 and singular locus data locus}, the condition $\mathrm{EDD}(X|Z)\le s$ is not sufficient to have $\mathrm{DL}_{X|Z}^{>s}=\mathrm{DL}_{X|Z}^{\infty}$.
We consider two counterexamples for $s=1$:
\begin{enumerate}
    \item Let $X$ be any linear or affine subspace in $V$, and $Z$ a subvariety of $X$ that is singular. On one hand, it is easy to see that the hypotheses of Lemma \ref{lem: properties multiple data loci} are satisfied, hence $\mathrm{DL}_{X|Z}^{>1}=\emptyset$ and therefore $\mathrm{EDD}(X|Z)=1$. On the other hand, for every $x\in Z_{\sing}$, every point in the linear space $x+N_xX$ is singular for $\mathrm{DL}_{X|Z}$, hence in this case trivially $\emptyset=\mathrm{DL}_{X|Z}^{>1}\subsetneq(\mathrm{DL}_{X|Z})_{\sing}$.
    \item In \cite{ottaviani2013matrices}, the authors consider the projective space $\PP^{n^2-1}$ of complex $n\times n$ matrices, and study the locus of matrices having at least one eigenvector in a fixed subspace $L\subset\PP^{n-1}$. Also, this variety may be interpreted as a data locus, which they call {\em Kalman variety} $\sK_{d,n}(L)$, where $d=\dim(L)$. In particular, the data points are represented by matrices $A$, while the eigenvectors of $A$ play the role of the critical points. In this case, $(\sK_{d,n}(L))_{\sing}$ equals the locus of matrices having a two-dimensional linear subspace $L'\subset L$ which is $A$-invariant, and the generic element of $(\sK_{d,n}(L))_{\sing}$ has precisely two eigenvectors on $L$ (see \cite[Lemma 4.1]{ottaviani2013matrices} for more details). In our setting, this means that $\mathrm{EDD}(X|Z)=1$, but also that
    \[\mathrm{DL}_{X|Z}^{\infty}\subsetneq\mathrm{DL}_{X|Z}^{>1}=(\mathrm{DL}_{X|Z})_{\sing}\,.
    \]
    Kalman varieties are also studied in the more general context of tensors \cite{OSh,shahidi2021degrees}, and we have briefly discussed them before Theorem \ref{thm: Kalman}. In this case, it might happen that the inclusion $\mathrm{DL}_{X|Z}^{>1}\subset(\mathrm{DL}_{X|Z})_{\sing}$ is also strict.
\end{enumerate}
\end{remark}

\begin{remark}
Under the assumptions of Proposition \ref{prop: relation EDD X,Z 1 and singular locus data locus}, the condition $\mathrm{EDD}(X|Z)\le s$ is not sufficient to have $\mathrm{DL}_{X|Z}^{>s}\subset(\mathrm{DL}_{X|Z})_{\sing}$. For example, in the first part of Example \ref{ex: projection not birational} we consider a sphere $S$ and a maximal circle $Z\subset S$. On one hand $\mathrm{EDD}(S|Z)=2$, but $(\mathrm{DL}_{X|Z})_{\sing}=\emptyset$ and $\mathrm{DL}_{X|Z}^{>2}=\mathrm{DL}_{X|Z}^{\infty}$ is precisely the center of $S$.
\end{remark}

In the last part of this section, we concentrate on sufficient conditions to have $\mathrm{EDD}(X|Z)=1$.

\begin{proposition}\label{prop: double data strictly contained}
Let $X\subset V$ be an irreducible affine variety, and let $Z\subset X$ be a nonempty subvariety obtained intersecting $X$ with a generic complete intersection variety $Y$ of codimension $c$. Then $\mathrm{EDD}(X|Z)=1$. A similar result holds for projective varieties.
\end{proposition}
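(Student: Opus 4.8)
## Proof Plan

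The plan is to show that for a generic data point $u$ in the ED data locus $\mathrm{DL}_{X|Z}$, there is exactly one smooth critical point of $d_{X,u}$ lying on $Z = X \cap Y$. The geometric heart of the matter is that $Z$ is obtained by cutting $X$ with a \emph{generic} complete intersection $Y$, and this genericity will force the fibers of the second projection $\pr_2 \colon \sE_{X|Z} \to \mathrm{DL}_{X|Z}$ to be single points. First I would set up the relevant incidence varieties: the ED correspondence $\sE_{X|Z}$ from Definition \ref{def: ED correspondence Z}, which by Proposition \ref{prop: ED correspondence Z irreducible} is irreducible of dimension $N+1-c$, and its closure projects onto $\mathrm{DL}_{X|Z}$, irreducible of the same dimension once $X$ is ED regular given $Z$ (Proposition \ref{prop: data locus is irreducible}). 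So $\mathrm{EDD}(X|Z)$ is the degree of the generically finite map $\pr_2|_{\sE_{X|Z}}$.

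The key step is to exhibit a single data point $u$ with exactly one critical point on $Z$; by upper semicontinuity of fiber cardinality on the open locus where the fiber is finite, this forces $\mathrm{EDD}(X|Z) = 1$. To do this, I would pick a generic smooth point $x_0 \in X_{\sm}$, pick a normal vector $y_0 \in N_{x_0}X$ generically, and set $u = x_0 + y_0$. Then $x_0$ is a critical point of $d_{X,u}$. The genericity of $Y$ is invoked as follows: the critical points of $d_{X,u}$ on all of $X$ (away from $x_0$) form a finite set $\{x_0, x_1, \dots, x_k\}$ of bounded size $\mathrm{EDD}(X)$, and since $Y$ is a generic complete intersection of codimension $c \le n = \dim X$, it will pass through $x_0$ (we can arrange this, or rather we run the argument with $x_0$ already constrained to lie on $Y$) but miss the finitely many other critical points $x_1, \dots, x_k$ — a generic complete intersection of positive codimension avoids any prescribed finite set of points not forced to lie on it. More carefully: I would parametrize pairs $(x, Y)$ with $x \in X_{\sm} \cap Y$, note this is irreducible and dominates the space of admissible $Y$'s, take $(x_0, Y)$ generic in it, set $u = x_0 + y_0$ with $y_0 \in N_{x_0}X$ generic, and then argue that the other critical points of $d_{X,u}$ — which vary algebraically and generically avoid being on a generic $Y$ through $x_0$ — indeed do not lie on $Y$, hence not on $Z$. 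Thus $\pr_2^{-1}(u) \cap (Z_{\sm} \times \{u\})$ is the single point $(x_0, u)$, giving $\mathrm{EDD}(X|Z) = 1$.

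For the projective case, the argument is structurally identical, working instead with $\sPE_{X|Z}$ from Proposition \ref{prop: projective ED correspondence Z irreducible}: one replaces affine critical points by the projective ones (points $[x] \in Z \setminus (X_{\sing} \cup Q)$ critical for $d_{X,u}$), uses that a generic complete intersection $Y$ avoids the finitely many other projective critical points of a generic $u$, and concludes $\mathrm{EDD}(X|Z) = 1$ by the same semicontinuity argument.

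The main obstacle I anticipate is making rigorous the claim that a generic complete intersection $Y$ of codimension $c$ passing through a chosen generic point $x_0 \in X$ avoids the other critical points $x_1, \dots, x_k$ of $d_{X,u}$. The subtlety is that $u = x_0 + y_0$ depends on $x_0$, so the set $\{x_1, \dots, x_k\}$ is not fixed in advance but covaries with the choice of $(x_0, y_0)$; one must therefore work on the total space of the incidence $\{(x_0, y_0, Y) : x_0 \in Y\}$ and apply a dimension count / generic fiber argument there, checking that the ``bad'' locus where some $x_i \in Y$ is a proper subvariety. This is a standard but slightly delicate transversality-type argument — it is precisely the kind of reasoning already carried out in Proposition \ref{prop: relation codim dual relative dual generic complete intersection}(2), where a generic complete intersection was shown to meet a fixed linear contact locus in at most one point, and I would model the argument on that. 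Once that genericity claim is secured, the rest (irreducibility of $\mathrm{DL}_{X|Z}$, semicontinuity, and the conclusion $\mathrm{EDD}(X|Z)=1$) is routine.
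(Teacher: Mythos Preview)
Your proposal is correct and follows essentially the same approach as the paper: pick a smooth point $x_0\in Z$, a generic data point $u\in x_0+N_{x_0}X$ so that $\pr_2^{-1}(u)$ consists of the $\mathrm{EDD}(X)$ simple points $x_0,x_1,\ldots$, and argue that the generic complete intersection $Y$ avoids $x_1,\ldots$, whence $u\in\mathrm{DL}_{X|Z}\setminus\mathrm{DL}_{X|Z}^{>1}$ and $\mathrm{EDD}(X|Z)=1$. The paper dispatches your anticipated obstacle in one sentence (``since all polynomials $f_i$ are generic, there exists $i\in[c]$ such that $x_j\notin V(f_i)$''), so your more careful incidence-variety formulation of this step is a welcome elaboration of exactly the point the paper treats as obvious.
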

\begin{proof}
The genericity of $Y$ implies that $X$ is ED regular given $Z$, particularly that $Z$ is not contained in the ramification locus of the projection $\pr_2\colon\sE_X\to V$.
Therefore, there exists a point $x\in Z\cap X_{sm}$ such that $\pr_2^{-1}(x)$ is reduced and consists of $\mathrm{EDD}(X)$ simple points $x_1,\ldots,x_{\mathrm{EDD}(X)}$. Up to relabeling, we may assume that $x=x_1$.
Let $f_1,\ldots,f_c$ be the generators of the vanishing ideal of $Y$.
Since all polynomials $f_i$ are generic, there exists $i\in[c]$ such that $x_j\notin V(f_i)$ for all $j\in\{2,\ldots,\mathrm{EDD}(X)\}$. For this reason, we conclude that $x\in\mathrm{DL}_{X|Z}\setminus\mathrm{DL}_{X|Z}^{>1}$. Since $\mathrm{DL}_{X|Z}^{>1}$ is Zariski closed within $\mathrm{DL}_{X|Z}$, it is necessarily a proper subvariety, and by Lemma \ref{lem: properties multiple data loci} this is equivalent to the condition $\mathrm{EDD}(X|Z)=1$. The proof in the projective case is the same.
\end{proof}

We extend the previous result to Segre products of projective varieties. This new result is more technical and is needed in Example \ref{ex: degree Kalman}. An even more general statement can be formulated for Segre-Veronese products of projective varieties.

\begin{proposition}\label{prop: double data Segre strictly contained}
Let $k\ge 1$ be an integer. For all $i\in[k]$, let $X_i\subset\PP(V_i)$ be an irreducible projective variety of dimension $n_i$. Define $X$ as the image of $X_1\times\cdots\times X_k$ under the Segre embedding $\sigma\colon\PP(V_1)\times\cdots\times\PP(V_k)\hookrightarrow\PP(V)$, where $V\coloneqq V_1\otimes\cdots\otimes V_k$. For all $i\in[k]$, let $Z_i\subset X_i$ be a nonempty subvariety obtained intersecting $X_i$ with a generic complete intersection variety $Y_i$ of codimension $c_i$, and define $Z\subset X$ as the image of $Z_1\times\cdots\times Z_k$ under $\sigma$. Then $\mathrm{EDD}(X|Z)=1$.
\end{proposition}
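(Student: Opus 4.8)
The plan is to reduce the multi-factor Segre statement to the single-factor case established in Proposition~\ref{prop: double data strictly contained}, by producing a single data point $[u]\in\mathrm{DL}_{X|Z}$ admitting exactly one critical point of $d_{X,u}$ lying on $Z$, and then invoking Lemma~\ref{lem: properties multiple data loci}(3), which says $\mathrm{EDD}(X|Z)=1$ precisely when $\mathrm{DL}_{X|Z}^{>1}\subsetneq\mathrm{DL}_{X|Z}$. Since $\mathrm{DL}_{X|Z}^{>1}$ is Zariski closed in $\mathrm{DL}_{X|Z}$, exhibiting one point of $\mathrm{DL}_{X|Z}\setminus\mathrm{DL}_{X|Z}^{>1}$ suffices. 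First I would recall the structure of critical points on a Segre product: for $u=u_1\otimes\cdots\otimes u_k$ a decomposable tensor and $x=x_1\otimes\cdots\otimes x_k$ a smooth point of $X$, the normal space condition $u-x\in N_xX$ decouples, so that (after the usual scaling bookkeeping for cones) the critical equations for $d_{X,u}$ on $X$ are governed factor-by-factor by the critical equations for $d_{X_i,u_i}$ on $X_i$ — this is exactly the mechanism behind the multiplicativity $\mathrm{EDD}(X)=\prod_i\mathrm{EDD}(X_i)$ when $u$ is generic. I would state this precisely as a lemma, citing the analogous computation in \cite[\S8]{DHOST} or reproving the short linear-algebra step: the normal space of a Segre product at a rank-one point is a direct sum $\bigoplus_i x_1\otimes\cdots\otimes N_{x_i}X_i\otimes\cdots\otimes x_k$ modulo the scaling directions.

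Next I would choose the witness point. By Proposition~\ref{prop: double data strictly contained} (projective case), for each $i$ there is a point $x_i\in Z_i\cap (X_i)_{\mathrm{sm}}$ and a data point $[u_i]$ such that $x_i$ is the \emph{unique} critical point of $d_{X_i,u_i}$ lying on $Z_i$; moreover the proof of that proposition shows one can take $[u_i]\in\mathrm{DL}_{X_i|Z_i}\setminus\mathrm{DL}_{X_i|Z_i}^{>1}$, and we may additionally arrange (by genericity of the $Y_i$ and openness) that $x_i$ is a reduced critical point — so the fiber over $[u_i]$ meets $Z_i$ in the single reduced point $x_i$, though it may meet $X_i\setminus Z_i$ in other critical points. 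Set $u\coloneqq u_1\otimes\cdots\otimes u_k$ and $x\coloneqq x_1\otimes\cdots\otimes x_k$. Using the decoupling lemma, the critical points of $d_{X,u}$ on $Z$ correspond to tuples $(y_1,\ldots,y_k)$ with $y_i$ critical for $d_{X_i,u_i}$ and $y_i\in Z_i$; the only such tuple is $(x_1,\ldots,x_k)$, so $x$ is the unique critical point of $d_{X,u}$ on $Z$. Hence $[u]\in\mathrm{DL}_{X|Z}\setminus\mathrm{DL}_{X|Z}^{>1}$, and the claim follows from Lemma~\ref{lem: properties multiple data loci}.

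The main obstacle I anticipate is handling the scaling/cone subtleties correctly: on a Segre product the decomposable tensor $x_1\otimes\cdots\otimes x_k$ depends only on the projective classes $[x_i]$, but the individual ``critical point'' equations $u_i-x_i\in N_{x_i}X_i$ are affine statements, and one has to verify that the rescaling freedom $x_i\mapsto\lambda_i x_i$ with $\prod\lambda_i=1$ does not create spurious extra critical points on $Z$ (it does not, because the ED-critical condition on the Segre product fixes, up to a single global scalar, the ratios among the $\|x_i\|^2$, reducing to exactly the per-factor projective conditions — this is the content of the projective ED correspondence being a cone). A secondary technical point is confirming that the witness $u$ is genuinely a point \emph{of} $\mathrm{DL}_{X|Z}$ at which the fiber is finite and reduced, i.e.\ that $x$ is a reduced point of the fiber; this follows because reducedness of the fiber of $\pr_2$ over $u_i$ at $x_i$ for each $i$ implies, via the product structure of the local equations, reducedness at $x$. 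Once the decoupling lemma is stated cleanly, the rest is a short argument; I would also remark that the same proof, using a Segre–Veronese embedding and the evident analogue of the decoupling of normal spaces, yields the Segre–Veronese generalization alluded to in the text.
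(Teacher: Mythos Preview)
Your approach differs from the paper's and is more involved than necessary. The paper does not use any decoupling of critical points across Segre factors; it simply reruns the argument of Proposition~\ref{prop: double data strictly contained} directly on the full variety $X$. Concretely: pick a generic $x=\sigma(\xi_1,\ldots,\xi_k)\in Z\cap X_{\sm}$ and use $u=x$ itself as the data point, so that $\pr_2^{-1}(x)$ consists of the $\mathrm{EDD}(X)$ critical points $x_1=x,x_2,\ldots,x_{\mathrm{EDD}(X)}$ of $d_{X,x}$. Each $x_i$ lies in $X$, hence decomposes as $\sigma(\xi_1^{(i)},\ldots,\xi_k^{(i)})$; for $i\ge 2$ some component $\xi_{j}^{(i)}$ differs from $\xi_j$, and genericity of $Y_j$ then forces $\xi_j^{(i)}\notin Y_j$, so $x_i\notin Z$. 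The only Segre-specific input is the trivial one that membership in $Z$ means membership of each factor in the corresponding $Z_j$. In particular the paper's argument never touches the Frobenius structure of $q$ and would go through for any nondegenerate quadratic form on $V$.

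Your route can be made to work, but two points need repair. First, your description of the normal space of the Segre product as $\bigoplus_i x_1\otimes\cdots\otimes N_{x_i}X_i\otimes\cdots\otimes x_k$ is incorrect: this is only a proper subspace of $N_xX$ (already for $X_i=\PP(V_i)$ it gives zero, while the Segre variety has a large normal space). What does decompose is the \emph{tangent} space $T_xX=\sum_i x_1\otimes\cdots\otimes T_{x_i}X_i\otimes\cdots\otimes x_k$, and it is this that makes the orthogonality condition $u-\lambda x\perp T_xX$ split into per-factor conditions when $q$ is Frobenius and $u$ is decomposable. Second, that splitting yields, for a candidate $y=y_1\otimes\cdots\otimes y_k$,
\[
\Bigl(\textstyle\prod_{j\neq i}\langle u_j,y_j\rangle\Bigr)u_i-\lambda\Bigl(\textstyle\prod_{j\neq i}q(y_j)\Bigr)y_i\in N_{y_i}X_i\,,
\]
which says $[y_i]$ is critical for $d_{X_i,u_i}$ only when the coefficient $\prod_{j\neq i}\langle u_j,y_j\rangle$ is nonzero; the degenerate case where some $\langle u_j,y_j\rangle=0$ must be excluded separately. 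This can be done via genericity, but it is extra work that the paper's argument sidesteps entirely.
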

\begin{proof}
The argument follows the one in the proof of Proposition \ref{prop: double data strictly contained}. Also in this case, there exists a point $x=\sigma(\xi_1,\ldots,\xi_k)\in Z\cap X_{sm}$ such that $\pr_2^{-1}(x)$ is reduced and consists of $\mathrm{EDD}(X)$ simple points $x_1,\ldots,x_{\mathrm{EDD}(X)}$. For each $i\in[\mathrm{EDD}(X)]$, we can write $x_i=\sigma(\xi_1^{(i)},\ldots,\xi_k^{(i)})$, where $\xi_j^{(i)}\in X_j$ for all $j\in[k]$. Up to relabeling, we may assume that $x=x_1$.
For all $j\in[k]$, let $f_{j,1},\ldots,f_{j,c_j}$ be the generators of the vanishing ideal of $Y_i$.
Since all polynomials $f_{j,\ell_j}$ are generic, there exists $\ell_j\in[c_j]$ such that $\xi_j^{(i)}\notin V(f_{j,\ell_j})$ for all $i\in\{2,\ldots,\mathrm{EDD}(X)\}$. For this reason, we conclude that $x\in\mathrm{DL}_{X|Z}\setminus\mathrm{DL}_{X|Z}^{>1}$ and, therefore, $\mathrm{EDD}(X|Z)=1$.
\end{proof}

\begin{proposition}\label{prop: double data hypersurface wrt subspace strictly contained}
Let $X\subset\PP^N$ be a smooth hypersurface, and consider a linear subspace $Z\subset X$ not contained in the isotropic quadric $Q$. Then $\mathrm{EDD}(X|Z)=1$.
\end{proposition}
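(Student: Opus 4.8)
The plan is to work throughout with $X$ and $Z$ as affine cones in $V\cong\C^{N+1}$, using the coordinates in which $q(x)=x_0^2+\cdots+x_N^2$, and to exploit heavily that $Z$ is a \emph{linear} subspace of $V$ contained in $X$. Write $f$ for the homogeneous defining polynomial of $X$, of degree $d$. Smoothness of $X$ gives $X_{\sing}=\{0\}$, so every $x\in X\setminus\{0\}$ is a smooth point and $N_xX=\C\,\nabla f(x)$, with $\nabla f=(\partial_0f,\dots,\partial_Nf)$; here the Euclidean form $\langle\,,\rangle$ coincides with the standard one. The first --- and main --- observation is that for every $x\in Z$ one has $\nabla f(x)\in Z^{\perp}$: for $x\in Z\setminus\{0\}$ this follows from $Z=T_xZ\subseteq T_xX=\nabla f(x)^{\perp}$ (using $Z\subseteq X$ and that $Z$ is linear), and the case $x=0$ follows by continuity. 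Combining this with the conical form of \eqref{eq: alternative description of ED data locus} gives
\[
\mathrm{DL}_{X|Z}=\overline{\bigcup_{x\in Z\setminus\{0\}}\bigl(x+\C\,\nabla f(x)\bigr)}\ \subseteq\ Z+Z^{\perp},
\]
and a point $[x]\in Z\setminus(Q\cup\{0\})$ is critical for $d_{X,u}$ exactly when $u$ lies in the plane $\operatorname{span}(x,\nabla f(x))$.

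The core of the argument is to recover the critical point from the data point. Let $u\in\mathrm{DL}_{X|Z}$ be general and $[x]\in Z$ critical for $d_{X,u}$, say $u=a\,x+b\,\nabla f(x)$ with $x\in Z\setminus\{0\}$ and $a,b\in\C$. Pairing with an arbitrary $y\in Z$ and using $\langle\nabla f(x),y\rangle=0$ gives $\langle u,y\rangle=a\,\langle x,y\rangle$ for all $y\in Z$. Since $\mathrm{DL}_{X|Z}$ is not contained in the proper subspace $Z^{\perp}$ --- it contains $x'+\nabla f(x')$ with $\langle x'+\nabla f(x'),x'\rangle=q(x')\neq0$ for $[x']\notin Q$ --- the general $u$ forces $a\neq0$, so after rescaling $x$ we may assume $u=x+t\,\nabla f(x)$ and $\langle u,y\rangle=\langle x,y\rangle$ for all $y\in Z$. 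Hence $x$ is determined by $u$ modulo $R\coloneqq\operatorname{rad}(q|_Z)=Z\cap Z^{\perp}$, which is a \emph{proper} subspace of $Z$ because $Z\not\subseteq Q$. When $q|_Z$ is nondegenerate (i.e. $R=0$) this already finishes the proof: $x$, and hence $[x]$, is uniquely determined by $u$, so a general $u$ has at most one critical point on $Z$ and $\mathrm{EDD}(X|Z)\le1$; as $\mathrm{DL}_{X|Z}$ is nonempty and equals $\overline{\pr_2(\sPE_{X|Z})}$ we also have $\mathrm{EDD}(X|Z)\ge1$, and the finiteness just obtained makes $X$ ED regular given $Z$ by Lemma~\ref{lem: properties multiple data loci}(2), so $\mathrm{EDD}(X|Z)=1$.

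It remains to treat the degenerate case $R\neq0$. If $d=1$ then $X$ is a hyperplane, $\mathrm{EDD}(X)=1$, and $\mathrm{EDD}(X|Z)\le\mathrm{EDD}(X)$ (from the proof of Proposition~\ref{prop: data locus is irreducible}) gives the claim. For $d\ge2$ I would argue by a dimension count. Two normalized critical representatives $x_1,x_2\in Z$ for the same general $u$ satisfy $\langle u,y\rangle=\langle x_1,y\rangle=\langle x_2,y\rangle$ for all $y\in Z$, hence $r\coloneqq x_1-x_2\in R$ and $t_1\,\nabla f(x_1)-t_2\,\nabla f(x_2)=r$. Parametrizing such configurations by $(x_1,r,t_1)\in Z\times R\times\C$, this relation forces the $3\times(N+1)$ matrix with rows $r$, $\nabla f(x_1)$, $\nabla f(x_1-r)$ to have rank $\le 2$ (a condition of codimension $N-1$ away from $r=0$), after which $(t_1,t_2)$ is determined; since $\dim R<\dim Z<\dim V$ for $d\ge2$ (as then $Z\subsetneq X$ and $R\subsetneq Z$), one concludes that the locus of data points having two distinct critical points on $Z$, as well as the locus where $\Phi\colon Z\times\C\to V$, $(x,t)\mapsto x+t\,\nabla f(x)$, fails to be finite, has dimension strictly smaller than $\dim\mathrm{DL}_{X|Z}=\dim Z+1$. (One also notes that distinct $\Phi$-preimages of a general $u$ project to distinct points of $\PP^N$, since $x\parallel\nabla f(x)$ holds only on the proper subvariety $Z\cap Q$.) Hence a general $u\in\mathrm{DL}_{X|Z}$ has a single critical point on $Z$ and $\mathrm{EDD}(X|Z)=1$.

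The nondegenerate case is the clean, essentially computation-free heart of the proof. The main obstacle is precisely the degenerate case $R=\operatorname{rad}(q|_Z)\neq0$ --- which genuinely occurs, e.g. for suitable lines $Z$ on high-degree smooth hypersurfaces --- where $u$ no longer determines its critical point and one must run the dimension estimates above; the delicate point there is to verify that the relevant incidence variety meets the rank-$\le2$ locus in the expected codimension (equivalently, that the $2$-planes $\operatorname{span}(\nabla f(x_1),\nabla f(x_1-r))$ sit in sufficiently general position relative to $R$). It would simplify matters considerably if one could show a priori that $q|_Z$ is automatically nondegenerate for a linear $Z\subseteq X$ with $Z\not\subseteq Q$; since I do not see why that should hold, the dimension-count route appears to be the safe one.
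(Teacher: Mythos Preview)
Your argument in the nondegenerate case $R=Z\cap Z^{\perp}=0$ is exactly the paper's proof, restated coordinate-free. The paper chooses coordinates with $Z=V(x_0,\dots,x_{N-d-1})$, writes $f=\sum_{j<N-d}x_jf_j$, notes that the last $d{+}1$ entries of $\nabla f(z)$ vanish for $z\in Z$, and reads off $z_1=z_2$ from $z_1+\lambda_1\nabla f(z_1)=z_2+\lambda_2\nabla f(z_2)$ by comparing those entries; this is precisely your observation $\nabla f(z)\in Z^{\perp}$ followed by projecting modulo $Z^{\perp}$, after which Lemma~\ref{lem: properties multiple data loci}(4) applies. You are also right that the paper's ``without loss of generality'' tacitly assumes the coordinate change preserves $q$ --- otherwise $N_xX$ is no longer spanned by the vector of partials --- so the paper's argument, like yours, is clean only when $q|_Z$ is nondegenerate.

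Your degenerate-case argument, however, has a genuine gap. The ``codimension $N-1$'' you claim for the rank-$\le 2$ condition is the codimension inside \emph{all} $3\times(N{+}1)$ matrices, but the three rows $r,\nabla f(x_1),\nabla f(x_1-r)$ lie in $Z^{\perp}$, a space of dimension $N-d$; the correct expected codimension is only $N-d-2$, which can be zero. For instance, take a line $Z$ on a smooth quadric $X\subset\PP^3$ with $q|_Z$ of rank~$1$: then $\dim Z^{\perp}=2$, any three vectors there are linearly dependent, the rank condition is vacuous, and your count gives no bound at all. (In that example one still has $\mathrm{EDD}(X|Z)=1$, but for a different reason: the doubled fibers turn out to be \emph{infinite} and hence confined to $\mathrm{DL}_{X|Z}^{\infty}$, not merely rare.) So the degenerate regime needs a genuinely different idea; the dimension estimate you sketch does not establish it.
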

\begin{proof}
Let $d=\dim(Z)$.
Without loss of generality, we assume that $Z=V(x_0,\ldots,x_{N-d-1})$. Since $Z\subset X$, we can write $X=V(f)$, where $f=x_0f_0+\cdots +x_{N-d-1}f_{N-d-1}$ for some homogeneous polynomials $f_0,\ldots,f_{N-d-1}$ of degree $\deg(X)-1$. For all $x\in X$, the normal space $N_xX$ has affine dimension one and is spanned by $\nabla f(x)$. In particular
\begin{equation}
    \frac{\partial f}{\partial x_i}(x) =
    \begin{cases}
    f_i(x)+\sum_{j=0}^{N-d-1}x_j\frac{\partial f_j}{\partial x_j}(x) & \text{if $0\le i\le N-d-1$}\\
    \sum_{j=0}^{N-d-1}x_j\frac{\partial f_j}{\partial x_j}(x) & \text{if $N-d\le i\le N$.}
    \end{cases}
\end{equation}
This implies that, for all $x\in Z$, we have $\nabla f(x)=(f_0(x),\ldots,f_{N-d-1}(x),0,\ldots,0)$. Now suppose that there exist two vectors $z_1\neq z_2$ in $Z$ such that $(z_1+N_{z_1}X)\cap(z_2+N_{z_2}X)\neq\emptyset$. More precisely, we assume that $z_1+\lambda_1\nabla f(z_1)=z_2+\lambda_2\nabla f(z_2)$ for some coefficients $\lambda_1,\lambda_2$. On one hand, the first $N-d$ components of the vectors $z_i$ are zero. On the other hand, the last $d+1$ components of the gradients $\nabla f(z_i)$ are zero. Then necessarily $z_1=z_2$, a contradiction. Therefore $\mathrm{EDD}(X|Z)=1$ by Lemma \ref{lem: properties multiple data loci}(4).
\end{proof}

\bibliographystyle{alpha}
\bibliography{biblio}

\end{document}